\title{Local well-posedness of the multi-layer shallow water model with free surface\thanks{This 
        work was supported by the French Naval Hydrographic and Oceanographic Service.}}
\author{R.~Monjarret\thanks{Institut de Math\'ematiques de Toulouse, Universit\'e Toulouse III - Paul Sabatier, 118 route de Narbonne, 31062 Toulouse Cedex 9, France ({\tt ronan.monjarret@math.univ-toulouse.fr}).}}
\begin{document}

\maketitle

\begin{abstract}
In this paper, we address the question of the hyperbolicity and the local well-posedness of the multi-layer shallow water model, with free surface, in two dimensions. We first provide a general criterion that proves the symmetrizability of this model, which implies hyperbolicity and local well-posedness in $\mathcal{H}^s(\mathbb{R}^2)$, with $s>2$. Then, we analyze rigorously the eigenstructure associated to this model  and prove a more general criterion of hyperbolicity and local well-posedness, under a particular asymptotic regime and  a weak stratification assumptions of the densities and the velocities. Finally, we consider a new {\em conservative} multi-layer shallow water model, we prove the symmetrizability, the hyperbolicity and the local well-posedness and rely it to the basic multi-layer shallow water model.
\end{abstract}

\begin{keywords} 
shallow water, multi-layer, free surface, symmetrizability, hyperbolicity, vorticity.
\end{keywords}

\begin{AMS}
15A15, 15A18, 35A07, 35L45, 35P15
\end{AMS}

\pagestyle{myheadings}
\thispagestyle{plain}
\markboth{R. Monjarret}{Local well-posedness of the multi-layer shallow water model with free surface}

\section{Introduction}

We consider $n$ immiscible, homogeneous, inviscid and incompressible superposed fluids, with no surface tension and under the influence of gravity and the {\em Coriolis} forces; the pressure is assumed to be hydrostatic: Constant at the interface liquid/air ({\em i.e.} the free surface) and continuous at the interfaces liquid/liquid ({\em i.e.} the internal surfaces). Moreover, the shallow water assumption is considered in each fluid layer: There exist vertical and horizontal characteristic lengths, for each fluid, and the vertical one is assumed much smaller than the horizontal one.

\noindent
For more details on the formal derivation of these equations, see \cite{desaint1871theorie}, \cite{pedlosky1982geophysical}, \cite{gill1982atmosphere} (the single-layer model), \cite{long1956long} (the two-layer model with rigid lid), \cite{schijf1953theoretical}, \cite{ovsyannikov1979two}, \cite{liska1997analysis} (the two-layer model with free surface). In the {\em curl}-free case, these models have been obtained rigorously with an asymptotic model of the three-dimensional {\em Euler} equations, under the shallow water assumption, in \cite{alvarez2007nash} for the single-layer model with free surface and in \cite{duchene2010asymptotic} for the two-layer one. This has been obtained in \cite{Castrowellposed2014} for the single-layer case and without assumption on the vorticity.

\noindent
Unlike the two-layer model --- see \cite{monjarret2014local} --- the analysis of the hyperbolicity of the multi-layer model, with $n \ge 3$, cannot be performed explicitly. Very few results have been proved concerning the general multi-layer model. They are in particular cases: \cite{stewart2012multilayer} and \cite{castro2010hyperbolicity} in the three-layer case; \cite{audusse2005multilayer} in the very particular case $\rho_1=\ldots=\rho_n$; \cite{audusse2011approximation} and \cite{audusse2011multilayer}, where the interfaces between layers have no physical meaning. In the general case, it was proved only the local well-posedness of the model, in one dimension, under conditions of weak-stratification in density and velocity (see \cite{duchene2013note}). Though, there is no  explicit estimate of this stratification, nor asymptotic one: we know there exists conditions such that the multi-layer model with free surface is locally well-posed but we do not know the characterization of these conditions.

\noindent
The first aim of this paper is to obtain criteria of symmetrizability and hyperbolicity of the multi-layer shallow water model, in order to insure the local well-posedness of the associated {\em Cauchy} problem. The second aim is to characterize the eigenstructure of the space-differential operator, associated with the model, for the treatment of a well-posed open boundary problem with {\em characteristic} variables --- see full proof in \cite{browning1982initialization}, for the single-layer case. The third aim is to prove the local well-posedness of the new conservative model, and characterize its eigenstructure.

\noindent
The main result of this paper is, under weak density stratification and weak velocity stratification, we obtained an asymptotic expansion of all the eigenvalues associated with the multi-layer model. The interpretation of these expressions is really interesting:
\begin{itemize}
\item the eigenvalues, related to the free surface, are asymptotically the same as the single-layer model.
\item the eigenvalues, corresponding to an internal surface $i \in [\![1,n-1]\!]$, are asymptotically as the internal eigenvalues of a two-layer model, where the upper layer would be all the layers, directly above the interface $i$, where the corresponding interface has a density-gap smaller than the density-gap of the interface $i$, and the lower layer would be all the layers, directly below the interface $i$, where the corresponding interface has a density-gap smaller than the density-gap of the interface $i$.
\end{itemize}

\noindent
{\em Outline:} In section $1$, the model is introduced. In section $2$, useful definitions are reminded and a sufficient condition of hyperbolicity and local well-posedness in $\mathcal{H}^s(\mathbb{R}^2)$, is given. In section $3$, the hyperbolicity of the model is studied in particular cases. In sections $4$ and $5$, the asymptotic expansion of all the eigenvalues and all the eigenvecotrs is performed, in order to deduce a new criterion of local well-posedness in $\mathcal{H}^s(\mathbb{R}^2)$, which will be interpreted as the condition obtained in a two-layer model, and compared to the one proved in section $2$. Finally, in the last section, after discussing the conservative quantities of the model and reminding the definition of the horizontal vorticity, a new model is introduced: Benefits of this model are explained, local well-posedness, in $\mathcal{H}^s(\mathbb{R}^2)$, is proved and links, with the multi-layer shallow water model, are fully justified.

\subsection{Governing equations}

\noindent
Let us introduce $\rho_i$ the constant density of the $i^{\mathrm{th}}$ fluid layer, $i \in [\![1,n]\!]$,  $h_i(t,X)$ its height and $\boldsymbol{u_i}(t,X):={}^{\top} (u_i(t,X),v_i(t,X))$ its depth-averaged horizontal velocity, where $t$ denotes the time and $X:=(x,y)$ the horizontal cartesian coordinates, as drawn in figure \ref{1figuremultilayer}.

\noindent
The governing equations of the multi-layer shallow water model with free surface, in two dimensions, is given by a system of $3n$ partial differential equations of $1^{\mathrm{st}}$ order. For all $i \in [\![1,n]\!]$, there is the mass-conservation of the layer $i$:
\begin{equation}
\frac{\partial h_i}{\partial t} + {\bf \nabla} {\bf \cdot} (h_i \boldsymbol{u_i}) = 0,
\label{1massconservation}
\end{equation}
whereas equations on the momentum of the layer $i$:
\begin{equation}
\frac{\partial \boldsymbol{u_i}}{\partial t}+(\boldsymbol{u_i} {\bf \cdot} {\bf \nabla}) \boldsymbol{u_i} + {\bf \nabla} P_i - f \boldsymbol{u_i}^{\bot} = 0,
\label{1momentumconservation}
\end{equation}
where $P_i:=g\big( b+\sum_{k=1}^n \alpha_{i,k} h_k \big)$, $\boldsymbol{u_i}^{\bot}:={}^{\top} (v_i,-u_i)$, $g$ is the gravitational acceleration, $b$ is the bottom topography, $f$ is the {\em Coriolis} parameter and

\begin{equation}
 \alpha_{i,k}=\left\{
 \begin{array}{ll}
 \frac{\rho_k}{\rho_i}, & k < i, \\[4pt]
 1, & k \ge i.
 \end{array}
 \right.
 \label{1alphaik}
\end{equation}

\noindent
A useful notation is introduced, for all $i \in [\![1,n-1]\!]$, $\gamma_i$ is defined by:
\begin{equation}
\gamma_i := \frac{\rho_i}{\rho_{i+1}},
\end{equation}

\noindent
and is the density ratio between the layer $i$ and the layer $i+1$. Then, if $k \le i-1$, the ratio $\frac{\rho_k}{\rho_i}$ is equal to
\begin{equation}
\frac{\rho_k}{\rho_i}=\prod_{j=k}^{i-1} \gamma_j,
\label{1profgamma}
\end{equation}
the vector $\boldsymbol{\gamma}$ will denote ${}^{\top} (\gamma_1,\ldots,\gamma_{n-1}) \in \mathbb{R}_+^{n-1}$ and the vector ${\bf h}$ will be equal to ${}^{\top} (h_1,\ldots,h_n) \in \mathbb{R}_+^n$.

\noindent
{\em Remark:} At this point, no assumption is made over the range of $\rho_i$ and an interesting consequence of the expansion, made in this paper, will be to verify the {\em Rayleigh-Taylor} stability ({\em i.e.} $ \rho_n > \rho_{n-1} > \ldots > \rho_1 > 0$).

\begin{figure}[ht]
\centering
\includegraphics[width=8cm,height=60mm]{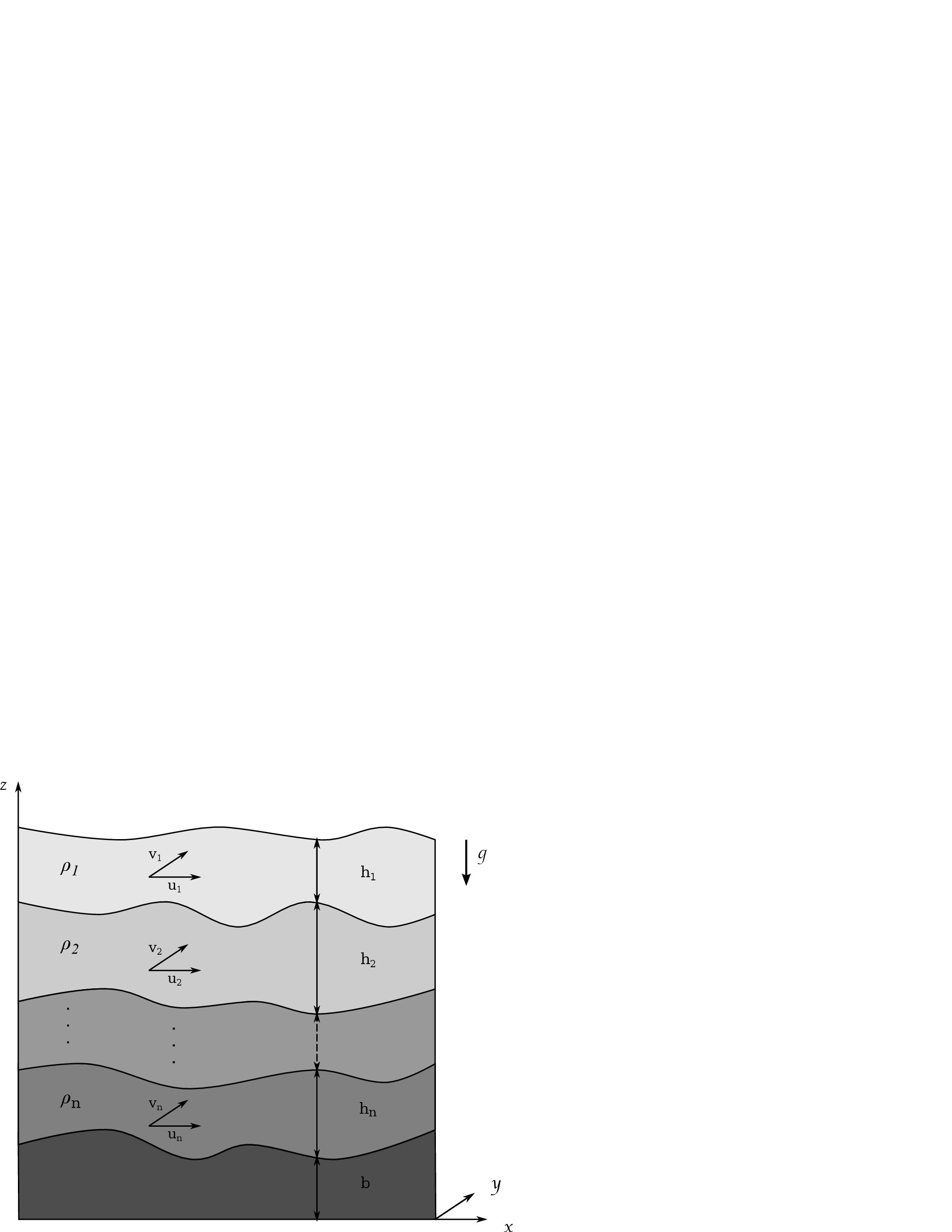}
\caption{Configuration of the multi-layer shallow water model with free surface}
\label{1figuremultilayer}
\end{figure}

\noindent
Moreover, with $i \in [\![1,n]\!]$ and $\mathsf{M}$ a $n \times n$ matrix, we denote by $\mathrm{C}_i\left(\mathsf{M} \right)$ and $\mathrm{L}_i\left(\mathsf{M} \right)$ respectively the $i^{\mathrm{th}}$ column and the $i^{\mathrm{th}}$ line of $\mathsf{M}$. We will denote the total depth by 
\begin{equation}
H:=\sum_{i=1}^n h_i,
\label{1H}
\end{equation}
and the average velocity in each direction by
\begin{equation}
\left\{
\begin{array}{l}
\bar{u}:=\frac{1}{H}\sum_{i=1}^n h_i u_i,\\[4pt]
\bar{v}:=\frac{1}{H}\sum_{i=1}^n h_i v_i.
\end{array}
\right.
\label{1meanvelocity}
\end{equation}


\noindent
In order to get rid of the constant $g$, in the following analysis, we proceed the following rescaling:
\begin{equation}
\forall i \in [\![1,n-1]\!],\ 
\left\{
\begin{array}{l}
\hat{h}_i \leftarrow g h_i,\\[4pt]
\hat{u}_i \leftarrow u_i,\\[4pt]
\hat{v}_i \leftarrow v_i,
\end{array}
\right.
\label{1rescal}
\end{equation}
and in order to simplify the notations, $\hat{ }\ $ will be removed. Then, with the vector
\begin{equation}
{\bf u}:={}^{\top} (h_1,\ldots,h_n,u_1,\ldots,u_n,v_1,\ldots,v_n),
\label{1defu}
\end{equation}
the $1^{\mathrm{st}}$ order quasi-linear partial differential equations system (\ref{1massconservation}-\ref{1momentumconservation}) can be written as
\begin{equation}
\frac{\partial {\bf u}}{\partial t} + \mathsf{A}_x({\bf u},\boldsymbol{\gamma}) \frac{\partial {\bf u}}{\partial x} + \mathsf{A}_y ({\bf u},\boldsymbol{\gamma}) \frac{\partial {\bf u}}{\partial y} + {\bf b}({\bf u})=0,
\label{1systemmultilayer}
\end{equation}
where the $3n \times 3n$ block matrices $\mathsf{A}_x({\bf u},\boldsymbol{\gamma})$, $\mathsf{A}_y({\bf u},\boldsymbol{\gamma})$ and the vector ${\bf b}({\bf u}) \in \mathbb{R}^{3n}$ are defined by
\begin{equation}
\mathsf{A}_x({\bf u},\boldsymbol{\gamma}) :=  \left[\begin{array}{c|c|c}
 \begin{array}{c} \mathsf{V}_x  \end{array}
 & \begin{array}{c} \mathsf{H} \end{array}
 & \begin{array}{c} \mathsf{0} \end{array}\\
 \hline
 \begin{array}{c} \mathsf{\Gamma} \end{array}
 & \begin{array}{c} \mathsf{V}_x \end{array}
 & \begin{array}{c} \mathsf{0} \end{array}\\
 \hline
 \begin{array}{c} \mathsf{0} \end{array}
 & \begin{array}{c} \mathsf{0} \end{array}
 & \begin{array}{c} \mathsf{V}_x \end{array}\\
\end{array}\right],
\mathsf{A}_y({\bf u},\boldsymbol{\gamma}) :=  \left[\begin{array}{c|c|c}
 \begin{array}{c} \mathsf{V}_y  \end{array}
 & \begin{array}{c} \mathsf{0} \end{array}
 & \begin{array}{c} \mathsf{H} \end{array}\\
 \hline
 \begin{array}{c} \mathsf{0} \end{array}
 & \begin{array}{c} \mathsf{V}_y \end{array}
 & \begin{array}{c} \mathsf{0} \end{array}\\
 \hline
 \begin{array}{c} \mathsf{\Gamma} \end{array}
 & \begin{array}{c} \mathsf{0} \end{array}
 & \begin{array}{c} \mathsf{V}_y \end{array}\\
\end{array}\right],
\label{1Axy}
\end{equation}
\begin{equation}
{\bf b}({\bf u}):= {}^{\top} \left(0,\ldots,0,-f v_1 +  \frac{\partial b}{\partial x},\ldots,-f v_n + \frac{\partial b}{\partial x}, f u_1 + \frac{\partial b}{\partial y},f u_n +  \frac{\partial b}{\partial y}\right),
\label{1bottom}
\end{equation}
with the $n \times n$ block matrices
\begin{equation}
\left\{
\begin{array}{cl}
\mathsf{V}_x&:=\mathrm{diag}[u_i]_{i \in [\![1,n]\!]},\\[4pt]
\mathsf{V}_y&:=\mathrm{diag}[v_i]_{i \in [\![1,n]\!]},\\[4pt]
\mathsf{H}&:=\mathrm{diag}[h_i]_{i \in [\![1,n]\!]},\\[4pt]
\Gamma&:=[\alpha_{i,k}]_{(i,k)\in[\![1,n]\!]^2},
\end{array}
\right.
\label{1blockAx}
\end{equation}
where $\mathrm{diag}[x_i]_{i \in [\![1,n]\!]}$ is the $n \times n$ diagonal matrix with $(x_1,\ldots,x_n)$ on the diagonal.

\noindent
As it will be reminded in the next subsections, the hyperbolicity of the model is an interesting property to prove the local well-posedness. The study of the hyperbolicity of the model \eqref{1systemmultilayer} is well-known in the case $n=1$: there are $3$ waves, in each direction, which are well-defined if the height remains strictly positive. In the case $n=2$: if $\rho_1 > \rho_2$, the model is never hyperbolic, and if $\rho_1 = \rho_2$ the model is so if and only if $u_2=u_1$, as proved in \cite{audusse2005multilayer}. Moreover, in \cite{monjarret2014local}, an exact characterization of the domain of hyperbolicity of the model \eqref{1systemmultilayer} was proved: unlike the one-dimensional model, the hyperbolicity of the two-dimensional model is verified if the shear velocity $| u_2 - u_1|^2 + | v_2 -v_1 |^2$ is bounded by a positive parameter, $F_{crit}^{-\ 2}$, depending only on $\gamma_1=\frac{\rho_1}{\rho_2}$, $h_1$ and $h_2$. In the {\em Boussinesq} approximation ({\em i.e.} $1-\gamma_1 \ll 1$), the asymptotic expansion of this parameter is:
\begin{equation}
F_{crit}^{-\ 2} = (h_1+h_2)(1-\gamma_1) + o(1-\gamma_1).
\end{equation}

\noindent
This condition, also explained in \cite{castro2011numerical}, \cite{duchene2013rigid}, \cite{stewart2012multilayer} and \cite{vreugdenhil1979two}, can be interpreted as a physical instability condition --- also known as {\em Kelvin-Helmoltz} stability --- and if it is not verified, the equations have exponential growing solutions. In the general case $n \ge 3$, apart from the formal study in \cite{frings2012adaptive} and the particular case in \cite{chumakova2009stability}, there is no result of hyperbolicity. In order to treat this lack of hyperbolicity, several numerical methods have been proposed in \cite{abgrall2009two}, \cite{bouchut2008entropy} and \cite{bouchut2010robust}.

\noindent
{\em Remark:} The multi-layer shallow water model, with free surface, describes fluids such as the ocean: the evolution of the density can be assumed piecewise-constant (which is verified), the horizontal characteristic length is much greater than the vertical one and the pressure can be expected only dependent of the height of fluid. This model is used by the {\em French Naval Hydrographic and Oceanographic Service}, with $40$ layers, to provide the underwater weather forecast in the bay of Biscay, for example.

\subsection{Rotational invariance} As the multi-layer shallow water model with free surface is based on physical partial differential equations, it verifies the so-called rotational invariance: the $3n \times 3n$ matrix
\begin{equation}
\mathsf{A}({\bf u},\boldsymbol{\gamma},\theta):=\cos(\theta)\mathsf{A}_x({\bf u},\boldsymbol{\gamma})+\sin(\theta) \mathsf{A}_y({\bf u},\boldsymbol{\gamma}) 
\label{1Atheta}
\end{equation}
depends only on the matrix $\mathsf{A}_x({\bf u},\boldsymbol{\gamma})$ and the parameter $\theta$. Indeed, there is the following relation:
\begin{equation}
 \forall ({\bf u},\boldsymbol{\gamma},\theta) \in \mathbb{R}^{3n} \times \mathbb{R}_+^{*\ n-1} \times [0,2\pi],\ \mathsf{A}({\bf u},\boldsymbol{\gamma},\theta)=\mathsf{P}(\theta)^{\mathsf{-1}} \mathsf{A}_x\left(\mathsf{P}(\theta){\bf u},\boldsymbol{\gamma} \right) \mathsf{P}(\theta),
\label{1rotinv}
\end{equation}
where the $3n \times 3n$ matrix $\mathsf{P}(\theta)$ is defined by
\begin{equation}
\mathsf{P}(\theta) :=  \left[\begin{array}{c|c|c}
 \begin{array}{c} \mathsf{I_n}  \end{array}
 & \begin{array}{c} \mathsf{0} \end{array}
 & \begin{array}{c} \mathsf{0} \end{array}\\
 \hline
 \begin{array}{c} \mathsf{0} \end{array}
 & \begin{array}{c} \cos(\theta) \mathsf{I_n} \end{array}
 & \begin{array}{c} \sin(\theta)\mathsf{I_n} \end{array}\\
 \hline
 \begin{array}{c} \mathsf{0} \end{array}
 & \begin{array}{c} -\sin(\theta)\mathsf{I_n} \end{array}
 & \begin{array}{c} \cos(\theta) \mathsf{I_n} \end{array}\\
\end{array}\right].
\label{1ptheta}
\end{equation}

\noindent
Notice that $\mathsf{P}(\theta)^{\mathsf{-1}}={}^{\top}\mathsf{P}(\theta)$. The equality (\ref{1rotinv}) will permit to simplify the analysis of $\mathsf{A}({\bf u},\boldsymbol{\gamma},\theta)$ to the analysis of $\mathsf{A}_x({\bf u},\boldsymbol{\gamma})$.

\section{Well-posedness of the model: a $\boldsymbol{1^{\mathrm{st}}}$ criterion}\label{1sectionhyp}
In this section, we remind useful criteria of local well-posedness in $\mathcal{L}^2(\mathbb{R}^2)^{3n}$, also called hyperbolicity, and in $\mathcal{H}^s(\mathbb{R}^2)^{3n}$. Connections between each one will be given and a $1^{\mathrm{st}}$ criterion of local well-posedness of the system \eqref{1systemmultilayer} will be deduced.

\subsection{Hyperbolicity}
First, we give the definition of hyperbolicity, then a useful criterion of this property and an important property of hyperbolic problem. We will consider the euclidean space $(\mathcal{L}^2(\mathbb{R}^2)^{3n},\|\cdot\|_{\mathcal{L}^2})$.

\begin{definition}[{\rm Hyperbolicity}]
Let ${\bf u} \in \mathcal{L}^2(\mathbb{R}^2)^{3n}$ and $\boldsymbol{\gamma} \in \mathbb{R}_+^{*\ n-1}$. The system {\em (\ref{1systemmultilayer})} is hyperbolic if and only if
\begin{equation}
\exists\ c>0,\ \forall \theta \in [0,2\pi],\  \sup_{\tau \in \mathbb{R}} \| \exp \left( - i \tau \mathsf{A}({\bf u},\boldsymbol{\gamma},\theta) \right) \|_{\mathcal{L}^2} \le c.
\label{1expAutheta}
\end{equation}
\label{1defcrithypuseful}
\end{definition}

\noindent
A useful criterion of hyperbolicity is in the next proposition:
\begin{proposition}
Let ${\bf u} \in \mathcal{L}^2(\mathbb{R}^2)^{3n}$ and $\boldsymbol{\gamma} \in \mathbb{R}_+^{*\ n-1}$. The system {\em (\ref{1systemmultilayer})} is hyperbolic if and only
\begin{equation}
\forall (X,\theta) \in \mathbb{R}^2 \times [0,2\pi],\ \sigma \left( \mathsf{A}({\bf u}(X),\boldsymbol{\gamma},\theta) \right) \subset \mathbb{R}.
\label{1spectrumAutheta}
\end{equation}
\label{1propcrithypuseful}
\end{proposition}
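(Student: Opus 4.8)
The plan is to unwind Definition~\ref{1defcrithypuseful} using the rotational invariance \eqref{1rotinv} together with standard spectral facts about the exponential of a matrix. The key point is that condition \eqref{1expAutheta} is a uniform bound, over all real times $\tau$, on the operator norm of $\exp(-i\tau \mathsf{A})$, and such a bound is equivalent to $\mathsf{A}$ being \emph{uniformly diagonalizable with real spectrum}. So the first step is to recall, for a fixed $3n\times 3n$ matrix $\mathsf{M}$, the elementary equivalence: $\sup_{\tau\in\mathbb{R}}\|\exp(-i\tau\mathsf{M})\|_{\mathcal{L}^2}<\infty$ if and only if $\mathsf{M}$ is diagonalizable and $\sigma(\mathsf{M})\subset\mathbb{R}$. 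The ``if'' direction follows by writing $\mathsf{M}=\mathsf{S}\mathsf{D}\mathsf{S}^{-1}$ with $\mathsf{D}$ real diagonal, so that $\exp(-i\tau\mathsf{M})=\mathsf{S}\exp(-i\tau\mathsf{D})\mathsf{S}^{-1}$ and $\|\exp(-i\tau\mathsf{D})\|=1$, giving the uniform bound $\|\mathsf{S}\|\,\|\mathsf{S}^{-1}\|$. The ``only if'' direction uses that a non-real eigenvalue produces a factor growing like $e^{|\mathrm{Im}\,\lambda|\,|\tau|}$, and a nontrivial Jordan block (even with real eigenvalue) produces polynomial growth in $\tau$; both contradict the bound.

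The second step is to pass from the pointwise-in-$X$ matrix condition to the functional-analytic condition on $\mathcal{L}^2(\mathbb{R}^2)^{3n}$. Since $\mathsf{A}({\bf u}(X),\boldsymbol{\gamma},\theta)$ acts on $\mathcal{L}^2$ as a multiplication operator in $X$, we have $\|\exp(-i\tau\mathsf{A}({\bf u}(\cdot),\boldsymbol{\gamma},\theta))\|_{\mathcal{L}^2}=\mathrm{ess\,sup}_{X}\|\exp(-i\tau\mathsf{A}({\bf u}(X),\boldsymbol{\gamma},\theta))\|_{2}$, so the uniform bound \eqref{1expAutheta} is exactly a bound that is uniform in $\tau$, in $\theta$, \emph{and} in $X$. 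By the first step this is equivalent to: for all $(X,\theta)$, $\mathsf{A}({\bf u}(X),\boldsymbol{\gamma},\theta)$ is diagonalizable with real spectrum, with the diagonalizing similarity $\mathsf{S}$ bounded and boundedly invertible uniformly in $(X,\theta)$.

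The third step is to observe that the apparently stronger ``uniform diagonalizability'' is automatic here, and this is where the rotational invariance \eqref{1rotinv} does its work. By \eqref{1rotinv}, $\mathsf{A}({\bf u},\boldsymbol{\gamma},\theta)=\mathsf{P}(\theta)^{-1}\mathsf{A}_x(\mathsf{P}(\theta){\bf u},\boldsymbol{\gamma})\mathsf{P}(\theta)$ with $\mathsf{P}(\theta)$ orthogonal, so $\mathsf{A}({\bf u},\boldsymbol{\gamma},\theta)$ and $\mathsf{A}_x(\mathsf{P}(\theta){\bf u},\boldsymbol{\gamma})$ are orthogonally similar; hence it suffices to analyze $\mathsf{A}_x$. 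For the matrix $\mathsf{A}_x({\bf u},\boldsymbol{\gamma})$ the eigenvalues are algebraic functions of the entries, and — this is the only genuinely model-specific input — one checks (from the block structure \eqref{1Axy}) that whenever the spectrum is real it is in fact semisimple, so that diagonalizability is equivalent to real spectrum; moreover the constants controlling $\|\mathsf{S}\|,\|\mathsf{S}^{-1}\|$ can be taken locally uniform because the eigenprojectors depend continuously on parameters away from coalescence and, at points of coalescence with real spectrum, one has the structural fact that geometric and algebraic multiplicities agree. Combining the three steps: \eqref{1expAutheta} holds for some $c>0$ $\iff$ for every $(X,\theta)$, $\mathsf{A}({\bf u}(X),\boldsymbol{\gamma},\theta)$ has real spectrum, which is \eqref{1spectrumAutheta}.

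The main obstacle is the third step: controlling the diagonalizing matrices \emph{uniformly} in $(X,\theta)$, in particular ruling out the scenario where $\sigma(\mathsf{A})\subset\mathbb{R}$ everywhere but the condition number of the eigenbasis blows up (which would still violate \eqref{1expAutheta}). I expect the paper handles this either by invoking a general theorem on constant-coefficient hyperbolic systems (Kreiss-type matrix theorem / Strang–Métivier characterization of strong hyperbolicity) that packages ``uniform bound on $\exp(-i\tau\mathsf{A})$'' $\iff$ ``$\mathsf{A}$ smoothly/uniformly diagonalizable with real spectrum,'' or by exploiting that $\mathsf{A}_x$, where it has real spectrum, is symmetrizable with a uniformly positive symmetrizer (the content of Section~2's first criterion), which automatically gives the uniform bound. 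Either way, the reduction to a pointwise real-spectrum condition via \eqref{1rotinv} is the backbone of the argument.
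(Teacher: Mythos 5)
The paper states Proposition~\ref{1propcrithypuseful} without any proof, so there is no argument to compare yours against; what I can do is assess whether your plan closes. Your framework is the right one: the elementary characterization of $\sup_{\tau\in\mathbb{R}}\|\exp(-i\tau\mathsf{M})\|<\infty$ as ``$\mathsf{M}$ diagonalizable with real spectrum,'' the passage to an essential supremum in $X$ for a multiplication operator, and the reduction to $\mathsf{A}_x$ via the orthogonal conjugation \eqref{1rotinv}. The ``only if'' direction (a non-real eigenvalue forces exponential growth in $\tau$, violating \eqref{1expAutheta}) is correct and is the only direction the paper actually uses downstream (Theorems~\ref{1corolcaluderiv1} and~\ref{1corolcaluderiv3}).

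The obstacle you flag in step~3, however, is not a detail you will be able to supply: it is a genuine gap in the statement. The structural claim you hope for --- that for this model a real spectrum of $\mathsf{A}_x$ is automatically semisimple, so that ``real spectrum'' upgrades itself to ``uniformly diagonalizable'' --- is false, and the paper itself provides the counterexample. In \S3.2 (single-layer degeneration, all $\gamma_i=1$ and $u_i=u_{i+1}=0$) the spectrum of $\mathsf{A}_x$ is entirely real, yet the eigenvalue $\lambda=0$ has geometric multiplicity strictly less than algebraic multiplicity once $n\ge2$; the paper explicitly notes there is ``no eigenbasis of $\mathbb{R}^{3n}$'' while still calling the model hyperbolic. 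With a nontrivial Jordan block, $\|\exp(-i\tau\mathsf{A}_x)\|$ grows polynomially in $\tau$, so the bound of Definition~\ref{1defcrithypuseful} fails, and the ``if'' direction of Proposition~\ref{1propcrithypuseful} is false under a literal reading. The reconciliation is that the paper is in practice using ``hyperbolic'' as a synonym for ``$\sigma(\mathsf{A}({\bf u}(X),\boldsymbol{\gamma},\theta))\subset\mathbb{R}$ for all $(X,\theta)$'' (weak hyperbolicity), and Proposition~\ref{1propcrithypuseful} records that convention rather than a theorem derived from Definition~\ref{1defcrithypuseful}; this is also why Proposition~\ref{1diagimpliqsym} adds diagonalizability as a \emph{separate} hypothesis beyond hyperbolicity --- an addition that would be redundant if the two notions coincided. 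So your diagnosis of the obstacle is exactly right; the correct conclusion is that the obstacle should not be pushed through, because the implication it would establish does not hold. What is true, and what the paper actually needs, is: hyperbolicity in the sense of Definition~\ref{1defcrithypuseful} implies \eqref{1spectrumAutheta}; and \eqref{1spectrumAutheta} together with uniform diagonalizability (Proposition~\ref{1diagimpliqsym}) gives local well-posedness.
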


\begin{proposition}
Let ${\bf u}: \mathbb{R}^2 \rightarrow \mathbb{R}^{3n}$ a constant function. If the system {\em (\ref{1systemmultilayer})} is hyperbolic, then the {\em Cauchy} problem, associated with the linear system
\begin{equation}
\frac{\partial {\bf v}}{\partial t} + \mathsf{A}_x({\bf u}) \frac{\partial {\bf v}}{\partial x} + \mathsf{A}_y ({\bf u}) \frac{\partial {\bf v}}{\partial y}=0,
\label{1linearprobl}
\end{equation}
and the initial data ${\bf v^0} \in \mathcal{L}^2(\mathbb{R}^2)^{3n}$, is well-posed in $\mathcal{L}^2(\mathbb{R}^2)^{3n}$ and the unique solution ${\bf v}$ is such that
\begin{equation}
\left\{
\begin{array}{l}
\forall\ T >0,\ \exists\ c_T > 0,\ \sup_{t \in [0,T]} \|{\bf v}(t)\|_{\mathcal{L}^2} \le c_T \|{\bf v^0}\|_{\mathcal{L}^2},\\[4pt]
{\bf v} \in \mathcal{C}(\mathbb{R}_+;\mathcal{L}^2(\mathbb{R}^2))^{3n}.
\end{array}
\right.
\label{1regularityuhyp}
\end{equation}
\label{1defhyp}
\end{proposition}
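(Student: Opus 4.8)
The statement is a standard fact about constant-coefficient first-order systems: hyperbolicity (in the spectral/uniform-bound sense of Definition \ref{1defcrithypuseful} and Proposition \ref{1propcrithypuseful}) yields $\mathcal{L}^2$ well-posedness of the Cauchy problem for the linear system \eqref{1linearprobl}. The plan is to pass to the frequency side via the Fourier transform, where the PDE decouples into a family of ODEs parametrized by the frequency vector, and then to use the uniform bound on the matrix exponential furnished by hyperbolicity together with Plancherel's theorem.

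First I would Fourier-transform \eqref{1linearprobl} in the space variable $X=(x,y)$. Writing $\widehat{{\bf v}}(t,\xi)$ for the transform, with $\xi=(\xi_1,\xi_2)\in\mathbb{R}^2$, the system becomes the ODE $\partial_t \widehat{{\bf v}} = -i\bigl(\xi_1 \mathsf{A}_x({\bf u}) + \xi_2 \mathsf{A}_y({\bf u})\bigr)\widehat{{\bf v}}$ — here the constancy of ${\bf u}$ is essential, since it makes $\mathsf{A}_x,\mathsf{A}_y$ constant matrices so that no convolution arises. Setting $\xi = |\xi|(\cos\theta,\sin\theta)$ one has $\xi_1 \mathsf{A}_x + \xi_2 \mathsf{A}_y = |\xi|\, \mathsf{A}({\bf u},\boldsymbol{\gamma},\theta)$ by the definition \eqref{1Atheta}, so the solution is $\widehat{{\bf v}}(t,\xi) = \exp\bigl(-i t |\xi|\, \mathsf{A}({\bf u},\boldsymbol{\gamma},\theta)\bigr)\widehat{{\bf v}^0}(\xi)$. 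Taking $\tau = t|\xi|$ in \eqref{1expAutheta}, hyperbolicity gives a constant $c>0$, independent of $t$, $\xi$ and $\theta$, with $\|\widehat{{\bf v}}(t,\xi)\| \le c\,\|\widehat{{\bf v}^0}(\xi)\|$ pointwise in $\xi$.

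Next I would integrate this pointwise bound: by Plancherel, $\|{\bf v}(t)\|_{\mathcal{L}^2}^2 = (2\pi)^{-2}\int_{\mathbb{R}^2}\|\widehat{{\bf v}}(t,\xi)\|^2\,d\xi \le c^2 (2\pi)^{-2}\int_{\mathbb{R}^2}\|\widehat{{\bf v}^0}(\xi)\|^2\,d\xi = c^2\|{\bf v}^0\|_{\mathcal{L}^2}^2$, which gives the uniform bound in \eqref{1regularityuhyp} with $c_T = c$ independent of $T$. Uniqueness follows because the same estimate applied to a difference of two solutions with the same data forces it to vanish. For the continuity statement ${\bf v}\in\mathcal{C}(\mathbb{R}_+;\mathcal{L}^2(\mathbb{R}^2))^{3n}$, I would fix $t_0$ and estimate $\|{\bf v}(t)-{\bf v}(t_0)\|_{\mathcal{L}^2}^2 = (2\pi)^{-2}\int \|(\exp(-it|\xi|\mathsf{A}) - \exp(-it_0|\xi|\mathsf{A}))\widehat{{\bf v}^0}(\xi)\|^2 d\xi$; the integrand tends to $0$ pointwise as $t\to t_0$ (continuity of the matrix exponential) and is dominated by $4c^2\|\widehat{{\bf v}^0}(\xi)\|^2 \in \mathcal{L}^1$, so dominated convergence closes the argument.

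The only genuinely delicate point is the passage from the \emph{spectral} condition \eqref{1spectrumAutheta} to the \emph{uniform exponential bound} \eqref{1expAutheta} — i.e.\ Proposition \ref{1propcrithypuseful} — which I am allowed to invoke here; without it one would need the Kreiss matrix theorem, since real spectrum alone does not bound $\|\exp(-i\tau \mathsf{A})\|$ uniformly in $\tau$ when $\mathsf{A}$ fails to be diagonalizable. Everything else is the routine Fourier/Plancherel argument, and the main thing to be careful about is that the constant $c$ in \eqref{1expAutheta} is uniform in $\theta$, which is exactly why the bound on $\|{\bf v}(t)\|_{\mathcal{L}^2}$ comes out independent of $T$.
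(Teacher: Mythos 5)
Your proposal is correct and complete. The paper does not give an explicit proof of this proposition — it is stated as a standard fact about constant-coefficient hyperbolic systems with a pointer to \cite{Serre1996systemes} — so there is no in-paper argument to compare against; your Fourier/Plancherel route (transform in $X$, solve the resulting ODE using the matrix exponential, invoke the uniform bound \eqref{1expAutheta} with $\tau=t|\xi|$ and the rotational parametrization $\xi=|\xi|(\cos\theta,\sin\theta)$, close with Plancherel and dominated convergence for continuity) is precisely the textbook argument that reference supplies. You were also right to flag that the proof must rest on the uniform exponential bound of Definition~\ref{1defcrithypuseful} rather than on the spectral criterion of Proposition~\ref{1propcrithypuseful} alone, since real spectrum by itself does not give a $\tau$-uniform bound when the symbol is not diagonalizable.
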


\noindent
{\em Remark:} An interesting property of hyperbolic problems is the conservation of this property under $\mathcal{C}^1$ change of variables. More details about the main properties of hyperbolicity in \cite{Serre1996systemes}.

\subsection{Symmetrizability}
In order to prove the local well-posedness of the model (\ref{1systemmultilayer}), in $\mathcal{H}^s(\mathbb{R}^2)^{3n}$, we give below a useful criterion.

\begin{definition}[{\rm Symmetrizability}]
Let ${\bf u} \in \mathcal{H}^s(\mathbb{R}^2)^{3n}$. If there exists a $\mathcal{C}^{\infty}$ mapping $\mathsf{S}: \mathcal{H}^s(\mathbb{R}^2)^{3n} \times [0,2\pi] \rightarrow \mathcal{M}_{3n}(\mathbb{R})$ such that for all $\theta \in [0,2\pi]$,
\begin{enumerate}
\item $\mathsf{S}({\bf u},\boldsymbol{\gamma},\theta)$ is symmetric,
\item $\mathsf{S}({\bf u},\boldsymbol{\gamma},\theta)$ is positive-definite,
\item $\mathsf{S}({\bf u},\boldsymbol{\gamma},\theta) \mathsf{A}({\bf u},\theta)$ is symmetric.
\end{enumerate}
Then, the model {\em (\ref{1systemmultilayer})} is said symmetrizable and the mapping $\mathsf{S}$ is called a symbolic-symmetrizer.
\label{1defsymm}
\end{definition}

\begin{proposition}
Let $s>2$ and ${\bf u^0} \in \mathcal{H}^s(\mathbb{R}^2)^{3n}$. If the model {\em (\ref{1systemmultilayer})} is symmetrizable, then the {\em Cauchy} problem, associated with {\em (\ref{1systemmultilayer})} and initial data ${\bf u^0}$, is locally well-posed in $\mathcal{H}^s(\mathbb{R}^2)^{3n}$. Furthermore, there exists $T>0$ such that the unique solution ${\bf u}$ verifies
\begin{equation}
\left\{
\begin{array}{l}
{\bf u} \in \mathcal{C}^1([0,T] \times \mathbb{R}^2)^{3n},\\[4pt]
{\bf u} \in \mathcal{C}([0,T];\mathcal{H}^s(\mathbb{R}^2))^{3n} \cap \mathcal{C}^1([0,T];\mathcal{H}^{s-1}(\mathbb{R}^2))^{3n}.
\end{array}
\right.
\label{1strongsol}
\end{equation}
\label{1propsymm}
\end{proposition}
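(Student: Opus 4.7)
This is a classical local existence result for quasi-linear symmetric-hyperbolic systems à la Friedrichs--Kato--Majda, and my plan is to reduce the problem to a scheme of linearized equations and run a Picard iteration, with all the \emph{a priori} control coming from energy estimates based on the symbolic symmetrizer $\mathsf{S}$.

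Step 1 (linearized problem and energy estimate). Given a smooth coefficient ${\bf w}\in\mathcal{C}([0,T];\mathcal{H}^s)^{3n}$ with $s>2$, consider the linear Cauchy problem
\begin{equation*}
\partial_t {\bf v} + \mathsf{A}_x({\bf w},\boldsymbol{\gamma})\partial_x {\bf v} + \mathsf{A}_y({\bf w},\boldsymbol{\gamma})\partial_y {\bf v} + {\bf b}({\bf v}) = 0,\qquad {\bf v}(0)={\bf u^0}.
\end{equation*}
By rotational invariance \eqref{1rotinv} and Definition \ref{1defsymm}, in Fourier variables the principal symbol $\xi_1 \mathsf{A}_x + \xi_2 \mathsf{A}_y = |\xi|\,\mathsf{A}({\bf w},\boldsymbol{\gamma},\theta)$ with $\theta=\arg\xi$ is symmetrized on the left by $\mathsf{S}({\bf w},\boldsymbol{\gamma},\theta)$, which is smooth, symmetric, and positive-definite. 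Using this symbolic symmetrizer as a weight, define an equivalent norm $\|{\bf v}\|_{\mathcal{H}^s_{\bf w}}$ on $\mathcal{H}^s(\mathbb{R}^2)^{3n}$ by pseudo-differential calculus. Differentiating this norm in time along the linearized flow, the antisymmetric part cancels and only commutator/lower-order terms remain, so
\begin{equation*}
\frac{d}{dt}\|{\bf v}(t)\|_{\mathcal{H}^s_{\bf w}}^2 \;\le\; C\bigl(1+\|{\bf w}\|_{\mathcal{H}^s}\bigr)\|{\bf v}(t)\|_{\mathcal{H}^s_{\bf w}}^2,
\end{equation*}
where $C$ depends on the $\mathcal{C}^\infty$-norm of $\mathsf{S}$ on a neighborhood of $\bar{\bf u}$. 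Grönwall then gives $\mathcal{H}^s$ control on $[0,T]$.

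Step 2 (Picard iteration and convergence). Build a sequence ${\bf u}^{(k+1)}$ as the solution of the linearized problem with coefficients ${\bf u}^{(k)}$, starting from ${\bf u}^{(0)}\equiv{\bf u^0}$. Existence of each ${\bf u}^{(k+1)}$ follows from the linear theory via Proposition \ref{1defhyp} combined with a standard frozen-coefficient/parabolic-regularization argument. Choose $T>0$ small enough, depending only on $\|{\bf u^0}\|_{\mathcal{H}^s}$, so that the energy estimate of Step 1 propagates a uniform bound $\sup_k \|{\bf u}^{(k)}\|_{\mathcal{C}([0,T];\mathcal{H}^s)}\le R$. Next, write the equation satisfied by the difference ${\bf u}^{(k+1)}-{\bf u}^{(k)}$: its right-hand side is quadratic in the difference ${\bf u}^{(k)}-{\bf u}^{(k-1)}$ (using that $\mathsf{A}_x,\mathsf{A}_y$ are linear in ${\bf u}$) and an $\mathcal{L}^2$ energy estimate, again using $\mathsf{S}$, gives contraction in $\mathcal{C}([0,T];\mathcal{L}^2)^{3n}$ after shrinking $T$ if necessary. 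This produces a limit ${\bf u}\in\mathcal{C}([0,T];\mathcal{L}^2)^{3n}\cap\mathcal{L}^\infty([0,T];\mathcal{H}^s)^{3n}$ solving \eqref{1systemmultilayer}; uniqueness in this class is obtained by the same difference energy estimate applied to two candidate solutions.

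Step 3 (regularity and conclusion). The limit lies a priori only in $\mathcal{L}^\infty_t\mathcal{H}^s_x$, and weak-$\ast$ convergence gives weak continuity. To upgrade to strong $\mathcal{C}([0,T];\mathcal{H}^s(\mathbb{R}^2))^{3n}$, apply the Bona--Smith regularization: mollify the initial data, use the a priori estimate at higher regularity together with continuous dependence in a lower norm, and pass to the limit. Then $\partial_t {\bf u}\in\mathcal{C}([0,T];\mathcal{H}^{s-1})^{3n}$ directly from \eqref{1systemmultilayer} since $\mathsf{A}_x,\mathsf{A}_y$ are affine in ${\bf u}$ and $\mathcal{H}^s$ is a Banach algebra (as $s>1$). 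Finally, for $s>2$, Sobolev embedding in $\mathbb{R}^2$ gives $\mathcal{H}^s\hookrightarrow\mathcal{C}^1$ and $\mathcal{H}^{s-1}\hookrightarrow\mathcal{C}^0$, so ${\bf u}\in\mathcal{C}^1([0,T]\times\mathbb{R}^2)^{3n}$, which yields \eqref{1strongsol}.

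The principal technical obstacle is Step 1: establishing an $\mathcal{H}^s$ energy estimate when the symmetrizer $\mathsf{S}$ depends on $({\bf w},\boldsymbol{\gamma},\theta)$, because one must control commutators of the type $[\Lambda^s,\mathsf{S}({\bf w},\boldsymbol{\gamma},\cdot)]$ in pseudo-differential or Littlewood--Paley form. This is handled by the Moser--Kato--Ponce commutator estimates, whose applicability requires precisely $s>d/2+1=2$ in two space dimensions, which is why the threshold in the statement is sharp for this method.
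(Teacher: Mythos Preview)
Your sketch is essentially correct: it is the classical Friedrichs--Kato--Majda scheme (linear energy estimate via the symbolic symmetrizer, Picard iteration with high-norm boundedness and low-norm contraction, Bona--Smith regularization for strong continuity), and this is precisely the content of the reference the paper invokes. Note, however, that the paper does \emph{not} give its own proof of this proposition: immediately after the statement it simply remarks that ``the proof of the last proposition is in \cite{benzoni2007multi}, for instance.'' So there is nothing to compare against beyond checking that your outline matches the standard argument, which it does. Two minor comments: your observation that $\mathsf{A}_x,\mathsf{A}_y$ are affine in ${\bf u}$ is correct for this particular model and does simplify the difference estimate, but the general theorem in \cite{benzoni2007multi} only needs smooth dependence; and the threshold $s>d/2+1$ is indeed the natural one for the Moser-type commutator estimates, as you point out.
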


\noindent
{\em Remark:} The proof of the last proposition is in \cite{benzoni2007multi}, for instance.

\noindent
In this paper, the model (\ref{1massconservation}--\ref{1momentumconservation}) is expressed with the variables $(h_i,\boldsymbol{u_i})$ with $i \in [\![1,n]\!]$. However, we could have worked with the unknowns $h_i$ and $\boldsymbol{q_i}:=h_i \boldsymbol{u_i}$, as it is well-known this quantities are conservative in the one-dimensional case. However, in the particular case of the multi-layer shallow water model with free surface, it is not true: The multi-layer model, in one space-dimension, is conservative with $(h_i,\boldsymbol{u_i})$ variables and not conservative with $(h_i,\boldsymbol{q_i})$ variables.

\noindent
As it was noticed in \cite{Serre1996systemes}, if the model is conservative and has a total energy, there exists a natural symmetrizer: the hessian of this total energy. In one dimension, the total energy of the model \eqref{1systemmultilayer} is defined, modulo a constant, by
\begin{equation}
e_1({\bf u},\boldsymbol{\gamma}):=\frac{1}{2}\sum_{i=1}^n \alpha_{n,i} h_i \left(u_i^2+h_i\right)+\sum_{i=1}^{n-1}\sum_{j=i+1}^n \alpha_{n,i} h_i h_j.
\label{1energy1}
\end{equation}
As the model (\ref{1massconservation}--\ref{1momentumconservation}), in one space-dimension and variables $(h_i,\boldsymbol{u_i})$, is conservative, it is straightforward the hessian of $e_1$ is a symmetrizer of the one-dimensional model. However, it is not anymore a symmetrizer with the non-conservative variables $(h_i,\boldsymbol{q_i})$. This is another reason the analysis, in this paper, is performed with variables $(h_i,\boldsymbol{u_i})$. Moreover, as the two-dimensional model is not conservative, the symmetrizer $\mathsf{S}$, defined in definition \ref{1defsymm}, is not the hessian of the total energy of the two-dimensional model
\begin{equation}
e_2({\bf u},\boldsymbol{\gamma}):=\frac{1}{2}\sum_{i=1}^n \alpha_{n,i} h_i \left(u_i^2+v_i^2+h_i\right)+\sum_{i=1}^{n-1}\sum_{j=i+1}^n \alpha_{n,i} h_i h_j.
\label{1energy2}
\end{equation}
This is why the symmetrizer is called symbolic: it will depend on $\theta$. If it does not depend on (such as the irrotational model in two dimensions), the symmetrizer is called {\em Friedrichs}-symmetrizer.

\noindent
{\em Remarks:} 1) In all this paper, the parameter $s \in \mathbb{R}$ is assumed such that
\begin{equation}
s > 1+ \frac{d}{2},
\label{1params}
\end{equation}
where $d:=2$ is the space-dimension. 2) The criterion (\ref{1spectrumAutheta}) is a necessary and sufficient condition of hyperbolicity, whereas the symmetrizability is only a sufficient condition of local well-posedness in $\mathcal{H}^s(\mathbb{R}^2)^{3n}$.

\subsection{Connections between hyperbolicity and symmetrizability}
In this subsection, we do not formulate all the connections between these two types of local well-posedness but only the useful ones for this paper.

\begin{proposition}
If the system {\em (\ref{1systemmultilayer})} is symmetrizable, then it is hyperbolic.
\label{1symimpliqhyp}
\end{proposition}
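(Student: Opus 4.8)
If the system \eqref{1systemmultilayer} is symmetrizable, then it is hyperbolic.

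The plan is to reduce the claim to Proposition \ref{1propcrithypuseful}, i.e.\ to showing that for every $(X,\theta) \in \mathbb{R}^2 \times [0,2\pi]$ the spectrum $\sigma\big(\mathsf{A}({\bf u}(X),\boldsymbol{\gamma},\theta)\big)$ is real. So fix $X$ and $\theta$, write $A := \mathsf{A}({\bf u}(X),\boldsymbol{\gamma},\theta)$ and $S := \mathsf{S}({\bf u}(X),\boldsymbol{\gamma},\theta)$, which by Definition \ref{1defsymm} is a real symmetric positive-definite $3n\times 3n$ matrix with $SA$ symmetric. The standard argument is that $S$ induces an inner product $\langle v,w\rangle_S := {}^{\top}\! v\, S\, w$ on $\mathbb{R}^{3n}$ (this uses points (1) and (2): symmetry gives bilinearity-compatibility, positive-definiteness gives that it is a genuine inner product), and $A$ is self-adjoint with respect to it: indeed $\langle Av,w\rangle_S = {}^{\top}\!(Av) S w = {}^{\top}\! v\, {}^{\top}\! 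A\, S\, w = {}^{\top}\! v\, (SA)\, w$ using ${}^{\top}\!(SA) = SA$, and likewise $\langle v, Aw\rangle_S = {}^{\top}\! v\, S A w$; these agree. A self-adjoint operator on a finite-dimensional real inner product space is diagonalizable with real eigenvalues, hence $\sigma(A)\subset\mathbb{R}$.

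Concretely, to stay purely at the matrix level I would argue via a Cholesky-type factorization: since $S$ is symmetric positive-definite, write $S = {}^{\top}\! R\, R$ with $R$ invertible (e.g.\ $R = S^{1/2}$). Then $R A R^{-1} = R A R^{-1}$, and I claim this matrix is symmetric: ${}^{\top}\!(R A R^{-1}) = {}^{\top}\!(R^{-1})\, {}^{\top}\! A\, {}^{\top}\! R = {}^{\top}\!(R^{-1})\, {}^{\top}\! A\, {}^{\top}\! R$. Using $SA = {}^{\top}\!(SA)$, i.e.\ ${}^{\top}\! R\, R\, A = {}^{\top}\! A\, {}^{\top}\! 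R\, R$, multiply on the left by ${}^{\top}\!(R^{-1}) = {}^{\top}\!(R)^{-1}$ and on the right by $R^{-1}$ to get $R A R^{-1} = {}^{\top}\!(R^{-1})\, {}^{\top}\! A\, {}^{\top}\! R$, which is exactly ${}^{\top}\!(R A R^{-1})$. Hence $R A R^{-1}$ is a real symmetric matrix, so it has real spectrum; since it is similar to $A$, we conclude $\sigma(A)\subset\mathbb{R}$. As $X$ and $\theta$ were arbitrary, \eqref{1spectrumAutheta} holds, and Proposition \ref{1propcrithypuseful} gives hyperbolicity of \eqref{1systemmultilayer}.

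There is essentially no hard obstacle here; the only points requiring a modicum of care are that the symmetrizer is evaluated pointwise in $X$ (so that the algebraic lemma is applied for each fixed state ${\bf u}(X)$, which is legitimate since the hyperbolicity criterion \eqref{1spectrumAutheta} is itself pointwise), and the bookkeeping of transposes together with the identity $\mathsf{P}(\theta)^{-1} = {}^{\top}\mathsf{P}(\theta)$ is not even needed for this direction. One might also remark that symmetrizability in fact yields more than \eqref{1spectrumAutheta}: it gives uniform diagonalizability and hence the bound \eqref{1expAutheta} directly, via $\|\exp(-i\tau A)\|_{\mathcal{L}^2} \le \mathrm{cond}(R)$ uniformly in $\tau$; but invoking Proposition \ref{1propcrithypuseful} is the cleanest route and suffices.
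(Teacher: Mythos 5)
Your proof is correct and complete. The paper itself does not carry out an argument for this proposition — it only records a one-line remark pointing to a change of variables $\tilde{\bf u}=\mathsf{S}\,{\bf u}$ and to \cite{benzoni2007multi}, \cite{Serre1996systemes} — so what you have written is the standard argument that the remark alludes to, filled in properly. Two small observations: the change of variables that actually symmetrizes the principal part is $\tilde{\bf u}=R\,{\bf u}$ with $R=\mathsf{S}^{1/2}$ (your Cholesky factor), not $\mathsf{S}$ itself, so your version is in fact cleaner than the paper's remark; and you are right that symmetrizability gives the stronger conclusion that $\mathsf{A}$ is \emph{diagonalizably} similar to a real symmetric matrix, which yields the bound \eqref{1expAutheta} directly and makes the detour through Proposition \ref{1propcrithypuseful} (whose stated ``if and only if'' implicitly assumes diagonalizability) unnecessary.
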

{\em Remark:} This property is obvious in the linear case, with the change of variables ${\bf \tilde{u}}:=\mathsf{S}({\bf u^0},\boldsymbol{\gamma},\theta){\bf u}$. See \cite{benzoni2007multi} and \cite{Serre1996systemes} for more details.

\begin{proposition}
Let $({\bf u^0},\boldsymbol{\gamma}) \in \mathcal{H}^s(\mathbb{R}^2)^{3n} \times \mathbb{R}_+^{*\ n-1}$ such that the model is hyperbolic and for all $\theta \in [0,2\pi]$, the matrix $\mathsf{A}({\bf u^0},\boldsymbol{\gamma},\theta)$ is diagonalizable. Then, the system {\em (\ref{1systemmultilayer})} is symmetrizable and the unique solution verifies the conditions {\em (\ref{1strongsol})}.
\label{1diagimpliqsym}
\end{proposition}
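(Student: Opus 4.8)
The plan is to construct an explicit symbolic-symmetrizer $\mathsf{S}$ from the spectral decomposition of $\mathsf{A}({\bf u^0},\boldsymbol{\gamma},\theta)$, exploiting the hypotheses that for each $\theta$ the matrix is real-spectrum (hyperbolicity, via Proposition \ref{1propcrithypuseful}) \emph{and} diagonalizable. The standard fact is that a real matrix $\mathsf{A}$ with real spectrum is diagonalizable if and only if it admits a symmetric positive-definite matrix $\mathsf{S}$ with $\mathsf{S}\mathsf{A}$ symmetric: writing $\mathsf{A}=\mathsf{R}\,\mathsf{D}\,\mathsf{R}^{-1}$ with $\mathsf{D}$ real diagonal and $\mathsf{R}$ the (invertible, real) matrix of right eigenvectors, one sets $\mathsf{S}:={}^{\top}\!\mathsf{R}^{-1}\mathsf{R}^{-1}$. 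Then $\mathsf{S}$ is symmetric positive-definite by construction, and $\mathsf{S}\mathsf{A}={}^{\top}\!\mathsf{R}^{-1}\mathsf{R}^{-1}\mathsf{R}\mathsf{D}\mathsf{R}^{-1}={}^{\top}\!\mathsf{R}^{-1}\mathsf{D}\mathsf{R}^{-1}$, which is symmetric since $\mathsf{D}$ is. This gives, for each fixed $({\bf u^0},\theta)$, a pointwise symmetrizer; the three algebraic conditions of Definition \ref{1defsymm} are then immediate.

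The first real step is therefore to establish that the construction can be carried out \emph{smoothly} in $({\bf u},\theta)$ in a neighbourhood of $({\bf u^0},\theta)$, i.e. to upgrade the pointwise algebraic symmetrizer to a $\mathcal{C}^{\infty}$ mapping $\mathsf{S}:\mathcal{H}^s(\mathbb{R}^2)^{3n}\times[0,2\pi]\to\mathcal{M}_{3n}(\mathbb{R})$ as required. Here I would use the rotational invariance \eqref{1rotinv}: it suffices to build a smooth symmetrizer $\mathsf{S}_x({\bf u})$ for $\mathsf{A}_x({\bf u},\boldsymbol{\gamma})$ alone and then set $\mathsf{S}({\bf u},\boldsymbol{\gamma},\theta):={}^{\top}\mathsf{P}(\theta)\,\mathsf{S}_x(\mathsf{P}(\theta){\bf u})\,\mathsf{P}(\theta)$, which inherits symmetry, positive-definiteness and the symmetry of $\mathsf{S}\mathsf{A}$ from $\mathsf{S}_x$ because $\mathsf{P}(\theta)^{-1}={}^{\top}\mathsf{P}(\theta)$ and $\mathsf{P}$ depends smoothly (indeed analytically) on $\theta$. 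So the whole issue reduces to the smooth dependence of $\mathsf{S}_x$ on ${\bf u}$ near ${\bf u^0}$.

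For that smoothness I would invoke the classical result (see \cite{benzoni2007multi}) that a matrix depending smoothly on parameters, which is diagonalizable with real eigenvalues at a base point, admits a symmetrizer depending smoothly on the parameters in a neighbourhood: one can, for instance, take $\mathsf{S}_x({\bf u}):=\frac{1}{2\pi}\int_0^{2\pi}{}^{\top}\!\big(e^{-i\tau\mathsf{A}_x({\bf u},\boldsymbol{\gamma})}\big)\,\overline{e^{-i\tau\mathsf{A}_x({\bf u},\boldsymbol{\gamma})}}\;d\tau$ suitably normalised, or, more robustly, use Riesz spectral projectors on the (locally constant in number, continuously varying) eigenvalue clusters of $\mathsf{A}_x$ and sum the associated positive-definite blocks; both are $\mathcal{C}^{\infty}$ in ${\bf u}$ as long as the spectrum stays real and the matrix stays diagonalizable, which holds on a neighbourhood of ${\bf u^0}$ by openness arguments together with the hyperbolicity hypothesis. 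I expect the main obstacle to be precisely this local-in-${\bf u}$ smoothness: eigenvalues may cross as ${\bf u}$ varies, so one cannot diagonalize smoothly eigenvector-by-eigenvector, and the argument must be phrased in terms of spectral projectors onto groups of eigenvalues (which are smooth even across crossings as long as diagonalizability persists) rather than individual eigenvectors. Once the smooth symbolic-symmetrizer $\mathsf{S}$ is in hand, the model \eqref{1systemmultilayer} is symmetrizable in the sense of Definition \ref{1defsymm}, and Proposition \ref{1propsymm} then yields local well-posedness in $\mathcal{H}^s(\mathbb{R}^2)^{3n}$ together with the regularity \eqref{1strongsol}, completing the proof.
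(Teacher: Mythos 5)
Your proposal is essentially correct and arrives at the same core construction as the paper: a symbolic-symmetrizer built from the spectral decomposition of $\mathsf{A}(\mathbf{u^0},\boldsymbol{\gamma},\theta)$. The paper goes directly for the projector-based form $\mathsf{S_1}(\mathbf{u^0},\boldsymbol{\gamma},\theta):=\sum_{\mu\in\sigma(\mathsf{A})}{}^{\top}\mathsf{P}^{\mu}\mathsf{P}^{\mu}$ and verifies symmetry, positive-definiteness and symmetry of $\mathsf{S_1}\mathsf{A}$ from diagonalizability and reality of the spectrum, then invokes Proposition \ref{1propsymm}; it neither passes through rotational invariance nor dwells on the $\mathcal{C}^{\infty}$ requirement. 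You start instead from the eigenvector-matrix version ${}^{\top}\mathsf{R}^{-1}\mathsf{R}^{-1}$, which is a valid pointwise symmetrizer and coincides with the projector form up to a choice of normalization of the eigenvectors, and you then correctly observe that to obtain smooth dependence on $\mathbf{u}$ across eigenvalue crossings one must rephrase the construction in terms of spectral projectors onto groups of eigenvalues rather than a smoothly chosen eigenbasis — which is precisely the object $\mathsf{P}^{\mu}$ in the paper's definition of $\mathsf{S_1}$. So the route is the same; your write-up is simply more explicit about the smoothness subtlety that the paper's terse proof passes over, and it adds an optional reduction via the rotational invariance \eqref{1rotinv} that the paper uses elsewhere (in the proof of Theorem \ref{1condwellposedMLSW}) but not here.
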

\begin{proof}
Let $\mu \in \sigma(\mathsf{A}({\bf u^0},\boldsymbol{\gamma},\theta))$, we denote $\mathsf{P}^{\mu}({\bf u^0},\boldsymbol{\gamma},\theta)$ the projection onto the $\mu$-eigenspace of $\mathsf{A}({\bf u^0},\boldsymbol{\gamma},\theta)$. One can construct a symbolic-symmetrizer:
\begin{equation}
\mathsf{S_1}({\bf u^0},\boldsymbol{\gamma},\theta):= \sum_{\mu \in \sigma(\mathsf{A}({\bf u^0},\boldsymbol{\gamma},\theta))} {}^{\top} \mathsf{P}^{\mu}({\bf u^0},\boldsymbol{\gamma},\theta) \mathsf{P}^{\mu}({\bf u^0},\boldsymbol{\gamma},\theta).
\label{1symprojdef}
\end{equation}

\noindent
Then, $\mathsf{S_1}({\bf u^0},\boldsymbol{\gamma},\theta)$ verifies conditions of the proposition \ref{1propsymm} because $\mathsf{A}({\bf u^0},\boldsymbol{\gamma},\theta)$ is diagonalizable --- which implies $\mathsf{S_1}({\bf u^0},\boldsymbol{\gamma},\theta)$ induces a scalar product on $\mathbb{R}^{3n}$ --- and the spectrum of $\mathsf{A}({\bf u^0},\boldsymbol{\gamma},\theta)$ is a subset of $\mathbb{R}$. Then, proposition \ref{1propsymm} implies the well-posedness of the system (\ref{1systemmultilayer}), in $\mathcal{H}^s(\mathbb{R}^2)^{3n}$, and there exists $T>0$ such that conditions (\ref{1strongsol}) are verified.\end{proof}

\noindent
To conclude, the analysis of the eigenstructure of $\mathsf{A}({\bf u},\boldsymbol{\gamma},\theta)$ is a crucial point, in order to provide its diagonalizability. Moreover, it provides also the characterization of the {\em Riemann} invariants (see \cite{smoller1983shock} and \cite{toro2009riemann}), which is an important benefit for numerical resolution.

\noindent
{\em Remark:} The proposition \eqref{1diagimpliqsym} was proved in \cite{taylor1996partial}, in the particular case of a strictly hyperbolic model ({\em i.e.} all the eigenvalues are real and distinct).

\subsection{A $\boldsymbol{1^{\mathrm{st}}}$ criterion of local well-posedness}
According to the proposition \ref{1symimpliqhyp}, the symmetrizability implies the hyperbolicity. Then, we give a rough criterion of symmetrizability to insure the well-posedness in $\mathcal{H}^s(\mathbb{R}^2)^{3n}$ and $\mathcal{L}^2(\mathbb{R}^2)^{3n}$.

\begin{theorem}
Let $s>2$ and $({\bf u^0},\boldsymbol{\gamma}) \in  \mathcal{H}^s(\mathbb{R}^2)^{3n} \times ]0,1[^{n-1}$. There exists a sequence $(\delta_i({\bf h},\boldsymbol{\gamma}))_{i \in [\![1,n]\!]} \subset \mathbb{R}_+^*$ such that
\begin{equation}
\forall\ i \in [\![1,n]\!],\ 
\left\{
\begin{array}{l}
\inf_{X \in \mathbb{R}^2} h_i^0(X) > 0,\\[4pt]
\inf_{X \in \mathbb{R}^2} \delta_i({\bf h^0}(X),\boldsymbol{\gamma})-|u_i^0(X) - \bar{u}^0(X)|^2 - |v_i^0(X)-\bar{v}^0(X)|^2>0,
\end{array}
\right.
\label{1condwellposed}
\end{equation}
then, the {\em Cauchy} problem, associated with the system {\em \eqref{1systemmultilayer}} and the initial data ${\bf u^0}$, is hyperbolic, locally well-posed in $\mathcal{H}^s(\mathbb{R}^2)^{3n}$ and the unique solution verifies conditions {\em (\ref{1strongsol})}.
\label{1condwellposedMLSW}
\end{theorem}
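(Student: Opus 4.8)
The plan is to construct an explicit symbolic-symmetrizer $\mathsf{S}(\mathbf{u},\boldsymbol{\gamma},\theta)$ for the system \eqref{1systemmultilayer} and then invoke Proposition \ref{1propsymm} (symmetrizability $\Rightarrow$ local well-posedness in $\mathcal{H}^s$) together with Proposition \ref{1symimpliqhyp} (symmetrizability $\Rightarrow$ hyperbolicity). By the rotational-invariance identity \eqref{1rotinv}, it suffices to symmetrize $\mathsf{A}_x(\mathbf{u},\boldsymbol{\gamma})$: if $\mathsf{S}_x$ is a Friedrichs-symmetrizer of $\mathsf{A}_x$, then $\mathsf{S}(\mathbf{u},\boldsymbol{\gamma},\theta):={}^{\top}\mathsf{P}(\theta)\,\mathsf{S}_x(\mathsf{P}(\theta)\mathbf{u},\boldsymbol{\gamma})\,\mathsf{P}(\theta)$ is symmetric, positive-definite and symmetrizes $\mathsf{A}(\mathbf{u},\boldsymbol{\gamma},\theta)$, using $\mathsf{P}(\theta)^{-1}={}^{\top}\mathsf{P}(\theta)$. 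So the whole problem reduces to a purely algebraic question about the $3n\times 3n$ block matrix $\mathsf{A}_x$ with blocks $\mathsf{V}_x,\mathsf{H},\Gamma$.

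The first key step is to guess the right form of $\mathsf{S}_x$. Since the $v$-block of $\mathsf{A}_x$ is just $\mathsf{V}_x$ decoupled, I would look for a block-diagonal symmetrizer of the shape $\mathrm{diag}(\mathsf{S}^{hh},\mathsf{S}^{uu},\mathsf{S}^{vv})$ acting on $(h_1,\dots,h_n,u_1,\dots,u_n,v_1,\dots,v_n)$, with the $v$-block chosen so that $\mathsf{S}^{vv}\mathsf{V}_x$ is symmetric (e.g.\ $\mathsf{S}^{vv}$ diagonal). For the coupled $(h,u)$ part the natural candidate is suggested by the one-dimensional total energy \eqref{1energy1}: the Hessian of $e_1$ in the variables $(h_i,u_i)$ is a Friedrichs-symmetrizer of the $1$D model, so I would take $\mathsf{S}^{uu}=\mathrm{diag}[\alpha_{n,i}h_i]$, $\mathsf{S}^{hh}=\mathrm{Hess}_{\mathbf h}\big(\tfrac12\sum\alpha_{n,i}h_i^2+\sum_{i<j}\alpha_{n,i}h_ih_j\big)$, i.e.\ the symmetric matrix with entries $\alpha_{n,\min(i,j)}$ up to the diagonal correction, together with the mixed-block structure forced by $\mathsf{S}_x\mathsf{A}_x={}^{\top}(\mathsf{S}_x\mathsf{A}_x)$. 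The symmetry condition $\mathsf{S}^{uu}\mathsf{H}={}^{\top}(\mathsf{S}^{hh})$ type identities must be checked; these amount to the symmetry of $\mathrm{diag}[\alpha_{n,i}h_i]\,\Gamma$ composed appropriately, which holds because $\alpha_{n,i}\alpha_{i,k}=\alpha_{n,k}$ for $k<i$ (a consequence of \eqref{1profgamma}), a standard but crucial algebraic cancellation.

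The second, and main, difficulty is positive-definiteness of $\mathsf{S}$. The blocks $\mathsf{S}^{hh}$, $\mathsf{S}^{uu}$ are positive-definite whenever all $h_i>0$ and $\gamma_i\in(0,1)$ — for $\mathsf{S}^{hh}$ this is where the hypothesis $\boldsymbol{\gamma}\in\,]0,1[^{\,n-1}$ (weak/stable stratification, $\rho_1<\dots<\rho_n$) is used, and positivity should follow from a Cholesky-type factorization exploiting the nested structure $\alpha_{n,\min(i,j)}$. The real obstruction is that the off-diagonal coupling between the $h$-, $u$- and $v$-blocks degrades positivity unless the shear velocities $|u_i-\bar u|^2+|v_i-\bar v|^2$ are small: by a Schur-complement computation, $\mathsf{S}$ is positive-definite iff a certain reduced matrix, whose spectral gap is controlled by $\inf_X h_i^0$ and the $\gamma_i$'s and is eroded quadratically by the shear, is positive-definite. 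This is exactly what defines the threshold sequence $\delta_i(\mathbf h,\boldsymbol{\gamma})$: one sets $\delta_i(\mathbf h,\boldsymbol{\gamma})$ to be (a lower bound for) the largest value such that $|u_i-\bar u|^2+|v_i-\bar v|^2<\delta_i$ for all $i$ forces the Schur complement to stay positive-definite, and checks $\delta_i>0$.

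Finally, I would assemble the pieces: under \eqref{1condwellposed}, for every $X\in\mathbb{R}^2$ the matrix $\mathsf{S}_x(\mathbf u^0(X),\boldsymbol{\gamma})$ is symmetric positive-definite and symmetrizes $\mathsf{A}_x$; transporting by $\mathsf{P}(\theta)$ gives a $\mathcal{C}^\infty$ symbolic-symmetrizer $\mathsf{S}(\mathbf u^0,\boldsymbol{\gamma},\theta)$ on $\mathcal{H}^s(\mathbb{R}^2)^{3n}\times[0,2\pi]$ (smoothness is immediate since all entries are smooth functions of $\mathbf u,\boldsymbol{\gamma},\theta$, and uniform positivity on $X\in\mathbb{R}^2$ comes from the strict infima in \eqref{1condwellposed} plus $\mathbf u^0\in\mathcal{H}^s\hookrightarrow\mathcal{C}_b$ since $s>2$). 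Thus the model is symmetrizable; Proposition \ref{1symimpliqhyp} gives hyperbolicity, and Proposition \ref{1propsymm} gives local well-posedness in $\mathcal{H}^s(\mathbb{R}^2)^{3n}$ with a solution satisfying \eqref{1strongsol}. The bulk of the work — and where I expect the argument to be delicate — is the explicit Schur-complement estimate producing the $\delta_i(\mathbf h,\boldsymbol{\gamma})$ and verifying it is strictly positive.
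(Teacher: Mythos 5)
Your high-level route --- transport a one-dimensional symmetrizer through the rotational-invariance identity \eqref{1rotinv}, then perturb for small shear --- is the same as the paper's proof of Theorem~\ref{1condwellposedMLSW}. Two places in your sketch deserve more care, because they are where the specific form of \eqref{1condwellposed} is actually forced.

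The missing idea is the free reference velocity. The paper's symmetrizer \eqref{1Sx} carries the $(h,u)$ cross-block $\mathsf{\Delta}(\mathsf{V}_x - u_0\mathsf{I_n})$ for a free parameter $u_0$, which is then set to $u_0 := \bar u$. With that choice the cross-block vanishes exactly on the slice $u_i \equiv \bar u$, where $\mathsf{S}_x$ collapses to the block-diagonal $\mathsf{S}_x^0$ of \eqref{1Sx0}, whose positive-definiteness under $\boldsymbol\gamma \in\, ]0,1[^{n-1}$, $h_i>0$ is established in Lemma~\ref{1lemSx0posdef} via Sylvester's criterion. That is what singles out $|u_i - \bar u|$ and $|v_i-\bar v|$ as the natural smallness parameters. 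Your sketch guesses that the answer involves $\bar u$ but does not explain why; note also that a genuinely block-diagonal candidate $\mathrm{diag}(\mathsf{\Delta}\mathsf{\Gamma},\mathsf{\Delta}\mathsf{H},\mathsf{\Delta}\mathsf{H})$ does not symmetrize $\mathsf{A}_x$: the $(h,h)$ block of the product, $\mathsf{\Delta}\mathsf{\Gamma}\mathsf{V}_x$, has entries $\alpha_{n,\min(i,j)}u_j$, which is asymmetric unless all $u_i$ coincide. The $(h,u)$ off-diagonal block and the $u_0$-shift are therefore essential, not refinements.

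For the existence of the thresholds $\delta_i$, the paper does not run a Schur-complement computation in the proof of Theorem~\ref{1condwellposedMLSW}: it only invokes continuity of the eigenvalues of $\mathsf{S}_x(\mathbf u,\boldsymbol\gamma,\bar u)$ in $\mathbf u$ to propagate positive-definiteness from $\mathsf{S}_x^0$ to a neighborhood of the reference state. Explicit lower bounds on $\delta_i$ are deferred to Propositions~\ref{1propestimdeltai}--\ref{1propestimdeltai2}, obtained via concavity of $\lambda^{\min}$ on symmetric matrices and a Gerschgorin bound on $\mathsf{S}_x^0{}^{-1}$. Your Schur-complement route would also prove the theorem and could give sharper explicit thresholds, but it is a genuinely different estimate. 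Finally, do not omit the maximization over $\theta$ via \eqref{maxPthetau}: it is this step that converts the one-dimensional shear $\cos\theta\,(u_i - \bar u) - \sin\theta\,(v_i - \bar v)$ into the isotropic quantity $|u_i-\bar u|^2 + |v_i-\bar v|^2$ that appears in \eqref{1condwellposed}.
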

\begin{proof}
First, we prove the next lemma.
\begin{lemma}
Let $\boldsymbol{\gamma} \in \mathbb{R}_+^{*\ n-1}$, $\mathcal{S}$ an open subset of $\mathcal{H}^s(\mathbb{R}^2)^{3n}$ and $\mathsf{S}_x({\bf u},\boldsymbol{\gamma})$ be a symmetric matrix such that $\mathsf{S}_x({\bf u},\boldsymbol{\gamma}) \mathsf{A}_x({\bf u},\boldsymbol{\gamma})$ is symmetric. If there exists ${\bf u^0} \in \mathcal{S}$ such that
\begin{equation}
\forall \theta \in [0,2\pi],\ \mathsf{S}_x(P(\theta){\bf u^0},\boldsymbol{\gamma})>0,
\label{1defposSp}
\end{equation}
then the {\em Cauchy} problem, associated with system {\em (\ref{1systemmultilayer})} and initial data ${\bf u^0}$, is hyperbolic, locally well-posed in $\mathcal{H}^s(\mathbb{R}^2)^{3n}$ and the unique solution verifies conditions {\em (\ref{1strongsol})}.
\label{1wellposedrotinv}
\end{lemma}
\begin{proof}
We consider ${\bf u^0} \in \mathcal{S}$ such that
\begin{equation}
\forall (X,\theta) \in \mathbb{R}^2 \times [0,2\pi],\ \mathsf{S}_x(\mathsf{P}(\theta){\bf u^0}(X),\boldsymbol{\gamma}) >0.
\end{equation}
We define the mapping
\begin{equation}
\mathsf{S}: ({\bf u},\boldsymbol{\gamma},\theta) \mapsto \mathsf{P}(\theta)^{\mathsf{-1}} \mathsf{S}_x(\mathsf{P}(\theta) {\bf u},\boldsymbol{\gamma}) \mathsf{P}(\theta).
\label{1Srotin}
\end{equation}
Then, using the rotational invariance (\ref{1rotinv}), the mapping $\mathsf{S}$ verifies assumptions of the definition \ref{1defsymm}, with ${\bf u^0} \in \mathcal{H}^s(\mathbb{R}^2)^{3n}$, and $\mathsf{S}$ is a symbolic-symmetrizer of the system (\ref{1systemmultilayer}).
\end{proof}

\noindent
As it was noticed before, the one-dimensional multi-layer model, with variables $(h_i,u_i)$ is conservative: a natural symmetrizer of this model is the hessian of the total energy $e_1$. The next matrix defines a symbolic-symmetrizer of the two-dimensional model --- using the mapping \eqref{1Srotin} --- and it has been constructed from the {\em Friedrichs}-symmetrizer of the one-dimensional model:
\begin{equation}
\mathsf{S}_x(\mathbf{u},\boldsymbol{\gamma},u_0) =  \left[\begin{array}{c|c|c}
 \begin{array}{c} \mathsf{\Delta} \mathsf{\Gamma}  \end{array}
 & \begin{array}{c} \mathsf{\Delta}  (\mathsf{V}_x-u_0\mathsf{I_n}) \end{array}
 & \begin{array}{c} \mathsf{0} \end{array}\\
 \hline
 \begin{array}{c} \mathsf{\Delta} (\mathsf{V}_x-u_0 \mathsf{I_n}) \end{array}
 & \begin{array}{c} \mathsf{\Delta} \mathsf{H} \end{array}
 & \begin{array}{c} \mathsf{0} \end{array}\\
 \hline
 \begin{array}{c} \mathsf{0} \end{array}
 & \begin{array}{c} \mathsf{0} \end{array}
 & \begin{array}{c} \mathsf{\Delta} \mathsf{H} \end{array}\\
\end{array}\right],
\label{1Sx}
\end{equation}
where $\mathsf{\Delta}:=\mathrm{diag}(\alpha_{n,1},\ldots,\alpha_{n,n-1},1)$ and $u_0 \in \mathbb{R}$ is a parameter, which will be chosen in order to simplify the calculus.

\noindent
{\em Remark:} If $u_0=0$, the matrix
\begin{equation}
\left[\begin{array}{c|c}
 \begin{array}{c} \mathsf{\Delta} \mathsf{\Gamma}  \end{array}
 & \begin{array}{c} \mathsf{\Delta}  (\mathsf{V}_x-u_0\mathsf{I_n}) \end{array}\\
 \hline
 \begin{array}{c} \mathsf{\Delta} (\mathsf{V}_x-u_0 \mathsf{I_n}) \end{array}
 & \begin{array}{c} \mathsf{\Delta} \mathsf{H} \end{array}\\
\end{array}\right],
\label{1Sx1D}
\end{equation}
is exactly the hessian of the total energy $e_1$.

\noindent
Moreover, we introduce the $3n \times 3n$ symmetric matrix $\mathsf{S}_x^0({\bf h},\boldsymbol{\gamma})$ defined by
\begin{equation}
\mathsf{S}_x^0({\bf h},\boldsymbol{\gamma}) :=  \left[\begin{array}{c|c|c}
 \begin{array}{c} \mathsf{\Delta} \mathsf{\Gamma}  \end{array}
 & \begin{array}{c} \mathsf{0} \end{array}
 & \begin{array}{c} \mathsf{0} \end{array}\\
 \hline
 \begin{array}{c} \mathsf{0} \end{array}
 & \begin{array}{c} \mathsf{\Delta} \mathsf{H} \end{array}
 & \begin{array}{c} \mathsf{0} \end{array}\\
 \hline
 \begin{array}{c} \mathsf{0} \end{array}
 & \begin{array}{c} \mathsf{0} \end{array}
 & \begin{array}{c} \mathsf{\Delta} \mathsf{H} \end{array}\\
\end{array}\right],
\label{1Sx0}
\end{equation}
and prove the next lemma.

\begin{lemma}
Let $({\bf h},\boldsymbol{\gamma}) \in \mathbb{R}_+^n \times \mathbb{R}_+^{*\ n-1}$. $\mathsf{S}_x^0({\bf h},\boldsymbol{\gamma})$ is positive-definite if and only if
\begin{equation}
\left\{
\begin{array}{lr}
h_i > 0,& \forall\ i \in [\![1,n]\!],\\[4pt]
1>\gamma_i > 0,&\forall\ i \in [\![1,n-1]\!].
\end{array}
\right.
\label{1Sx0posdef}
\end{equation}
\label{1lemSx0posdef}
\end{lemma}
\begin{proof}
First of all, it is clear $\mathsf{S}_x^0({\bf h},\boldsymbol{\gamma})$ is positive-definite if and only if $\mathsf{\Delta} \mathsf{\Gamma}$ and $\mathsf{\Delta} \mathsf{H}$ are positive-definite. Then, as $\mathsf{\Delta} \mathsf{H} :=\mathrm{diag}(\alpha_{n,i} h_i)$, it is positive-definite if and only if
\begin{equation}
\forall i \in [\![1,n]\!],\ \alpha_{n,i}h_i>0.
\label{1conddefposDeltaH}
\end{equation}
Moreover, using the {\em Sylvester}'s criterion, $\mathsf{\Delta} \mathsf{\Gamma}:=[\alpha_{n,\min(i,j)} ]_{(i,j)\in[\![1,n]\!]^2}$ is positive-definite if and only if all the leading principal minors are strictly positive:
\begin{equation}
\forall k \in [\![1,n]\!],\ m_k:=\det [\alpha_{n,\min(i,j)} ]_{(i,j)\in[\![1,k]\!]^2} >0.
\end{equation}

\noindent
Let $k \in [\![1,n]\!]$, performing the following elementary operations on the columns of $\det(\mathsf{\Delta} \mathsf{\Gamma})$: 
\begin{equation}
\forall\ i \in [\![1,n-1]\!],\ \mathrm{C}_i(\mathsf{\Delta} \mathsf{\Gamma}) \leftarrow \mathrm{C}_i(\mathsf{\Delta} \mathsf{\Gamma}) - \mathrm{C}_{i+1}(\mathsf{\Delta} \mathsf{\Gamma}),
\label{1elementopDeltaGamma}
\end{equation}
and expanding this determinant along the $1^{\mathrm{st}}$ line, we deduce the expression of $m_k$:
\begin{equation}
\left\{
\begin{array}{lr}
m_k=\alpha_{n,1},& \mathrm{if}\ k=1,\\[4pt]
m_k= \alpha_{n,1} \prod_{i=1}^{k-1} (\alpha_{n,i+1}-\alpha_{n,i}), & \mathrm{if}\ k\in[\![2,n]\!].
\end{array}
\right.
\label{1minors}
\end{equation}
Consequently, it is obvious that 
\begin{equation}
\forall k \in [\![1,n]\!],\ m_k >0 \Longleftrightarrow
\left\{
\begin{array}{lr}
\alpha_{n,1}>0, & \\[4pt]
\alpha_{n,k+1}-\alpha_{n,k}>0,& \forall\ k\in[\![2,n]\!].
\end{array}
\right.
\label{1Sylvestercrit}
\end{equation}
As for all $k\in[\![1,n]\!]$, $\alpha_{n,k}:=\frac{\rho_k}{\rho_n}$ and $\rho_n$ is assumed strictly positive, $\mathsf{S}_x^0$ is positive-definite if and only if conditions (\ref{1Sx0posdef}) are verified.
\end{proof}

\noindent
Finally, using lemmata \ref{1wellposedrotinv}--\ref{1lemSx0posdef}, we can prove the theorem \ref{1condwellposedMLSW}. One can check that $\mathsf{S}_x(\mathbf{u},\boldsymbol{\gamma},u_0) $ and $\mathsf{S}_x(\mathbf{u},\boldsymbol{\gamma},u_0) \mathsf{A}_x({\bf u},\boldsymbol{\gamma})$ are unconditionally symmetric:
\begin{equation}
\mathsf{S}_x(\mathbf{u},\boldsymbol{\gamma},u_0)\mathsf{A}_x(\mathbf{u},\boldsymbol{\gamma}) :=  \left[\begin{array}{c|c|c}
 \begin{array}{c} \mathsf{\Delta} \mathsf{\Gamma}(2\mathsf{V}_x-u_0\mathsf{I_n})  \end{array}
 & \begin{array}{c} \mathsf{S}_x^1(\mathbf{u},\boldsymbol{\gamma},u_0) \end{array}
 & \begin{array}{c} \mathsf{0} \end{array}\\
 \hline
 \begin{array}{c} \mathsf{S}_x^1(\mathbf{u},\boldsymbol{\gamma},u_0) \end{array}
 & \begin{array}{c} \mathsf{\Delta} \mathsf{H}(2\mathsf{V}_x-u_0\mathsf{I_n}) \end{array}
 & \begin{array}{c} \mathsf{0} \end{array}\\
 \hline
 \begin{array}{c} \mathsf{0} \end{array}
 & \begin{array}{c} \mathsf{0} \end{array}
 & \begin{array}{c} \mathsf{\Delta} \mathsf{H}\mathsf{V}_x \end{array}\\
\end{array}\right],
\label{1SxAx}
\end{equation}
where $\mathsf{S}_x^1(\mathbf{u},\boldsymbol{\gamma},u_0):= \mathsf{\Delta} (\mathsf{\Gamma} \mathsf{H} +(\mathsf{V}_x-u_0\mathsf{I_n})\mathsf{V}_x)$. As we need to chose a reference velocity $u_0$, we decide to set $u_0:=\bar{u}$, the average velocity. Moreover, if $({\bf u},\boldsymbol{\gamma}) \in \mathcal{H}^s(\mathbb{R}^2) \times \mathbb{R}_+^{*\ n-1}$ are such that
\begin{equation}
\left\{
\begin{array}{lr}
u_i(X)=\bar{u}(X),& \forall X \in \mathbb{R}^2,\ \forall i \in [\![1,n]\!],\\[4pt]
\inf_{X \in \mathbb{R}^2} h_i(X) > 0,& \forall i \in [\![1,n]\!],\\[4pt]
1>\gamma_i > 0,& \forall i \in [\![1,n-1]\!],
\end{array}
\right.
\label{1particularu}
\end{equation}
then, $\mathsf{S}_x(\mathbf{u},\boldsymbol{\gamma},\bar{u})=\mathsf{S}_x^0({\bf h},\boldsymbol{\gamma})$ and, according to the lemma \ref{1lemSx0posdef},
\begin{equation}
\forall X \in \mathbb{R}^2,\ \mathsf{S}_x(\mathbf{u}(X),\boldsymbol{\gamma},\bar{u}(X))>0.
\label{1Sxdefpos1}
\end{equation}

\noindent
Then, if $({\bf u},\boldsymbol{\gamma})$ verifies
\begin{equation}
\left\{
\begin{array}{lr}
\inf_{X \in \mathbb{R}^2} h_i(X) > 0,& \forall\ i \in [\![1,n]\!],\\[4pt]
1>\gamma_i > 0,& \forall\ i \in [\![1,n-1]\!],\\
\end{array}
\right.
\label{1Sx0posdef2}
\end{equation}
as all the eigenvalues of $\mathsf{S}_x(\mathbf{u},\boldsymbol{\gamma},\bar{u})$  depend continuously on the parameter ${\bf u}$, the matrix $\mathsf{S}_x(\mathbf{u},\boldsymbol{\gamma},\bar{u})$ remains positive-definite if $u_i - \bar{u}$ is sufficiently small, for all $i \in [\![1,n]\!]$: this insures the existence of the sequence $\left(\delta_i({\bf h},\boldsymbol{\gamma})\right)_{i \in [\![1,n]\!]} \subset \mathbb{R}_+^*$ such that
\begin{equation}
\forall i \in [\![1,n]\!],\ \inf_{X \in \mathbb{R}^2} \delta_i({\bf h}(X),\boldsymbol{\gamma}) - |u_i(X)-\bar{u}(X)|^2 >0 \Rightarrow \mathsf{S}_x(\mathbf{u}(X),\boldsymbol{\gamma},\bar{u}(X))>0.
\label{1newconditionsym}
\end{equation}
Moreover, these quantities depend only on the parameters of $\mathsf{S}_x^0({\bf h},\boldsymbol{\gamma})$: ${\bf h}$ and $\boldsymbol{\gamma}$. In order to use the lemma \ref{1wellposedrotinv}, we remark that if for all $\theta \in [0,2\pi]$,
\begin{equation}
\inf_{X \in \mathbb{R}^2} \delta_i({\bf h}(X),\boldsymbol{\gamma}) - \left[\cos(\theta)(u_i(X)-\bar{u}(X)) -\sin(\theta)(v_i(X)-\bar{v}(X))\right]^2 >0
\label{1condsymPtheta}
\end{equation}
then,
\begin{equation}
\forall (X,\theta) \in \mathbb{R}^2 \times [0,2\pi],\ \mathsf{S}_x(\mathsf{P}(\theta)\mathbf{u}(X),\boldsymbol{\gamma},\mathsf{P}(\theta)\bar{u}(X))>0.
\end{equation}

\noindent
As this last condition must be verified for all $\theta \in [0,2\pi]$ and
\begin{equation}
\forall (\alpha,\beta) \in \mathbb{R}^2,\ \max_{\theta \in [0,2\pi]} \left[\cos(\theta)\alpha+\sin(\theta)\beta\right]^2 = \alpha^2 + \beta^2,
\label{maxPthetau}
\end{equation}
then, if $({\bf u^0},\boldsymbol{\gamma})$ is such that
\begin{equation}
\inf_{X \in \mathbb{R}^2} \delta_i({\bf h}(X),\boldsymbol{\gamma}) - |u_i(X)-\bar{u}(X)|^2 - |v_i(X)-\bar{v}(X)|^2 >0,
\label{1condsymbsymm2}
\end{equation}
then, $\mathsf{S}_x(\mathsf{P}(\theta)\mathbf{u}(X),\boldsymbol{\gamma},\mathsf{P}(\theta)\bar{u}(X))$ is positive-definite for all $\theta \in [0,2\pi]$.

\noindent
Finally, using the lemma \ref{1wellposedrotinv}, if $({\bf u^0},\boldsymbol{\gamma}) \in \mathcal{H}^s(\mathbb{R}^2) \times ]0,1[^{n-1}$ verifies conditions (\ref{1condwellposed}), the mapping 
\begin{equation}
\mathsf{S}: ({\bf u},\boldsymbol{\gamma},\theta) \mapsto \mathsf{P}(\theta)^{\mathsf{-1}} \mathsf{S}_x(\mathsf{P}(\theta) {\bf u},\boldsymbol{\gamma}) \mathsf{P}(\theta)
\label{1mappingsymbolsym}
\end{equation}
is a symbolic-simmetrizer and, according to the proposition \ref{1propsymm}, the {\em Cauchy} problem, associated with (\ref{1systemmultilayer}) and the initial data ${\bf u^0}$, is hyperbolic, locally well-posed in $\mathcal{H}^s(\mathbb{R}^2)^{3n}$ and the unique solution verifies conditions \eqref{1strongsol}.
\end{proof}

\noindent
To conclude, considering $\boldsymbol{\gamma} \in ]0,1[^{n-1}$ and $s > 2$, we define $\mathcal{S}^s_{\boldsymbol{\gamma}} \subset \mathcal{H}^s(\mathbb{R}^2)^{3n}$, an open subset of initial conditions such that the model (\ref{1systemmultilayer}) is symmetrizable:
\begin{equation}
\mathcal{S}^s_{\gamma}:= \left\{ {\bf u^0} \in \mathcal{H}^s(\mathbb{R}^2)^{3n} / {\bf u^0}\ \mathrm{verifies}\ \mathrm{conditions}\ {\rm \eqref{1condwellposed}}  \right\}.
\label{1Sgamma}
\end{equation}

\noindent
{\em Remark:} The condition of symmetrizability expressed in \cite{duchene2013note}, with the multi-layer shallow water model with free surface in one dimension, is a little different from \eqref{1condwellposed}. Indeed, there is no need of a velocity reference but even if it seems to be a weaker criterion, it is not possible to assure it, as there is no explicit estimations of this criterion.

\subsection{Lower bounds of $\boldsymbol{\delta_i}$} In this subsection, we do not estimate exactly the sequence $(\delta_i({\bf h},\boldsymbol{\gamma}))_{i \in [\![1,n]\!]} \subset \mathbb{R}_+^*$, but a lower bound of each element $\delta_i({\bf h},\boldsymbol{\gamma})$. The proof is based on the next proposition, where $\lambda^{\min}$ and $\lambda^{\max}$ denote respectively the smallest and the largest eigenvalues.
\begin{proposition}
Let $\mathcal{S}_n(\mathbb{R})$ the space-vector of $n \times n$  symmetric matrices, with real coefficients. Then, $\lambda^{\min}: \mathcal{S}_n(\mathbb{R}) \rightarrow \mathbb{R}$ is a concave function and $\lambda^{\max}: \mathcal{S}_n(\mathbb{R}) \rightarrow \mathbb{R}$ is a convex one:
\begin{equation}
\forall (\mathsf{A},\mathsf{B}) \in \mathcal{S}_n(\mathbb{R})^2,\
\left\{
\begin{array}{l}
\lambda^{\min} (\mathsf{A}+\mathsf{B}) \ge \lambda^{\min}(\mathsf{A})+\lambda^{\min}(\mathsf{B}),\\[4pt]
\lambda^{\max} (\mathsf{A}+\mathsf{B}) \le \lambda^{\max}(\mathsf{A})+\lambda^{\max}(\mathsf{B}),
\end{array}
\right.
\label{1ConcConv}
\end{equation}
\label{1proplambdamin}
\end{proposition}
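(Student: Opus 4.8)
The statement to prove is Proposition~\ref{1proplambdamin}: the map $\lambda^{\min}$ is concave and $\lambda^{\max}$ is convex on the space $\mathcal{S}_n(\mathbb{R})$ of real symmetric matrices, and in particular the superadditivity/subadditivity inequalities \eqref{1ConcConv} hold. The natural starting point is the Rayleigh--Ritz variational characterization of the extreme eigenvalues: for any $\mathsf{M} \in \mathcal{S}_n(\mathbb{R})$,
\begin{equation}
\lambda^{\min}(\mathsf{M}) = \min_{\|x\|=1}\ {}^{\top}x\, \mathsf{M}\, x, \qquad \lambda^{\max}(\mathsf{M}) = \max_{\|x\|=1}\ {}^{\top}x\, \mathsf{M}\, x,
\label{1RayleighRitz}
\end{equation}
where the extrema are taken over the Euclidean unit sphere of $\mathbb{R}^n$. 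This is standard (it follows from the spectral theorem: diagonalize $\mathsf{M}$ in an orthonormal basis and optimize the resulting quadratic form on the simplex of squared coordinates), so I would state it and cite e.g. \cite{Serre1996systemes} or prove it in one line.

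The key observation is that for \emph{fixed} $x$, the map $\mathsf{M} \mapsto {}^{\top}x\,\mathsf{M}\,x$ is \emph{linear} in $\mathsf{M}$, hence both concave and convex. Now $\lambda^{\min}$ is, by \eqref{1RayleighRitz}, a pointwise minimum (over $x$ on the unit sphere) of this family of linear functions of $\mathsf{M}$; a pointwise infimum of affine (here linear) functions is concave. Symmetrically, $\lambda^{\max}$ is a pointwise supremum of linear functions of $\mathsf{M}$, hence convex. This gives concavity of $\lambda^{\min}$ and convexity of $\lambda^{\max}$ directly.

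To extract the explicit inequalities \eqref{1ConcConv}, I would argue as follows. Let $(\mathsf{A},\mathsf{B}) \in \mathcal{S}_n(\mathbb{R})^2$ and let $x$ be a unit eigenvector realizing $\lambda^{\min}(\mathsf{A}+\mathsf{B})$, so that $\lambda^{\min}(\mathsf{A}+\mathsf{B}) = {}^{\top}x(\mathsf{A}+\mathsf{B})x = {}^{\top}x\,\mathsf{A}\,x + {}^{\top}x\,\mathsf{B}\,x \ge \lambda^{\min}(\mathsf{A}) + \lambda^{\min}(\mathsf{B})$, the last step by applying the variational lower bound in \eqref{1RayleighRitz} to each term. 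The inequality for $\lambda^{\max}$ is obtained identically with $\min$ replaced by $\max$ and the inequality reversed, or simply by noting $\lambda^{\max}(\mathsf{M}) = -\lambda^{\min}(-\mathsf{M})$ and applying the first part. There is essentially no obstacle here: the only point requiring a modicum of care is justifying \eqref{1RayleighRitz} itself and making sure the eigenvector attaining the extremum exists, which is immediate by compactness of the unit sphere and continuity of the quadratic form. The superadditivity/subadditivity inequalities are in fact equivalent to concavity/convexity here because $\lambda^{\min}$ and $\lambda^{\max}$ are positively homogeneous of degree one, but I would not belabor that equivalence; the direct argument above suffices.
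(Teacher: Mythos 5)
Your proof is correct. The paper states Proposition~\ref{1proplambdamin} without proof, treating it as a classical fact; the argument you give via the Rayleigh--Ritz variational characterization \eqref{1RayleighRitz} (extreme eigenvalues as pointwise min/max of the linear functionals $\mathsf{M} \mapsto {}^{\top}x\,\mathsf{M}\,x$ over the unit sphere) is the standard and correct way to establish it.
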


\noindent
Using this last proposition, we can extract conditions which maintain $\mathsf{S}_x({\bf u},\boldsymbol{\gamma},\bar{u})$ positive-definite.
\begin{proposition}
Let $\boldsymbol{\gamma} \in ]0,1[^{n-1}$ and ${\bf h} \in \mathbb{R}_+^{*\ n}$. Then, $\lambda^{\min}(\mathsf{S}_x^0({\bf h},\boldsymbol{\gamma}))>0$ and a lower bound of $\delta_i({\bf h},\boldsymbol{\gamma})$, for every $i \in [\![1,n]\!]$, is
\begin{equation}
\delta_i({\bf h},\boldsymbol{\gamma}) \ge \left(\frac{\lambda^{\min}(\mathsf{S}_x^0({\bf h},\boldsymbol{\gamma}))}{\alpha_{n,i}}\right)^2
\label{1estimdeltai}
\end{equation}
\label{1propestimdeltai}
\end{proposition}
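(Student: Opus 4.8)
The plan is to combine the perturbation structure $\mathsf{S}_x(\mathbf{u},\boldsymbol{\gamma},\bar u) = \mathsf{S}_x^0(\mathbf{h},\boldsymbol{\gamma}) + \mathsf{R}(\mathbf{u},\boldsymbol{\gamma})$, where $\mathsf{R}$ collects all terms involving the shear $u_i-\bar u$, with the concavity of $\lambda^{\min}$ from Proposition \ref{1proplambdamin}. First I would establish $\lambda^{\min}(\mathsf{S}_x^0(\mathbf{h},\boldsymbol{\gamma}))>0$: by Lemma \ref{1lemSx0posdef} the matrix $\mathsf{S}_x^0$ is positive-definite under the hypotheses $\boldsymbol{\gamma}\in\,]0,1[^{n-1}$ and $\mathbf{h}\in\mathbb{R}_+^{*\,n}$, hence all its eigenvalues are strictly positive and in particular the smallest one is. Then, writing the perturbation explicitly from \eqref{1Sx}, the only block of $\mathsf{S}_x(\mathbf{u},\boldsymbol{\gamma},\bar u)-\mathsf{S}_x^0(\mathbf{h},\boldsymbol{\gamma})$ that is nonzero is the off-diagonal coupling $\mathsf{\Delta}(\mathsf{V}_x-\bar u\mathsf{I_n})$ between the $h$-block and the $u$-block (and its transpose). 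So $\mathsf{R}$ is a symmetric matrix whose nonzero entries are exactly $\alpha_{n,i}(u_i-\bar u)$ placed in the $(i,\,n+i)$ and $(n+i,\,i)$ positions.

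Next I would bound $\lambda^{\min}(\mathsf{R})$ from below. By concavity (Proposition \ref{1proplambdamin}),
\begin{equation}
\lambda^{\min}\big(\mathsf{S}_x(\mathbf{u},\boldsymbol{\gamma},\bar u)\big)\ \ge\ \lambda^{\min}\big(\mathsf{S}_x^0(\mathbf{h},\boldsymbol{\gamma})\big)+\lambda^{\min}(\mathsf{R}),
\end{equation}
so it suffices to ensure the right-hand side is positive, i.e. $-\lambda^{\min}(\mathsf{R}) < \lambda^{\min}(\mathsf{S}_x^0(\mathbf{h},\boldsymbol{\gamma}))$. Since $\mathsf{R}$ decomposes as a direct sum of $2\times2$ blocks $\begin{pmatrix}0 & \alpha_{n,i}(u_i-\bar u)\\ \alpha_{n,i}(u_i-\bar u) & 0\end{pmatrix}$ (after reordering coordinates, with the $v$-block contributing a zero block), its eigenvalues are precisely $\pm\alpha_{n,i}|u_i-\bar u|$ for $i\in[\![1,n]\!]$ together with $0$'s. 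Hence $\lambda^{\min}(\mathsf{R}) = -\max_{i}\alpha_{n,i}|u_i-\bar u|$, and the condition becomes $\max_i \alpha_{n,i}|u_i-\bar u| < \lambda^{\min}(\mathsf{S}_x^0(\mathbf{h},\boldsymbol{\gamma}))$, that is, $|u_i-\bar u| < \lambda^{\min}(\mathsf{S}_x^0(\mathbf{h},\boldsymbol{\gamma}))/\alpha_{n,i}$ for every $i$. Comparing with the definition of $\delta_i$ in \eqref{1newconditionsym} (the threshold on $|u_i-\bar u|^2$ guaranteeing positivity of $\mathsf{S}_x$), this shows one may take $\delta_i(\mathbf{h},\boldsymbol{\gamma}) \ge \big(\lambda^{\min}(\mathsf{S}_x^0(\mathbf{h},\boldsymbol{\gamma}))/\alpha_{n,i}\big)^2$, which is \eqref{1estimdeltai}.

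The step I expect to be the main obstacle is the clean reduction of $\mathsf{R}$ to a block-diagonal form and the resulting exact computation of its spectrum: one has to be careful that in the full $3n\times 3n$ matrix the perturbation lives only in the $(h,u)$ sector, that the $v$-sector is untouched, and that simultaneously permuting indices $i\leftrightarrow n+i$ genuinely block-diagonalizes $\mathsf{R}$ into $2\times2$ anti-diagonal blocks plus a zero block of size $n$. Once that is pinned down, the eigenvalues $\pm\alpha_{n,i}|u_i-\bar u|$ follow immediately. A secondary subtlety is to make sure the lower bound for $\delta_i$ is stated for the $\theta$-robust version: since by \eqref{maxPthetau} the worst case over $\theta$ replaces $|u_i-\bar u|^2$ by $|u_i-\bar u|^2+|v_i-\bar v|^2$, the same scalar bound $\lambda^{\min}(\mathsf{S}_x^0)/\alpha_{n,i}$ controls the full planar shear, so no deterioration of the estimate occurs when passing through the rotational-invariance argument of Lemma \ref{1wellposedrotinv}.
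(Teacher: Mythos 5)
Your proposal is correct and follows essentially the same route as the paper's own proof: decompose $\mathsf{S}_x({\bf u},\boldsymbol{\gamma},\bar u)=\mathsf{S}_x^0({\bf h},\boldsymbol{\gamma})+\mathsf{R}$, invoke the concavity of $\lambda^{\min}$ from Proposition \ref{1proplambdamin}, compute the spectrum of the rank-structured perturbation $\mathsf{R}$ as $\{\pm\alpha_{n,i}(u_i-\bar u)\}\cup\{0\}$, and read off the bound on $\delta_i$. The extra details you give (the explicit $2\times2$ block-diagonalization after permuting indices, and the remark that the $\theta$-worst-case bound does not degrade the estimate) are sound and merely make explicit what the paper leaves implicit.
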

\begin{proof}
We remind that $\left(\delta_i({\bf h},\boldsymbol{\gamma})\right)_{i \in [\![1,n]\!]}$ is the sequence that remains $\mathsf{S}_x({\bf u},\boldsymbol{\gamma},\bar{u})$ positive-definite ({\em i.e.} $\lambda^{\min}\left( \mathsf{S}_x({\bf u},\boldsymbol{\gamma},\bar{u}) \right) >0$). We decompose $\mathsf{S}_x({\bf u},\boldsymbol{\gamma},\bar{u})$ as $\mathsf{S}_x^0({\bf h},\boldsymbol{\gamma})+ \mathsf{S}_x({\bf u},\boldsymbol{\gamma},\bar{u})-\mathsf{S}_x^0({\bf h},\boldsymbol{\gamma})$. Then, according to the proposition \ref{1proplambdamin},  a condition to insure $\mathsf{S}_x({\bf u},\bar{u})$ positive-definite is
\begin{equation}
\lambda^{\min}(\mathsf{S}_x^0({\bf h},\boldsymbol{\gamma}))+\lambda^{\min}(\mathsf{S}_x({\bf u},\boldsymbol{\gamma},\bar{u})-\mathsf{S}_x^0({\bf h},\boldsymbol{\gamma})) > 0.
\label{1lambdaminSx}
\end{equation}
As the spectrum of $\mathsf{S}_x({\bf u},\boldsymbol{\gamma},\bar{u})-\mathsf{S}_x^0({\bf h},\boldsymbol{\gamma})$ is explicit
\begin{equation}
\sigma \left( \mathsf{S}_x({\bf u},\boldsymbol{\gamma},\bar{u})-\mathsf{S}_x^0({\bf h},\boldsymbol{\gamma}) \right) = \left(\pm \alpha_{n,i}(u_i-\bar{u})\right)_{i \in [\![1,n]\!]},
\label{1spectra1}
\end{equation}
it is obvious that $\lambda^{\min}(\mathsf{S}_x({\bf u},\boldsymbol{\gamma},\bar{u})-\mathsf{S}_x^0({\bf h},\boldsymbol{\gamma}))=-\max_{j \in [\![1,n]\!]} \left(\alpha_{n,j} | u_j-\bar{u}| \right)$ and the matrix $\mathsf{S}_x({\bf u},\boldsymbol{\gamma},\bar{u})$ remains positive-definite if
\begin{equation}
 \forall i \in [\![1,n]\!],\ \lambda^{\min}(\mathsf{S}_x^0({\bf h},\boldsymbol{\gamma})) \ge \alpha_{n,i} |u_i-\bar{u}|.
\label{1deltai2}
\end{equation}
Finally, the lower bound of $\delta_i({\bf h},\boldsymbol{\gamma})$, for $i \in [\![1,n]\!]$, is straightforward obtained with the definition of $\delta_i({\bf h},\boldsymbol{\gamma})$ in theorem \ref{1condwellposedMLSW}.
\end{proof}

\noindent
As the lower bound (\ref{1estimdeltai}) is not explicit in ${\bf h}$ and $\boldsymbol{\gamma}$, we give, in the next proposition, an explicit lower bound of $\delta_i({\bf h},\boldsymbol{\gamma})$, for $i \in [\![1,n]\!]$.

\begin{proposition}
Let $\boldsymbol{\gamma} \in ]0,1[^{n-1}$ and ${\bf h} \in \mathbb{R}_+^{*\ n}$. A lower bound of $\delta_i({\bf h},\boldsymbol{\gamma})$, for $i \in [\![1,n]\!]$, is
\begin{equation}
\delta_i({\bf h},\boldsymbol{\gamma}) \ge \frac{1}{\left(\alpha_{n,i} a({\bf h},\boldsymbol{\gamma}) \right)^2}
\label{1estimdeltai2}
\end{equation}
where 
\begin{equation}
a({\bf h},\boldsymbol{\gamma}):=\max\left(\left(\frac{\alpha_{n,2}}{\alpha_{n,1}}+1 \right)p_1,2\max_{k \in [\![1,n-2]\!]}\left( p_k+p_{k+1} \right), \max_{i \in [\![1,n]\!]} \left(\alpha_{n,i}^{-1} h_i^{-1}\right) \right),
\label{1ahgamma}
\end{equation}
and for all $k \in [\![1,n-1]\!]$, $p_k:=\frac{1}{\alpha_{n,k+1}-\alpha_{n,k}}=\frac{\rho_n}{\rho_{k+1}-\rho_k}$.
\label{1propestimdeltai2}
\end{proposition}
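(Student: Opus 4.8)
The plan is to bound $\delta_i({\bf h},\boldsymbol{\gamma})$ from below using the estimate \eqref{1estimdeltai} already proven in Proposition \ref{1propestimdeltai}, which reduces the task to finding an explicit lower bound for $\lambda^{\min}(\mathsf{S}_x^0({\bf h},\boldsymbol{\gamma}))$ in terms of ${\bf h}$ and $\boldsymbol{\gamma}$; equivalently, an explicit upper bound for $\lambda^{\max}\big(\mathsf{S}_x^0({\bf h},\boldsymbol{\gamma})^{-1}\big) = 1/\lambda^{\min}(\mathsf{S}_x^0({\bf h},\boldsymbol{\gamma}))$. Since $\mathsf{S}_x^0({\bf h},\boldsymbol{\gamma})$ is block-diagonal with blocks $\mathsf{\Delta}\mathsf{\Gamma}$, $\mathsf{\Delta}\mathsf{H}$, $\mathsf{\Delta}\mathsf{H}$, we have $\lambda^{\min}(\mathsf{S}_x^0) = \min\big(\lambda^{\min}(\mathsf{\Delta}\mathsf{\Gamma}),\, \lambda^{\min}(\mathsf{\Delta}\mathsf{H})\big)$, so it suffices to bound each block separately. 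The diagonal block $\mathsf{\Delta}\mathsf{H} = \mathrm{diag}(\alpha_{n,i}h_i)$ contributes $\min_i(\alpha_{n,i}h_i)$, whose reciprocal is $\max_i(\alpha_{n,i}^{-1}h_i^{-1})$ — this accounts for the third argument of the max defining $a({\bf h},\boldsymbol{\gamma})$.

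The real work is estimating $\lambda^{\min}(\mathsf{\Delta}\mathsf{\Gamma})$, where $\mathsf{\Delta}\mathsf{\Gamma} = [\alpha_{n,\min(i,j)}]_{(i,j)}$ is the symmetric matrix whose positive-definiteness was established via the Sylvester computation \eqref{1minors}. First I would compute its inverse explicitly. A matrix of the form $[\min(c_i,c_j)]$ with $c_1 < c_2 < \cdots$ (here $c_i = \alpha_{n,i}$, increasing by \eqref{1Sylvestercrit}) has a well-known tridiagonal inverse: working in terms of the gaps $\alpha_{n,k+1}-\alpha_{n,k}$ and $\alpha_{n,1}$, one finds $(\mathsf{\Delta}\mathsf{\Gamma})^{-1}$ is tridiagonal with diagonal entries built from consecutive reciprocals $p_k = 1/(\alpha_{n,k+1}-\alpha_{n,k})$ plus the boundary term involving $1/\alpha_{n,1}$, and off-diagonal entries $-p_k$. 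Then I would bound $\lambda^{\max}\big((\mathsf{\Delta}\mathsf{\Gamma})^{-1}\big)$ by a row-sum (Gershgorin / $\ell^\infty$-norm) argument: the maximal absolute row sum of this tridiagonal matrix is, for interior rows, $p_{k-1} + p_k + p_{k-1} + p_k$ — wait, more carefully, diagonal $p_{k-1}+p_k$ plus two off-diagonals $p_{k-1}, p_k$, giving $2(p_{k-1}+p_k)$; for the first row it is $(\alpha_{n,2}/\alpha_{n,1}+1)p_1$ after combining the $1/\alpha_{n,1}$ boundary contribution; this produces exactly the first two arguments of the max in \eqref{1ahgamma}. Collecting, $\lambda^{\min}(\mathsf{S}_x^0({\bf h},\boldsymbol{\gamma})) \ge 1/a({\bf h},\boldsymbol{\gamma})$, and substituting into \eqref{1estimdeltai} yields \eqref{1estimdeltai2}.

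The main obstacle I anticipate is getting the explicit form of $(\mathsf{\Delta}\mathsf{\Gamma})^{-1}$ correct, including the boundary rows, and then tracking the bookkeeping so that the Gershgorin row sums match the precise constants in \eqref{1ahgamma} — in particular verifying that the first-row term genuinely collapses to $(\alpha_{n,2}/\alpha_{n,1}+1)p_1$ and that the last row does not produce a larger bound than the interior rows (it should give $2p_{n-1} + 2p_{n-1}$-type terms dominated by the interior $2\max_k(p_k+p_{k+1})$ expression, or be absorbed). A clean way to organize this is to factor $\mathsf{\Delta}\mathsf{\Gamma} = \mathsf{L}\mathsf{D}\mathsf{L}^{\top}$ from the column operations \eqref{1elementopDeltaGamma} already used in Lemma \ref{1lemSx0posdef}, read off $(\mathsf{\Delta}\mathsf{\Gamma})^{-1} = \mathsf{L}^{-\top}\mathsf{D}^{-1}\mathsf{L}^{-1}$ with $\mathsf{L}^{-1}$ bidiagonal, and then the tridiagonal structure of the inverse and its row sums follow mechanically. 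Everything else is routine estimation using that $\boldsymbol{\gamma} \in\, ]0,1[^{n-1}$ guarantees all the gaps $\alpha_{n,k+1}-\alpha_{n,k}$ and $\alpha_{n,1}$ are strictly positive, so all $p_k$ are well-defined and positive.
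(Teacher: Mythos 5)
Your proposal is correct and follows essentially the same route as the paper: reduce to bounding $\lambda^{\max}\big(\mathsf{S}_x^0({\bf h},\boldsymbol{\gamma})^{-1}\big)$ via the estimate \eqref{1estimdeltai}, exploit the block-diagonal structure (handling $\mathsf{\Delta}\mathsf{H}$ trivially), write out $(\mathsf{\Delta}\mathsf{\Gamma})^{-1}$ as the tridiagonal matrix with diagonal $\bigl(\tfrac{\alpha_{n,2}}{\alpha_{n,1}}p_1,\,p_1+p_2,\ldots,p_{n-2}+p_{n-1},\,p_{n-1}\bigr)$ and off-diagonals $-p_k$, then bound its largest eigenvalue by Gerschgorin row sums and absorb the last-row term $2p_{n-1}$ into $2\max_k(p_k+p_{k+1})$. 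The $\mathsf{L}\mathsf{D}\mathsf{L}^{\top}$ reorganization you mention is an optional bookkeeping convenience the paper does not bother with (it simply states the tridiagonal inverse), but the substance is identical.
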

\begin{proof}
First, in order to provide an explicit lower bound of $\lambda^{\min}(\mathsf{S}_x^0({\bf h},\boldsymbol{\gamma}))$, an upper bound of the spectral radius of $\mathsf{S}_x^0({\bf h},\boldsymbol{\gamma})^{\mathsf{-1}}$ is sufficient and is proved in the next lemma.
\begin{lemma}
Let $\boldsymbol{\gamma} \in ]0,1[^{n-1}$ and ${\bf h} \in \mathbb{R}_+^{*\ n}$. Then, the next inequality is verified
\begin{equation}
\lambda^{\max}(\mathsf{S}_x^0({\bf h},\boldsymbol{\gamma})^{\mathsf{-1}}) \le a({\bf h},\boldsymbol{\gamma}).
\label{1rayonspectralSx0m1}
\end{equation}
\label{1lemlambdamaxSx0m1}
\end{lemma}
\begin{proof}
We remind $\mathsf{S}_x^0({\bf h},\boldsymbol{\gamma}))$ is positive-definite under conditions (\ref{1Sx0posdef}). Then, the inverse of $\mathsf{S}_x^0({\bf h},\boldsymbol{\gamma})$ is
\begin{equation}
\mathsf{S}_x^0({\bf h},\boldsymbol{\gamma})^{\mathsf{-1}} :=  \left[\begin{array}{c|c|c}
 \begin{array}{c} \mathsf{\Delta}^{\mathsf{-1}} \mathsf{\Gamma}^{\mathsf{-1}}  \end{array}
 & \begin{array}{c} \mathsf{0} \end{array}
 & \begin{array}{c} \mathsf{0} \end{array}\\
 \hline
 \begin{array}{c} \mathsf{0} \end{array}
 & \begin{array}{c} \mathsf{\Delta}^{\mathsf{-1}} \mathsf{H}^{\mathsf{-1}} \end{array}
 & \begin{array}{c} \mathsf{0} \end{array}\\
 \hline
 \begin{array}{c} \mathsf{0} \end{array}
 & \begin{array}{c} \mathsf{0} \end{array}
 & \begin{array}{c} \mathsf{\Delta}^{\mathsf{-1}} \mathsf{H}^{-1} \end{array}\\
\end{array}\right],
\label{1Sx0-1}
\end{equation}
where $\mathsf{\Delta^{-1}}\mathsf{H}^{\mathsf{-1}}=\mathrm{diag}(\alpha_{n,1}^{-1}h_1^{-1},\ldots,\alpha_{n,n-1}^{-1}h_{n-1}^{-1},h_n^{-1})$. Moreover, one can verified that $\mathsf{\Delta}^{\mathsf{-1}} \mathsf{\Gamma}^{\mathsf{-1}}$ is a $n$-{\em Toeplitz} symmetric matrix ({\em i.e.} a tridiagonal symmetric matrix), defined by $\boldsymbol{p_1} \in \mathbb{R}^n$ on the diagonal and $\boldsymbol{p_2} \in \mathbb{R}^{n-1}$ just above and below this diagonal, with
\begin{equation}
\left\{
\begin{array}{l}
\boldsymbol{p_1}:={}^{\top}\left(\frac{\alpha_{n,2}}{\alpha_{n,1}}p_1,p_1+p_2,p_2+p_3,\ldots,p_{n-2}+p_{n-1},p_{n-1}  \right),\\[4pt]
\boldsymbol{p_2}:={}^{\top}\left(-p_1,-p_2,p_3,\ldots,-p_{n-1}  \right).
\end{array}
\right.
\label{1gamma12}
\end{equation}
Then, using the {\em Gerschgorin}'s theorem, there exists $k \in [\![1,n]\!]$ such that
\begin{equation}
\lambda^{\max}(\mathsf{\Delta}^{\mathsf{-1}} \mathsf{\Gamma}^{\mathsf{-1}}) \in \mathcal{D}_k(\mathsf{\Delta}^{\mathsf{-1}} \mathsf{\Gamma}^{\mathsf{-1}}),
\label{1lambdaMax}
\end{equation}
with the subsets $\left(\mathcal{D}_k\right)_{k \in [\![1,n]\!]} \subset \mathbb{C}$ defined by
\begin{equation}
\forall k \in [\![1,n]\!],\ \forall \mathsf{A}:=\left[ \mathsf{A}_{i,j} \right]_{(i,j) \in [\![1,n]\!]^2},\ \mathcal{D}_k\left(\mathsf{A}\right):=\left\{z \in \mathbb{C},\ |z-\mathsf{A}_{k,k}| \le \sum_{j\not=k} |\mathsf{A}_{k,j}| \right\}.
\label{1gerschgorincricle}
\end{equation}
Then, as $\mathsf{\Delta}^{\mathsf{-1}} \mathsf{\Gamma}^{\mathsf{-1}}$ is symmetric, real and positive-definite, $\lambda^{\max}(\mathsf{\Delta}^{\mathsf{-1}} \mathsf{\Gamma}^{\mathsf{-1}}) \in \mathbb{R}_+^*$ and using the tridiagonal structure of $\mathsf{\Delta}^{\mathsf{-1}} \mathsf{\Gamma}^{\mathsf{-1}}$
\begin{equation}
\lambda^{\max}(\mathsf{\Delta}^{\mathsf{-1}} \mathsf{\Gamma}^{\mathsf{-1}}) \le a_1({\bf h},\boldsymbol{\gamma}),
\label{1lambdamaxupperb}
\end{equation}
with $a_1({\bf h},\boldsymbol{\gamma}):=\max \left( \left(\frac{\alpha_{n,2}}{\alpha_{n,1}}+1 \right)p_1,2p_{n-1},2\max_{k \in [\![1,n-2]\!]}\left( p_i+p_{i+1} \right) \right)$. Finally, as we have for all $i \in [\![1,n-1]\!]$, $\gamma_i \in ]0,1[$, it implies that
\begin{equation}
\forall i \in [\![1,n-1]\!],\ p_i >0.
\label{1majorp1p2}
\end{equation}
Consequently, the inequality \eqref{1rayonspectralSx0m1} is proved, using the structure of $\mathsf{S}_x^0({\bf h},\boldsymbol{\gamma})^{\mathsf{-1}}$:
\begin{equation}
\lambda^{\max}(\mathsf{S}_x^0({\bf h},\boldsymbol{\gamma})^{\mathsf{-1}}) \le \max\left(\left(\frac{\alpha_{n,2}}{\alpha_{n,1}}+1 \right)p_1,2\max_{k \in [\![1,n-2]\!]}\left( p_k+p_{k+1} \right), \max_{i \in [\![1,n]\!]} \left(\alpha_{n,i}^{-1} h_i^{-1}\right) \right),
\label{1rayonspectralSx0m12}
\end{equation}
and $a({\bf h},\boldsymbol{\gamma})$ is an explicit upper bound of $\lambda^{\max}(\mathsf{S}_x^0({\bf h},\boldsymbol{\gamma})^{\mathsf{-1}})$.
\end{proof}

\noindent
Finally, we can prove the explicit lower bound (\ref{1estimdeltai2}). As $\mathsf{S}_x^0({\bf h},\boldsymbol{\gamma})$ is positive-definite if $\boldsymbol{\gamma} \in ]0,1[^{n-1}$ and ${\bf h} \in \mathbb{R}_+^{*\ n}$, its smallest eigenvalue is the inverse of the greatest eigenvalue of $\mathsf{S}_x^0({\bf h},\boldsymbol{\gamma})^{\mathsf{-1}}$. Then, according to the lemma \ref{1lemlambdamaxSx0m1},
\begin{equation}
\lambda^{\min}(\mathsf{S}_x^0({\bf h},\boldsymbol{\gamma})) \ge \frac{1}{a({\bf h},\boldsymbol{\gamma})},
\label{1lambdamin}
\end{equation}
and using the proposition \ref{1propestimdeltai}, the explicit lower bound (\ref{1estimdeltai2}) of $\delta_i({\bf h},\boldsymbol{\gamma})$ is insured.
\end{proof}

\noindent
{\em Remark:} The lower bound \eqref{1estimdeltai2} is explicit but rougher than the lower bound \eqref{1estimdeltai}. However, the main loss was due to the concave-inequality \eqref{1lambdaminSx}.

\section{Hyperbolicity of particular cases}
According to the previous section, the system \eqref{1systemmultilayer}, with initial data ${\bf u^0} \in \mathcal{H}^s(\mathbb{R}^2)^{3n}$, $s>2$, is hyperbolic if ${\bf u^0} \in \mathcal{S}^s_{\boldsymbol{\gamma}}$. However, this was just a sufficient condition of hyperbolicity. The aim of this section is to analyse the eigenstructure of particular cases and to obtain an explicit criterion of hyperbolicity ({\em i.e.} weaker than the lower bound of $\delta_{i_0}({\bf h}(X),\boldsymbol{\gamma})$ in \eqref{1estimdeltai2}, for one $i_0 \in [\![1,n]\!]$). This will provide another necessary criterion for initial conditions to be in the set of hyperbolicity of the model \eqref{1systemmultilayer}: $\mathcal{H}_{\boldsymbol{\gamma}}$, defined by
\begin{equation}
\mathcal{H}_{\boldsymbol{\gamma}}:= \left\{ {\bf u^0} \in \mathcal{L}^2(\mathbb{R}^2)^{3n} / {\bf u^0}\ \mathrm{verifies}\ \mathrm{conditions}\ {\rm (\ref{1spectrumAutheta})} \right\}
\label{1defHgamma}
\end{equation}

\noindent
To succeed, $\boldsymbol{\gamma} \in ]0,1[^{n-1}$ is set and only for one $i_0 \in [\![1,n]\!]$, the asymptotic case $1-\gamma_{i_0} \rightarrow 0$ is studied, in order to extract the criterion of hyperbolicity. The technique is based on the analysis performed for the two-layer model in \cite{monjarret2014local}. 

\subsection{Eigenstructure of $\boldsymbol{\mathsf{A}({\bf u},\boldsymbol{\gamma},\theta)}$}
Using the rotational invariance \eqref{1rotinv}, the eigenstructure of $\mathsf{A}({\bf u},\boldsymbol{\gamma},\theta)$ is deduced from the one of $\mathsf{A}_x({\bf u},\boldsymbol{\gamma})$. Moreover, as the eigenstructure of $\mathsf{A}_x({\bf u},\boldsymbol{\gamma})$ will be analyzed, the canonical basis of $\mathbb{R}^{3n}$ will be necessary and denoted by $(\boldsymbol{e_i})_{i \in [\![1,3n]\!]}$. For every eigenvalue $\lambda \in \sigma\left(\mathsf{A}_x({\bf u},\boldsymbol{\gamma}) \right)$, the associated eigenspace will be noted $\mathcal{E}_{\lambda}({\bf u},\boldsymbol{\gamma}):=\ker\left(\mathsf{A}_x({\bf u},\boldsymbol{\gamma})-\lambda \mathsf{I_{3n}} \right)$; the geometric multiplicity will be denoted by $\mu_{\lambda}({\bf u},\boldsymbol{\gamma}):=\dim\mathcal{E}_{\lambda}({\bf u},\boldsymbol{\gamma})$; the associated right eigenvector will be noted $\boldsymbol{r_x^{\lambda}}({\bf u},\boldsymbol{\gamma})$ and the left one $\boldsymbol{l_x^{\lambda}}({\bf u},\boldsymbol{\gamma})$. First, we prove the next proposition:
\begin{proposition}
The characteristic polynomial of $\mathsf{A}_x({\bf u},\boldsymbol{\gamma})$ is equal to
\begin{equation}
\det\left(\mathsf{A}_x({\bf u},\boldsymbol{\gamma})-\lambda \mathsf{I_{3n}} \right)= \det\left( \mathsf{M}_x(\lambda,{\bf u},\boldsymbol{\gamma})  \right) \prod_{i=1}^n (u_i-\lambda),
\label{1caractpol}
\end{equation}
where the $n \times n$ matrix $\mathsf{M}_x(\lambda,{\bf u},\boldsymbol{\gamma}):=(\mathsf{V}_x-\lambda \mathsf{I_n})^2-\mathsf{\Gamma} \mathsf{H}$\label{1propcharactpoly}
\end{proposition}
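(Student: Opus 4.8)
The plan is to compute the determinant of the $3n\times 3n$ block matrix $\mathsf{A}_x({\bf u},\boldsymbol{\gamma})-\lambda\mathsf{I}_{3n}$ directly, exploiting its block structure. Writing it in the $n$-by-$n$ block form inherited from \eqref{1Axy}, we have
\begin{equation}
\mathsf{A}_x({\bf u},\boldsymbol{\gamma})-\lambda\mathsf{I}_{3n}=\left[\begin{array}{c|c|c}
\mathsf{V}_x-\lambda\mathsf{I}_n & \mathsf{H} & \mathsf{0}\\
\hline
\mathsf{\Gamma} & \mathsf{V}_x-\lambda\mathsf{I}_n & \mathsf{0}\\
\hline
\mathsf{0} & \mathsf{0} & \mathsf{V}_x-\lambda\mathsf{I}_n
\end{array}\right].
\end{equation}
The last block row and column decouple completely: expanding along the third block, the determinant factors as $\det(\mathsf{V}_x-\lambda\mathsf{I}_n)$ times the determinant of the upper-left $2n\times2n$ block. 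Since $\mathsf{V}_x=\mathrm{diag}[u_i]$, the first factor is exactly $\prod_{i=1}^n(u_i-\lambda)$, which accounts for the trailing product in \eqref{1caractpol}.

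For the remaining $2n\times2n$ block $\begin{bmatrix}\mathsf{V}_x-\lambda\mathsf{I}_n & \mathsf{H}\\ \mathsf{\Gamma} & \mathsf{V}_x-\lambda\mathsf{I}_n\end{bmatrix}$, I would use the Schur complement formula for a $2\times2$ block matrix. The natural choice is to eliminate using the $(2,2)$ block $\mathsf{V}_x-\lambda\mathsf{I}_n$, which is diagonal and invertible away from the finitely many values $\lambda\in\{u_1,\dots,u_n\}$; this gives the determinant as $\det(\mathsf{V}_x-\lambda\mathsf{I}_n)\cdot\det\!\big((\mathsf{V}_x-\lambda\mathsf{I}_n)-\mathsf{H}(\mathsf{V}_x-\lambda\mathsf{I}_n)^{-1}\mathsf{\Gamma}\big)$. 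Alternatively, and more cleanly, since $\mathsf{V}_x-\lambda\mathsf{I}_n$ is diagonal it commutes with everything, so the block determinant equals $\det\!\big((\mathsf{V}_x-\lambda\mathsf{I}_n)^2-\mathsf{H}\,\mathsf{\Gamma}\big)$ by the standard commuting-block identity. Finally, observe that $\det(\mathsf{H}\mathsf{\Gamma})=\det(\mathsf{\Gamma}\mathsf{H})$ and, more to the point, $(\mathsf{V}_x-\lambda\mathsf{I}_n)^2-\mathsf{H}\mathsf{\Gamma}$ and $(\mathsf{V}_x-\lambda\mathsf{I}_n)^2-\mathsf{\Gamma}\mathsf{H}=\mathsf{M}_x(\lambda,{\bf u},\boldsymbol{\gamma})$ are similar (conjugate by $\mathsf{H}^{1/2}$, or note $\det(A-BC)=\det(A-CB)$ when $A$ commutes with the relevant factors, or simply that $\mathsf{M}_x$ and $(\mathsf{V}_x-\lambda\mathsf{I}_n)^2-\mathsf{H}\mathsf{\Gamma}$ are transposes up to a diagonal conjugation since $\mathsf{V}_x$ is diagonal), hence have the same determinant; this yields exactly $\det(\mathsf{M}_x(\lambda,{\bf u},\boldsymbol{\gamma}))\prod_{i=1}^n(u_i-\lambda)$.

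The only subtlety is that the Schur-complement manipulation requires $\mathsf{V}_x-\lambda\mathsf{I}_n$ to be invertible, i.e. $\lambda\notin\{u_1,\dots,u_n\}$; but both sides of \eqref{1caractpol} are polynomials in $\lambda$ of degree $3n$, and since they agree on the cofinite set $\mathbb{R}\setminus\{u_1,\dots,u_n\}$ they agree identically. (If one prefers to avoid this altogether, the commuting-block identity $\det\begin{bmatrix}P&Q\\R&S\end{bmatrix}=\det(PS-QR)$ when $R$ and $S$ commute applies verbatim here since $\mathsf{\Gamma}$ and $\mathsf{V}_x-\lambda\mathsf{I}_n$ need $\mathsf{V}_x-\lambda I$ to commute with $\mathsf{H}$, which holds as both are diagonal — so no invertibility hypothesis is needed.) I expect the main bookkeeping obstacle to be nothing more than correctly tracking the three block rows/columns and justifying the passage from $\mathsf{H}\mathsf{\Gamma}$ to $\mathsf{\Gamma}\mathsf{H}$ inside the determinant; there is no real analytic difficulty.
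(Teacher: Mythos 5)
Your proof is essentially the paper's argument: factor out the decoupled third block to produce $\prod_{i=1}^n(u_i-\lambda)$, then evaluate the remaining $2n\times2n$ determinant by the commuting-block identity. Two small remarks. First, the aside that $\mathsf{V}_x-\lambda\mathsf{I_n}$ ``commutes with everything'' is not correct as stated: a diagonal matrix with distinct entries commutes only with other diagonal matrices, and $\mathsf{\Gamma}$ is generally not diagonal, so $\mathsf{\Gamma}$ does not commute with $\mathsf{V}_x-\lambda\mathsf{I_n}$. What your argument actually uses, and what suffices, is that $\mathsf{V}_x-\lambda\mathsf{I_n}$ and $\mathsf{H}$ commute (both diagonal); the paper's own justification (``all the blocks of $\mathsf{A}_x^1$ commute'') is imprecise in exactly the same way. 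Second, the detour through $\det\bigl((\mathsf{V}_x-\lambda\mathsf{I_n})^2-\mathsf{H}\mathsf{\Gamma}\bigr)$ followed by conjugation by $\mathsf{H}$ (or $\mathsf{H}^{1/2}$) is correct but avoidable: the variant of the commuting-block identity that uses commutativity of the two top-row blocks $A$ and $B$ gives $\det\begin{bmatrix}A&B\\C&D\end{bmatrix}=\det(DA-CB)$, which here is $\det\bigl((\mathsf{V}_x-\lambda\mathsf{I_n})^2-\mathsf{\Gamma}\mathsf{H}\bigr)=\det\bigl(\mathsf{M}_x(\lambda,{\bf u},\boldsymbol{\gamma})\bigr)$ directly, with no need to pass from $\mathsf{H}\mathsf{\Gamma}$ to $\mathsf{\Gamma}\mathsf{H}$.
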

\begin{proof}
First of all, according to the block-structure of $\mathsf{A}_x({\bf u},\boldsymbol{\gamma})$, it is clear that
\begin{equation}
\det\left(\mathsf{A}_x({\bf u},\boldsymbol{\gamma})-\lambda \mathsf{I_{3n}} \right)=\det\left( \mathsf{A}_x^1({\bf u},\boldsymbol{\gamma})-\lambda \mathsf{I_{2n}}  \right) \prod_{i=1}^n (u_i-\mu),
\label{1detblockstruct}
\end{equation}
where the $2n \times 2n$ matrix $\mathsf{A}_x^1({\bf u},\boldsymbol{\gamma})$ is defined by
\begin{equation}
\mathsf{A}_x^1({\bf u},\boldsymbol{\gamma}) :=  \left[\begin{array}{c|c}
 \begin{array}{c} \mathsf{V}_x  \end{array}
 & \begin{array}{c} \mathsf{H} \end{array}\\
 \hline
 \begin{array}{c} \mathsf{\Gamma} \end{array}
 & \begin{array}{c} \mathsf{V}_x \end{array}\\
\end{array}\right].
\label{1Ax1}
\end{equation}
Then, as all the blocks of $\mathsf{A}_x^1({\bf u},\boldsymbol{\gamma})$ commute, the characteristic polynomial of $\mathsf{A}_x^1({\bf u},\boldsymbol{\gamma})$ is equal to $\det\left(\mathsf{M}_x(\lambda,{\bf u},\boldsymbol{\gamma}) \right)$.
\end{proof}

\noindent
According to the expression of the characteristic polynomial of $\mathsf{A}_x({\bf u},\boldsymbol{\gamma})$ in \eqref{1caractpol}, we denote the spectrum of this matrix by 
\begin{equation}
\sigma(\mathsf{A}_x({\bf u},\boldsymbol{\gamma})):=\left(\lambda_i^{\pm}({\bf u},\boldsymbol{\gamma})\right)_{i \in [\![1,n]\!]} \cup \left( \lambda_{2n+i}({\bf u},\boldsymbol{\gamma})\right)_{i \in [\![1,n]\!]},
\label{1sigmaAx}
\end{equation}
where $\left(\lambda_i^{\pm}({\bf u},\boldsymbol{\gamma})\right)_{i \in [\![1,n]\!]}=:\sigma(\mathsf{A}_x^1({\bf u},\boldsymbol{\gamma}))$ and 
\begin{equation}
\forall i \in [\![1,n]\!],\ \lambda_{2n+i}({\bf u},\boldsymbol{\gamma}):=u_i.
\label{1lambda2npi}
\end{equation}

\noindent
{\em Remarks:} 1) Using the rotational invariance \eqref{1rotinv}, the spectrum of $\mathsf{A}({\bf u},\boldsymbol{\gamma},\theta)$ will be
\begin{equation}
\sigma \left( \mathsf{A}({\bf u},\boldsymbol{\gamma},\theta) \right)=\left(\lambda_i^{\pm}(\mathsf{P}(\theta){\bf u},\boldsymbol{\gamma})\right)_{i \in [\![1,n]\!]} \cup \left( \lambda_{2n+i}(\mathsf{P}(\theta){\bf u},\boldsymbol{\gamma})\right)_{i \in [\![1,n]\!]} .
\label{1spectrAtheta}
\end{equation}
\noindent
2) The eigenvalues $\left(\lambda_i^{\pm}({\bf u},\boldsymbol{\gamma})\right)_{i \in [\![1,n-1]\!]}$ will be called the baroclinic eigenvalues and $\lambda_n^{\pm}({\bf u},\boldsymbol{\gamma})$ will be called the barotropic eigenvalues.

\noindent
As the eigenstructure associated to $\left( \lambda_{2n+i}({\bf u},\boldsymbol{\gamma})\right)_{i \in [\![1,n]\!]}$ is entirely known
\begin{equation}
\forall i \in [\![1,n]\!],\
\left\{
\begin{array}{l}
\boldsymbol{r_x^{\lambda_{2n+i}}}({\bf u},\boldsymbol{\gamma})=\boldsymbol{ e_{2n+i}},\\[4pt]
\boldsymbol{l_x^{\lambda_{2n+i}}}({\bf u},\boldsymbol{\gamma})={}^{\top} \boldsymbol{ e_{2n+i}},
\end{array}
\right.
\label{1vectpropVx}
\end{equation}
the following study is only focused on $\sigma(\mathsf{A}_x^1({\bf u},\boldsymbol{\gamma}))$. Moreover, as
\begin{equation}
\mathsf{M}_x(\lambda,{\bf u},\boldsymbol{\gamma})=(\mathsf{V}_x-\bar{u}\mathsf{I_n}-(\lambda-\bar{u}) \mathsf{I_n})^2-\mathsf{\Gamma} \mathsf{H},
\end{equation}
the analysis will be performed with the rescaling
\begin{equation}
\forall i \in [\![1,n]\!],\
\left\{
\begin{array}{l}
\tilde{\lambda}_i^{\pm} := \lambda_i^{\pm}-\bar{u},
\tilde{u}_i:=u_i-\bar{u}.
\end{array}
\right.
\end{equation}

\noindent
In this part, we will remove the $\tilde{ }$ and we consider ${\bf u}$ such that $\bar{u}=0$. In the following study, we set $f_n: \mathbb{R} \times \mathbb{R}^{3n} \times \mathbb{R}^{n-1} \rightarrow \mathbb{R}$ such that
\begin{equation}
\forall (\lambda,{\bf u},\boldsymbol{\gamma}) \in \mathbb{R} \times \mathbb{R}^{3n} \times \mathbb{R}^{n-1},\  f_n(\lambda,{\bf u},\boldsymbol{\gamma}):=\det\left( \mathsf{M}_x(\lambda,{\bf u},\boldsymbol{\gamma})  \right)
\label{1f}
\end{equation}

\subsection{A $\boldsymbol{1^{\mathrm{st}}}$ case: the single-layer model}\label{1sectioncase1}
The single-layer model with free surface is characterized by $({\bf u},\boldsymbol{\gamma})=({\bf u^0},\boldsymbol{\gamma^0})$, where ${\bf u}^{\boldsymbol{0}}$ and $\boldsymbol{\gamma^0}$ are defined by
\begin{equation}
\forall i \in [\![1,n-1]\!],\
\left\{
\begin{array}{l}
u_{i}=u_{i+1}=0\\[4pt]
\gamma_i=1.
\end{array}
\right.
\label{1conddegen1}
\end{equation}
In that case, the spectrum of $\mathsf{A}_x^1({\bf u^0},\boldsymbol{\gamma^0})$ is always real and is such that
\begin{equation}
\left\{
\begin{array}{lr}
\lambda_i^{\pm}({\bf u^0},\boldsymbol{\gamma^0})=\lambda_0, & \forall i \in [\![1,n-1]\!],\\[4pt]
\lambda_{n}^{\pm}({\bf u^0},\boldsymbol{\gamma^0})=\pm \sqrt{H},&
\end{array}
\right.
\label{1onelayerlambda}
\end{equation}
where $\lambda_0:=0$. The geometric multiplicity associated to $\lambda_0$ and $\lambda_{n}^{\pm}$ are respectively $\mu_{\lambda_0}=n-1$ and $\mu_{\lambda_{n}^{\pm}}=1$. The eigenvectors associated to this spectrum are
\begin{equation}
\forall  i \in [\![1,n-1]\!],\
\left\{
\begin{array}{l}
\boldsymbol{r_x^{\lambda_i^{\pm}}}({\bf u^0},\boldsymbol{\gamma^0})= \boldsymbol{ e_i} - \boldsymbol{ e_{i+1}},\\[4pt]
\boldsymbol{l_x^{\lambda_i^{\pm}}}({\bf u^0},\boldsymbol{\gamma^0})= {}^{\top} \boldsymbol{ e_{n+i}} - {}^{\top} \boldsymbol{ e_{n+i+1}},
\end{array}
\right.
\label{1vi}
\end{equation}
\begin{equation}
\left\{
\begin{array}{ll}
\boldsymbol{r_x^{\lambda_n^{\pm}}}({\bf u^0},\boldsymbol{\gamma^0})=\sum_{k=1}^n \boldsymbol{ e_{n+k}}\pm \frac{h_k}{\sqrt{H}}\boldsymbol{ e_k}\\[4pt]
\boldsymbol{l_x^{\lambda_n^{\pm}}}({\bf u^0},\boldsymbol{\gamma^0})=\sum_{k=1}^n {}^{\top} \boldsymbol{ e_{k}}\pm \frac{h_k}{\sqrt{H}}{}^{\top} \boldsymbol{ e_{n+k}}.
\end{array}
\right.
\label{1vn}
\end{equation}
To conclude, in the single-layer case, the model is hyperbolic but there is no eigenbasis of $\mathbb{R}^{3n}$.

\subsection{A $\boldsymbol{2^{\mathrm{nd}}}$ case: the merger of two layers}\label{1sectioncase2}

The merger of two layers is characterized by the equality of the parameters of two neighboring layers: $i \in [\![1,n-1]\!]$ such that $({\bf u},\boldsymbol{\gamma})=({\bf u^{\boldsymbol{i}}},\boldsymbol{\gamma^{i}})$, where ${\bf u}^{\boldsymbol{i}}$ and $\boldsymbol{\gamma^{i}}$ are defined by
\begin{equation}
\left\{
\begin{array}{lr}
u_j^2 \le \delta_j({\bf h},\boldsymbol{\gamma}), & \forall j \in [\![1,n]\!]\setminus \{i\},\\[4pt]
0 < \gamma_{j} < 1, & \forall j \in [\![1,n-1]\!]\setminus \{i\}. 
\end{array}
\right.
\label{1conddegen21}
\end{equation}
and for $i \in [\![1,n-1]\!]$,
\begin{equation}
\left\{
\begin{array}{l}
u_{i}=u_{i+1}\\[4pt]
\gamma_{i}=1.
\end{array}
\right.
\label{1conddegen22}
\end{equation}
Then, according to theorem \ref{1condwellposedMLSW}, it is hyperbolic and the spectrum of $\mathsf{A}_x^1({\bf u^{\boldsymbol{i}}},\boldsymbol{\gamma^{i}})$ is always a subset of $\mathbb{R}$. However there is no recursive method nor explicit expression to determine entirely this spectrum. Moreover, as the next equality on the columns is obvious,
\begin{equation}
\mathrm{C}_{i}\left(\mathsf{A}_x^1({\bf u^{\boldsymbol{i}}},\boldsymbol{\gamma^{i}})-u_{i} \mathsf{I_{2n}} \right)=\mathrm{C}_{i+1}\left(\mathsf{A}_x^1({\bf u^{\boldsymbol{i}}},\boldsymbol{\gamma^{i}})-u_{i} \mathsf{I_{2n}} \right),
\label{1equalcolumn2case}
\end{equation}
there is only one trivial value for the eigenvalues $\lambda_{i}^{\pm}= u_{i}$. And for this eigenvalues, the eigenvectors associated are
\begin{equation}
\left\{
\begin{array}{l}
\boldsymbol{r_x^{\lambda_{i}^{\pm}}}({\bf u^{\boldsymbol{i}}},\boldsymbol{\gamma^{i}})= \boldsymbol{ e_{i}} - \boldsymbol{ e_{i+1}},\\[4pt]
\boldsymbol{l_x^{\lambda_{i}^{\pm}}}({\bf u^{\boldsymbol{i}}},\boldsymbol{\gamma^{i}})= {}^{\top}\boldsymbol{ e_{n+i}} - {}^{\top}\boldsymbol{ e_{n+i+1}},
\end{array}
\right.
\label{1vi2}
\end{equation}
To conclude, as in the previous case, the model, with the merger of two layer, remains hyperbolic but there is no eigenbasis of $\mathbb{R}^{3n}$.

\subsection{The asymptotic expansion of the merger of two layers}
With the same notations as the previous subsection, we consider the merger of two layer: there exists $i \in [\![1,n-1]\!]$, such that conditions (\ref{1conddegen21}--\ref{1conddegen22}) are verified. As it was explained before, the eigenvalue $\lambda_{i}^{\pm}({\bf u},\boldsymbol{\gamma})$ is explicit but does not provide two distinct eigenvalues associated to the interface $i$, in order to get two distinct right eigenvectors. Indeed, proving the existence of two distinct right eigenvectors would be a first step to prove the diagonalizability of the matrix $\mathsf{A}({\bf u},\boldsymbol{\gamma},\theta)$, in order to apply proposition \ref{1diagimpliqsym}.

\begin{proposition}
Let $i \in [\![1,n-1]\!]$, $({\bf u},\boldsymbol{\gamma}) \in \mathbb{R}^{3n} \times ]0,1[^{n-1}$ such that $1-\gamma_i$ and $(u_j)_{j \in [\![1,n]\!]}$ are sufficiently small. Then, an expansion of $\lambda_i^{\pm}({\bf u},\boldsymbol{\gamma})$ is
\begin{equation}
\begin{array}{ll}
\lambda_i^{\pm}({\bf u},\boldsymbol{\gamma})=& \frac{u_i h_{i+1}+u_{i+1}h_i}{h_i+h_{i+1}} \pm \left[\frac{ h_i h_{i+1}}{h_i+h_{i+1}}\left(1-\gamma_i-\frac{(u_{i+1}-u_i)^2}{h_i+h_{i+1}}\right)\right]^{\frac{1}{2}}\\[6pt]
& +\mathcal{O}((1-\gamma_i),(u_j^2)_{j \in [\![1,n]\!]}).
\end{array}
\label{1approxlambdaipm}
\end{equation}
\label{1thmapproxlambdaipm}
\end{proposition}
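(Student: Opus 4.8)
The plan is to extract the eigenvalue $\lambda_i^\pm$ associated with interface $i$ as a perturbation of the doubly-degenerate root $\lambda = u_i$ of the characteristic polynomial $f_n(\lambda,{\bf u},\boldsymbol{\gamma}) = \det\big(\mathsf{M}_x(\lambda,{\bf u},\boldsymbol{\gamma})\big)$, where $\mathsf{M}_x = (\mathsf{V}_x - \lambda\mathsf{I_n})^2 - \mathsf{\Gamma}\mathsf{H}$. At the merger configuration (\ref{1conddegen22}) we have $u_i = u_{i+1}$ and $\gamma_i = 1$, so the $i$-th and $(i+1)$-th columns of $\mathsf{M}_x(u_i,\cdot,\cdot)$ coincide (this is exactly (\ref{1equalcolumn2case}) read at the $\mathsf{M}_x$ level); hence $\lambda = u_i$ is at least a double root of $f_n$, but the two "physical" roots near it are not resolved. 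The idea is to treat $\varepsilon_1 := 1-\gamma_i$ and the rescaled velocities $\tilde u_j = u_j - \bar u$ as small parameters and perform a two-variable perturbation expansion of $f_n$ around $(\lambda,\gamma_i) = (u_i,1)$.

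First I would recenter: write $\lambda = u_i + \mu$ and use the freedom to set $\bar u = 0$ as in the text, so all velocities are $O(\sqrt{\text{small}})$. Then I would compute $f_n(u_i+\mu,{\bf u},\boldsymbol{\gamma})$ by expanding the determinant along the near-degenerate $2\times 2$ block built from rows/columns $i,i+1$: perform the column operation $\mathrm{C}_i \leftarrow \mathrm{C}_i - \mathrm{C}_{i+1}$, which pulls out the small quantities $1-\gamma_i$ and $u_{i+1}-u_i$ and the shift $\mu$ from that column, and similarly a row operation, reducing $f_n$ to $(\text{quadratic in }\mu,\,1-\gamma_i,\,(u_{i+1}-u_i))\times(\text{a Schur-complement determinant})$ evaluated at the merger point. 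The leading Schur complement is, up to a nonzero factor coming from the remaining $(n-2)$ layers, essentially the two-layer dispersion relation for an "effective" two-layer system with heights $h_i, h_{i+1}$; matching the quadratic factor to zero gives
\begin{equation}
\Big(\mu - \tfrac{u_i h_{i+1} + u_{i+1} h_i}{h_i+h_{i+1}} + \bar u\text{-terms}\Big)^2 = \frac{h_i h_{i+1}}{h_i+h_{i+1}}\left(1-\gamma_i - \frac{(u_{i+1}-u_i)^2}{h_i+h_{i+1}}\right) + \text{higher order},
\end{equation}
from which (\ref{1approxlambdaipm}) follows after undoing the recentering $\lambda = u_i + \mu = \bar u + \tilde\lambda$.

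To make this rigorous I would invoke the implicit function theorem (or a Puiseux/Newton-polygon argument, since the two roots collide): define $g(\mu, \varepsilon) := f_n(u_i + \mu, {\bf u}, \boldsymbol{\gamma})$ divided by the explicitly-known nonvanishing cofactor, treat $\varepsilon := (1-\gamma_i, (\tilde u_j)_j)$, observe that $g$ has a simple structure $g = \mu^2 - q(\varepsilon) + O(\varepsilon^{3/2})$ with $q$ the displayed quadratic, and conclude the two branches $\mu = \pm\sqrt{q(\varepsilon)} + O(\varepsilon)$. The main obstacle is the bookkeeping of the Schur complement: one must verify that the cofactor obtained after eliminating the degenerate block is nonzero at the merger point (equivalently, that $\lambda = u_i$ is exactly a double, not higher-order, root generically under (\ref{1conddegen21})), and that all the cross terms coupling interface $i$ to the neighboring interfaces contribute only at order $O(1-\gamma_i, \tilde u_j^2)$ — i.e., they do not pollute the leading square-root term. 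This is where the weak-stratification hypotheses (\ref{1conddegen21}) on the other $\gamma_j$ and the smallness of the other $u_j$ are used: they guarantee the remaining $(n-2)\times(n-2)$ block of $\mathsf{M}_x(u_i,\cdot,\cdot)$ is invertible with controlled inverse, so the Schur complement is well-defined and its expansion is governed by the two layers $i, i+1$ alone, reproducing the two-layer formula of \cite{monjarret2014local}.
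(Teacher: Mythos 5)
Your plan is essentially the paper's plan: treat $1-\gamma_i$ and the velocities as small parameters, expand the characteristic polynomial $f_n(\lambda,{\bf u},\boldsymbol{\gamma})=\det\big(\mathsf{M}_x(\lambda,{\bf u},\boldsymbol{\gamma})\big)$ to second order about the merger state, reduce to the two-layer quadratic in $\lambda$, and conclude by the implicit function theorem once the leading cofactor is shown to be nonzero. The paper implements this by a direct multivariable Taylor expansion of the scalar function $f_n$ about $(\lambda,{\bf u},\gamma_i)=(0,\mathbf{0},1)$, using the cofactor identities $\partial_\lambda f_n=\sum_k -2(u_k-\lambda)f_n^k$, $\partial^2_\lambda f_n=\sum_k 2f_n^k$, etc., together with explicit evaluations of $f_n$ and the minors $f_n^k$ at the degenerate state (Lemmata~\ref{1lemmagamma1}--\ref{1derivee2}); all but a handful of partial derivatives vanish and what survives is exactly your quadratic multiplied by the nonzero factor $\kappa_i$.

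The one place your version differs in execution is the ``Schur-complement'' step. Because $\mathsf{\Gamma}\mathsf{H}$ has a full lower-triangular part, rows/columns $\{i,i+1\}$ are coupled to \emph{all} other indices: there is no exact $2\times 2$ / $(n-2)\times(n-2)$ block split, so the Schur complement you would need is only a perturbative one, and justifying that its determinant is the right $\kappa_i$ up to $O(1-\gamma_i,u_j^2)$ amounts precisely to the minor computations in Lemma~\ref{lemmalinecolumn}. So the two routes converge on the same cofactor algebra; your framing just reorganizes it as a matrix factorization rather than a list of partial derivatives. Your self-identified obstacle (that the cross-terms and the remaining block must not pollute the leading square root) is exactly what those lemmata resolve. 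Finally, you are right to flag that a bare implicit-function-theorem invocation is not quite enough at the first step since $\partial_\lambda f_n$ vanishes at the degenerate point; as you note, one first resolves the quadratic to get two distinct candidate roots $\mu_\pm$ (this needs the discriminant to be bounded away from zero, i.e.\ the hyperbolicity inequality), and then applies the IFT to each simple root to absorb the higher-order remainder. The paper is somewhat terse on this point, but the intent is the same as yours.
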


\begin{proof}
In order to obtain an asymptotic expansion of $\lambda_{i}^{\pm}$, we perform a $2^{\mathrm{nd}}$ order {\em Taylor} expansion of $f_n$, about a state mixing the two cases analyzed in \S \ref{1sectioncase1} and \S \ref{1sectioncase2}:
\begin{equation}
\left\{
\begin{array}{l}
\lambda=0,\\
{\bf u}={\bf u^{\boldsymbol{0}}},\\
\boldsymbol{\gamma}=\boldsymbol{\gamma}^{i}
\end{array}
\right.
\label{1statedegen}
\end{equation}

\noindent
Then, we have

\begin{eqnarray}
f_n(\lambda,\boldsymbol{u},\boldsymbol{\gamma})& &= f_n(0,{\bf u^0},\boldsymbol{\gamma^i})+\lambda\frac{\partial f_n}{\partial \lambda}(0,{\bf u^0},\boldsymbol{\gamma^i})+(\gamma_i-1)\frac{\partial f_n}{\partial \gamma_i}(0,{\bf u^0},\boldsymbol{\gamma^i})\nonumber\\
&& +\sum_{j=1}^n u_j\frac{\partial f_n}{\partial u_j}(0,{\bf u^0},\boldsymbol{\gamma^i})+\frac{1}{2}\lambda^2 \frac{\partial^2 f_n}{\partial \lambda^2}(0,{\bf u^0},\boldsymbol{\gamma^i})\nonumber\\
&& +\sum_{j=1}^n \frac{1}{2}u_j^2 \frac{\partial^2 f_n}{\partial u_j^2}(0,{\bf u^0},\boldsymbol{\gamma^i})+\frac{1}{2}(\gamma_i - 1)^2 \frac{\partial^2 f_n}{\partial \gamma_i^2}(0,{\bf u^0},\boldsymbol{\gamma^i})\nonumber\\
&& +\sum_{j=1}^n u_j\left(\lambda \frac{\partial^2 f_n}{\partial u_j \partial \lambda}(0,{\bf u^0},\boldsymbol{\gamma^i})+(\gamma_i-1)\frac{\partial^2 f_n}{\partial u_j \partial \gamma_i}(0,{\bf u^0},\boldsymbol{\gamma^i})\right)\nonumber\\
&& +\sum_{j\not=k}u_ju_k\frac{\partial^2 f_n}{\partial u_j \partial u_k}(0,{\bf u^0},\boldsymbol{\gamma^i})+\lambda(\gamma_i-1)\frac{\partial^2 f_n}{\partial \lambda \partial \gamma_i}(0,{\bf u^0},\boldsymbol{\gamma^i})\nonumber\\
&&+o(\lambda^2,1-\gamma_i,\left(u_j^2\right)_{j \in [\![1,n]\!]}),\nonumber
\label{1Taylor}
\end{eqnarray}
To calculate all these derivatives, we use the following lemmata:

\begin{lemma}Let $\boldsymbol{\gamma} \in ]0,1[^{n-1}$ and ${\bf h} \in \mathbb{R}_+^{*\ n}$, then
\begin{equation}
f_n(0,{\bf u^0},\boldsymbol{\gamma})=(-1)^n h_n \prod_{i=1}^{n-1} h_i (1-\gamma_i)
\end{equation}
\label{1lemmagamma1}
\end{lemma}
\begin{proof}
First, we perform the next operations to the columns of $\mathsf{M}_x(\lambda,{\bf u^0},\boldsymbol{\gamma})$: for all $k \in [\![1,n-1]\!]$,
\begin{equation}
\mathrm{C}_{k}\left(\mathsf{M}_x(\lambda,{\bf u^0},\boldsymbol{\gamma}) \right) \leftarrow \mathrm{C}_{k}\left(\mathsf{M}_x(\lambda,{\bf u^0},\boldsymbol{\gamma}) \right)-\mathrm{C}_{k+1}\left(\mathsf{M}_x(\lambda,{\bf u^0},\boldsymbol{\gamma}) \right)
\label{1columnslammegamma1}
\end{equation}
Finally, with an expansion of the determinant obtained, about the $1^{\mathrm{st}}$ line, the lemma \ref{1lemmagamma1} is proved.
\end{proof}

\noindent
In the next lemma, for $k \in [\![1,n-1]\!]$, we denote by $\mathsf{M}_x^k(\lambda,{\bf u},\boldsymbol{\gamma})$, the $n-1 \times n-1$ matrix obtained with $\mathsf{M}_x(\lambda,{\bf u},\boldsymbol{\gamma})$ with the $k^{\mathrm{th}}$ column and $k^{\mathrm{th}}$ line removed; and by $f^k_n: \mathbb{R} \times \mathbb{R}^{3n} \times \mathbb{R}^{n-1} \rightarrow \mathbb{R}$ such that $f^k_n(\lambda,{\bf u},\boldsymbol{\gamma})$ is the $k^{\mathrm{th}}$ first minor of $\mathsf{M}_x(\lambda,{\bf u},\boldsymbol{\gamma})$
\begin{equation}
f^k_n(\lambda,{\bf u},\boldsymbol{\gamma}):= \det \left( \mathsf{M}_x^k(\lambda,{\bf u},\boldsymbol{\gamma}) \right).
\label{1fkdef}
\end{equation}

\begin{lemma}
Let $k \in [\![1,n]\!]$, $\boldsymbol{\gamma} \in ]0,1[^{n-1}$ and ${\bf h} \in \mathbb{R}_+^{*\ n}$, then
\begin{equation}
f^k_n(0,{\bf u^0},\boldsymbol{\gamma})=
\left\{
\begin{array}{lr}
(-1)^{n-1} \prod_{j=2}^n h_j \prod_{j=2}^{n-1} (1-\gamma_j),& \mathrm{if}\ k=1,\\[4pt]
(-1)^{n-1}  \eta_k \prod_{j=1,j \not =k}^{n} h_j\prod_{j=1,j \not \in \{k-1,k\}}^{n-1} (1-\gamma_j),& \mathrm{if}\ k \in [\![2,n-1]\!],\\[4pt]
(-1)^{n-1} \prod_{j=1}^{n-1} h_j \prod_{j=1}^{n-2}(1-\gamma_j),& \mathrm{if}\ k=n,
\end{array}
\right.
\label{1fk}
\end{equation}
where $\forall k \in [\![2,n-1]\!]$, $\eta_k:=1-\gamma_{k-1}\gamma_k$.
\label{lemmalinecolumn}
\end{lemma}
\begin{proof}
First, we just remark that
\begin{equation}
f^k_n(0,{\bf u^0},\boldsymbol{\gamma})=f_{n-1}(0,{\bf u^0}(k),\boldsymbol{\gamma}(k)),
\label{1fkdet}
\end{equation}
where ${\bf u^0}(k) \in \mathbb{R}^{3n-3}$ is the vector ${\bf u^0}$, where $h_k$, $u_k$ and $v_k$ have been removed; and $\boldsymbol{\gamma}(k) \in \mathbb{R}^{n-1}$ is defined by
\begin{equation}
\boldsymbol{\gamma}(k):=
\left\{
\begin{array}{lr}
{}^{\top} (\gamma_2,\ldots,\gamma_{n-1}), & \mathrm{if}\ k=1,\\[4pt]
{}^{\top} (\gamma_1,\ldots,\gamma_{k-2},\gamma_{k-1} \gamma_k,\gamma_{k+1},\ldots,\gamma_{n-1}), & \mathrm{if}\ k \in [\![2,n-1]\!],\\[4pt]
{}^{\top} (\gamma_1,\ldots,\gamma_{n-2}),& \mathrm{if}\ k=n.
\end{array}
\right.
\label{1gammak}
\end{equation}
Then, the lemma \ref{lemmalinecolumn} is straightforward deduced, as a direct application of the lemma \ref{1lemmagamma1}. 
\end{proof}

\noindent
Furthermore, using the lemma \ref{1lemmagamma1} and reminding that $\boldsymbol{\gamma^i}$ is defined such that $\gamma_i=1$, then it is clear that
\begin{equation}
f_n(0,{\bf u^0},\boldsymbol{\gamma^i})=0.
\label{1fgammai}
\end{equation}
Consequently, all the derivatives of the $2^{\mathrm{nd}}$ order {\em Taylor} expansion of $f_n$, about the state $\lambda=0$, ${\bf u}={\bf u^0}$ and $\boldsymbol{\gamma}=\boldsymbol{\gamma}^{i}$, are deduced from the particular structure of $\mathsf{M}_x(\lambda,{\bf u^0},\boldsymbol{\gamma})$ and lemmata \ref{1lemmagamma1}--\ref{lemmalinecolumn}.

\begin{lemma}
The $1^{\mathrm{st}}$ order partial derivatives are such that
\begin{equation}
\left\{
\begin{array}{llr}
\frac{\partial f_n}{\partial \lambda}(0,{\bf u^{\boldsymbol{0}}},\boldsymbol{\gamma^i})&=0,&\\[4pt]
\frac{\partial f_n}{\partial u_j}(0,{\bf u^{\boldsymbol{0}}},\boldsymbol{\gamma^i})&=0, &\forall j \in [\![1,n]\!]\\[4pt]
\frac{\partial f_n}{\partial \gamma_i}(0,{\bf u^{\boldsymbol{0}}},\boldsymbol{\gamma^i})&=(-1)^{n+1} h_n h_i \prod_{j=1,j\not=i}^{n-1} h_j (1-\gamma_j).&
\end{array}
\right.
\label{1derivative1}
\end{equation}
\label{1lemmaderivee1}
\end{lemma}
\begin{proof}
Remarking
\begin{equation}
\left\{
\begin{array}{ll}
\frac{\partial f_n}{\partial \lambda}(\lambda,{\bf u},\boldsymbol{\gamma})=\sum_{k=1}^n -2(u_k-\lambda) f^k_n(\lambda,{\bf u},\boldsymbol{\gamma}),& \\[4pt]
\frac{\partial f_n}{\partial u_j}(\lambda,{\bf u},\boldsymbol{\gamma})=2(u_j-\lambda) f^k_n(\lambda,{\bf u},\boldsymbol{\gamma}),& \forall j \in [\![1,n]\!].
\end{array}
\right.
\label{1dfdlambda}
\end{equation}
and, according to the definition of ${\bf u^{\boldsymbol{0}}}$ in (\ref{1conddegen1}): $\forall k \in [\![1,n]\!]$, $u_k=0$, it is straightforward to prove the two $1^{\mathrm{st}}$ derivatives:
\begin{equation}
\left\{
\begin{array}{lr}
\frac{\partial f_n}{\partial \lambda}(0,{\bf u^{\boldsymbol{0}}},\boldsymbol{\gamma^i})=0,& \\[4pt]
\frac{\partial f_n}{\partial u_j}(0,{\bf u^{\boldsymbol{0}}},\boldsymbol{\gamma^i})=0, & \forall j \in [\![1,n]\!].
\end{array}
\right.
\label{1dfdlambda2}
\end{equation}

\noindent
The $3^{\mathrm{rd}}$ one is obtained remarking that in each column of $\mathsf{M}_x(\lambda,{\bf u},\boldsymbol{\gamma})$, the terms in $\boldsymbol{\gamma}$ are not correlated with the terms in $\lambda$ and ${\bf u}$. Then, the result is proved, applying the lemma \ref{1lemmagamma1}.
\end{proof}

\begin{lemma}
The $2^{\mathrm{nd}}$ order partial derivatives are
\begin{equation}
\left\{
\begin{array}{llr}
\frac{\partial^2 f_n}{\partial \lambda \partial \gamma_i}(0,{\bf u^{\boldsymbol{0}}},\boldsymbol{\gamma^i})&=0,&\\[4pt]
\frac{\partial^2 f_n}{\partial u_j \partial u_k}(0,{\bf u^{\boldsymbol{0}}},\boldsymbol{\gamma^i})&=0,& \forall j \not =k,\\[4pt]
\frac{\partial^2 f_n}{\partial u_j \partial \gamma_i}(0,{\bf u^{\boldsymbol{0}}},\boldsymbol{\gamma^i})&=0,& \forall j \in [\![2,n]\!],\\[4pt]
\frac{\partial^2 f_n}{\partial \gamma_i^2}(0,{\bf u^{\boldsymbol{0}}},\boldsymbol{\gamma^i})&=0,&\\[4pt]
\frac{\partial^2 f_n}{\partial u_j^2}(0,{\bf u^{\boldsymbol{0}}},\boldsymbol{\gamma^i})&=0, &\forall j \not \in \{i,i+1\},\\[4pt]
\frac{\partial^2 f_n}{\partial \lambda \partial u_j}(0,{\bf u^{\boldsymbol{0}}},\boldsymbol{\gamma^i})&=0,& \forall j \not \in \{i,i+1\},
\end{array}
\right.
\label{1derivative3}
\end{equation}
\begin{equation}
\left\{
\begin{array}{ll}
\frac{\partial^2 f_n}{\partial \lambda^2}(0,{\bf u^{\boldsymbol{0}}},\boldsymbol{\gamma^i})&=(h_i+h_{i+1})\kappa_i,\\[4pt]
\frac{\partial^2 f_n}{\partial \lambda \partial u_{i+1}}(0,{\bf u^{\boldsymbol{0}}},\boldsymbol{\gamma^i})&=-h_i \kappa_i,\\[4pt]
\frac{\partial^2 f_n}{\partial u_{i+1}^2}(0,{\bf u^{\boldsymbol{0}}},\boldsymbol{\gamma^i})&=h_i \kappa_i,\\[4pt]
\frac{\partial^2 f_n}{\partial \lambda \partial u_i}(0,{\bf u^{\boldsymbol{0}}},\boldsymbol{\gamma^i})&=-h_{i+1} \kappa_i,\\[4pt]
\frac{\partial^2 f_n}{\partial u_i^2}(0,{\bf u^{\boldsymbol{0}}},\boldsymbol{\gamma^i})&=h_{i+1}\kappa_i,
\end{array}
\right.
\label{1derivative2}
\end{equation}
where $\kappa_i:=2(-1)^{n-1}\prod_{j=1,j\not \in\{i,i+1\}}^n h_j\prod_{j=1,j\not=i}^{n-1} (1-\gamma_j)$.
\label{1derivee2}
\end{lemma}
\begin{proof}
We note that
\begin{equation}
\left\{
\begin{array}{lr}
\frac{\partial^2 f_n}{\partial \lambda^2}(\lambda,{\bf u},\boldsymbol{\gamma})= \sum_{k=1}^n 2 f^k_n(\lambda,{\bf u},\boldsymbol{\gamma}),& \\[4pt]
\frac{\partial^2 f_n}{\partial u_j^2}(\lambda,{\bf u},\boldsymbol{\gamma})=2 f^j_n(\lambda,{\bf u},\boldsymbol{\gamma}),& \forall j \in [\![1,n]\!],\\[4pt]
\frac{\partial^2 f_n}{\partial \lambda \partial u_j}(\lambda,{\bf u},\boldsymbol{\gamma})=-2 f^j_n(\lambda,{\bf u},\boldsymbol{\gamma}),& \forall j \in [\![1,n]\!],
\end{array}
\right.
\label{1sndderivative}
\end{equation}
and as it was noticed before
\begin{equation}
\left\{
\begin{array}{lr}
\frac{\partial^2 f_n}{\partial \gamma_i^2}(\lambda,{\bf u},\boldsymbol{\gamma})=\frac{\partial^2 f_n}{\partial \gamma_i^2}(0,{\bf u^{\boldsymbol{0}}},\boldsymbol{\gamma}),& \\[4pt]
\frac{\partial^2 f_n}{\partial \lambda \partial \gamma_i}(\lambda,{\bf u},\boldsymbol{\gamma})=0,& \\[4pt]
\frac{\partial^2 f_n}{\partial u_j \partial \gamma_i}(\lambda,{\bf u},\boldsymbol{\gamma})=0,& \forall j \in [\![1,n]\!].
\end{array}
\right.
\label{1deriv2gamma}
\end{equation}
Moreover, according to the definition of $\boldsymbol{\gamma^i}$ in (\ref{1conddegen21}--\ref{1conddegen22}) and the expression of $f^j_n(0,{\bf u^0},\boldsymbol{\gamma})$ in (\ref{1fk}),
\begin{equation}
\forall j \in [\![1,n]\!],\ f^j_n(0,{\bf u^0},\boldsymbol{\gamma^i})=
\left\{
\begin{array}{lr}
0, & \mathrm{if}\ j \not= i,i+1,\\[4pt]
\frac{1}{2} \kappa_i h_{i+1}, & \mathrm{if}\ j=i,\\[4pt]
\frac{1}{2} \kappa_i h_{i}, & \mathrm{if}\ j=i+1,
\end{array}
\right.
\label{1fkexpress}
\end{equation}
the other derivatives are directly calculated.
\end{proof}

\noindent
Using lemmata \ref{1lemmagamma1}--\ref{1derivee2}, the $2^{\mathrm{nd}}$ order {\em Taylor} expansion of $f$, about the state (\ref{1statedegen}), becomes
\begin{equation}
\begin{array}{ll}
0=& \kappa_i \bigg[ \frac{1}{2}(\gamma_i-1) h_i h_{i+1}  +\frac{1}{2}\lambda^2 (h_i+h_{i+1})  + \frac{1}{2}u_i^2 h_{i+1}  + \frac{1}{2}u_{i+1}^2 h_i \\
& - \lambda u_i h_{i+1}  - \lambda u_{i+1} h_{i}  \bigg] +o(\lambda^2,1-\gamma_i,\left(u_j^2\right)_{j \in [\![1,n]\!]}).\\
\end{array}
\label{1secondtaylorderivecalculee}
\end{equation}
Finally, as $\kappa_i \not=0$, we apply the implicit function theorem and obtain the expression (\ref{1approxlambdaipm}).
\end{proof}

\begin{theorem}
Let $i \in [\![1,n-1]\!]$, $({\bf u},\boldsymbol{\gamma}) \in \mathbb{R}^{3n} \times ]0,1[^{n-1}$ such that ${\bf h}>0$, and $1-\gamma_i$, $(u_j)_{j \in [\![1,n]\!]}$ and $(v_j)_{j \in [\![1,n]\!]}$ are sufficiently small. Then, a necessary condition of hyperbolicity for the model {\em (\ref{1systemmultilayer})} is
\begin{equation}
(u_{i+1}-u_i)^2+(v_{i+1}-v_i)^2 \le (h_i+h_{i+1})(1-\gamma_i).
\label{1condnechyp}
\end{equation}
\label{1corolcaluderiv1}
\end{theorem}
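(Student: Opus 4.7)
The strategy is to combine the asymptotic expansion of Proposition \ref{1thmapproxlambdaipm} with the rotational invariance \eqref{1rotinv} and then apply the hyperbolicity criterion of Proposition \ref{1propcrithypuseful}. By that criterion, hyperbolicity of \eqref{1systemmultilayer} requires $\sigma(\mathsf{A}(\mathbf{u}(X),\boldsymbol{\gamma},\theta))\subset\mathbb{R}$ for every $(X,\theta)\in\mathbb{R}^2\times[0,2\pi]$, and by \eqref{1rotinv} this spectrum equals $\sigma(\mathsf{A}_x(\mathsf{P}(\theta)\mathbf{u}(X),\boldsymbol{\gamma}))$. Hence it is enough to require that each $\lambda_i^{\pm}(\mathsf{P}(\theta)\mathbf{u},\boldsymbol{\gamma})$ be real for every $\theta$.

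First, I compute the effect of $\mathsf{P}(\theta)$ on $\mathbf{u}$: the heights $h_j$ are unchanged, while the $j$-th ``$x$-velocity'' after rotation becomes $\tilde{u}_j(\theta):=\cos(\theta)u_j+\sin(\theta)v_j$. Since the hypotheses guarantee that $1-\gamma_i$ and all $(u_j,v_j)$ are small, so are the rotated velocities $\tilde{u}_j(\theta)$, uniformly in $\theta$. I may therefore apply Proposition \ref{1thmapproxlambdaipm} to the rotated state, obtaining
\begin{equation*}
\lambda_i^{\pm}(\mathsf{P}(\theta)\mathbf{u},\boldsymbol{\gamma})=\frac{\tilde{u}_i h_{i+1}+\tilde{u}_{i+1}h_i}{h_i+h_{i+1}}\pm\sqrt{D(\theta)}+\mathcal{O}\bigl((1-\gamma_i),(\tilde{u}_j(\theta)^2)_{j\in[\![1,n]\!]}\bigr),
\end{equation*}
where $D(\theta):=\dfrac{h_i h_{i+1}}{h_i+h_{i+1}}\Bigl(1-\gamma_i-\dfrac{(\tilde{u}_{i+1}(\theta)-\tilde{u}_i(\theta))^2}{h_i+h_{i+1}}\Bigr)$.

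Second, I argue that reality of $\lambda_i^{\pm}(\mathsf{P}(\theta)\mathbf{u},\boldsymbol{\gamma})$ forces $D(\theta)\ge 0$ at leading order. Indeed, in the asymptotic regime where $1-\gamma_i$ and the $\tilde{u}_j(\theta)^2$ are all $\mathcal{O}(\varepsilon)$ with $\varepsilon\to 0$, the quantity $D(\theta)$ is $\mathcal{O}(\varepsilon)$ and $\sqrt{|D(\theta)|}$ is $\mathcal{O}(\varepsilon^{1/2})$, which strictly dominates the remainder $\mathcal{O}(\varepsilon)$ whenever $D(\theta)<0$ is bounded away from zero in the scale of $\varepsilon$. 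Consequently, if $D(\theta)<0$ at leading order, the two eigenvalues $\lambda_i^{\pm}(\mathsf{P}(\theta)\mathbf{u},\boldsymbol{\gamma})$ acquire a genuine non-real imaginary part of order $\sqrt{\varepsilon}$, contradicting hyperbolicity.

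Third, the condition $D(\theta)\ge 0$ reads $(h_i+h_{i+1})(1-\gamma_i)\ge[\cos(\theta)(u_{i+1}-u_i)+\sin(\theta)(v_{i+1}-v_i)]^2$, which must hold for every $\theta\in[0,2\pi]$. Taking the supremum via the identity \eqref{maxPthetau} yields exactly \eqref{1condnechyp}.

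The main technical delicacy, and the step I expect to require the most care, is the second one: making rigorous that the $\mathcal{O}(\varepsilon)$ remainder in the expansion cannot rescue the reality of $\lambda_i^{\pm}$ when the leading-order discriminant $D(\theta)$ is negative. This can be settled by revisiting the implicit function argument underlying Proposition \ref{1thmapproxlambdaipm}: the two local roots of $f_n(\cdot,\mathbf{u},\boldsymbol{\gamma})$ near $\bar{u}$ are obtained as a smooth perturbation of the roots of the quadratic leading polynomial in $\lambda-\bar u$, and their imaginary parts depend continuously on $(1-\gamma_i,u_j,v_j)$; therefore a negative leading discriminant produces non-real eigenvalues in a full neighbourhood of the reference state, ruling out hyperbolicity and establishing \eqref{1condnechyp} as a necessary condition.
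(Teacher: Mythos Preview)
Your proof is correct and follows essentially the same approach as the paper: apply the rotational invariance \eqref{1rotinv} together with Proposition \ref{1thmapproxlambdaipm} to obtain the expansion \eqref{1approxlambdaipmtheta}, require the discriminant to be nonnegative for all $\theta$, and conclude via \eqref{maxPthetau}. Your additional paragraph justifying that the $\mathcal{O}(\varepsilon)$ remainder cannot restore reality when the leading discriminant is negative is more explicit than what the paper writes, but the underlying argument is the same.
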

\begin{proof}
To verify the hyperbolicity of the system (\ref{1systemmultilayer}), all the eigenvalues of $\mathsf{A}({\bf u},\boldsymbol{\gamma},\theta)$ need to be real. According to the rotational invariance (\ref{1rotinv}) and the proposition \ref{1thmapproxlambdaipm}, if $1-\gamma_i$, $(u_j)_{j \in [\![1,n]\!]}$ and $(v_j)_{j \in [\![1,n]\!]}$ are sufficiently small, the asymptotic expansion of $\lambda_i^{\pm}(\mathsf{P}(\theta){\bf u},\boldsymbol{\gamma}) \in \sigma \left( \mathsf{A}({\bf u},\boldsymbol{\gamma},\theta \right)$ is
\begin{equation}
\begin{array}{ll}
\lambda_i^{\pm}(\mathsf{P}(\theta){\bf u},\boldsymbol{\gamma})=& \cos(\theta) \frac{u_i h_{i+1}+u_{i+1}h_i}{h_i+h_{i+1}}+ \sin(\theta) \frac{v_i h_{i+1}+v_{i+1}h_i}{h_i+h_{i+1}}\\[4pt]
& \pm \left[\frac{ h_i h_{i+1}}{h_i+h_{i+1}}\left(1-\gamma_i- \frac{(\cos(\theta) (u_{i+1}-u_i)+\sin(\theta) (v_{i+1}-v_i))^2}{h_i+h_{i+1}}\right)\right]^{\frac{1}{2}}\\[6pt]
&  +\mathcal{O}((1-\gamma_i),(u_j^2)_{j \in [\![1,n]\!]}).
\end{array}
\label{1approxlambdaipmtheta}
\end{equation}
\noindent
Then, as $h_i >0$ for all $i \in [\![1,n]\!]$, a necessary condition to have $\lambda_i^{\pm}(\mathsf{P}(\theta){\bf u},\boldsymbol{\gamma}) \in \mathbb{R}$, for all $\theta \in [0,2\pi]$ is
\begin{equation}
\forall \theta \in [0,2\pi],\  1-\gamma_i- \frac{(\cos(\theta) (u_{i+1}-u_i)+\sin(\theta) (v_{i+1}-v_i))^2}{h_i+h_{i+1}} \ge 0.
\label{1condneclambdarealtheta}
\end{equation}
Finally, using (\ref{maxPthetau}), the necessary condition of hyperbolicity \eqref{1condnechyp} is obtained.
\end{proof}

\noindent
With the asymptotic expansion (\ref{1approxlambdaipm}), we can deduce an asymptotic expansion of the eigenvectors associated to $\lambda_i^{\pm}({\bf u},\boldsymbol{\gamma})$.

\begin{proposition}
Let $i \in [\![1,n-1]\!]$, $({\bf u},\boldsymbol{\gamma}) \in \mathbb{R}^{3n} \times ]0,1[^{n-1}$ such that $1-\gamma_i$ and $(| u_j | )_{j \in [\![1,n]\!]}$ are sufficiently small. Then, the asymptotic expansion of the right eigenvector associated to $\lambda_i^{\pm}({\bf u},\boldsymbol{\gamma})$, with precision in $\mathcal{O}((1-\gamma_i),(u_j^2)_{j \in [\![1,n]\!]})$, is such that
\begin{equation}
\begin{array}{ll}
\boldsymbol{r_x^{\lambda_{i}^{\pm}}}({\bf u},\boldsymbol{\gamma})= & \boldsymbol{ e_{i}} - \boldsymbol{ e_{i+1}}+\frac{u_{i+1}-u_i}{h_i+h_{i+1}}({\boldsymbol{e_{n+i}}}+{\boldsymbol{e_{n+i+1}}})\\[4pt]
& \pm \left[ \frac{h_i h_{i+1}}{h_i + h_{i+1}} \left( 1-\gamma_i - \frac{(u_{i+1} - u_i)^2}{h_i + h_{i+1}} \right) \right]^{\frac{1}{2}}(\frac{{\boldsymbol{e_{n+i}}}}{h_i}-\frac{{\boldsymbol{e_{n+i+1}}}}{h_{i+1}})\\[6pt]
& +\mathcal{O}((1-\gamma_i),(u_j^2)_{j \in [\![1,n]\!]}),
\end{array}
\label{1eigvectlambdipmr}
\end{equation}
and the asymptotic expansion of the left eigenvector associated to $\lambda_i^{\pm}({\bf u},\boldsymbol{\gamma})$, with precision in $\mathcal{O}((1-\gamma_i),(u_j^2)_{j \in [\![1,n]\!]})$, is such that
\begin{equation}
\begin{array}{ll}
\boldsymbol{l_x^{\lambda_{i}^{\pm}}}({\bf u},\boldsymbol{\gamma})= & {}^{\top}\boldsymbol{ e_{n+i}} - {}^{\top}\boldsymbol{ e_{n+i+1}}+\frac{u_{i+1}-u_i}{h_i+h_{i+1}}({}^{\top}{\boldsymbol{e_{i}}}+{}^{\top}{\boldsymbol{e_{i+1}}})\\[4pt]
& \pm \left[ \frac{h_i h_{i+1}}{h_i + h_{i+1}} \left( 1-\gamma_i - \frac{(u_{i+1} - u_i)^2}{h_i + h_{i+1}} \right) \right]^{\frac{1}{2}}(\frac{{}^{\top}{\boldsymbol{e_{i}}}}{h_i}-\frac{{}^{\top}{\boldsymbol{e_{i+1}}}}{h_{i+1}})\\[6pt]
& +\mathcal{O}((1-\gamma_i),(u_j^2)_{j \in [\![1,n]\!]}).
\end{array}
\label{1eigvectlambdipml}
\end{equation}
\label{1thmeigvectlambdipm}
\end{proposition}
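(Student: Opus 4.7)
The plan is to construct the eigenvectors explicitly by exploiting the block structure of $\mathsf{A}_x^1({\bf u},\boldsymbol{\gamma})$ together with the asymptotic expansion of $\lambda_i^{\pm}({\bf u},\boldsymbol{\gamma})$ obtained in Proposition \ref{1thmapproxlambdaipm}. Writing a right eigenvector as $\boldsymbol{r} = \sum_{k=1}^n a_k \boldsymbol{e_k} + \sum_{k=1}^n b_k \boldsymbol{e_{n+k}}$, the equation $\mathsf{A}_x^1 \boldsymbol{r} = \lambda_i^{\pm} \boldsymbol{r}$ splits into $(u_k - \lambda_i^{\pm}) a_k + h_k b_k = 0$ (top block) and $\sum_j \alpha_{k,j} a_j + (u_k - \lambda_i^{\pm}) b_k = 0$ (bottom block). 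The top block yields $b_k = (\lambda_i^{\pm} - u_k) a_k / h_k$, so it suffices to determine the $h$-components $(a_k)_{k \in [\![1,n]\!]}$.

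First, I would fix the normalization $a_i = 1$ and $a_{i+1} = -1$, motivated by the leading-order eigenvector of \eqref{1vi2}, and argue that $a_k = \mathcal{O}((1-\gamma_i),(u_j^2)_{j \in [\![1,n]\!]})$ for every $k \notin \{i,i+1\}$. This rests on two facts about the reference state $({\bf u^0}, \boldsymbol{\gamma^i})$: rows $i$ and $i+1$ of $\mathsf{\Gamma}$ coincide (a direct consequence of \eqref{1alphaik} at $\gamma_i=1$, giving $\mathsf{\Gamma}(\boldsymbol{e_i}-\boldsymbol{e_{i+1}})=0$), and the submatrix of $\mathsf{\Gamma}\mathsf{H}$ obtained by deleting rows and columns $i,i+1$ is non-singular because $\gamma_j \neq 1$ for $j\neq i$ (a consequence analogous to Lemma \ref{lemmalinecolumn}). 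Standard non-degenerate perturbation on that complementary subspace then supplies the off-subspace corrections at the claimed order.

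Next, the coefficients $b_i$ and $b_{i+1}$ are extracted directly from the expansion \eqref{1approxlambdaipm} via $b_k = (\lambda_i^{\pm}-u_k) a_k/h_k$. A short computation yields
\begin{equation*}
\lambda_i^{\pm} - u_i = \frac{h_i(u_{i+1}-u_i)}{h_i+h_{i+1}} \pm \left[\frac{h_i h_{i+1}}{h_i+h_{i+1}}\left(1-\gamma_i - \frac{(u_{i+1}-u_i)^2}{h_i+h_{i+1}}\right)\right]^{1/2} + \mathcal{O}\bigl((1-\gamma_i),(u_j^2)_{j}\bigr),
\end{equation*}
and the symmetric expression for $\lambda_i^{\pm} - u_{i+1}$ with $h_i$ replaced by $-h_{i+1}$. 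Dividing by $h_i$ and by $-h_{i+1}$ respectively reproduces the coefficients of $\boldsymbol{e_{n+i}}$ and $\boldsymbol{e_{n+i+1}}$ in \eqref{1eigvectlambdipmr}. The bottom-block equations are then satisfied at the required precision because $\lambda_i^{\pm}$ solves $f_n(\lambda_i^{\pm},{\bf u},\boldsymbol{\gamma})=0$ at that precision.

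For the left eigenvector I would mirror the argument using $(\mathsf{A}_x^1)^{\top}$, whose off-diagonal blocks are $\mathsf{\Gamma}^{\top}$ (top-right) and $\mathsf{H}$ (bottom-left). At reference, columns $i$ and $i+1$ of $\mathsf{\Gamma}$ also coincide, so $\mathsf{\Gamma}^{\top}(\boldsymbol{e_i}-\boldsymbol{e_{i+1}})=0$; the block elimination now produces $l_k = (\lambda_i^{\pm} - u_k) m_k / h_k$ and, by the identical algebra, yields \eqref{1eigvectlambdipml}. The main obstacle throughout is the defective nature of the reference eigenvalue: at $({\bf u^0},\boldsymbol{\gamma^i})$ the value $\lambda_i^{\pm}=0$ has algebraic multiplicity two but geometric multiplicity one, so the splitting under the perturbation $(1-\gamma_i, \boldsymbol{u})$ is governed by a Jordan block of size two and produces eigenvector corrections of order $\sqrt{1-\gamma_i}$ rather than $1-\gamma_i$; this is precisely what the $\pm$ branches and the square-root factor encode in \eqref{1eigvectlambdipmr}--\eqref{1eigvectlambdipml}.
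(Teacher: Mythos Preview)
Your approach is correct and takes a genuinely different route from the paper. The paper parametrizes $(u_{i+1}-u_i)=\pi_i\sqrt{1-\gamma_i}$ and $\lambda_i^\pm=u_i+\chi_i^\pm\sqrt{1-\gamma_i}$, decomposes $\mathsf{A}_x({\bf u},\boldsymbol{\gamma})=\mathsf{A}_x({\bf u}^{\boldsymbol{i}},\boldsymbol{\gamma^i})+\pi_i\sqrt{1-\gamma_i}\,\mathsf{A}_x^{i,1}+(1-\gamma_i)\mathsf{A}_x^{i,2}$, writes the eigenvector as $\boldsymbol{r_{i,0}}+\sqrt{1-\gamma_i}\,\boldsymbol{r_{i,1}^\pm}$, and solves the matching condition \eqref{1eigsystem} for the correction $\boldsymbol{r_{i,1}^\pm}$. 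You instead eliminate the $u$-block via $b_k=(\lambda_i^\pm-u_k)a_k/h_k$, pin down the $h$-components through the kernel of $\mathsf{\Gamma}$ at $\gamma_i=1$, and read off $b_i,b_{i+1}$ directly from the expansion \eqref{1approxlambdaipm}. Your route is more elementary and avoids introducing the auxiliary perturbation matrices; the paper's route is more systematic and is the template reused verbatim in Propositions \ref{1proplambdanexpandeigenvector} and \ref{1thmeigvectlambdipm2} for the full asymptotic regime. Two small slips: you say ``rows $i$ and $i+1$ of $\mathsf{\Gamma}$ coincide'' to justify $\mathsf{\Gamma}(\boldsymbol{e_i}-\boldsymbol{e_{i+1}})=0$, but that needs the \emph{columns} to coincide (they do at $\gamma_i=1$; in fact both rows and columns agree); and the non-degeneracy you actually need is not the invertibility of the $(n{-}2)\times(n{-}2)$ submatrix but that $\mathsf{\Gamma}|_{\gamma_i=1}$ has rank exactly $n-1$, which follows from $\det\mathsf{\Gamma}=\prod_{j}(1-\gamma_j)$ (a consequence of Lemma \ref{1lemmagamma1}) together with $\gamma_j\neq 1$ for $j\neq i$.
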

\begin{proof}
We consider $\lambda_i^{\pm}({\bf u},\boldsymbol{\gamma}) \in \mathbb{R}$:
\begin{equation}
(u_{i+1}-u_i)^2 \le (h_{i+1}+h_i)(1-\gamma_i).
\label{1condhyplambdaipm2}
\end{equation}
Then, we define $\pi_i \in [-(h_{i+1}+h_i)^{\frac{1}{2}},(h_{i+1}+h_i)^{\frac{1}{2}}]$ such that
\begin{equation}
(u_{i+1}-u_i)^2= \pi_i^2 (1-\gamma_i),
\label{1defpi}
\end{equation}

\begin{equation}
\lambda_i^{\pm}({\bf u},\boldsymbol{\gamma})=u_i+\chi_i^{\pm} (1-\gamma_i)^{\frac{1}{2}} +o((1-\gamma_i)^{\frac{1}{2}},\left(u_j \right)_{j \in [\![1,n]\!]} ),
\label{1reformullambdai}
\end{equation}
where $\chi_i^{\pm} := \pi_i \frac{h_i}{h_i+h_{i+1}} \pm \left[ \frac{h_i h_{i+1}}{h_i +h_{i+1}}\left( 1-\frac{\pi_i^2}{h_i +h_{i+1}} \right)  \right]^{\frac{1}{2}}$ and we will expand the eigenvectors $\boldsymbol{r_x^{\lambda_i^{\pm}}}({\bf u},\boldsymbol{\gamma})$ and $\boldsymbol{l_x^{\lambda_i^{\pm}}}({\bf u},\boldsymbol{\gamma})$ as
\begin{equation}
\forall  i \in [\![1,n-1]\!],\
\left\{
\begin{array}{l}
\boldsymbol{r_x^{\lambda_i^{\pm}}}({\bf u},\boldsymbol{\gamma})= \boldsymbol{r_{i,0}}({\bf u},\boldsymbol{\gamma})+(1-\gamma_i)^{\frac{1}{2}} \boldsymbol{r_{i,1}^{\pm}}({\bf u},\boldsymbol{\gamma}),\\[6pt]
\boldsymbol{l_x^{\lambda_i^{\pm}}}({\bf u},\boldsymbol{\gamma})= \boldsymbol{l_{i,0}}({\bf u},\boldsymbol{\gamma})+(1-\gamma_i)^{\frac{1}{2}} \boldsymbol{l_{i,1}^{\pm}}({\bf u},\boldsymbol{\gamma}),
\end{array}
\right.
\label{1eigvectoexpand}
\end{equation}
\noindent
where
\begin{equation}
\left\{
\begin{array}{l}
\boldsymbol{r_{i,0}}({\bf u},\boldsymbol{\gamma}):=\boldsymbol{ e_i} - \boldsymbol{ e_{i+1}},\\[4pt]
\boldsymbol{l_{i,0}}({\bf u},\boldsymbol{\gamma}):={}^{\top} \boldsymbol{ e_{n+i}} - {}^{\top} \boldsymbol{ e_{n+i+1}}.
\end{array}
\right.
\end{equation}

\noindent
Moreover, we have
\begin{equation}
\mathsf{A}_x({\bf u},\boldsymbol{\gamma})=\mathsf{A}_x({\bf u}^{\boldsymbol{i}},\boldsymbol{\gamma^i})+(u_{i+1}-u_i) \mathsf{A}_x^{i,1}+(1-\gamma_i)\mathsf{A}_x^{i,2}(\boldsymbol{\gamma}),
\label{1equalityA1A2}
\end{equation}
where the $3n \times 3n$ matrices, $\mathsf{A}_x^{i,1}:=\left[ \mathsf{A}_{l,k}^{i,1} \right]_{(l,k) \in [\![1,n]\!]^2}$ and $\mathsf{A}_x^{i,2}(\boldsymbol{\gamma}):=\left[ \mathsf{A}_{l,k}^{i,2} \right]_{(l,k) \in [\![1,n]\!]^2}$, are defined by
\begin{equation}
\mathsf{A}_{l,k}^{i,1}:=
\left\{
\begin{array}{ll}
1,& \mathrm{if}\ l=k\ \mathrm{and}\ l \in \{p n+ i+1 / p \in [\![0,2]\!]\},\\
0,& \mathrm{otherwise},
\end{array}
\right.
\label{1defA1}
\end{equation}

\begin{equation}
\mathsf{A}_{l,k}^{i,2}:=
\left\{
\begin{array}{ll}
0,& \mathrm{if}\ l \le n+i\ \mathrm{or}\ l \ge 2n+1,\\
0,& \mathrm{if}\ k \ge n+1,\\
0,& \mathrm{if}\ n+k \ge l,\\
-\alpha_{l-n-1,k},& \mathrm{otherwise},
\end{array}
\right.
\label{1defA2}
\end{equation}
 and
\begin{equation}
\mathsf{A}_x({\bf u},\boldsymbol{\gamma})=\mathsf{A}_x({\bf u}^{\boldsymbol{i}},\boldsymbol{\gamma^i})+\pi_i (1-\gamma_i)^{\frac{1}{2}} \mathsf{A}_x^{i,1}+(1-\gamma_i)\mathsf{A}_x^{i,2}(\boldsymbol{\gamma}).
\label{1equalityA1A22}
\end{equation}

\noindent
In the asymptotic regime $0<1-\gamma_i \ll 1$ and for every $j \in [\![1,n]\!], | u_j | \ll 1$, $\boldsymbol{r_x^{\lambda_i^{\pm}}}({\bf u},\boldsymbol{\gamma})$ and $\boldsymbol{l_x^{\lambda_i^{\pm}}}({\bf u},\boldsymbol{\gamma})$ are respectively the approximations of the right and left eigenvectors associated to $\lambda_x^{\pm}({\bf u},\boldsymbol{\gamma})$, with precision $\mathcal{O}(1-\gamma_i)$ if and only if $\boldsymbol{r_{i,1}^{\pm}}({\bf u},\boldsymbol{\gamma})$ and $\boldsymbol{l_{i,1}^{\pm}}({\bf u},\boldsymbol{\gamma})$ verify

\begin{equation}
\left\{
\begin{array}{l}
\big(\pi_i \mathsf{A}_x^{i,1}-\chi_i^{\pm} \mathsf{I}_{3n}\big)\boldsymbol{r_{i,0}}({\bf u},\boldsymbol{\gamma})=-\big(\mathsf{A}_x({\bf u}^{\boldsymbol{i}},\boldsymbol{\gamma^i})-u_i \mathsf{I}_{3n}\big)\boldsymbol{r_{i,1}^{\pm}}({\bf u},\boldsymbol{\gamma}),\\[6pt]
\boldsymbol{l_{i,0}}({\bf u},\boldsymbol{\gamma}) \big(\pi_i \mathsf{A}_x^{i,1}-\chi_i^{\pm} \mathsf{I}_{3n}\big)=-\boldsymbol{l_{i,1}^{\pm}}({\bf u},\boldsymbol{\gamma})\big(\mathsf{A}_x({\bf u}^{\boldsymbol{i}},\boldsymbol{\gamma^i})-u_i \mathsf{I}_{3n}\big),
\end{array}
\right.
\label{1eigsystem}
\end{equation}
\noindent
Finally, a solution of \eqref{1eigsystem} is
\begin{equation}
\left\{
\begin{array}{l}
\boldsymbol{r_{i,1}^{\pm}}({\bf u},\boldsymbol{\gamma})=\frac{\chi_i^{\pm}}{h_i}\boldsymbol{ e_{n+i}}+\frac{\pi_i-\chi_i^{\pm}}{h_{i+1}}\boldsymbol{ e_{n+i+1}},\\[6pt]
\boldsymbol{l_{i,1}^{\pm}}({\bf u},\boldsymbol{\gamma})=\frac{\chi_i^{\pm}}{h_i} {}^{\top} \boldsymbol{ e_{i}} - +\frac{\pi_i-\chi_i^{\pm}}{h_{i+1}} {}^{\top} \boldsymbol{ e_{i+1}},
\end{array}
\right.
\label{1eigsystemsolution}
\end{equation}
\noindent
and the approximations of the eigenvectors given in proposition \ref{1thmeigvectlambdipm} are verified.
\end{proof}

\noindent
To sum this section up, we succeeded to split the eigenvalues $\lambda_i^{\pm}$ into two distinct ones, for one $i \in [\![1,n-1]\!]$, in the asymptotic $1-\gamma_i \ll 1$ and for all $j \in [\![1,n]\!], |u_j | \ll 1$. Moreover, we managed to get approximations of the corresponding left and right eigenvectors. However, this study was done just for one $i \in [\![1,n]\!]$ and need to be proved for each one to deduce the diagonalizability of $\mathsf{A}({\bf u},\boldsymbol{\gamma},\theta)$ and the local well-posedness of the system \eqref{1systemmultilayer}.

\section{Asymptotic expansion of all the eigenvalues}
In the previous section, a bifurcation of one couple of eigenvalues (associated with one interface liquide/liquid) has been obtained, in the regime of the merger of two layers: for the interface where the density ratio is the closest to $1$, we managed to prove there exist two distinct eigenvalues with distinct eigenvectors. However, this analysis is not possible anymore if all the density ratios tend to $1$, without distinction on how they tend to. In this section, we will prove the expressions of the asymptotic expansions of all the eigenvalues of $\mathsf{A}({\bf u},\boldsymbol{\gamma})$ and give a criterion of hyperbolicity of the system \eqref{1systemmultilayer}, under a regime which distinguish how these density ratios tend to $1$.

\subsection{The asymptotic regime}
In order to get an asymptotic expansion of the eigenvalues and the eigenvectors, it is necessary to assume there exist a small parameter $\epsilon > 0$ and an injective function $\sigma : [\![1,n-1]\!] \rightarrow \mathbb{R}_+^*$ such that for all $i \in [\![1,n-1]\!]$
\begin{equation}
1-\gamma_i=\epsilon^{\sigma(i)}.
\label{1hypasympt}
\end{equation}
\noindent
Without loss of generality, we consider $\epsilon$ is such that
\begin{equation}
\min_{i \in [\![1,n-1]\!]} \sigma(i) = 1.
\end{equation}
\noindent
Moreover, we set the next notations:
\begin{equation}
\left\{
\begin{array}{l}
u_{i+1}-u_i:=\pi_i \epsilon^{\frac{\sigma(i)}{2}},\\[4pt]
h_{i}:=\varpi_i h_{i+1}.
\end{array}
\right.
\label{1hypasympt3}
\end{equation}
Another assumption will be made on the parameters $\boldsymbol{\pi}:=(\pi_i)_{i \in [\![1,n-1]\!]} \in \mathbb{R}^{n-1}$ and $\boldsymbol{\varpi}:=(\varpi_i)_{i \in [\![1,n-1]\!]} \in \mathbb{R}_+^{*\ n-1}$:
\begin{equation}
\forall j \in [\![1,n-1]\!],\
\left\{
\begin{array}{l}
\pi_j^2= \mathcal{O}(h_{j+1} + h_{j}),\\[6pt]
\varpi_j=\mathcal{O}(1).
\end{array}
\right.
\label{1hypasympt2}
\end{equation}

\noindent
{\em Remark:} The assumption on $\boldsymbol{\pi}$ is in agreement with the necessary condition of hyperbolicity \eqref{1corolcaluderiv1}: we expect to get this type of condition for the hyperbolicity of the complete model. However, the assumption on $\boldsymbol{\varpi}$ is a particular case, where there is no preponderant layer.

\noindent
The density-stratification \eqref{1hypasympt} will permit to consider the multi-layer system as the two-layer system. We explain in this section how we figure it out: That is why we define the next subsets of $\mathbb{N}^*$, which provide a partition of $[\![1,n]\!]$:
\begin{equation}
\begin{array}{rl}
\Sigma_i^-&:=\{1 \le j \le i\ / \sigma([\![j,i]\!]) \subset\ [\sigma(i),+\infty[\},\\[4pt]
\Sigma_i^+&:=\{n \ge j > i\ / \sigma([\![i,j-1]\!]) \subset\ [\sigma(i),+\infty[\},\\[4pt]
\overline{\Sigma}_{i,1}^-&:=  \{j \not \in \Sigma_i^- \cup \Sigma_i^+ \ / \ \sigma(j) > \sigma(i)\ \mathrm{and}\ 1 \le j<i\},\\[4pt]
\overline{\Sigma}_{i,1}^+&:=  \{j \not \in \Sigma_i^- \cup \Sigma_i^+ \ / \ \sigma(j) > \sigma(i)\ \mathrm{and}\ n \ge j>i\},\\[4pt]
\overline{\Sigma}_{i,2}^-&:=  \{j \not \in \Sigma_i^- \cup \Sigma_i^+ \ / \ \sigma(j) < \sigma(i)\ \mathrm{and}\ 1 \le j<i\},\\[4pt]
\overline{\Sigma}_{i,2}^+&:=  \{j \not \in \Sigma_i^- \cup \Sigma_i^+ \ / \ \sigma(j) < \sigma(i)\ \mathrm{and}\ n \ge j>i\},
\end{array}
\label{1Sigmai}
\end{equation}
and
\begin{equation}
\left\{
\begin{array}{l}
m_i^-:=\mathrm{min}\ \Sigma_i^-,\\[4pt]
m_i^+:=\mathrm{max}\ \Sigma_i^+.
\end{array}
\right.
\label{1maxminSigmai}
\end{equation}

\noindent
Using the implicit function theorem and assumption \eqref{1hypasympt}, we will prove the eigenvalues associated to the interface $i$, $\lambda_i^{\pm}$, are influenced just by the layers with indices in 
\begin{equation}
\Sigma_i^- \cup \Sigma_i^+=[\![m_i^-,m_i^+]\!].
\end{equation}

\noindent
{\em Remark:} The interpretation of the indices $m_i^-$ and $m_i^+$ is: coming from the interface $i$, the interface $m_i^- -1$ is the first one, above the interface $i$, with a density ratio smaller than $\gamma_i$; the interface $m_i^+$ is the first one, below the interface $i$, with a density ratio smaller than $\gamma_i$:
\begin{equation}
\begin{array}{l}
m_i^-:=\mathrm{max} \{1 \le j \le i / \gamma_j \ge \gamma_i  \} ,\\[4pt]
m_i^+:=\mathrm{min} \{n \ge j > i / \gamma_{j-1} \ge \gamma_i  \}.
\end{array}
\label{1maxminSigmai2}
\end{equation}
\noindent
Then, in respect of the interface $i$, the interface $m_i^- -1$ has the same behavior as a free-surface and the interface $m_i^+$ as a bathymetry.

\subsection{The barotropic eigenvalues}
When all the densities and the velocities are equal, the barotropic eigenvalues degenerate to eigenvalues with simple multiplicity, so the asymptotic expansion is not necessary to prove the diagonalizability of the matrix $\mathsf{A}({\bf u},\boldsymbol{\gamma},\theta)$. However, using classical analysis, we can obtain more accurate expression of these eigenvalues, as it is proved in the next proposition. Thus, we may know the order of the perturbation under the asymptotic regime \eqref{1hypasympt}.

\begin{proposition}
Let $({\bf u},\boldsymbol{\gamma}) \in \mathbb{R}^{3n} \times ]0,1[^{n-1}$ such that $(1-\gamma_j)_{j \in [\![1,n-1]\!]}$ and $(u_j)_{j \in [\![1,n]\!]}$ are sufficiently small. Then, an asymptotic expansion of $\lambda_n^{\pm}({\bf u},\boldsymbol{\gamma})$ is
\begin{equation}
\begin{array}{ll}
\lambda_n^{\pm}({\bf u},\boldsymbol{\gamma})=& \bar{u} \pm \bigg[ \sqrt{H}  - \frac{1}{2 H^{\frac{3}{2}}} \sum_{j=1}^{n-1} (1-\gamma_j) \big( \sum_{k=1}^j h_k \big) \big(\sum_{k=j+1}^n h_k \big) \bigg]\\[10pt]
& +\mathcal{O}(\left((1-\gamma_j)^2\right)_{j \in [\![1,n-1]\!]}, \left((1-\gamma_j)u_k\right)_{(j,k) \in [\![1,n-1]\!] \times [\![1,n]\!]}, \left(u_k^2\right)_{k \in [\![1,n]\!]}).
\end{array}
\label{1approxlambdanpm}
\end{equation}
\label{1proplambdanexpand}
\end{proposition}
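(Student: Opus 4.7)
The plan is to apply the implicit function theorem to the equation $f_n(\lambda,\mathbf{u},\boldsymbol{\gamma})=0$ around the single-layer reference state
$\lambda=\pm\sqrt{H},\ \mathbf{u}=\mathbf{u}^{\boldsymbol{0}},\ \boldsymbol{\gamma}=\boldsymbol{\gamma}^{\boldsymbol{0}}:={}^\top(1,\ldots,1)$
where, by \S\ref{1sectioncase1}, the barotropic values $\pm\sqrt{H}$ are simple roots (both algebraic and geometric multiplicity one). In fact, at this state $\mathsf{M}_x(\lambda,\mathbf{u}^{\boldsymbol{0}},\boldsymbol{\gamma}^{\boldsymbol{0}})=\lambda^2\mathsf{I}_n-\boldsymbol{1}\mathbf{h}^{\top}$ with $\mathbf{h}={}^\top(h_1,\ldots,h_n)$, so $f_n(\lambda,\mathbf{u}^{\boldsymbol{0}},\boldsymbol{\gamma}^{\boldsymbol{0}})=\lambda^{2(n-1)}(\lambda^2-H)$, giving the crucial non-degeneracy $\partial_\lambda f_n\big|_{\mathrm{ref}}=\pm\,2\,H^{n-1/2}\ne 0$. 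The implicit function theorem then produces a smooth branch $\lambda_n^\pm(\mathbf{u},\boldsymbol{\gamma})$, and it remains to compute its first-order Taylor expansion.

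Following the method of Proposition \ref{1thmapproxlambdaipm}, I reduce to $\bar u=0$ by setting $\tilde u_i:=u_i-\bar u$ and $\tilde\lambda:=\lambda-\bar u$ (so that $\sum_i h_i\tilde u_i=0$), then Taylor-expand $f_n$ at the reference state. Using $\partial_{u_j}f_n=2(u_j-\lambda)f_n^j$ together with the elementary computation $f_n^j(\pm\sqrt{H},\mathbf{u}^{\boldsymbol{0}},\boldsymbol{\gamma}^{\boldsymbol{0}})=H^{n-2}h_j$, I obtain $\partial_{u_j}f_n\big|_{\mathrm{ref}}=\mp 2H^{n-3/2}h_j$, whose sum against $(\tilde u_j)$ vanishes by the constraint $\sum_j h_j\tilde u_j=0$. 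Thus the only first-order contribution left to compute is the one in $(1-\gamma_j)_j$, and the whole task is reduced to evaluating $\partial_{\gamma_j}f_n$ at the reference state.

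The heart of the proof is that computation, which I carry out through Jacobi's formula $\partial_{\gamma_j}\det(\mathsf{M}_x)=\mathrm{tr}\bigl(\mathrm{adj}(\mathsf{M}_x)\,\partial_{\gamma_j}\mathsf{M}_x\bigr)$. At the reference state, $\mathsf{M}_x=H\mathsf{I}_n-\boldsymbol{1}\mathbf{h}^{\top}$ is rank $n-1$ with right null vector $\boldsymbol{r}=\boldsymbol{1}$, left null row $\boldsymbol{\ell}^{\top}=\mathbf{h}^{\top}/H$ (normalised so that $\boldsymbol{\ell}^{\top}\boldsymbol{r}=1$) and non-zero eigenvalue $H$ of multiplicity $n-1$, so that the adjugate has the explicit rank-one form $\mathrm{adj}(\mathsf{M}_x)=H^{n-1}\,\boldsymbol{r}\,\boldsymbol{\ell}^{\top}$. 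Combining $\alpha_{i,k}=\prod_{l=k}^{i-1}\gamma_l$ for $k<i$ with $\boldsymbol{\gamma}^{\boldsymbol{0}}=\boldsymbol{1}$ gives $\partial_{\gamma_j}\alpha_{i,k}\big|_{\boldsymbol{\gamma}^{\boldsymbol{0}}}=\mathbf{1}_{k\le j<i}$, so $\partial_{\gamma_j}\mathsf{M}_x=-(\partial_{\gamma_j}\mathsf{\Gamma})\mathsf{H}$ and a direct index computation gives
\begin{equation}
\boldsymbol{\ell}^{\top}\bigl(\partial_{\gamma_j}\mathsf{M}_x\bigr)\boldsymbol{r}=-\frac{1}{H}\Big(\sum_{k=1}^{j}h_k\Big)\Big(\sum_{k=j+1}^{n}h_k\Big),
\end{equation}
hence $\partial_{\gamma_j}f_n\big|_{\mathrm{ref}}=-H^{n-2}\bigl(\sum_{k\le j}h_k\bigr)\bigl(\sum_{k>j}h_k\bigr)$.

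Finally, feeding $\partial_\lambda f_n\big|_{\mathrm{ref}}=\pm 2H^{n-1/2}$ and the preceding expression into the implicit function theorem yields
\begin{equation}
\tilde\lambda_n^\pm=\pm\sqrt{H}\,-\,\frac{\sum_j(\gamma_j-1)\,\partial_{\gamma_j}f_n\big|_{\mathrm{ref}}}{\pm 2H^{n-1/2}}+\mathcal{O}(\cdots)=\pm\sqrt{H}\mp\frac{1}{2H^{3/2}}\sum_{j=1}^{n-1}(1-\gamma_j)\Big(\sum_{k=1}^{j}h_k\Big)\Big(\sum_{k=j+1}^{n}h_k\Big)+\mathcal{O}(\cdots),
\end{equation}
and adding $\bar u$ produces \eqref{1approxlambdanpm}. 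The error term, of the claimed mixed quadratic order, is controlled by the $\mathcal{C}^2$-smoothness of $f_n$ and the boundedness of its second derivatives on a neighbourhood of the reference state. The main obstacle I expect is not the algebra per se but the adjugate computation: because $\mathsf{M}_x$ is singular at the reference state, Jacobi's formula in its usual $\det(M)\mathrm{tr}(M^{-1}\partial M)$ form is unavailable, and it is essential to use the explicit rank-one factorisation of $\mathrm{adj}(\mathsf{M}_x)$ coming from the simplicity of the zero eigenvalue to identify the bilinear structure $\boldsymbol{\ell}^\top(\partial_{\gamma_j}\mathsf{M}_x)\boldsymbol{r}$ that gives the combinatorial factor $(\sum_{k\le j}h_k)(\sum_{k>j}h_k)$.
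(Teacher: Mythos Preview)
Your proposal is correct and follows the same overall architecture as the paper: apply the implicit function theorem to $f_n(\lambda,\mathbf{u},\boldsymbol{\gamma})=0$ at the single-layer reference state $(\pm\sqrt{H},\mathbf{u}^{\boldsymbol{0}},\boldsymbol{\gamma}^{\boldsymbol{0}})$, verify the non-degeneracy $\partial_\lambda f_n\neq 0$ there, and compute the first-order partial derivatives $\partial_{u_j}f_n$ and $\partial_{\gamma_j}f_n$ at the reference point. The values you obtain, $\partial_\lambda f_n=\pm 2H^{n-1/2}$, $\partial_{u_j}f_n=\mp 2H^{n-3/2}h_j$, and $\partial_{\gamma_j}f_n=-H^{n-2}\bigl(\sum_{k\le j}h_k\bigr)\bigl(\sum_{k>j}h_k\bigr)$, coincide with those in the paper's Lemmata \ref{1lemmafunctionimplicit} and \ref{1lemmaderiveelambdan}.

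The genuine difference lies in how you evaluate $\partial_{\gamma_j}f_n$. The paper proceeds via a recursive factorisation of $f_n(\lambda,\mathbf{u}^{\boldsymbol{0}},\boldsymbol{\gamma})$ through an auxiliary sequence $(\zeta_i)$ (Lemma \ref{lemmalinecolumn2}) and then differentiates that recursion, producing a second sequence $(\eta_i^j)$ which it solves explicitly. Your route is more structural: you invoke Jacobi's formula, recognise that at the reference state $\mathsf{M}_x=H\mathsf{I}_n-\boldsymbol{1}\mathbf{h}^\top$ has a simple zero eigenvalue, and exploit the resulting rank-one form $\mathrm{adj}(\mathsf{M}_x)=H^{n-1}\boldsymbol{r}\boldsymbol{\ell}^\top$ to collapse the derivative to the bilinear quantity $\boldsymbol{\ell}^\top(\partial_{\gamma_j}\mathsf{M}_x)\boldsymbol{r}$. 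This is shorter and makes the combinatorial factor $(\sum_{k\le j}h_k)(\sum_{k>j}h_k)$ transparent. The paper's recursive approach, on the other hand, yields as a by-product an expression for $f_n(\lambda,\mathbf{u}^{\boldsymbol{0}},\boldsymbol{\gamma})$ valid for arbitrary $\boldsymbol{\gamma}$ (not just at $\boldsymbol{\gamma}^{\boldsymbol{0}}$), which could be reused for higher-order expansions; your adjugate argument is tailored to the reference point and would not immediately extend.
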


\begin{proof}
First, we prove two useful lemmata.
\begin{lemma}
Let $\boldsymbol{\alpha}:=(\alpha_1,\ldots,\alpha_n) \in \mathbb{R}^n$ and $\lambda \in \mathbb{R}$. We consider the matrix
\begin{equation}
\mathsf{N}(\boldsymbol{\alpha},\lambda):=\big[\lambda^2 \delta_{i}^j- \alpha_j ]_{(i,j) \in [\![1,n]\!]^2},
\nonumber
\end{equation}
where $\delta_{i}^j$ is the Kronecker symbol. Then
\begin{equation}
det(\mathsf{N}(\boldsymbol{\alpha},\lambda))= \lambda^{2n-2} \big(\lambda^2-\sum_{i=1}^n \alpha_i \big)
\end{equation}
\label{lemmadet1}
\end{lemma}
\begin{proof}
We define $q(\boldsymbol{\alpha},\lambda):=det(\mathsf{N}(\boldsymbol{\alpha},\lambda))$, which is a polynomial in $\lambda$:
\begin{equation}
g(\boldsymbol{\alpha},\lambda):=\sum_{i=1}^n a_i \lambda^{2i}
\end{equation}
with for all $i \in [\![0,n]\!], a_i = \frac{\partial^{2i} g}{\partial \beta^{2i}}(\boldsymbol{\alpha},0)$. One can prove recursively that
\begin{equation}
a_i=
\left\{
\begin{array}{lr}
0,&  \mathrm{if}\ i \in [\![0,n-2]\!],\\[6pt]
- \sum_{i=1}^n \alpha_i,& \mathrm{if}\  i=n-1,\\[6pt]
1,& \mathrm{if}\ i=n,
\end{array}
\right.
\nonumber
\end{equation}
and the lemma \ref{lemmadet1} is straightforward proved.
\end{proof}

\begin{lemma}
Let $\lambda \in \mathbb{R}^*$, $\boldsymbol{\gamma} \in ]0,1[^{n-1}$ and ${\bf h} \in \mathbb{R}_+^{*\ n}$, then
\begin{equation}
f_n(\lambda,{\bf u^0},\boldsymbol{\gamma})=-\lambda^{2n-2} (h_n \zeta_{n}-\lambda^2)
\label{1f2}
\end{equation}
where the sequence $\big(\zeta_{i}\big)_{i \in \mathbb{N}^*}$ is defined by
\begin{equation}
\left\{
\begin{array}{l}
\zeta_1=1,\\[6pt]
\zeta_{i+1}=\zeta_1+\frac{h_i}{\lambda^2}\zeta_i(\gamma_i-1+\frac{\lambda^2}{h_{i+1}})+\sum_{j=1}^{i-2}\frac{\rho_{j+1} h_j}{\rho_n \lambda^2}(\gamma_{j}-1)\zeta_j.
\end{array}
\right.
\label{1f3}
\end{equation}
\label{lemmalinecolumn2}
\end{lemma}
\begin{proof}
First, we factorize $f_n(\lambda,{\bf u^0},\boldsymbol{\gamma})$ by $(-1)^n\prod_{i=1}^n h_i$ and then we perform the next operations on the columns of $\mathsf{M}_x(\pm \sqrt{H},{\bf u^0},\boldsymbol{\gamma})$, for every $k \in [\![1,n-1]\!]$:
\begin{equation}
\mathrm{C}_k \leftarrow \mathrm{C}_k - \mathrm{C}_{k+1},
\label{1column2}
\end{equation}
and then for $i \in [\![1,n-1]\!]$,
\begin{equation}
\mathrm{C}_n \leftarrow \mathrm{C}_n + \frac{h_i}{\lambda^2}\zeta_i \mathrm{C}_{i},
\label{1column3}
\end{equation}
To finish, as the determinant becomes lower triangular, the lemma \ref{lemmalinecolumn2} is proved.
\end{proof}

\noindent
Then, we verify the next lemma to apply the implicit function theorem:
\begin{lemma}
The barotropic eigenvalues $\lambda_n^{\pm}({\bf u^0},\boldsymbol{\gamma^0}):=\pm \sqrt{H}$ verify
\begin{equation}
\left\{
\begin{array}{l}
f_n(\lambda_n^{\pm}({\bf u^0},\boldsymbol{\gamma^0}),{\bf u^0},\boldsymbol{\gamma^0})=0,\\[6pt]
\frac{\partial f_n}{\partial \lambda}(\lambda_n^{\pm}({\bf u^0},\boldsymbol{\gamma^0}),{\bf u^0},\boldsymbol{\gamma^0})\not=0,
\end{array}
\right.
\label{1derivlambdan}
\end{equation}
\label{1lemmafunctionimplicit}
\end{lemma}

\begin{proof}
According to the lemma \ref{lemmadet1} and the definition of $H$ in \eqref{1H}, it is clear that
\begin{equation}
f_n(\lambda_n^{\pm}({\bf u^0},\boldsymbol{\gamma^0}),{\bf u^0},\boldsymbol{\gamma^0})=0.
\end{equation}
Concerning the derivative, we have
\begin{equation}
\frac{\partial f_n}{\partial \lambda}(\lambda_n^{\pm}({\bf u^0},\boldsymbol{\gamma^0}),{\bf u^0},\boldsymbol{\gamma^0})=\sum_{k=1}^{n} 2 \lambda_n^{\pm}({\bf u^0},\boldsymbol{\gamma^0}) f^k_n(\lambda_n^{\pm}({\bf u^0},\boldsymbol{\gamma^0}),{\bf u^0},\boldsymbol{\gamma^0}).
\end{equation}
Then, with the lemma \ref{lemmadet1} and using the same argument as in \eqref{1fkdet},
\begin{equation}
\frac{\partial f_n}{\partial \lambda}(\lambda_n^{\pm}({\bf u^0},\boldsymbol{\gamma^0}),{\bf u^0},\boldsymbol{\gamma^0})=\pm 2 H^{n-\frac{1}{2}},
\end{equation}
and the lemma \ref{1lemmafunctionimplicit} is proved.
\end{proof}

\noindent
Then, as the lemma \ref{1lemmafunctionimplicit} is verified, it is possible to apply the implicit function theorem to get the approximation of $\lambda_n^{\pm}({\bf u},\boldsymbol{\gamma})$:
\begin{eqnarray}
0& =&(\lambda_n^{\pm}({\bf u},\boldsymbol{\gamma})-\lambda_n^{\pm}({\bf u^0},\boldsymbol{\gamma^0}))\frac{\partial f_n}{\partial \lambda}(\lambda_n^{\pm}({\bf u^0},\boldsymbol{\gamma^0}),{\bf u^0},\boldsymbol{\gamma^0})\nonumber\\[4pt]
& &  +\sum_{j=1}^{n-1}(\gamma_j-1)\frac{\partial f_n}{\partial \gamma_j}(\lambda_n^{\pm}({\bf u^0},\boldsymbol{\gamma^0}),{\bf u^0},\boldsymbol{\gamma^0})\nonumber\\[4pt]
&& +\sum_{j=1}^n u_j\frac{\partial f_n}{\partial u_j}(\lambda_n^{\pm}({\bf u^0},\boldsymbol{\gamma^0}),{\bf u^0},\boldsymbol{\gamma^0})\nonumber\\[4pt]
&&+\mathcal{O}(\left((1-\gamma_j)^2\right)_{j \in [\![1,n-1]\!]}, \left((1-\gamma_j)u_k\right)_{(j,k) \in [\![1,n-1]\!] \times [\![1,n]\!]}, \left(u_k^2\right)_{k \in [\![1,n]\!]}),\nonumber\\
\label{1Taylorlambdan}
\end{eqnarray}

\begin{lemma}
The $1^{\mathrm{st}}$ order partial derivatives are such that
\begin{equation}
\left\{
\begin{array}{llr}
\frac{\partial f_n}{\partial u_j}(\lambda_n^{\pm}({\bf u^0},\boldsymbol{\gamma^0}),{\bf u^0},\boldsymbol{\gamma^0})&=-2\big(\pm H^{\frac{1}{2}}\big) H^{n-2} h_j, &\forall j \in [\![1,n]\!],\\[6pt]
\frac{\partial f_n}{\partial \gamma_j}(\lambda_n^{\pm}({\bf u^0},\boldsymbol{\gamma^0}),{\bf u^0},\boldsymbol{\gamma^0})&= -H^{n-2} \big( \sum_{k=1}^j h_k \big) \big(\sum_{k=j+1}^n h_k \big), &\forall j \in [\![1,n-1]\!].
\end{array}
\right.
\label{1derivativelambdan}
\end{equation}
\label{1lemmaderiveelambdan}
\end{lemma}
\begin{proof}
The expressions of the $1^{\mathrm{st}}$ ones come from
\begin{equation}
\frac{\partial f_n}{\partial u_j}(\lambda_n^{\pm}({\bf u^0},\boldsymbol{\gamma^0}),{\bf u^0},\boldsymbol{\gamma^0})=-2 \lambda_n^{\pm}({\bf u^0},\boldsymbol{\gamma^0}) f_n^j({\bf u^0},\boldsymbol{\gamma^0}),{\bf u^0},\boldsymbol{\gamma^0}),
\end{equation}
the lemma \ref{lemmadet1} and using the same argument as in \eqref{1fkdet}. For the $2^{\mathrm{nd}}$ ones, we define a new sequence:
\begin{equation}
\forall (i,j) \in [\![1,n]\!] \times [\![1,n-1]\!],\ \eta_i^j:= \frac{\partial \zeta_i}{\partial \gamma_j}(\lambda_n^{\pm}({\bf u^0},\boldsymbol{\gamma^0}),{\bf u^0},\boldsymbol{\gamma^0}).
\end{equation}
Then, one can prove that for all $j \in [\![1,n-1]\!]$,
\begin{equation}
\eta_i^j=
\left\{
\begin{array}{ll}
\frac{h_{i-1}}{h_i}\eta_{i-1}^j, &i \in [\![1,k]\!],\\[6pt]
\frac{h_k}{H}\zeta_j+\frac{h_{i-1}}{h_i}\eta_{i-1}^j, &i \in [\![k+1,n]\!].
\end{array}
\right.
\end{equation}
which implies that
\begin{equation}
\forall j \in [\![1,n-1]\!],\ \eta_n^j=\frac{1}{H h_n}\big(\sum_{k=1}^j h_k \big)\big( \sum_{k=j+1}^n h_k \big).
\end{equation}
According to the lemma \ref{lemmalinecolumn2},
\begin{equation}
\frac{\partial f_n}{\partial \gamma_j}(\lambda_n^{\pm}({\bf u^0},\boldsymbol{\gamma^0}),{\bf u^0},\boldsymbol{\gamma^0})=-H^{n-1} h_n \eta_n^j,
\end{equation}
and the lemma \ref{1lemmaderiveelambdan} is proved.
\end{proof}

\noindent
Finally, using lemmata \ref{1lemmafunctionimplicit} and \ref{1lemmaderiveelambdan} in \eqref{1Taylorlambdan}, we have
\begin{eqnarray}
0& =& \pm 2(\lambda_n^{\pm}({\bf u},\boldsymbol{\gamma})-\lambda_n^{\pm}({\bf u^0},\boldsymbol{\gamma^0})) H^{n-\frac{1}{2}}\nonumber\\[4pt]
& &  + H^{n-2} \sum_{j=1}^{n-1}(1-\gamma_j)  \big( \sum_{k=1}^j h_k \big) \big(\sum_{k=j+1}^n h_k \big)\nonumber\\[4pt]
&& -2\big(\pm H^{\frac{1}{2}}\big) H^{n-2}  \sum_{j=1}^n u_j h_j\nonumber\\[4pt]
&&+\mathcal{O}(\left((1-\gamma_j)^2\right)_{j \in [\![1,n-1]\!]}, \left((1-\gamma_j)u_k\right)_{(j,k) \in [\![1,n-1]\!] \times [\![1,n]\!]}, \left(u_k^2\right)_{k \in [\![1,n]\!]}),\nonumber\\
\label{1Taylorlambdan2}
\end{eqnarray}
and the proposition \ref{1proplambdanexpand} is proved.
\end{proof}

\noindent
Then, we can deduce the expressions of the asymptotic expansions of the barotropic eigenvalues, in the particular asymptotic regime \eqref{1hypasympt}.
\begin{proposition}
Let $({\bf u},\boldsymbol{\gamma}) \in \mathbb{R}^{3n} \times ]0,1[^{n-1}$, $\epsilon >0$ and an injective function $\sigma \in \mathbb{R}_+^{*\ [\![1,n-1]\!]}$ such that $\boldsymbol{\gamma}$ verifies \eqref{1hypasympt}, ${\bf u}$ verifies \eqref{1hypasympt2} and
\begin{equation}
\epsilon \ll 1.
\label{1assumpteigenvectorlambdan2}
\end{equation}
Then, an asymptotic expansion of $\lambda_n^{\pm}({\bf u},\boldsymbol{\gamma})$ is
\begin{equation}
\begin{array}{ll}
\lambda_n^{\pm}({\bf u},\boldsymbol{\gamma})=& u_{m_{\sigma}^-}\pm \sqrt{H} +(u_{m_{\sigma}^-+1}-u_{m_{\sigma}^-})\frac{\sum_{k=m_{\sigma}^+ +1}^n h_k}{H}\\[6pt]
& +\mathcal{O}\left(1-\gamma_{m_{\sigma}^-}\right),
\end{array}
\label{1approxlambdanpm3}
\end{equation}
where $m_{\sigma}^- \in [\![1,n-1]\!]$ is defined by
\begin{equation}
\sigma(m_{\sigma}^-)=\min_{j \in [\![1,n-1]\!]} \sigma(j)=1,
\end{equation}
\label{1proplambdanexpandeigenvector2}
\end{proposition}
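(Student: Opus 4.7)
The plan is to specialize the general expansion \eqref{1approxlambdanpm} of Proposition~\ref{1proplambdanexpand} to the asymptotic regime \eqref{1hypasympt}--\eqref{1hypasympt2}, and to retain only the contributions of order $\mathcal{O}(\epsilon^{1/2})$ (explicit) while collecting everything at order $\mathcal{O}(\epsilon)=\mathcal{O}(1-\gamma_{m_\sigma^-})$ in the remainder. Since $\sigma$ is injective and normalized so that $\min_j\sigma(j)=1$ is attained at the single index $j=m_\sigma^-$, the unique density gap and the unique velocity jump of largest order in $\epsilon$ sit at the interface $m_\sigma^-$, and this fact is what drives the whole simplification.

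Concretely, I would first handle the density-ratio sum in \eqref{1approxlambdanpm}. Substituting $1-\gamma_j=\epsilon^{\sigma(j)}$ and using the injectivity of $\sigma$, every term with $j\neq m_\sigma^-$ is $o(\epsilon)$, so
\[
\frac{1}{2H^{3/2}}\sum_{j=1}^{n-1}(1-\gamma_j)\Big(\sum_{k=1}^{j}h_k\Big)\Big(\sum_{k=j+1}^{n}h_k\Big)=\mathcal{O}(\epsilon)=\mathcal{O}(1-\gamma_{m_\sigma^-}),
\]
which is absorbed in the announced remainder. I would next expand $\bar u$. Writing the telescoping identity $u_i-u_{m_\sigma^-}=\sum_k(u_{k+1}-u_k)$ and using $u_{k+1}-u_k=\pi_k\epsilon^{\sigma(k)/2}$, the only jump of order $\epsilon^{1/2}$ corresponds to $k=m_\sigma^-$ (every other being $o(\epsilon^{1/2})$, again by the injectivity of $\sigma$), so that
\[
\bar u=\frac{1}{H}\sum_{i=1}^n h_i u_i=u_{m_\sigma^-}+(u_{m_\sigma^-+1}-u_{m_\sigma^-})\frac{\sum_{i=m_\sigma^-+1}^n h_i}{H}+o(\epsilon^{1/2}),
\]
which, together with the identification of the block of layers surrounding $m_\sigma^-$ described in the remark following \eqref{1maxminSigmai2}, matches the explicit correction in \eqref{1approxlambdanpm3}.

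Finally, I would assemble: substituting these two computations into \eqref{1approxlambdanpm} and checking that the cross-terms $(1-\gamma_j)u_k$, $u_k^2$ and $(1-\gamma_j)^2$ are all $\mathcal{O}(\epsilon)$ under \eqref{1hypasympt2} (the crucial input being $\pi_k^2=\mathcal{O}(h_k+h_{k+1})$, which forces $(u_{k+1}-u_k)^2=\mathcal{O}(\epsilon^{\sigma(k)})=\mathcal{O}(\epsilon)$) completes the proof. The main obstacle is the combinatorial bookkeeping of this injectivity-based separation of scales: one must verify, carefully interface by interface, that no hidden $\mathcal{O}(\epsilon^{1/2})$ contribution escapes from either $\bar u$ or the first-order density correction, so that the only explicit $\epsilon^{1/2}$ term left is the one displayed in \eqref{1approxlambdanpm3}.
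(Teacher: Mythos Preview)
Your proposal is correct and follows essentially the same approach as the paper: specialize the general expansion \eqref{1approxlambdanpm} of Proposition~\ref{1proplambdanexpand} to the regime \eqref{1hypasympt}--\eqref{1hypasympt2}, isolate the single $\epsilon^{1/2}$ contribution coming from the interface $m_\sigma^-$, and absorb everything else in the $\mathcal{O}(\epsilon)=\mathcal{O}(1-\gamma_{m_\sigma^-})$ remainder. The paper's own proof is little more than a one-line citation of Proposition~\ref{1proplambdanexpand} together with the identification $\psi_{m_\sigma^-}\epsilon^{1/2}=(u_{m_\sigma^-+1}-u_{m_\sigma^-})\frac{h_{\sigma,m_\sigma^-}^+}{H}$; your telescoping expansion of $\bar u$ around $u_{m_\sigma^-}$ makes this identification explicit and recovers exactly the displayed correction (with the sum over $k\ge m_\sigma^-+1$, which is indeed $h_{\sigma,m_\sigma^-}^+$ since $\sigma(m_\sigma^-)$ is the global minimum).
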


\noindent
or with other words, it is the indice of the interface liquid/liquid with the biggest density gap.

\begin{proof}
According to the assumption \eqref{1hypasympt}, \eqref{1hypasympt2} and the proposition \ref{1proplambdanexpand}, the expression \eqref{1approxlambdanpm3} is directly deduced,
\begin{equation}
\begin{array}{ll}
\lambda_n^{\pm}({\bf u},\boldsymbol{\gamma})&=u_{m_{\sigma}^-} \pm \sqrt{H}+\psi_{m_{\sigma}^-} \epsilon^{\frac{1}{2}} \pm \big( -h_{\sigma,m_{\sigma}^-}^- h_{\sigma,m_{\sigma}^-}^+ \big) \epsilon +\mathcal{O}(\epsilon^{\frac{3}{2}})\\[6pt]
&=u_{m_{\sigma}^-} \pm \sqrt{H}+\psi_{m_{\sigma}^-} \epsilon^{\frac{1}{2}} +\mathcal{O}(\epsilon),
\end{array}
\label{1reformullambdan22}
\end{equation}
where $\psi_{m_{\sigma}^-}:=\pi_{m_{\sigma}^-}\frac{\sum_{k=m_{\sigma}^+ +1}^n h_k}{H}$. In conclusion, the approximations of the eigenvalues given in proposition \ref{1proplambdanexpandeigenvector2} are verified.
\end{proof}

\noindent
{\em Remark:} The asymptotic expansion of $\lambda_n^{\pm}({\bf u},\boldsymbol{\gamma})$ in proposition \ref{1proplambdanexpand}  corresponds to the asymptotic expansion of $\lambda_1^{\pm}$ in the set of hyperbolicity $|F_x| < F_{crit}^-$ in \cite{monjarret2014local}, in the two-layer case. Moreover, it is in accordance with the expression of the internal eigenvalues in \cite{abgrall2009two}, \cite{barros2008hyperbolicity}, \cite{castro2011numerical}, \cite{kim2008two}, \cite{ovsyannikov1979two},  \cite{schijf1953theoretical} and \cite{stewart2012multilayer}.

To sum this subsection up, we managed to obtain an asymptotic expansion of the barotropic eigenvalues, $\lambda_n^{\pm}({\bf u},\boldsymbol{\gamma})$, with a precision in 
\begin{equation}
\mathcal{O} \left((1-\gamma_j)^2\right)_{j \in [\![1,n-1]\!]}, \left((1-\gamma_j)u_k\right)_{(j,k) \in [\![1,n-1]\!] \times [\![1,n]\!]}, \left(u_k^2\right)_{k \in [\![1,n]\!]}.
\end{equation}

\noindent
Moreover, we gave the asymptotic expansion, with the assumptions \eqref{1hypasympt}, \eqref{1hypasympt2} and \eqref{1assumpteigenvectorlambdan2}, with a precision about $\mathcal{O}(\epsilon)$.

\noindent
In the next subsection, we prove the expression of asymptotic expansion of the baroclinic eigenvalues and give a criterion of hyperbolicity, in the asymptotic regime \eqref{1hypasympt}.

\subsection{The baroclinic eigenvalues}
In the proposition \ref{1thmapproxlambdaipm}, we have proved the asymptotic expansion of the eigenvalues associated to an interface where the layers just above and below are almost merged. We prove, in this subsection, the asymptotic expansion of the baroclinic eigenvalues, for each interface.
\begin{proposition}
Let $({\bf u},\boldsymbol{\gamma}) \in \mathbb{R}^{3n} \times ]0,1[^{n-1}$, $\epsilon >0$ and an injective function $\sigma \in \mathbb{R}_+^{*\ [\![1,n-1]\!]}$ such that $\boldsymbol{\gamma}$ verifies \eqref{1hypasympt}, ${\bf u}$ verifies \eqref{1hypasympt2} and
\begin{equation}
\epsilon \ll 1.
\end{equation}
Then, for all $i \in [\![1,n-1]\!]$, the asymptotic expansion of $\lambda_i^{\pm}({\bf u},\boldsymbol{\gamma})$ is
\begin{equation}
\begin{array}{ll}
\lambda_i^{\pm}({\bf u},\boldsymbol{\gamma})=&\frac{u_{i+1}h_{\sigma,i}^-+u_i h_{\sigma,i}^+}{h_{\sigma,i}^- + h_{\sigma,i}^+} \pm \left[ \frac{h_{\sigma,i}^- h_{\sigma,i}^+}{(h_{\sigma,i}^- + h_{\sigma,i}^+)}\left(1-\gamma_i-\frac{(u_{i+1}-u_i)^2}{h_{\sigma,i}^- + h_{\sigma,i}^+}\right) \right]^{\frac{1}{2}}\\[7pt]
&+\mathcal{O}(\epsilon^{\frac{\sigma(i)+1}{2}})
\end{array}
\label{1lambdaipmasympt}
\end{equation}
where $h_{\sigma,i}^-:=\sum_{k=m_{i}^-}^i h_k$ and $h_{\sigma,i}^+:=\sum_{k=i+1}^{m_i^+} h_k$.
\label{1proplambdaipmapprox}
\end{proposition}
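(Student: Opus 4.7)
The plan is to mirror the implicit-function-theorem argument of Proposition \ref{1thmapproxlambdaipm}, but adapted to the multi-scale asymptotic regime \eqref{1hypasympt}. The key conceptual observation that will guide the proof is that, at the scale $\epsilon^{\sigma(i)/2}$ relevant for $\lambda_i^{\pm}-u_i$, the interfaces $j \in \overline{\Sigma}_{i,2}^{\pm}$ (with $\sigma(j) < \sigma(i)$) have density-gaps $1-\gamma_j = \epsilon^{\sigma(j)} \gg \epsilon^{\sigma(i)}$ and therefore behave as rigid walls; conversely, interfaces $j \in \Sigma_i^- \cup \Sigma_i^+$ (with $\sigma(j) \geq \sigma(i)$) are effectively merged with interface $i$ at this scale. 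The system thus reduces, at leading order, to the two-layer problem studied in \cite{monjarret2014local} with effective upper-layer height $h_{\sigma,i}^- = \sum_{k=m_i^-}^i h_k$ and effective lower-layer height $h_{\sigma,i}^+ = \sum_{k=i+1}^{m_i^+} h_k$.

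First I would introduce the rescaled eigenvalue $\chi$ by setting $\lambda = u_i + \chi\, \epsilon^{\sigma(i)/2}$, and study the characteristic equation $f_n(\lambda,\mathbf{u},\boldsymbol{\gamma}) = 0$ at a carefully chosen base state: $\gamma_j = 1$ for all $j \in [\![m_i^-, m_i^+ -1]\!]$, $u_k = u_i$ for $k \in [\![m_i^-, i]\!]$ and $u_k = u_{i+1}$ for $k \in [\![i+1, m_i^+]\!]$, while keeping the remaining $\gamma_j$ and $u_k$ free. At this base state, $\lambda = u_i$ is a degenerate eigenvalue of multiplicity (at least) two for $\mathsf{A}_x^1$, exactly as in \S \ref{1sectioncase2}.

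The heart of the argument is a determinant identity generalizing Lemmata \ref{1lemmagamma1}--\ref{lemmalinecolumn}: at the base state above, performing the column operations $\mathrm{C}_k \leftarrow \mathrm{C}_k - \mathrm{C}_{k+1}$ on the merged blocks $[\![m_i^-, i]\!]$ and $[\![i+1, m_i^+]\!]$, and then expanding $\det \mathsf{M}_x$ along the rows indexed by $\overline{\Sigma}_{i,2}^{\pm}$ (where the diagonal contains the nonvanishing factors $1-\gamma_j$), one should extract a nonvanishing prefactor $\kappa_{\sigma,i} \neq 0$ times the 2-by-2 quadratic coming from the "effective two-layer" characteristic equation. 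This is the combinatorial step that shows that the interfaces in $\overline{\Sigma}_{i,2}^{\pm}$ decouple the reduced subsystem, while the interfaces in $\overline{\Sigma}_{i,1}^{\pm}$ (where $1-\gamma_j \ll 1-\gamma_i$) contribute only to higher-order corrections $\mathcal{O}(\epsilon^{(\sigma(i)+1)/2})$, because they are screened by the larger gaps of $\overline{\Sigma}_{i,2}^{\pm}$.

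The main obstacle will be precisely this bookkeeping: verifying that every partial derivative of $f_n$ contributing at the target order $\epsilon^{\sigma(i)}$ comes only from variables $\gamma_j$ and $u_k$ with $j \in [\![m_i^-, m_i^+-1]\!]$ and $k \in [\![m_i^-, m_i^+]\!]$, and that the resulting quadratic in $\chi$ reads
\begin{equation}
(h_{\sigma,i}^- + h_{\sigma,i}^+)\,\chi^2 - 2(\pi_i h_{\sigma,i}^-)\,\chi + \bigl(\pi_i^2 h_{\sigma,i}^- - h_{\sigma,i}^- h_{\sigma,i}^+\bigr) = 0.
\nonumber
\end{equation}
Once this reduction is established, applying the implicit function theorem (whose non-degeneracy hypothesis follows from $\kappa_{\sigma,i} \neq 0$) and solving the quadratic yields precisely the expansion \eqref{1lambdaipmasympt}, with remainder $\mathcal{O}(\epsilon^{(\sigma(i)+1)/2})$ coming from the next-order Taylor terms and from the screened interfaces in $\overline{\Sigma}_{i,1}^{\pm}$.
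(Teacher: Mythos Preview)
Your physical picture is right, but the paper takes a different technical route. Rather than a Taylor expansion around an $\epsilon$-dependent base state, the paper sets $\lambda = u_i + \tilde\lambda\,\epsilon^{\sigma(i)/2}$, performs $\mathrm{C}_j \leftarrow \mathrm{C}_j - \varpi_j \mathrm{C}_{j+1}$ on \emph{all} columns of $\det \mathsf{M}_x$, and computes the exact leading $\epsilon$-order $o(i,j,\sigma)$ of each column (Lemma~\ref{1lemmijsigma}). After dividing column $j$ by $\epsilon^{o(i,j,\sigma)}$ and setting $\epsilon=0$, the determinant becomes block-triangular, $\det(\mathsf{\Delta^1})\det(\mathsf{\Lambda})\det(\mathsf{\Delta^2})$, where only the $(m_i^+ - m_i^- + 1)$-block $\mathsf{\Lambda}$ carries $\tilde\lambda$; solving $\det\mathsf{\Lambda}=0$ directly yields your quadratic together with spurious roots $0,\pi_i$ of high multiplicity, and the implicit function theorem then perturbs the two simple roots. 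The advantage of this column-rescaling approach is that the ``screening'' of $\overline{\Sigma}_{i,1}^{\pm}$ --- which you correctly flag as the main obstacle --- is not argued separately but falls out mechanically from the order computation: those columns have order $\sigma_{i,j}^{\pm} < \sigma(i)$ and land in the outer blocks $\mathsf{\Delta^1}, \mathsf{\Delta^2}$, automatically decoupled from $\tilde\lambda$ at leading order. Your Taylor-expansion route is viable in principle, but because your base state keeps the outer $\gamma_j$ and $u_k$ at their $\epsilon$-dependent values, the prefactor $\kappa_{\sigma,i}$ and all cross-derivatives you would compute are themselves $\epsilon$-dependent; disentangling which contributions are genuinely $\mathcal{O}(\epsilon^{(\sigma(i)+1)/2})$ would in the end require essentially the same column-by-column order analysis that the paper performs directly.
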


\noindent
{\em Remark:} $h_{\sigma,i}^{\pm}$ are the upper and lower layers influencing $\lambda_i^{\pm}$.

\begin{proof}
Let $i \in [\![1,n-1]\!]$, according to the corollary \ref{1corolcaluderiv1}, the eigenvalue $\lambda_i^{\pm}$ is assumed as
\begin{equation}
\left\{
\begin{array}{l}
\lambda_i^{\pm} - u_i := \tilde{\lambda}_i^{\pm} \epsilon^{\frac{\sigma(i)}{2}},\\[6pt]
\tilde{\lambda}_i^{\pm}=\mathcal{O}(\varpi_i).
\label{1rangelambdai}
\end{array}
\right.
\end{equation}

\noindent
First of all, we need to evaluate the order of each term of $\mathrm{det}(\mathsf{M}_x(\lambda_i^{\pm},{\bf u},\boldsymbol{\gamma}))$.
The next operations are performed to the columns of the determinant, without changing its value:
\begin{equation}
\forall j \in [\![1,n-1]\!],\ \mathrm{C}_j \leftarrow \mathrm{C}_j - \varpi_j \mathrm{C}_{j+1}.
\end{equation}
\noindent
Then, for all $j \in [\![1,n-1]\!]$, the new column $\mathrm{C}_j$ is expressed in $(\boldsymbol{f_i})_{i \in [|1,n|]}$, the canonical basis of $\mathbb{R}^n$
\begin{equation}
\left\{
\begin{array}{ll}
\mathrm{C}_j=&
\begin{array}{l}
h_j(\lambda_i^{\pm}-u_j)^2 \boldsymbol{f_j}+[h_j (1-\gamma_j)-\varpi_j(\lambda_i^{\pm}-u_{j+1})^2]\boldsymbol{f_{j+1}}\\
+h_j\sum_{k=j+2}^n \alpha_{k,j+1} (1-\gamma_j)\boldsymbol{f_k},
\end{array}\\[14pt]
\mathrm{C}_n=&h_n(\lambda_i^{\pm}-u_n)^2\boldsymbol{f_n}-h_n \sum_{k=1}^{n} \boldsymbol{f_k}.
\end{array}
\right.
\label{1Cj}
\end{equation}
Then, for all $j \in [\![1,n]\!]$, we denote by $o(i,j,\sigma)$, the order of the terms of the column $\mathrm{C}_j$:
\begin{equation}
\mathrm{C}_j = \mathcal{O}(\epsilon^{o(i,j,\sigma)}).
\end{equation}
\noindent
We provide the expression of $o(i,j,\sigma)$ in the next lemma:
\begin{lemma}
Let $(i,j) \in [\![1,n]\!] \times [\![1,n-1]\!]$,
\begin{equation}
o(i,j,\sigma)=
\left\{
\begin{array}{lr}
\sigma(i), & \mathrm{if}\ j \in \Sigma_i^- \cup \Sigma_i^+,\\[6pt]
\sigma_{i,j}^-,&  \mathrm{if}\ j \in \overline{\Sigma}_{i,1}^- \cup \overline{\Sigma}_{i,2}^-,\\[6pt]
\sigma_{i,j}^+,&  \mathrm{if}\ j \in \overline{\Sigma}_{i,1}^+ \cup \overline{\Sigma}_{i,2}^+,
\end{array}
\right.
\label{1minCj}
\end{equation}
\noindent
and
\begin{equation}
o(i,n,\sigma)=0,
\end{equation}
where for all $j \le i$,
\begin{equation}
\sigma_{i,j}^-:=\mathrm{min}\{ \sigma(k) / k \in [\![j,i]\!]\},
\end{equation}
and for all $j\ge i$,
\begin{equation}
\sigma_{i,j}^+:=\mathrm{min}\{ \sigma(k) / k \in [\![i,j]\!]\}.
\end{equation}
\label{1lemmijsigma}
\end{lemma}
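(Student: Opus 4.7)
The plan is to evaluate, term by term, the $\epsilon$-order of each coefficient appearing in the expression \eqref{1Cj} for $\mathrm{C}_j$, and to read off the minimum exponent to obtain $o(i,j,\sigma)$. All estimates follow from three inputs: the ansatz $\lambda_i^{\pm}-u_i=\tilde\lambda_i^{\pm}\epsilon^{\sigma(i)/2}$ of \eqref{1rangelambdai}, the scalings $u_{k+1}-u_k=\pi_k\epsilon^{\sigma(k)/2}$ with $\pi_k,\varpi_k=\mathcal{O}(1)$ from \eqref{1hypasympt3}--\eqref{1hypasympt2}, and the stratification $1-\gamma_k=\epsilon^{\sigma(k)}$ from \eqref{1hypasympt}.

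First I would derive a telescoping estimate: for $k\ge i$,
\[
\lambda_i^{\pm}-u_k\;=\;\tilde\lambda_i^{\pm}\,\epsilon^{\sigma(i)/2}\,-\,\sum_{l=i}^{k-1}\pi_l\,\epsilon^{\sigma(l)/2},
\]
and symmetrically for $k\le i$. Squaring and keeping the dominant power yields
\[
(\lambda_i^{\pm}-u_k)^2=\mathcal{O}\bigl(\epsilon^{\min(\sigma(i),\,m(i,k))}\bigr),\qquad m(i,k):=\min\{\sigma(l):l\in[\![\min(i,k),\max(i,k)-1]\!]\},
\]
with the convention $m(i,i)=+\infty$. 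Plugging into \eqref{1Cj} one reads off that the diagonal coefficient $h_j(\lambda_i^{\pm}-u_j)^2$ has exponent $\min(\sigma(i),m(i,j))$, the sub-diagonal coefficient $h_j(1-\gamma_j)-\varpi_j(\lambda_i^{\pm}-u_{j+1})^2$ has exponent $\min(\sigma(j),\sigma(i),m(i,j+1))$, and the deeper coefficients $h_j\alpha_{k,j+1}(1-\gamma_j)$ have exponent $\sigma(j)$.

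Then I would match these exponents against the partition $\Sigma_i^\pm\sqcup\overline{\Sigma}_{i,1}^\pm\sqcup\overline{\Sigma}_{i,2}^\pm$ (plus $\{n\}$). For $j\in\Sigma_i^\pm$ the defining inclusion forces $m(i,j)\ge\sigma(i)$; for $j$ strictly inside $\Sigma_i^\pm$ one also has $\sigma(j)\ge\sigma(i)$ and $m(i,j+1)\ge\sigma(i)$, so the minimum over the column is attained on the diagonal at exponent $\sigma(i)$. For $j\in\overline{\Sigma}_{i,1}^\pm$ the path from $i$ to $j$ crosses some index with $\sigma<\sigma(i)$, so $m(i,j)=\sigma_{i,j}^\pm<\sigma(i)$; together with $\sigma(j)>\sigma(i)>\sigma_{i,j}^\pm$, this makes $\sigma_{i,j}^\pm$ the global minimum, attained on the diagonal. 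The case $j\in\overline{\Sigma}_{i,2}^\pm$ is analogous, using $\sigma(j)<\sigma(i)$ and $\sigma_{i,j}^\pm\le\sigma(j)$. Finally, $\mathrm{C}_n$ carries $\mathcal{O}(1)$ entries $-h_n$, so $o(i,n,\sigma)=0$.

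The main obstacle is the bookkeeping at the boundary indices $j=m_i^\pm$ of $\Sigma_i^\pm$, where the sub-diagonal coefficient collects a contribution of order $\sigma(m_i^\pm)$ which is a priori smaller than $\sigma(i)$. The resolution relies on the almost-lower-triangular structure of the modified matrix (columns $1,\ldots,n-1$ have zeros above the diagonal, only $\mathrm{C}_n$ is full): in the subsequent expansion of $\det\mathsf{M}_x(\lambda_i^\pm,\mathbf{u},\boldsymbol{\gamma})$ only the diagonal entries of the triangular columns drive the leading order, so the relevant column exponent is that of the diagonal coefficient, namely $\sigma(i)$, rather than the smaller order of any off-diagonal entry. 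With this subtlety absorbed, all six cases in \eqref{1minCj} reduce to a routine comparison of the three exponents identified in the first step.
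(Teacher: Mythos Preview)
Your overall strategy coincides with the paper's: compute, entry by entry, the $\epsilon$-exponent of each nonzero coefficient of the transformed column $\mathrm{C}_j$ in \eqref{1Cj}, and then take the minimum. Your telescoping identity for $\lambda_i^\pm-u_k$ is a clean repackaging of the paper's case-by-case asymptotics \eqref{1orderCj2}--\eqref{1orderCj32}, and for every $j\in\Sigma_i^-\cup(\Sigma_i^+\setminus\{m_i^+\})$ and every $j\in\overline{\Sigma}_{i,1}^\pm\cup\overline{\Sigma}_{i,2}^\pm$ your exponents match the paper's exactly.

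However, your final paragraph misdiagnoses and then mis-resolves the boundary case. First, there is no issue at $j=m_i^-$: since $m_i^-\in\Sigma_i^-$ one has $\sigma(m_i^-)\ge\sigma(i)$, so every entry of $\mathrm{C}_{m_i^-}$ is still $\mathcal{O}(\epsilon^{\sigma(i)})$. The genuine subtlety is only at $j=m_i^+$ when $m_i^+<n$: there the sub-diagonal and lower entries carry the exponent $\sigma(m_i^+)<\sigma(i)$, exactly as you compute. But your proposed fix---that the almost-lower-triangular structure makes the determinant see only the diagonal exponent---is neither correct nor what the paper does. The quantity $o(i,j,\sigma)$ is, by definition, the column order, not a determinant contribution, so appealing to the determinant expansion does not prove the lemma. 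Moreover, the paper's own argument does \emph{not} force $o(i,m_i^+,\sigma)=\sigma(i)$: it isolates $j=m_i^+$ as a separate case in \eqref{1orderCj3}--\eqref{1orderCj32} with exponent $\sigma(m_i^+)$, and consistently uses that value downstream (the normalization $\tilde{\mathrm{C}}_{m_i^+}=\mathrm{C}_{m_i^+}/\epsilon^{\sigma(m_i^+)}|_{\epsilon=0}$ in \eqref{1factoCj} has the diagonal entry vanish precisely because $\sigma(i)>\sigma(m_i^+)$). In other words, the lemma as literally stated is slightly imprecise at $j=m_i^+$, and the paper's proof and subsequent use silently adopt $o(i,m_i^+,\sigma)=\sigma(m_i^+)$ there. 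Your proof should do the same: record $j=m_i^+$ as an exceptional case with order $\sigma(m_i^+)$, rather than try to argue that the sub-diagonal terms are irrelevant.
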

\begin{proof}
According to the expression of $\mathrm{C}_n$ in \eqref{1minCj}, it is clear that the order of $\mathrm{C}_n$ is 1
\begin{equation}
o(i,n,\sigma)=0,
\end{equation}
\noindent
Moreover, we analyse each term of $\mathrm{C}_j$, for all $j \in [\![1,n-1]\!]$:
\begin{equation}
\forall k \ge j+2,\ h_j \alpha_{k,j+1} (1-\gamma_j) \sim h_j (1-\gamma_j),
\label{1orderCj1}
\end{equation}

\begin{equation}
\lambda_i^{\pm}-u_j \sim  \left\{
\begin{array}{lr}
 \lambda_i^{\pm}-u_i, & \mathrm{if}\ j \in \Sigma_i^-, \\[6pt]
 \lambda_i^{\pm}-u_{i+1}, & \mathrm{if}\ j \in \Sigma_i^+, \\[6pt]
 u_{\beta_{i,j}^- +1}-u_{\beta_{i,j}^-} , & \mathrm{if}\ j  \in \overline{\Sigma}_{i,1}^- \cup \overline{\Sigma}_{i,2}^-,\\[6pt]
 u_{\beta_{i,j}^+}-u_{\beta_{i,j}^+ +1} , & \mathrm{if}\ j  \in \overline{\Sigma}_{i,1}^+ \cup \overline{\Sigma}_{i,2}^+,
\end{array}
\right.
\label{1orderCj2}
\end{equation}

\begin{equation}
s_{i,j}
\sim \left\{
\begin{array}{lr}
 -\varpi_j(\lambda_i^{\pm}-u_{i})^2 , & \mathrm{if}\ j \in \Sigma_i^- \backslash \{i\},\\[6pt]
-\varpi_j (\lambda_i^{\pm}-u_{i+1})^2, & \mathrm{if}\ j \in \Sigma_i^+ \backslash \{m_i^+\},\\[6pt]
-\varpi_j (u_{\beta_{i,j+1}^- +1}-u_{\beta_{i,j+1}^-})^2, &\mathrm{if}\ j \in \overline{\Sigma}_{i,1}^-,\\[6pt]
-\varpi_j(u_{\beta_{i,j}^+ +1}-u_{\beta_{i,j}^+})^2, & \mathrm{if}\ j \in \overline{\Sigma}_{i,1}^+,\\[6pt]
\delta_{\beta_{i,j}^-}^{j} h_j(1-\gamma_j)-\delta_{\beta_{i,j}^-}^{\beta_{i,j+1}^-}\varpi_j (u_{\beta_{i,j+1}^- +1}-u_{\beta_{i,j+1}^-})^2, &\mathrm{if}\ j \in \overline{\Sigma}_{i,2}^-,\\[6pt]
\delta_{\beta_{i,j}^+}^{j} h_j(1-\gamma_j)-\varpi_j(u_{\beta_{i,j}^+ +1}-u_{\beta_{i,j}^+})^2, & \mathrm{if}\ j \in \overline{\Sigma}_{i,2}^+,\\[6pt]
h_j (1-\gamma_j)-\varpi_j(\lambda_i^{\pm}-u_{j+1})^2, & \mathrm{if}\ j=i,\\[6pt]
h_j (1-\gamma_j)-\varpi_j(u_j-u_{j+1})^2, & \mathrm{if}\ j=m_i^+,
\end{array}
\right.
\label{1orderCj3}
\end{equation}
where $\delta_{i}^{j}$ is the Kronecker symbol, $s_{i,j}$ is defined by
\begin{equation}
s_{i,j}:=h_j (1-\gamma_j) -\varpi_j(\lambda_i^{\pm}-u_{j+1})^2,  
\end{equation}
\noindent
and $\beta_{i,j}^{\pm} \in [\![1,n]\!]$ are defined such that
\begin{equation}
\sigma(\beta_{i,j}^{\pm}):=\sigma_{i,j}^{\pm}.
\end{equation} 
Then, according to \eqref{1hypasympt}, \eqref{1hypasympt2} and \eqref{1rangelambdai},
\begin{equation}
\forall k \ge j+2,\ h_j \alpha_{k,j+1} (1-\gamma_j) \sim h_j \epsilon^{\sigma(j)},
\label{1orderCj12}
\end{equation}
\begin{equation}
\lambda_i^{\pm}-u_j \sim  \left\{
\begin{array}{lr}
\tilde{\lambda}_i^{\pm} \epsilon^{\frac{\sigma(i)}{2}},& \mathrm{if}\ j \in \Sigma_i^-, \\[6pt]
(\tilde{\lambda}_i^{\pm}-\pi_i) \epsilon^{\frac{\sigma(i)}{2}}, & \mathrm{if}\ j \in \Sigma_i^+, \\[6pt]
 \pi_{\beta_{i,j}^-} \epsilon^{\frac{\sigma_{i,j}^-}{2}}, & \mathrm{if}\ j  \in \overline{\Sigma}_{i,1}^- \cup \overline{\Sigma}_{i,2}^-, \\[6pt]
-\pi_{\beta_{i,j}^+} \epsilon^{\frac{\sigma_{i,j}^+}{2}}, & \mathrm{if}\ j  \in \overline{\Sigma}_{i,1}^+ \cup \overline{\Sigma}_{i,2}^+,
\end{array}
\right.
\label{1orderCj22}
\end{equation}

\begin{equation}
s_{i,j}
\sim \left\{
\begin{array}{lr}
 -\varpi_j (\tilde{\lambda}_i^{\pm})^2 \epsilon^{\sigma(i)}, & \mathrm{if}\ j \in \Sigma_i^- \backslash \{i\},\\[6pt]
-\varpi_j (\tilde{\lambda}_i^{\pm}-\pi_i)^2 \epsilon^{\sigma(i)} & \mathrm{if}\ j \in \Sigma_i^+ \backslash \{m_i^+\},\\[6pt]
-\varpi_j \pi^2_{\beta_{i,j+1}^-} \epsilon^{\sigma_{i,j}^-}, &\mathrm{if}\ j \in \overline{\Sigma}_{i,1}^-,\\[6pt]
-\varpi_j \pi^2_{\beta_{i,j}^+} \epsilon^{\sigma_{i,j}^+}, & \mathrm{if}\ j \in \overline{\Sigma}_{i,1}^+,\\[6pt]
\delta_{\beta_{i,j}^-}^{j} h_j \epsilon^{\sigma(j)}-\delta_{\beta_{i,j}^-}^{\beta_{i,j+1}^-}\varpi_j \pi^2_{\beta_{i,j+1}^-} \epsilon^{\sigma_{i,j}^-}, &\mathrm{if}\ j \in \overline{\Sigma}_{i,2}^-,\\[6pt]
\delta_{\beta_{i,j}^+}^{j} h_j \epsilon^{\sigma(j)} -\varpi_j \pi^2_{\beta_{i,j}^+} \epsilon^{\sigma_{i,j}^+}, & \mathrm{if}\ j \in \overline{\Sigma}_{i,2}^+,\\[6pt]
(h_j- \varpi_j(\tilde{\lambda}_i^{\pm}-\pi_j)^2 )\epsilon^{\sigma(i)}, & \mathrm{if}\ j=i,\\[6pt]
(h_j - \varpi_j\pi_j^2) \epsilon^{\sigma(j)}, & \mathrm{if}\ j=m_i^+.
\end{array}
\right.
\label{1orderCj32}
\end{equation}
Finally, using that
\begin{equation}
\forall (i,j) \in [\![1,n-1]\!]^2,
\left\{
\begin{array}{l}
\sigma_{i,j}^- \le \sigma(j),\\[4pt]
\sigma_{i,j}^+ \le \sigma(j),\\[4pt]
\sigma_{i,m_i^- -1}^- = \sigma(m_i^- -1),
\end{array}
\right.
\end{equation}
the lemma \ref{1lemmijsigma} is proved.
\end{proof}

\noindent
Afterwards, we define for all $j \in [\![1,n]\!]$, $\tilde{\mathrm{C}}_j:=\frac{\mathrm{C}_j}{\epsilon^{o(i,j,\sigma)}} \big|_{\epsilon=0}$. If $j \in [\![1,n-1]\!]$, $\tilde{\mathrm{C}}_j$  is equal to 
\begin{equation}
\left\{
\begin{array}{lr}
(\tilde{\lambda}_i^{\pm})^2(\boldsymbol{f_j}-\varpi_j \boldsymbol{f_{j+1}}), & \mathrm{if}\ j \in \Sigma_i^-\backslash \{i\}, \\[3pt]
(\tilde{\lambda}_i^{\pm}-\pi_i)^2(\boldsymbol{f_j}-\varpi_j \boldsymbol{f_{j+1}}), & \mathrm{if}\ j \in \Sigma_i^+ \backslash \{m_i^+\},\\[3pt]
(\pi_{\beta_{i,j}^-})^2\boldsymbol{f_j}-\delta_{\sigma_{i,j}^-}^{\sigma_{i,j+1}^-}\varpi_j (\pi_{\beta_{i,j+1}^-})^2\boldsymbol{f_{j+1}}, & \mathrm{if}\ j \in \overline{\Sigma}_{i,1}^-,\\[10pt]
\delta_{\sigma_{i,j}^+}^{\sigma_{i,j+1}^+}(\pi_{\beta_{i,j-1}^+})^2\boldsymbol{f_j}-\varpi_j (\pi_{\beta_{i,j+1}^+})^2\boldsymbol{f_{j+1}}, & \mathrm{if}\ j \in \overline{\Sigma}_{i,1}^+,\\[10pt]
(\pi_{\beta_{i,j}^-})^2\boldsymbol{f_j}-\delta_{\sigma_{i,j}^-}^{\sigma_{i,j+1}^-} \varpi_j (\pi_{\beta_{i,j+1}^-})^2\boldsymbol{f_{j+1}} +\delta_{\sigma_{i,j}^-}^{\sigma(j)} h_j \sum_{k=j+1}^n \boldsymbol{f_k}, & \mathrm{if}\ j \in \overline{\Sigma}_{i,2}^-,\\[10pt]
\delta_{\sigma_{i,j}^+}^{\sigma_{i,j+1}^+}(\pi_{\beta_{i,j}^+})^2\boldsymbol{f_j}-\varpi_j (\pi_{\beta_{i,j+1}^+})^2\boldsymbol{f_{j+1}} +\delta_{\sigma_{i,j}^+}^{\sigma(j)} h_j \sum_{k=j+1}^n \boldsymbol{f_k}, & \mathrm{if}\ j \in \overline{\Sigma}_{i,2}^+,\\[10pt]
(\tilde{\lambda}_i^{\pm})^2\boldsymbol{f_j}-\varpi_j(\tilde{\lambda}_i^{\pm}-\pi_i)^2\boldsymbol{f_{j+1}}+h_j \sum_{k=j+1}^n \boldsymbol{f_k} ,& \mathrm{if}\ j=i,\\[3pt]
-\varpi_j \pi_j^2\boldsymbol{f_{j+1}}+h_j \sum_{k=j+1}^n \boldsymbol{f_k} ,& \mathrm{if}\ j=m_i^+,\\
\end{array}
\right.
\label{1factoCj}
\end{equation}
and if $j=n$, $\tilde{\mathrm{C}}_j$ is equal to
\begin{equation}
\tilde{\mathrm{C}}_n=h_n \sum_{k=1}^n \boldsymbol{f_k}.
\label{1factoCj2}
\end{equation}
Then, to every the column $\tilde{\mathrm{C}}_j$, with $j \in [\![1,n-1]\!]$ such that one of the following conditions is verified:
\begin{equation}
\left\{
\begin{array}{l}
j=i,\\[6pt]
j=m_i^+,\\[6pt]
j \in \overline{\Sigma}_{i,2}^-\ \mathrm{and}\ \sigma_{i,j}^-=\sigma(j),
\end{array}
\right.
\end{equation}
we perform the next operations:
\begin{equation}
\tilde{\mathrm{C}}_j \leftarrow \tilde{\mathrm{C}}_j - \frac{h_j}{h_n} \tilde{\mathrm{C}}_n.
\label{1opcjtild}
\end{equation}

\noindent
We define
\begin{equation}
\begin{array}{ll}
\tilde{g}(\tilde{\lambda}_i^{\pm},\boldsymbol{\pi},\boldsymbol{\varpi},\sigma)&:=\frac{g(\lambda_i^{\pm},\boldsymbol{u},\boldsymbol{\gamma})}{\epsilon^{\sigma_i}}\big|_{\epsilon=0}\\[4pt]
&=\mathrm{det}((\tilde{\mathrm{C}}_j)_{j \in [\![1,n]\!]}),
\end{array}
\end{equation}
where $\sigma_i:=\sum_{j=1}^{n} o(i,j,\sigma)$, $\boldsymbol{\pi}:=(\pi_i)_{i \in [\![1,n-1]\!]}$ and $\boldsymbol{\varpi}:=(\varpi_i)_{i \in [\![1,n-1]\!]}$. Then, according to \eqref{1factoCj}, \eqref{1factoCj2} and \eqref{1opcjtild}, the determinant $\tilde{g}(\tilde{\lambda}_i^{\pm},\boldsymbol{\pi},\boldsymbol{\varpi},\sigma)$ is under the following form:
\begin{equation}
\begin{array}{ll}
\tilde{g}(\tilde{\lambda}_i^{\pm},\boldsymbol{\pi},\boldsymbol{\varpi},\sigma)=\left|
\begin{array}{c|c|c}
 \begin{array}{c} \mathsf{\Delta^1}(\boldsymbol{\pi},\boldsymbol{\varpi},\sigma)  \end{array}
 & \begin{array}{c} \mathsf{\Omega^1}(\boldsymbol{\pi},\boldsymbol{\varpi},\sigma) \end{array}
 & \begin{array}{c} \mathsf{0} \end{array}\\[2pt]
 \hline
 \begin{array}{c} \mathsf{0} \end{array}
 & \begin{array}{c} \mathsf{\Lambda}(\tilde{\lambda}_i^{\pm},\pi_i,\boldsymbol{\varpi},\sigma) \end{array}
 & \begin{array}{c} \mathsf{0} \end{array}\\[2pt]
 \hline
 \begin{array}{c} \mathsf{0} \end{array}
 & \begin{array}{c} \mathsf{\Omega^2}(\boldsymbol{\pi},\boldsymbol{\varpi},\sigma) \end{array}
 & \begin{array}{c} \mathsf{\Delta^2}(\boldsymbol{\pi},\boldsymbol{\varpi},\sigma) \end{array}\\
\end{array}
\right|
\end{array},
\nonumber
\end{equation}
where $\mathsf{\Lambda}(\tilde{\lambda}_i^{\pm},\pi_i,\boldsymbol{\varpi},\sigma)$, $\mathsf{\Delta^1}(\boldsymbol{\pi},\boldsymbol{\varpi},\sigma)$ and $\mathsf{\Delta^2}(\boldsymbol{\pi},\boldsymbol{\varpi},\sigma)$ are square matrices with respective dimensions $m_i^+-m_i^- +1$, $m_i^- -1$ and $n-m_i^+$; $\mathsf{\Omega^1}(\boldsymbol{\pi},\boldsymbol{\varpi},\sigma)$ and $\mathsf{\Omega^2}(\boldsymbol{\pi},\boldsymbol{\varpi},\sigma)$ are rectangular matrices with respective dimensions $m_i^- -1 \times m_i^+-m_i^- +1 $ and $n-m_i^+ \times m_i^+-m_i^-$.

\noindent
Then, it is clear that
\begin{equation}
\tilde{g}(\tilde{\lambda}_i^{\pm},\boldsymbol{\pi},\boldsymbol{\varpi},\sigma)= \det \big( \mathsf{\Delta^1}(\boldsymbol{\pi},\boldsymbol{\varpi},\sigma) \big) \det \big( \mathsf{\Lambda}(\tilde{\lambda}_i^{\pm},\pi_i,\boldsymbol{\varpi},\sigma) \big) \det \big( \mathsf{\Delta^2}(\boldsymbol{\pi},\boldsymbol{\varpi},\sigma) \big).
\end{equation}

\noindent
The important point of this proof is there is just $\mathsf{\Lambda}(\tilde{\lambda}_i^{\pm},\pi_i,\boldsymbol{\varpi},\sigma)$ which depends of $\tilde{\lambda}_i^{\pm}$, therefore it is necessary to find the solution, $\tilde{\lambda}_i^{\pm}$, such that
\begin{equation}
\det \big( \mathsf{\Lambda}(\tilde{\lambda}_i^{\pm},\pi_i,\boldsymbol{\varpi},\sigma) \big)=0.
\end{equation}
where, according to the previous analysis, the columns of $\det\big(\mathsf{\Lambda}(\tilde{\lambda}_i^{\pm},\pi_i,\boldsymbol{\varpi},\sigma)\big)$ are such that for all $j \in [\![1,m_i^+-m_i^- +1]\!]$,
\begin{equation}
\mathrm{C}_j\big( \mathsf{\Lambda} \big)=
\left\{
\begin{array}{lr}
(\tilde{\lambda}_i^{\pm})^2(\boldsymbol{f_j}-\varpi_j \boldsymbol{f_{j+1}}), & \mathrm{if}\ m_i^- \le j_{i} \le  i-1, \\[6pt]
(\tilde{\lambda}_i^{\pm}-\pi_i)^2(\boldsymbol{f_j}-\varpi_j \boldsymbol{f_{j+1}}), & \mathrm{if}\ i+1 \le j_{i} \le m_i^+ -1,\\[6pt]
(\tilde{\lambda}_i^{\pm})^2\boldsymbol{f_i}-\varpi_i(\tilde{\lambda}_i^{\pm}-\pi_i)^2\boldsymbol{f_{i+1}}-h_i \sum_{k=m_i^-}^i \boldsymbol{f_k} ,& \mathrm{if}\ j_{i}=i,\\[6pt]
-h_j \sum_{j=1}^{m_i^+} \boldsymbol{f_{k}},& \mathrm{if}\ j_{i}=m_i^+,
\end{array}
\right.
\label{1columnsLambda}
\end{equation}
where $j_i:=j+m_i^- - 1$.

\begin{lemma}
Let $(\pi_i,\boldsymbol{h}) \in \mathbb{R} \times \mathbb{R}^{n}$ and an injective function $\sigma \in \mathbb{R}_+^{*\ [\![1,n-1]\!]}$. Then, $\tilde{\lambda} \in \mathbb{R}$ is solution of
\begin{equation}
\det \big( \mathsf{\Lambda}(\tilde{\lambda},\pi_i,\boldsymbol{\varpi},\sigma) \big)=0.
\label{1detequation}
\end{equation}
if and only if
\begin{equation}
\tilde{\lambda} \in \bigg\{0,\pi_i,\frac{\pi_i h_{\sigma,i}^-}{h_{\sigma,i}^- + h_{\sigma,i}^+} \pm \left[\frac{h_{\sigma,i}^- h_{\sigma,i}^+}{(h_{\sigma,i}^- + h_{\sigma,i}^+)^2}\left(h_{\sigma,i}^- + h_{\sigma,i}^+-\pi_i^2\right) \right]^{\frac{1}{2}}\bigg\},
\label{1lambda}
\end{equation}
where $h_{\sigma,i}^-:=h_i(1+ \sum_{k=m_i^-}^{i-1} \prod_{j=k}^{i-1} \varpi_j)$ and $h_{\sigma,i}^+:=h_{m_i^+}(1+ \sum_{k=i+1}^{m_i^+-1} \prod_{j=k}^{m_i^+ -1} \varpi_j)$. Moreover, the respective multiplicities are $2(i-m_i^-)$, $2(m_i^+-i-1)$ and $1$.
\end{lemma}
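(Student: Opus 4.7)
The plan is to exploit the explicit column structure of $\mathsf{\Lambda}$ given in \eqref{1columnsLambda}. Each of the $i-m_i^-$ columns indexed by $j_i\in[\![m_i^-,i-1]\!]$ carries a scalar factor $\tilde{\lambda}^2$, and each of the $m_i^+-i-1$ columns indexed by $j_i\in[\![i+1,m_i^+-1]\!]$ carries a scalar factor $(\tilde{\lambda}-\pi_i)^2$. Pulling these factors out of the determinant produces
\[
\det\bigl(\mathsf{\Lambda}(\tilde{\lambda},\pi_i,\boldsymbol{\varpi},\sigma)\bigr)=\tilde{\lambda}^{2(i-m_i^-)}(\tilde{\lambda}-\pi_i)^{2(m_i^+-i-1)}\,\det(\mathsf{\Lambda}'),
\]
where $\mathsf{\Lambda}'$ is obtained from $\mathsf{\Lambda}$ by replacing each factored column $k$ with its scalar-less counterpart $\boldsymbol{f_k}-\varpi_k\boldsymbol{f_{k+1}}$. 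This at once accounts for the roots $\tilde{\lambda}=0$ and $\tilde{\lambda}=\pi_i$ with the announced multiplicities $2(i-m_i^-)$ and $2(m_i^+-i-1)$. A column-by-column degree count shows that $\det(\mathsf{\Lambda}')$ is a polynomial of degree exactly $2$ in $\tilde{\lambda}$, so only two further simple roots remain to be identified.

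To extract them, I would solve the linear system $\mathsf{\Lambda}'\boldsymbol{x}=0$ and eliminate every unknown except $x_i$ and $x_{m_i^+}$. Row $k\in[\![m_i^-,i-1]\!]$ reads $x_k=\varpi_{k-1}x_{k-1}+h_i x_i+h_{m_i^+} x_{m_i^+}$, with the convention $\varpi_{k-1}x_{k-1}:=0$ for $k=m_i^-$. This triangular recurrence is solved explicitly as $x_k=\mu_k(h_i x_i+h_{m_i^+} x_{m_i^+})$, with $\mu_k=\sum_{j=m_i^-}^{k}\prod_{l=j}^{k-1}\varpi_l$. Unfolding $h_j=\varpi_j h_{j+1}$ into $h_j=\bigl(\prod_{l=j}^{i-1}\varpi_l\bigr)h_i$ for $j<i$, the combination $1+\varpi_{i-1}\mu_{i-1}$ telescopes exactly to $h_{\sigma,i}^-/h_i$. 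Substituting the resulting value of $x_{i-1}$ into row $i$, which reads $-\varpi_{i-1}x_{i-1}+(\tilde{\lambda}^2-h_i)x_i-h_{m_i^+}x_{m_i^+}=0$, produces the first compatibility relation
\[
\tilde{\lambda}^2 h_i x_i=h_{\sigma,i}^-\bigl(h_i x_i+h_{m_i^+} x_{m_i^+}\bigr).
\]

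A completely symmetric elimination on the rows $k\in[\![i+1,m_i^+]\!]$, combined with the analogous telescoping $1+\varpi_{m_i^+-1}\eta_{m_i^+-1}=h_{\sigma,i}^+/h_{m_i^+}$, where $\eta_k=\sum_{j=i+1}^{k}\prod_{l=j}^{k-1}\varpi_l$, and with the identity $\prod_{j=i}^{m_i^+-1}\varpi_j=h_i/h_{m_i^+}$, produces the second compatibility relation
\[
-h_i(\tilde{\lambda}-\pi_i)^2 x_i=h_{m_i^+} h_{\sigma,i}^+ x_{m_i^+}.
\]
Eliminating $x_{m_i^+}$ between the two relations yields, for $x_i\neq 0$, the quadratic
\[
(h_{\sigma,i}^-+h_{\sigma,i}^+)\tilde{\lambda}^2-2\pi_i h_{\sigma,i}^-\tilde{\lambda}+h_{\sigma,i}^-(\pi_i^2-h_{\sigma,i}^+)=0,
\]
whose discriminant simplifies to $4 h_{\sigma,i}^- h_{\sigma,i}^+(h_{\sigma,i}^-+h_{\sigma,i}^+-\pi_i^2)$ and whose two roots are exactly the two remaining values in \eqref{1lambda}. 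The degenerate case $x_i=0$ forces $x_{m_i^+}=0$ by the first relation and then $\boldsymbol{x}=0$ by both recurrences, so it contributes no extra root.

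The main obstacle is the combinatorial bookkeeping that must recognise the telescoping sums $1+\varpi_{i-1}\mu_{i-1}$ and $1+\varpi_{m_i^+-1}\eta_{m_i^+-1}$ as the compact quantities $h_{\sigma,i}^-/h_i$ and $h_{\sigma,i}^+/h_{m_i^+}$: without this reduction one still obtains the correct roots but in an unreadable form. The boundary cases $m_i^-=i$ and $m_i^+=i+1$ are handled with the convention that empty sums vanish, so that the upper (respectively lower) recurrence is vacuous and the same quadratic emerges with $h_{\sigma,i}^-=h_i$ (respectively $h_{\sigma,i}^+=h_{m_i^+}$).
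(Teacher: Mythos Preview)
Your argument is correct and arrives at exactly the same quadratic factor
\[
h_{\sigma,i}^-(\tilde\lambda-\pi_i)^2+h_{\sigma,i}^+(\tilde\lambda^2-h_{\sigma,i}^-)=0
\]
that the paper obtains, but the route is dual to the paper's. The paper works on the \emph{columns} of $\mathsf{\Lambda}$: assuming $\tilde\lambda\notin\{0,\pi_i\}$, it adds suitable multiples of the columns $\mathrm{C}_j$, $j\in[\![m_i^-,m_i^+-1]\!]$, into $\mathrm{C}_i$ and $\mathrm{C}_{m_i^+}$ so that the determinant becomes lower-triangular apart from a $2\times2$ block, and then expands along the last column. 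You instead factor out the obvious scalar multiples first and then work on the \emph{rows} of the reduced matrix $\mathsf{\Lambda}'$, solving $\mathsf{\Lambda}'\boldsymbol{x}=0$ by forward substitution until only the two unknowns $x_i,x_{m_i^+}$ remain. The telescoping identities you isolate, $1+\varpi_{i-1}\mu_{i-1}=h_{\sigma,i}^-/h_i$ and $1+\varpi_{m_i^+-1}\eta_{m_i^+-1}=h_{\sigma,i}^+/h_{m_i^+}$, are precisely the coefficients that appear implicitly in the paper's column operations; your presentation makes them explicit, which is a genuine clarification. Your kernel argument also has the side benefit of exhibiting the eigenvector structure (the values $x_k$ expressed through $x_i,x_{m_i^+}$), whereas the paper's determinantal reduction yields only the characteristic polynomial. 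Both approaches rely on the same bidiagonal-plus-rank-one structure of the columns \eqref{1columnsLambda}; neither is more general, but yours is somewhat more self-contained regarding the multiplicity count, since the degree bound $\deg\det(\mathsf{\Lambda}')\le 2$ together with the two roots of the quadratic forces equality and simplicity directly.
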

\begin{proof}
As $\mathrm{C}_n \big(\mathsf{\Lambda}(\tilde{\lambda},\pi_i,\boldsymbol{\varpi},\sigma) \big)$ does not depend of $\tilde{\lambda}$, $\det \big( \mathsf{\Lambda}(\tilde{\lambda},\pi_i,\boldsymbol{\varpi},\sigma) \big)$ is a polynomial in $\tilde{\lambda}$, with a degree equal to $2(m_i^+-m_i^-)$. According to \eqref{1columnsLambda}, $0$ and $\pi_i$ are two roots, with respective multiplicity $2(i-m_i^-)$ and $2(m_i^+-i-1)$. To determine the expression of the other roots, it is sufficient to perform the next operations on two columns of $\det\big(\mathsf{\Lambda}(\tilde{\lambda}_i^{\pm},\pi_i,\boldsymbol{\varpi},\sigma)\big)$, assuming that $\tilde{\lambda} \not \in \{0,\pi_i\}$:
\begin{equation}
\forall j \in [\![m_i^-,i-1]\!],
\left\{
\begin{array}{l}
\mathrm{C}_i \leftarrow \mathrm{C}_i + \frac{h_i \big(1+ \sum_{k=1}^{j-1} \prod_{q=k}^{j-1} \varpi_q\big)}{\tilde{\lambda}^2} \mathrm{C}_j,\\[6pt]
\mathrm{C}_{m_i^+} \leftarrow \mathrm{C}_{m_i^+} + \frac{h_n \big(1+ \sum_{k=1}^{j-1} \prod_{q=k}^{j-1} \varpi_q\big)}{\tilde{\lambda}^2} \mathrm{C}_j,
\end{array}
\right.
\end{equation}
and afterwards
\begin{equation}
\forall j \in [\![i+1,m_i^+-1]\!],
\mathrm{C}_{m_i^+} \leftarrow \mathrm{C}_{m_i^+} + \frac{h_i \big(1+ \sum_{k=1}^{j-1} \prod_{q=k}^{j-1} \varpi_q\big)}{(\tilde{\lambda}-\pi_i)^2} \mathrm{C}_j.
\end{equation}
Therefore, for all $j \in [\![1,m_i^+-m_i^-+1]\!]$, the new column of $\det\big(\mathsf{\Lambda}(\tilde{\lambda}_i^{\pm},\pi_i,\boldsymbol{\varpi},\sigma)\big)$, $\mathrm{C}_j$, is equal to
\begin{equation}
\left\{
\begin{array}{lr}
(\tilde{\lambda}_i^{\pm})^2(\boldsymbol{f_j}-\varpi_j \boldsymbol{f_{j+1}}), & \mathrm{if}\ m_i^- \le j_i \le  i-1, \\[6pt]
(\tilde{\lambda}_i^{\pm}-\pi_i)^2(\boldsymbol{f_j}-\varpi_j \boldsymbol{f_{j+1}}), & \mathrm{if}\ i+1 \le j_i \le m_i^+ -1,\\[6pt]
((\tilde{\lambda}_i^{\pm})^2 -h_{\sigma,i}^- +)\boldsymbol{f_i}-\varpi_i(\tilde{\lambda}_i^{\pm}-\pi_i)^2\boldsymbol{f_{i+1}},& \mathrm{if}\ j_i=i,\\[6pt]
-\frac{h_{m_i^+}}{h_i} h_{\sigma,i}^- \boldsymbol{f_{i}}  - h_{\sigma,i}^+ \boldsymbol{f_{m_i^+}},& \mathrm{if}\ j_i=m_i^+,
\end{array}
\right.
\end{equation}
where $j_i:=j+m_i^- - 1$. An expansion of the determinant about the last column, $\mathrm{C}_{m_i^+}$, provides
\begin{equation}
\det \big( \mathsf{\Lambda}(\tilde{\lambda},\pi_i,\boldsymbol{\varpi},\sigma) \big)= - \tilde{\lambda}^{2i-2} (\tilde{\lambda}-\pi_i)^{2(m_i^+-i-1)} \big[ h_{\sigma,i}^-(\tilde{\lambda}-\pi_i)^2 + h_{\sigma,i}^+ (\tilde{\lambda}^2 - h_{\sigma,i}^-) \big].
\end{equation}
\noindent
Finally, the only solutions of \eqref{1detequation}, different from  $0$ and $\pi_i$ are:
\begin{equation}
\tilde{\lambda}_i^{\pm} =\frac{\pi_i h_{\sigma,i}^-}{h_{\sigma,i}^- + h_{\sigma,i}^+} \pm \left[\frac{h_{\sigma,i}^- h_{\sigma,i}^+}{(h_{\sigma,i}^- + h_{\sigma,i}^+)^2}\left(h_{\sigma,i}^- + h_{\sigma,i}^+-\pi_i^2\right) \right]^{\frac{1}{2}},
\label{1lambdaipmtilde}
\end{equation}
with multiplicity equal to $1$.
\end{proof}

\noindent
Consequently, according to the lemma \ref{1lambda}
\begin{equation}
\tilde{g}(\tilde{\lambda},\boldsymbol{\pi},\boldsymbol{\varpi},\sigma)=0,
\end{equation}
if and only if
\begin{equation}
\tilde{\lambda} \in \bigg\{0,\pi_i,\frac{\pi_i h_{\sigma,i}^-}{h_{\sigma,i}^- + h_{\sigma,i}^+} \pm \left[\frac{h_{\sigma,i}^- h_{\sigma,i}^+}{(h_{\sigma,i}^- + h_{\sigma,i}^+)^2}\left(h_{\sigma,i}^- + h_{\sigma,i}^+-\pi_i^2\right) \right]^{\frac{1}{2}}\bigg\}.
\label{1lambda2}
\end{equation}

\noindent
According to the implicit functions theorem,
\begin{equation}
g(\lambda,\boldsymbol{u},\boldsymbol{\gamma})=0,
\end{equation}
if and only if $\lambda -  \mathcal{O}(\epsilon^{\frac{\sigma(i)+1}{2}})$ is in
\begin{equation}
\begin{array}{ll}
&\bigg\{u_i,u_i+\pi_i \epsilon^{\frac{\sigma(i)}{2}},u_i+\bigg(\frac{\pi_i h_{\sigma,i}^-}{h_{\sigma,i}^- + h_{\sigma,i}^+} \pm \left[\frac{h_{\sigma,i}^- h_{\sigma,i}^+}{(h_{\sigma,i}^- + h_{\sigma,i}^+)^2}\left(h_{\sigma,i}^- + h_{\sigma,i}^+-\pi_i^2\right) \right]^{\frac{1}{2}}\bigg)\epsilon^{\frac{\sigma(i)}{2}}\bigg\}\\[10pt]
&=\bigg\{u_i,u_{i+1},\frac{u_{i+1} h_{\sigma,i}^- + u_i h_{\sigma,i}^+}{h_{\sigma,i}^- + h_{\sigma,i}^+} \pm \left[\frac{h_{\sigma,i}^- h_{\sigma,i}^+}{h_{\sigma,i}^- + h_{\sigma,i}^+}\left(1-\gamma_i-(u_{i+1}-u_i)^2\right) \right]^{\frac{1}{2}}\bigg\}.
\end{array}
\label{1lambda3}
\end{equation}
$\lambda \in \{u_i + \mathcal{O}(\epsilon^{\frac{\sigma(i)+1}{2}}),u_{i+1}+ \mathcal{O}(\epsilon^{\frac{\sigma(i)+1}{2}})\}$ corresponds to the merger of layers and does not provide the correct roots because the multiplicities are not equal to $1$. That is why we chose
\begin{equation}
\lambda=\lambda_i^{\pm}:=\frac{u_{i+1} h_{\sigma,i}^- + u_i h_{\sigma,i}^+}{h_{\sigma,i}^- + h_{\sigma,i}^+} \pm \left[\frac{h_{\sigma,i}^- h_{\sigma,i}^+}{h_{\sigma,i}^- + h_{\sigma,i}^+}\left(1-\gamma_i-(u_{i+1}-u_i)^2\right) \right]^{\frac{1}{2}}+ \mathcal{O}(\epsilon^{\frac{\sigma(i)+1}{2}}),
\end{equation}
which provides two roots, with multiplicity equal to $1$, and the proposition \ref{1proplambdaipmapprox} is proved.
\end{proof}

\noindent
{\em Remark:} The asymptotic expansion of $\lambda_i^{\pm}({\bf u},\boldsymbol{\gamma})$ in proposition \ref{1proplambdanexpand}  corresponds to the asymptotic expansion of $\lambda_2^{\pm}$ in the set of hyperbolicity $|F_x| < F_{crit}^-$ in \cite{monjarret2014local}, in the two-layer case. Moreover, it is also in accordance with \cite{abgrall2009two}, \cite{barros2008hyperbolicity}, \cite{castro2011numerical}, \cite{kim2008two}, \cite{ovsyannikov1979two},  \cite{schijf1953theoretical} and \cite{stewart2012multilayer}. 2) In the oceanographic applications, the {\em French Naval Hydrographic and Oceanographic Service} uses the multi-layer shallow water model with $40$ layers. For instance, in the bay of Biscay, the assumption \eqref{1hypasympt} is verified, with
\begin{equation}
\epsilon \simeq 10^{-4}.
\end{equation}

\noindent
However, the matter is that
\begin{equation}
\forall i \in [\![1,n-1]\!],\ 1 \le \sigma(i) \le 2,
\end{equation}
\noindent
which implies that the baroclinic eigenvalues are not much separated. Moreover, the assumption on $\boldsymbol{\pi}$ is verified, but the one on $\boldsymbol{\varpi}$ can be contradicted. On the one hand, a part of the layers used to describe the deep sea are reduced with a thickness of the order of $\epsilon$ and then, there would exist $j_0 \in [\![1,n-1]\!]$ such that
\begin{equation}
1 \ll \varpi_{j_0}.
\label{1config40couches1}
\end{equation}

\noindent
On the other hand, it can be interesting to increase the number of layers in a certain area where well-known phenomena occur, in order to provide more accurate results. Then, there would exist $k_0 \in [\![1,n-1]\!]$ such that
\begin{equation}
\varpi_{k_0} \ll 1. 
\label{1config40couches2}
\end{equation}

\begin{figure}[ht]
\centering
\includegraphics[width=10cm,height=70mm]{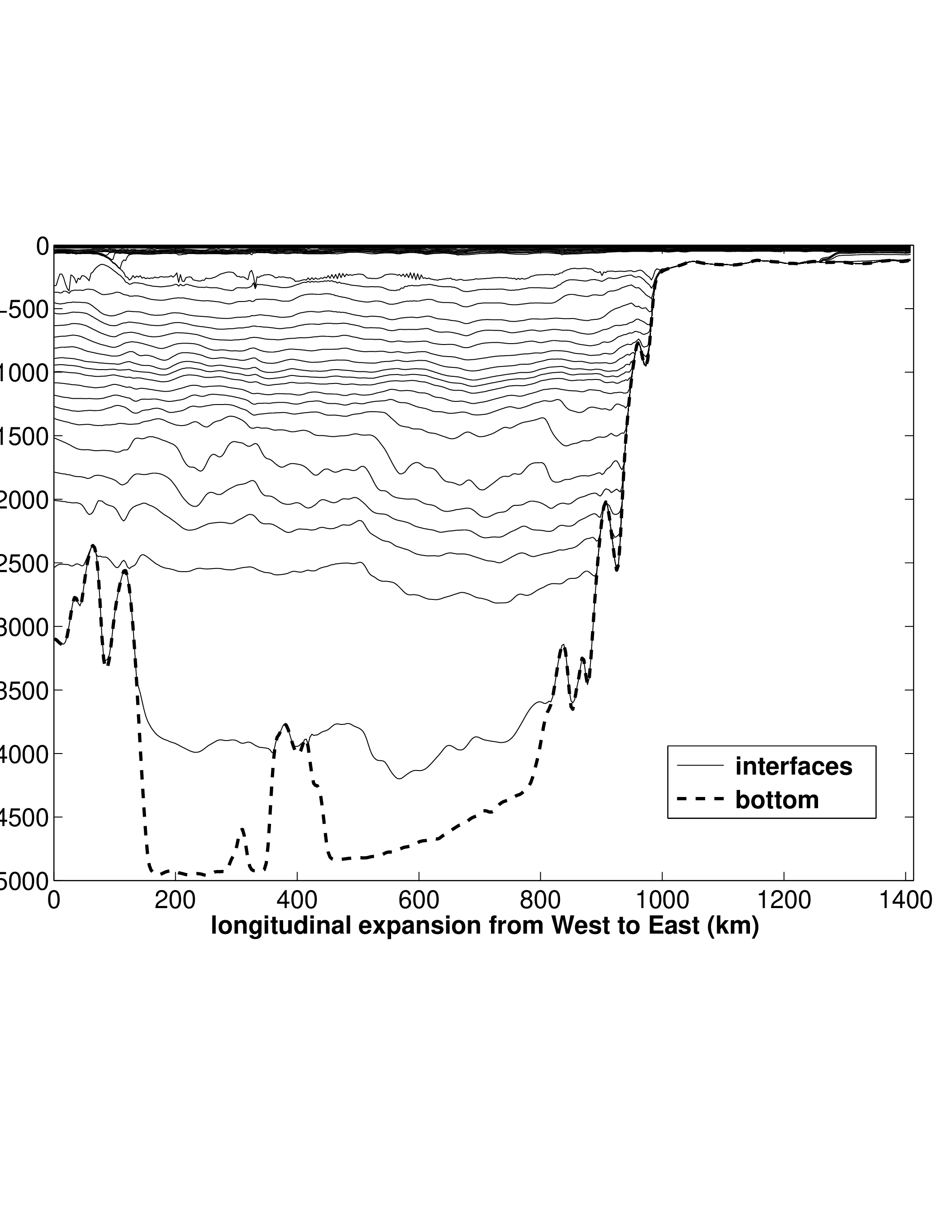}
\caption{Configuration of $40$ layers in the bay of Biscay}
\label{1figure40couches}
\end{figure}

\noindent
On the figure \ref{1figure40couches}, both of these cases occur. The first one concerns the layers near the bottom: these layers have high heights in the deep sea but, at the oceanic plateau, these heights tend to $0$ and then \eqref{1config40couches1} is verified for some $j_0 \in [\![1,n-1]\!]$. Moreover, the second case concerns the layers near the free surface: to describe well the mixing-zone, $20$ layers are necessary --- the black band just below the free surface --- and therefore the assumption \eqref{1config40couches2} is verified for some $k_0 \in [\![1,n-1]\!]$.

\noindent
{\em Remark:} According to the figure \ref{1figure40couches}, the assumption \eqref{1config40couches2} would be verified for all $X \in \mathbb{R}^2$. However, this is not true in the case of \eqref{1config40couches1}.

\noindent
As a consequence of the proposition \ref{1proplambdaipmapprox}, we can deduce the next theorem:
\begin{theorem}
Let ${\bf u^0} \in \mathcal{L}^2(\mathbb{R}^2)^{3n}$, $\boldsymbol{\gamma} \in ]0,1[^{n-1}$, $\epsilon >0$ and an injective function $\sigma: [\![1,n-1]\!] \rightarrow \mathbb{R}_+^*$ such that $\boldsymbol{\gamma}$ verifies \eqref{1hypasympt} and for all $X \in \mathbb{R}^2$, ${\bf u^0}(X)$ verifies \eqref{1hypasympt2}.

\noindent
There exists $\delta > 0$ such that if
\begin{equation}
\epsilon \le \delta,
\label{1hyplambdaipm2}
\end{equation}
and
\begin{equation}
\left\{
\begin{array}{lr}
\inf_{X \in \mathbb{R}^2} h_i^0(X) > 0, & \forall i \in [\![1,n]\!],\\[6pt]
\inf_{X \in \mathbb{R}^2} \phi_{\sigma,i}(\boldsymbol{h^0}(X))-|u^0_{i+1}-u^0_i|^2(X)-|v^0_{i+1}-v^0_i|^2(X) > 0,& \forall i \in [\![1,n-1]\!],
\end{array}\right.
\label{1condnechyp2}
\end{equation}
where $\phi_{\sigma,i}(\boldsymbol{h}):=(h_{\sigma,i}^- +h_{\sigma,i}^+)(1-\gamma_i)$. Then, the system {\em (\ref{1systemmultilayer})}, with initial data ${\bf u^0}$, is hyperbolic.
\label{1corolcaluderiv3}
\end{theorem}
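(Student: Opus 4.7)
The plan is to reduce hyperbolicity to reality of the spectrum of $\mathsf{A}_x(\mathsf{P}(\theta)\mathbf{u}^0(X),\boldsymbol{\gamma})$ uniformly in $(X,\theta)\in\mathbb{R}^2\times[0,2\pi]$, using the rotational invariance \eqref{1rotinv} and the criterion of Proposition \ref{1propcrithypuseful}. Thanks to \eqref{1sigmaAx} and \eqref{1spectrAtheta}, this spectrum splits into three families: the transported eigenvalues $\lambda_{2n+i}=(\mathsf{P}(\theta)\mathbf{u})_i$ of \eqref{1lambda2npi}, which are trivially real; the barotropic pair $\lambda_n^{\pm}$; and the baroclinic pairs $\lambda_i^{\pm}$ for $i\in[\![1,n-1]\!]$. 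The task therefore reduces to showing that, under \eqref{1condnechyp2} and for $\epsilon$ small enough, each of the $\lambda_i^{\pm}$ remains in $\mathbb{R}$ after rotation.

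For the barotropic pair, Proposition \ref{1proplambdanexpandeigenvector2} gives, after rotation, the asymptotic expansion $\lambda_n^{\pm}(\mathsf{P}(\theta)\mathbf{u}^0,\boldsymbol{\gamma})=u_{m_\sigma^-}^0(\theta)\pm\sqrt{H}+\mathcal{O}(\epsilon^{1/2})$. Since $\inf_X H(X)>0$ by the first line of \eqref{1condnechyp2}, there exists $\delta_1>0$ such that for $\epsilon\le\delta_1$ the square-root argument stays positive uniformly in $(X,\theta)$, so $\lambda_n^{\pm}\in\mathbb{R}$. For the baroclinic pairs, apply Proposition \ref{1proplambdaipmapprox} at $\mathsf{P}(\theta)\mathbf{u}^0$: noting that under $\mathsf{P}(\theta)$ the shear between layers $i$ and $i+1$ becomes $\cos\theta\,(u_{i+1}^0-u_i^0)+\sin\theta\,(v_{i+1}^0-v_i^0)$, one gets
\begin{equation*}
\lambda_i^{\pm}=\frac{u_{i+1}^0 h_{\sigma,i}^-+u_i^0 h_{\sigma,i}^+}{h_{\sigma,i}^-+h_{\sigma,i}^+}(\theta)\pm\left[\frac{h_{\sigma,i}^-h_{\sigma,i}^+}{h_{\sigma,i}^-+h_{\sigma,i}^+}\Bigl(1-\gamma_i-\frac{[\cos\theta(u_{i+1}^0-u_i^0)+\sin\theta(v_{i+1}^0-v_i^0)]^2}{h_{\sigma,i}^-+h_{\sigma,i}^+}\Bigr)\right]^{1/2}+\mathcal{O}(\epsilon^{(\sigma(i)+1)/2}).
\end{equation*}
Using the pointwise identity \eqref{maxPthetau}, the worst $\theta$ replaces the bracket by $(u_{i+1}^0-u_i^0)^2+(v_{i+1}^0-v_i^0)^2$, so the discriminant is bounded below by
$\frac{h_{\sigma,i}^-h_{\sigma,i}^+}{(h_{\sigma,i}^-+h_{\sigma,i}^+)^2}\bigl[\phi_{\sigma,i}(\mathbf{h}^0)-|u_{i+1}^0-u_i^0|^2-|v_{i+1}^0-v_i^0|^2\bigr],$
which is strictly positive by \eqref{1condnechyp2}, and in fact bounded away from zero by a positive constant (uniform in $X$) because we take the infimum over $\mathbb{R}^2$ and $\inf_X h_j^0>0$.

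The strict positivity of the leading-order discriminant for every $i$, combined with the fact that the remainder is $\mathcal{O}(\epsilon^{(\sigma(i)+1)/2})=o(1)$, yields a threshold $\delta_2>0$ such that for $\epsilon\le\delta_2$ the full discriminant is still positive, hence $\lambda_i^{\pm}\in\mathbb{R}$ uniformly in $(X,\theta)$. Setting $\delta:=\min(\delta_1,\delta_2)$ and invoking Proposition \ref{1propcrithypuseful} concludes the proof.

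The principal obstacle is purely quantitative: Propositions \ref{1proplambdanexpandeigenvector2} and \ref{1proplambdaipmapprox} give only pointwise asymptotic expansions, whereas hyperbolicity requires the spectrum to be real for every $X\in\mathbb{R}^2$ and every $\theta\in[0,2\pi]$. One must therefore check that the $\mathcal{O}$-terms are controlled uniformly. This is guaranteed by the infima in \eqref{1condnechyp2} and in $h_i^0$, which supply a positive lower bound on the leading discriminant independent of $(X,\theta)$, together with the uniform smallness \eqref{1hyplambdaipm2} of $\epsilon$; the mapping $\theta\mapsto\mathsf{P}(\theta)\mathbf{u}^0$ is continuous and $[0,2\pi]$ is compact, so no further compactness issue arises.
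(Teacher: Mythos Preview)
Your proposal is correct and follows essentially the same route as the paper: reduce via rotational invariance \eqref{1rotinv} and Proposition \ref{1propcrithypuseful} to the reality of $\sigma(\mathsf{A}_x(\mathsf{P}(\theta)\mathbf{u}^0,\boldsymbol{\gamma}))$, invoke the asymptotic expansion of Proposition \ref{1proplambdaipmapprox} for the baroclinic pairs, and then optimize over $\theta$ via \eqref{maxPthetau} to turn the rotated shear into $|u_{i+1}^0-u_i^0|^2+|v_{i+1}^0-v_i^0|^2$. Your write-up is in fact slightly more complete than the paper's: the paper only treats the baroclinic eigenvalues explicitly, whereas you also dispatch the transported eigenvalues $\lambda_{2n+i}$ and the barotropic pair $\lambda_n^{\pm}$ (both immediate), and you flag the uniformity-in-$X$ issue that the paper leaves implicit.
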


\noindent
{\em Remark:} A direct consequence of this theorem is that if the model \eqref{1systemmultilayer} is hyperbolic, then the {\em Rayleigh-Taylor} stability is verified ({\em i.e.} $ \rho_n > \rho_{n-1} > \ldots > \rho_1 > 0$).

\begin{proof}
To verify the hyperbolicity of the system (\ref{1systemmultilayer}), all the eigenvalues of $\mathsf{A}({\bf u},\boldsymbol{\gamma},\theta)$ need to be real. Let $i \in [\![1,n-1]\!]$, according to the rotational invariance (\ref{1rotinv}) and the proposition \ref{1thmapproxlambdaipm}, if $({\bf u},\boldsymbol{\gamma})$ verify \eqref{1hypasympt}, \eqref{1hypasympt2} and \eqref{1hyplambdaipm2}, the asymptotic expansion of $\lambda_i^{\pm}(\mathsf{P}(\theta){\bf u},\boldsymbol{\gamma}) \in \sigma \left( \mathsf{A}({\bf u},\boldsymbol{\gamma},\theta \right)$ is
\begin{equation}
\begin{array}{ll}
\lambda_i^{\pm}(\mathsf{P}(\theta){\bf u},\boldsymbol{\gamma})=& \cos(\theta) \frac{u_{i+1} h_{\sigma,i}^-+u_{i}h_{\sigma,i}^-}{h_{\sigma,i}^-+h_{\sigma,i}^+}+ \sin(\theta) \frac{v_{i+1} h_{\sigma,i}^-+v_{i}h_{\sigma,i}^+}{h_{\sigma,i}^- + h_{\sigma,i}^+}\\[6pt]
& \pm \left[\frac{ h_{\sigma,i}^- h_{\sigma,i}^+}{h_{\sigma,i}^- +h_{\sigma,i}^+}\left(1-\gamma_i- \frac{(\cos(\theta) (u_{i+1}-u_i)+\sin(\theta) (v_{i+1}-v_i))^2}{h_{\sigma,i}^- +h_{\sigma,i}^+}\right)\right]^{\frac{1}{2}}\\[8pt]
&  +\mathcal{O}(\epsilon^{\frac{\sigma(i)+1}{2}}).
\end{array}
\label{1approxlambdaipmtheta2}
\end{equation}
\noindent
Then, as $h_{\sigma,i}^- >0$ and $h_{\sigma,i}^+ >0$, for all $i \in [\![1,n-1]\!]$, if $h_k >0$ for all $k \in [\![1,n]\!]$, a necessary condition to have $\lambda_i^{\pm}(\mathsf{P}(\theta){\bf u},\boldsymbol{\gamma}) \in \mathbb{R}$, for all $\theta \in [0,2\pi]$, is
\begin{equation}
\forall \theta \in [0,2\pi],\  1-\gamma_i- \frac{(\cos(\theta) (u_{i+1}-u_i)+\sin(\theta) (v_{i+1}-v_i))^2}{h_{\sigma,i}^- +h_{\sigma,i}^+} \ge 0.
\label{1condneclambdarealtheta2}
\end{equation}
Finally, using \eqref{maxPthetau}, the necessary condition of hyperbolicity \eqref{1condnechyp2} is obtained.
\end{proof}

\noindent
Then, the theorem \ref{1corolcaluderiv3} insures the set of hyperbolicity, $\mathcal{H}_{\boldsymbol{\gamma}}$, contains all the elements verifying the conditions \eqref{1hypasympt2} and \eqref{1condnechyp2}, when $\boldsymbol{\gamma}$ verifies \eqref{1hypasympt} and \eqref{1hyplambdaipm2}.

\noindent
{\em Remarks:} 1) The theorem \ref{1corolcaluderiv3} is a generalization of the theorem \ref{1corolcaluderiv1}, in the asymptotic regime \eqref{1hypasympt} and \eqref{1hypasympt2}. Moreover, the shape of the baroclinic eigenvalues, in the merged-layer case \eqref{1approxlambdaipm}, is the same in the considered asymptotic regime, with the assumptions \eqref{1hyplambdaipm2}. 2) In \cite{stewart2012multilayer}, the numerical set of hyperbolicity of the three-layer model, in one dimension (see figure $4$), seems that the difference of velocities $u_{i+1}-u_i$, for $i \in [\![1,2]\!]$, is allowed to be very large: this should prove the last criterion, we gave in theorem \ref{1corolcaluderiv3}, is just very different from the entire set of hyperbolicity. However, as it was proved in \cite{monjarret2014local} for the two-layer case, there is a gap between the one and the two dimensions sets of hyperbolicity. Indeed, the elements in the one dimension set have to be rotational invariant ({\em i.e.} remain in the one dimension set of hyperbolicity if a rotation is applied) to be in the two dimensions one. This is why it should not be far from the exact set of hyperbolicity, even if the criterion \eqref{1condnechyp2} is a necessary condition of hyperbolicity of the multi-layer shallow water model, in two dimensions, and not a sufficient one.

\noindent
In conclusion, we managed to obtain an asymptotic expansion of the baroclinic eigenvalues, $\left(\lambda_i^{\pm}({\bf u},\boldsymbol{\gamma})\right)_{i \in [\![1,n-1]\!]}$, considering the asymptotic regime \eqref{1hypasympt} and assuming the heights of each layer have all the same range and the difference of velocity between an interface has the same order as the square root of the relative difference of density, at this interface. The expansions of $\lambda_i^{\pm}({\bf u},\boldsymbol{\gamma})$, for $i \in [\![1,n-1]\!]$, has been proved with a precision in $ \mathcal{O}(\epsilon^{\frac{\sigma(i)+1}{2}})$.

\subsection{Comparison of the criteria}
In this paper, we expressed two explicit criteria of local well-posedness of the multi-layer shallow water model with free surface: an explicit criterion of symmetrizability --- see \eqref{1condwellposed} with \eqref{1estimdeltai2} --- and there is an explicit criterion of hyperbolicity --- see \eqref{1corolcaluderiv3}. The main difference between both of them is that the $1^{\mathrm{st}}$ one gives conditions on $|u_i - \bar{u}|^2$, for all $i \in [\![1,n]\!]$,while the $2^{\mathrm{nd}}$ one gives conditions on $|u_{i+1} - u_i|^2$, for all $i \in [\![1,n-1]\!]$. Then, to compare these two criteria, we need to know which one of the following assertions is true, in the asymptotic regime \eqref{1hypasympt} and \eqref{1hypasympt2}:
\begin{equation}
\forall i \in [\![1,n-1]\!],\ \phi_{\sigma,i}(\boldsymbol{h}) \le \left( \frac{\alpha_{n,i}+\alpha_{n,i+1}}{\alpha_{n,i}\alpha_{n,i+1}a({\bf h},\boldsymbol{\gamma})}\right)^2,
\label{1comparaisoncriteres1}
\end{equation}
or
\begin{equation}
\forall i \in [\![1,n-1]\!],\ \left( \frac{\alpha_{n,i}+\alpha_{n,i+1}}{\alpha_{n,i}\alpha_{n,i+1}a({\bf h},\boldsymbol{\gamma})}\right)^2 \le \phi_{\sigma,i}(\boldsymbol{h}).
\label{1comparaisoncriteres2}
\end{equation}
Indeed, if
\begin{equation}
\forall i \in [\![1,n]\!],\
\frac{1}{\left( \alpha_{n,i} a({\bf h},\boldsymbol{\gamma}) \right)^2} > | u_i - \bar{u} |^2,
\label{1symwellposedcond}
\end{equation}
then
\begin{equation}
\forall i \in [\![1,n-1]\!],\ \left( \frac{\alpha_{n,i}+\alpha_{n,i+1}}{\alpha_{n,i}\alpha_{n,i+1}a({\bf h},\boldsymbol{\gamma})}\right)^2 > | u_{i+1} - u_i |^2.
\end{equation}

\noindent
Consequently, if \eqref{1comparaisoncriteres1} is verified, for instance, it implies the conditions of hyperbolicity:
\begin{equation}
\forall i \in [\![1,n-1]\!],\
\phi_{\sigma,i}(\boldsymbol{h}) > | u_{i+1} - u_i |^2.
\end{equation}

\noindent
Let $\boldsymbol{\gamma} \in ]0,1[^{n-1}$, $\epsilon >0$ and an injective function $\sigma: [\![1,n-1]\!] \rightarrow \mathbb{R}_+^*$ such that $\boldsymbol{\gamma}$ verifies \eqref{1hypasympt}. We define the next subset of $\mathcal{L}^2(\mathbb{R}^2)^{3n}$
\begin{equation}
\mathcal{H}_{\sigma,\epsilon}:= \left\{ {\bf u^0} \in \mathcal{L}^2(\mathbb{R}^2)^{3n} /\ \forall X \in \mathbb{R}^2,\ {\bf u^0}(X)\ \mathrm{verifies}\ \mathrm{conditions}\ {\rm \eqref{1hypasympt2}}\ {\rm and}\ {\rm \eqref{1condnechyp2}} \right\}
\label{1defHgammasigmaepsilon}
\end{equation}
and the subset of $\mathcal{H}^s(\mathbb{R}^2)^{3n}$
\begin{equation}
\mathcal{S}_{\sigma,\epsilon}^s:= \left\{ {\bf u^0} \in \mathcal{H}^s(\mathbb{R}^2)^{3n} /\ \forall X \in \mathbb{R}^2,\ {\bf u^0}(X)\ \mathrm{verifies}\ \mathrm{conditions}\ {\rm \eqref{1hypasympt2}}\ {\rm and}\ {\rm \eqref{1symwellposedcond}} \right\}
\end{equation}

\noindent
According to the proposition \ref{1propestimdeltai2}, it is clear that
\begin{equation}
\mathcal{S}_{\sigma,\epsilon}^s \subset \mathcal{S}_{\gamma}^s,
\end{equation}

\noindent
and according to the theorem \ref{1corolcaluderiv3}, it is clear that if $\epsilon \le \delta$,
\begin{equation}
\mathcal{H}_{\sigma,\epsilon} \subset \mathcal{H}_{\gamma}.
\end{equation}

\noindent
Moreover, there is the next proposition:

\begin{proposition}
Let $s>2$, ${\bf u^0}: \mathbb{R}^2 \rightarrow \mathbb{R}^{3n}$, $\boldsymbol{\gamma} \in ]0,1[^{n-1}$, $\epsilon >0$ and an injective function $\sigma: [\![1,n-1]\!] \rightarrow \mathbb{R}_+^*$ such that $\boldsymbol{\gamma}$ verifies \eqref{1hypasympt} and for all $X \in \mathbb{R}^2$, ${\bf u}(X)$ verifies \eqref{1hypasympt2}.

\noindent
There exists $\delta > 0$ such that if
\begin{equation}
\epsilon \le \delta,
\label{1hyplambdaipm3}
\end{equation}
then,
\begin{equation}
\mathcal{S}^s_{\sigma,\epsilon} \subset \mathcal{H}_{\sigma,\epsilon} \cap \mathcal{H}^s(\mathbb{R}^2)^{3n}.
\label{1subsethypsym}
\end{equation}
\label{1prop1subsethypsym}
\end{proposition}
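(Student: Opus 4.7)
The plan is to verify that any initial datum fulfilling the symmetrizability condition \eqref{1symwellposedcond} automatically fulfils the pointwise hyperbolicity criterion \eqref{1condnechyp2} of Theorem~\ref{1corolcaluderiv3}, once $\epsilon$ is small enough. For ${\bf u^0}\in\mathcal{S}^s_{\sigma,\epsilon}$, the regularity ${\bf u^0}\in\mathcal{H}^s(\mathbb{R}^2)^{3n}$, the regime \eqref{1hypasympt2}, and the strict positivity $\inf_X h_i^0(X)>0$ are already part of the definition of $\mathcal{S}^s_{\sigma,\epsilon}$; only the shear bound $|u_{i+1}^0-u_i^0|^2+|v_{i+1}^0-v_i^0|^2<\phi_{\sigma,i}({\bf h^0})$ remains to be established pointwise for every $i\in[\![1,n-1]\!]$.

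The first step is a triangle-inequality reduction from bounds on deviations from the mean velocity to bounds on inter-layer shears. Writing $u_{i+1}^0-u_i^0=(u_{i+1}^0-\bar u^0)-(u_i^0-\bar u^0)$ and proceeding identically for $v$ (both Cartesian components are controlled by \eqref{1symwellposedcond}, via the rotational invariance invoked in Lemma~\ref{1wellposedrotinv}), I obtain
\begin{equation*}
|u_{i+1}^0-u_i^0|^2+|v_{i+1}^0-v_i^0|^2 < 2\left(\frac{\alpha_{n,i}+\alpha_{n,i+1}}{\alpha_{n,i}\alpha_{n,i+1}\,a({\bf h^0},\boldsymbol{\gamma})}\right)^2
\end{equation*}
uniformly in $X\in\mathbb{R}^2$. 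The proposition thus reduces to assertion \eqref{1comparaisoncriteres2}: that the right-hand side of the preceding display is dominated by $\phi_{\sigma,i}({\bf h^0})$ in the asymptotic regime.

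To establish \eqref{1comparaisoncriteres2} I count powers of $\epsilon$. By \eqref{1hypasympt}, the quantities $p_k=\rho_n/(\rho_{k+1}-\rho_k)$ are of order $\epsilon^{-\sigma(k)}$, so \eqref{1ahgamma} yields $a({\bf h},\boldsymbol{\gamma})\ge c_1\epsilon^{-M}$ with $M:=\max_k\sigma(k)$, hence $a^{-2}\le C_1\epsilon^{2M}$. Conversely, $\phi_{\sigma,i}({\bf h^0})=(h_{\sigma,i}^-+h_{\sigma,i}^+)\epsilon^{\sigma(i)}$, with $h_{\sigma,i}^{\pm}$ uniformly bounded below by a positive constant (thanks to \eqref{1hypasympt2} together with $\inf_X h_i^0>0$). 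Since $\sigma$ is injective on $[\![1,n-1]\!]$ with $\min\sigma=1$, I get $\sigma(i)\le M$, and therefore $\sigma(i)<2M$ as soon as $M\ge 1$; hence the ratio $\phi_{\sigma,i}({\bf h^0})/a^{-2}$ is of order $\epsilon^{\sigma(i)-2M}\to+\infty$ as $\epsilon\to 0$.

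The main obstacle is precisely this uniform-in-$(X,i)$ asymptotic comparison: the constants hidden in $a({\bf h^0},\boldsymbol{\gamma})$ and $\phi_{\sigma,i}({\bf h^0})$ must be tracked so that they depend only on the fixed densities and on the uniform bounds on the heights provided by \eqref{1hypasympt2}, not on $X$. Once this uniformity is secured, I take $\delta$ to be the minimum of the threshold from Theorem~\ref{1corolcaluderiv3} and of a second threshold $\delta_0>0$ making $2\bigl((\alpha_{n,i}+\alpha_{n,i+1})/(\alpha_{n,i}\alpha_{n,i+1}\,a({\bf h^0},\boldsymbol{\gamma}))\bigr)^2<\phi_{\sigma,i}({\bf h^0})$ valid pointwise for every $i$. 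For such $\epsilon\le\delta$, every ${\bf u^0}\in\mathcal{S}^s_{\sigma,\epsilon}$ then satisfies \eqref{1condnechyp2} at every $X\in\mathbb{R}^2$, so ${\bf u^0}\in\mathcal{H}_{\sigma,\epsilon}\cap\mathcal{H}^s(\mathbb{R}^2)^{3n}$, which closes the argument.
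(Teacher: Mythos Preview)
Your proposal is correct and follows the same route as the paper: reduce the inter-layer shear to deviations from the mean via the triangle inequality, then compare the orders in $\epsilon$ of $\bigl((\alpha_{n,i}+\alpha_{n,i+1})/(\alpha_{n,i}\alpha_{n,i+1}a)\bigr)^2\sim\epsilon^{2M}$ against $\phi_{\sigma,i}\sim\epsilon^{\sigma(i)}$, with $M=\max_k\sigma(k)=\sigma(m_\sigma^+)$. The paper carries out the same power-count but inserts an explicit dichotomy according to whether the third term in \eqref{1ahgamma}, namely $\max_i(\alpha_{n,i}^{-1}h_i^{-1})$, dominates the $p_k$-terms (i.e.\ whether some $\rho_k h_k\le\tfrac{\rho_{m_\sigma^++1}}{2}\epsilon^{M}$); under your standing assumption $\inf_X h_i^0>0$ this first case is vacuous for $\epsilon$ small, so your streamlined version is legitimate and slightly cleaner.
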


\begin{proof}
First, we remind we proved in proposition \ref{1propestimdeltai2}
\begin{equation}
\forall i \in [\![1,n]\!],\ \frac{1}{\left( \alpha_{n,i} a({\bf h},\boldsymbol{\gamma}) \right)^2} \le \delta_i(\boldsymbol{h},\boldsymbol{\gamma}),
\end{equation}
with
\begin{equation}
a({\bf h},\boldsymbol{\gamma}):=\max\left(\left(\frac{\alpha_{n,2}}{\alpha_{n,1}}+1 \right)p_1,2\max_{k \in [\![1,n-2]\!]}\left( p_k+p_{k+1} \right), \max_{i \in [\![1,n]\!]} \left(\alpha_{n,i}^{-1} h_i^{-1}\right) \right),
\label{1ahgamma2}
\end{equation}
and for all $k \in [\![1,n-1]\!]$, $p_k:=\frac{1}{\alpha_{n,k+1}-\alpha_{n,k}}=\frac{\rho_n}{\rho_{k+1}-\rho_k}$.

\noindent
According to the asymptotic regime considered, we have
\begin{equation}
\left(\frac{\alpha_{n,2}}{\alpha_{n,1}}+1 \right)p_1=\frac{\rho_n}{\rho_2}\big( 2\epsilon^{-\sigma(1)}+1-\epsilon^{\sigma(1)} \big),
\end{equation}
and
\begin{equation}
\forall k \in [\![1,n-2]\!],\ p_k + p_{k+1}=\rho_n \big( \frac{\epsilon^{-\sigma(k)}}{\rho_{k+1}}+\frac{\epsilon^{-\sigma(k+1)}}{\rho_{k+2}} \big),
\end{equation}

\noindent
then, defining $m_{\sigma}^+ \in [\![1,n-1]\!]$ such that
\begin{equation}
\sigma(m_{\sigma}^+)=\max_{i \in [\![1,n-1]\!]} \sigma(i),
\label{1msigmap}
\end{equation}
we have:
\begin{equation}
\max\left( \left(\frac{\alpha_{n,2}}{\alpha_{n,1}}+1 \right)p_1,2 \max_{k \in [\![1,n-2]\!]}(p_k + p_{k+1})\right) \sim \frac{2 \rho_n \epsilon^{-\sigma(m_{\sigma}^+)}}{\rho_{m_{\sigma}^+ +1}}.
\end{equation}
Afterwards, we differentiate two cases: let $i \in [\!1,n-1]\!]$,
\begin{equation}
\frac{1}{(\alpha_{n,i}a({\bf h},\boldsymbol{\gamma}))^2}=
\left\{
\begin{array}{ll}
\frac{(\min_{k \in [\![1,n]\!]} \rho_k h_k)^2}{\rho_i^2},& \mathrm{if}\ \exists k \in [\![1,n]\!], \rho_k h_k \le \frac{\rho_{m_{\sigma}^+ +1}}{2}\epsilon^{\sigma(m_{\sigma}^+)},\\[6pt]
\left(\frac{\rho_{m_{\sigma}^+ +1}}{2 \rho_i} \right)^2\epsilon^{2 \sigma(m_{\sigma}^+)},& \mathrm{otherwise}.
\end{array}
\right.
\label{1ordredeltai}
\end{equation}

\noindent
In the $1^{\mathrm{st}}$ case, as $1-\epsilon \le \gamma_i < 1$ with $\epsilon \le \delta$, we have, for all $i \in [\![1,n-1]\!]$:
\begin{equation}
\begin{array}{ll}
\left( \frac{\alpha_{n,i}+\alpha_{n,i+1}}{\alpha_{n,i}\alpha_{n,i+1}a({\bf h},\boldsymbol{\gamma})}\right)^2 &\le \left(\frac{2\min_{k \in [\![1,n]\!]} \rho_k h_k}{\rho_i}\right)^2,\\[8pt]
&\le \left(\frac{\rho_{m_{\sigma}^+ +1}}{\rho_i} \right)^2\epsilon^{2\sigma(m_{\sigma}^+)},
\end{array}
\end{equation}
and then, as $\frac{\rho_{m_{\sigma}^+ +1}}{\rho_i} \epsilon^{\sigma(m_{\sigma}^+)} \le h_{\sigma,i}^- +h_{\sigma,i}^+$ and $\frac{\rho_{m_{\sigma}^+ +1}}{\rho_i} \epsilon^{\sigma(m_{\sigma}^+)} \le \epsilon^{\sigma(i)}$, the next inequality remains true
\begin{equation}
\frac{\rho_{m_{\sigma}^+ +1}^2}{1-\epsilon} \epsilon^{2\sigma(m_{\sigma}^+)} \le (h_{\sigma,i}^- +h_{\sigma,i}^+)\epsilon^{\sigma(i)} = \phi_{\sigma,i}(\boldsymbol{h}),
\end{equation}

\noindent
In the $2^{\mathrm{nd}}$ case, as $1-\epsilon \le \gamma_i < 1$ with $\epsilon \ll 1$, we have, for all $i \in [\![1,n-1]\!]$::
\begin{equation}
\begin{array}{ll}
\left( \frac{\alpha_{n,i}+\alpha_{n,i+1}}{\alpha_{n,i}\alpha_{n,i+1}a({\bf h},\boldsymbol{\gamma})}\right)^2 &\le \left( \frac{\rho_{m_{\sigma}^+ +1}}{2 \rho_i}\epsilon^{\sigma(m_{\sigma}^+)}+\frac{\rho_{m_{\sigma}^+ +1}}{2 \rho_{i+1}} \epsilon^{\sigma(m_{\sigma}^+)} \right)^2\\[8pt]
&  < \frac{\rho_{m_{\sigma}^+ +1}^2}{1-\epsilon} \epsilon^{2\sigma(m_{\sigma}^+)},
\end{array}
\end{equation}
Moreover, there is also the next inequality:
\begin{equation}
\frac{\rho_{m_{\sigma}^+ +1}^2}{1-\epsilon} \epsilon^{2\sigma(m_{\sigma}^+)} \le (h_{\sigma,i}^- +h_{\sigma,i}^+)\epsilon^{\sigma(i)} = \phi_{\sigma,i}(\boldsymbol{h}),
\end{equation}
according to the definition of $m_{\sigma}^+$ and that all the layers $i \in [\![1,n]\!]$ are supposed to verify
\begin{equation}
\rho_i h_i \ge \frac{\rho_{m_{\sigma}^+}}{2}\epsilon^{\sigma(m_{\sigma}^+)}.
\end{equation}

\noindent
Finally, we have
\begin{equation}
\forall i \in [\![1,n-1]\!],\ \left( \frac{\alpha_{n,i}+\alpha_{n,i+1}}{\alpha_{n,i}\alpha_{n,i+1}a({\bf h},\boldsymbol{\gamma})}\right)^2 \le \phi_{\sigma,i}(\boldsymbol{h}),
\end{equation}
and the proposition \ref{1prop1subsethypsym} is proved. 
\end{proof}

\noindent
To conclude, we proved that the criterion, highlighted in this section, is a weaker one than the criterion of symmetrizability, we proved in the previous section, when the asymptotic regime \eqref{1hypasympt}, \eqref{1hypasympt2} and \eqref{1hyplambdaipm3} are verified. In the next section, we perform the asymptotic expansion of the eigenvectors, in order to specify the nature of the waves associated to each eigenvalues and to prove the diagonalizability of the matrix $\mathsf{A}({\bf u},\boldsymbol{\gamma},\theta)$.

\section{Asymptotic expansion of all the eigenvectors}
In the previous section, the asymptotic expansion of all the eigenvalues associated to $\mathsf{A}_x({\bf u},\boldsymbol{\gamma})$ were proved, in a particular regime, which enables to separate all the baroclinic eigenvalues. In this section, we will give the associated expressions of the asymptotic expansions of all the eigenvectors of $\mathsf{A}_x({\bf u},\boldsymbol{\gamma})$. Moreover, we will deduce the diagonalizability of the matrix $\mathsf{A}({\bf u},\boldsymbol{\gamma},\theta)$ and the local well-posedness of the model \eqref{1systemmultilayer} in $\mathcal{H}^s(\mathbb{R}^2)^{3n}$, with $s>2$. Finally, the nature of the waves associated to each eigenvalues --  shock, contact or rarefaction wave -- in the asymptotic regime considered in the previous section, is deduced.

\subsection{The barotropic eigenvectors}
In the asymptotic regime \eqref{1hypasympt} and \eqref{1hypasympt2}, we can deduce the asymptotic expansions of the right and left eigenvectors associated to $\lambda_n^{\pm}({\bf u},\boldsymbol{\gamma})$.

\begin{proposition}
Let $({\bf u},\boldsymbol{\gamma}) \in \mathbb{R}^{3n} \times ]0,1[^{n-1}$, $\epsilon >0$ and an injective function $\sigma \in \mathbb{R}_+^{*\ [\![1,n-1]\!]}$ such that $\boldsymbol{\gamma}$ verifies \eqref{1hypasympt}, ${\bf u}$ verifies \eqref{1hypasympt2} and
\begin{equation}
\epsilon \ll 1.
\label{1assumpteigenvectorlambdan}
\end{equation}
Then, the asymptotic expansion of the right eigenvector associated to $\lambda_n^{\pm}({\bf u},\boldsymbol{\gamma})$, with precision about $\mathcal{O}(1-\gamma_{m_{\sigma}^-})$, is such that
\begin{equation}
\begin{array}{ll}
\boldsymbol{r_x^{\lambda_n^{\pm}}}({\bf u},\boldsymbol{\gamma})=& \sum_{k=1}^n \boldsymbol{ e_{n+k}}\pm \frac{h_k}{\sqrt{H}}\boldsymbol{ e_k}\\[6pt]
& -\sum_{k=1}^{m_{\sigma}^-}  \frac{(u_{m_{\sigma}^-+1}-u_{m_{\sigma}^-})}{\sqrt{H}} \left( \frac{2 h_k}{\sqrt{H}} \boldsymbol{ e_{k}} \pm \boldsymbol{ e_{n+k}}\right)\\[6pt]
& + \sum_{k=m_{\sigma}^-+1}^{n}  \frac{(u_{m_{\sigma}^-+1}-u_{m_{\sigma}^-}) h_{\sigma,m_{\sigma}^-}^-}{\sqrt{H}h_{\sigma,m_{\sigma}^-}^+} \left( \frac{2 h_k}{\sqrt{H}} \boldsymbol{ e_{k}} \pm \boldsymbol{ e_{n+k}} \right)\\[6pt]
&+\mathcal{O}(1-\gamma_{m_{\sigma}^-}),\\[6pt]
\end{array}
\label{1eigvectlambdnpmr2}
\end{equation}

\noindent
and the asymptotic expansion of the left eigenvector associated to $\lambda_n^{\pm}({\bf u},\boldsymbol{\gamma})$, with precision about $\mathcal{O}(1-\gamma_{m_{\sigma}^-})$, is such that
\begin{equation}
\begin{array}{ll}
\boldsymbol{l_x^{\lambda_n^{\pm}}}({\bf u},\boldsymbol{\gamma})=& \sum_{k=1}^n {}^{\top} \boldsymbol{ e_{k}}\pm \frac{h_k}{\sqrt{H}}{}^{\top}\boldsymbol{ e_{n+k}}\\[6pt]
& -\sum_{k=1}^{m_{\sigma}^-}  \frac{(u_{m_{\sigma}^-+1}-u_{m_{\sigma}^-})}{\sqrt{H}} \left(\frac{2 h_k}{\sqrt{H}} {}^{\top}\boldsymbol{ e_{n+k}} \pm {}^{\top} \boldsymbol{ e_{k}}\right)\\[6pt]
& + \sum_{k=m_{\sigma}^-+1}^{n}  \frac{(u_{m_{\sigma}^-+1}-u_{m_{\sigma}^-}) h_{\sigma,m_{\sigma}^-}^-}{\sqrt{H}h_{\sigma,m_{\sigma}^-}^+} \left( \frac{2 h_k}{\sqrt{H}} {}^{\top} \boldsymbol{ e_{n+k}} \pm {}^{\top} \boldsymbol{ e_{k}} \right)\\[6pt]
&+\mathcal{O}(1-\gamma_{m_{\sigma}^-}).
\end{array}
\label{1expressionlefteigenvectn}
\end{equation}
\label{1proplambdanexpandeigenvector}
\end{proposition}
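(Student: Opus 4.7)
The plan is to proceed by a perturbation analysis analogous to that of Proposition \ref{1thmeigvectlambdipm}, but expanding around the single-layer state $(\mathbf{u^0},\boldsymbol{\gamma^0})$ of \S\ref{1sectioncase1} rather than around a merged-interface state. The key observation is that the zeroth-order barotropic eigenvector is precisely the one obtained in \eqref{1vn}, namely
\begin{equation}
\boldsymbol{r_{n,0}^{\pm}} := \sum_{k=1}^n \boldsymbol{e_{n+k}} \pm \frac{h_k}{\sqrt{H}}\boldsymbol{e_k},
\label{1planr0}
\end{equation}
and that, under the asymptotic regime \eqref{1hypasympt}--\eqref{1hypasympt2}, the perturbation of $\mathsf{A}_x(\mathbf{u},\boldsymbol{\gamma})$ relative to $\mathsf{A}_x(\mathbf{u^0},\boldsymbol{\gamma^0})$ is dominated at order $\epsilon^{1/2}$ by the single contribution of the interface $m_\sigma^-$, since $\sigma(m_\sigma^-)=1$ and all other velocity differences scale as $\epsilon^{\sigma(j)/2}$ with $\sigma(j)>1$.

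The next step is to posit the Ansatz $\boldsymbol{r_x^{\lambda_n^{\pm}}}(\mathbf{u},\boldsymbol{\gamma}) = \boldsymbol{r_{n,0}^{\pm}} + \epsilon^{1/2}\boldsymbol{r_{n,1}^{\pm}} + \mathcal{O}(\epsilon)$, substitute it, together with the barotropic eigenvalue expansion $\lambda_n^{\pm} = \pm\sqrt{H} + \epsilon^{1/2}(u_{m_\sigma^-}^{(1)} \pm \psi_{m_\sigma^-}) + \mathcal{O}(\epsilon)$ coming from Proposition \ref{1proplambdanexpandeigenvector2}, into the eigenvalue equation $\mathsf{A}_x(\mathbf{u},\boldsymbol{\gamma})\boldsymbol{r_x^{\lambda_n^{\pm}}} = \lambda_n^{\pm}\boldsymbol{r_x^{\lambda_n^{\pm}}}$, and match the coefficient of $\epsilon^{1/2}$. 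This yields a linear system of the form
\begin{equation}
\bigl(\mathsf{A}_x(\mathbf{u^0},\boldsymbol{\gamma^0}) \mp \sqrt{H}\,\mathsf{I_{3n}}\bigr)\boldsymbol{r_{n,1}^{\pm}} = \bigl(\pm\psi_{m_\sigma^-}\mathsf{I_{3n}} - \mathsf{B}^{(1)}\bigr)\boldsymbol{r_{n,0}^{\pm}},
\label{1planlinsys}
\end{equation}
where $\mathsf{B}^{(1)}$ contains only the localized velocity perturbation at interface $m_\sigma^-$. The Fredholm compatibility condition, obtained by pairing with the left eigenvector $\boldsymbol{l_{n,0}^{\pm}}$ from \eqref{1vn}, recovers the value of $\psi_{m_\sigma^-}$ already established in Proposition \ref{1proplambdanexpandeigenvector2}; explicit inversion on the complementary subspace then produces $\boldsymbol{r_{n,1}^{\pm}}$.

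The main obstacle will be the bookkeeping: the operator $\mathsf{A}_x(\mathbf{u^0},\boldsymbol{\gamma^0}) \mp \sqrt{H}\,\mathsf{I_{3n}}$ has a large kernel --- it contains the $(n-1)$ contact modes \eqref{1vi} together with the $n$-dimensional $\mathbf{v}$-block --- so a canonical representative of $\boldsymbol{r_{n,1}^{\pm}}$ must be fixed by imposing orthogonality to $\boldsymbol{r_{n,0}^{\pm}}$ (or, equivalently, absorbing any kernel component into a renormalization of $\boldsymbol{r_{n,0}^{\pm}}$). The asymmetric split between indices $k \le m_\sigma^-$ and $k > m_\sigma^-$ visible in \eqref{1eigvectlambdnpmr2} will then emerge naturally: since $\mathsf{B}^{(1)}$ acts only at interface $m_\sigma^-$, its preimage under the degenerate operator partitions the layers into two blocks, and the balancing coefficients $-1$ (above) versus $h_{\sigma,m_\sigma^-}^-/h_{\sigma,m_\sigma^-}^+$ (below) are forced by the compatibility condition that the total height correction vanish (one verifies directly that $-h_{\sigma,m_\sigma^-}^- + (h_{\sigma,m_\sigma^-}^-/h_{\sigma,m_\sigma^-}^+)\cdot h_{\sigma,m_\sigma^-}^+ = 0$).

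Finally, the expansion of the left eigenvector $\boldsymbol{l_x^{\lambda_n^{\pm}}}(\mathbf{u},\boldsymbol{\gamma})$ follows by applying the same argument to the transposed equation $\boldsymbol{l_x^{\lambda_n^{\pm}}}\mathsf{A}_x(\mathbf{u},\boldsymbol{\gamma}) = \lambda_n^{\pm}\boldsymbol{l_x^{\lambda_n^{\pm}}}$. Inspection of the block structure of $\mathsf{A}_x$ in \eqref{1Axy} shows that the left eigenvector is obtained from the right one by swapping the canonical basis indices $k \leftrightarrow n+k$, so the formula \eqref{1expressionlefteigenvectn} is an immediate transcription of \eqref{1eigvectlambdnpmr2} and requires no additional calculation.
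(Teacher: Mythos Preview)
Your proposal is correct and follows essentially the same approach as the paper: expand the eigenvector as $\boldsymbol{r_{n,0}^{\pm}} + \epsilon^{1/2}\boldsymbol{r_{n,1}^{\pm}}$ around the degenerate single-layer state, match the $\epsilon^{1/2}$ coefficient in the eigenvalue equation to obtain a linear system for the correction, and solve. The paper writes the base point as $({\bf u}_{\sigma}^{m_\sigma^-},\boldsymbol{\gamma}_\sigma^{m_\sigma^-})$ rather than $({\bf u^0},\boldsymbol{\gamma^0})$, but since $m_\sigma^-$ is the interface of minimal $\sigma$, these coincide; and the paper handles the non-uniqueness by carrying a free parameter $S_r=\sum_k r_{n,1}^{\pm,k}$ and then setting it to zero, which is equivalent to your normalization choice. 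One small slip: the first-order eigenvalue shift from Proposition~\ref{1proplambdanexpandeigenvector2} is $\psi_{m_\sigma^-}$ with no $\pm$ sign, so the right-hand side of your system \eqref{1planlinsys} should read $(\psi_{m_\sigma^-}\mathsf{I_{3n}} - \mathsf{B}^{(1)})\boldsymbol{r_{n,0}^{\pm}}$.
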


\begin{proof}
According to the proposition \ref{1proplambdanexpandeigenvector2},
\begin{equation}
\begin{array}{ll}
\lambda_n^{\pm}({\bf u},\boldsymbol{\gamma})&=u_{m_{\sigma}^-} \pm \sqrt{H}+\psi_{m_{\sigma}^-} \epsilon^{\frac{1}{2}} \pm \big( -h_{\sigma,m_{\sigma}^-}^- h_{\sigma,m_{\sigma}^-}^+ \big) \epsilon +\mathcal{O}(\epsilon^{\frac{3}{2}})\\[6pt]
&=u_{m_{\sigma}^-} \pm \sqrt{H}+\psi_{m_{\sigma}^-} \epsilon^{\frac{1}{2}} +\mathcal{O}(\epsilon),
\end{array}
\label{1reformullambdai22}
\end{equation}
where $\psi_{m_{\sigma}^-}:=\pi_{m_{\sigma}^-}\frac{h_{\sigma,m_{\sigma}^-}^+}{h_{\sigma,m_{\sigma}^-}^- + h_{\sigma,m_{\sigma}^-}^+}=\pi_{m_{\sigma}^-}\frac{h_{\sigma,m_{\sigma}^-}^+}{H}$. We expand the eigenvectors $\boldsymbol{r_x^{\lambda_n^{\pm}}}({\bf u},\boldsymbol{\gamma})$ and $\boldsymbol{l_x^{\lambda_n^{\pm}}}({\bf u},\boldsymbol{\gamma})$ such that
\begin{equation}
\left\{
\begin{array}{l}
\boldsymbol{r_x^{\lambda_n^{\pm}}}({\bf u},\boldsymbol{\gamma})= \boldsymbol{r_{n,0}^{\pm}}({\bf u},\boldsymbol{\gamma})+\epsilon^{\frac{1}{2}} \boldsymbol{r_{n,1}^{\pm}}({\bf u},\boldsymbol{\gamma})+\mathcal{O}(\epsilon),\\[6pt]
\boldsymbol{l_x^{\lambda_n^{\pm}}}({\bf u},\boldsymbol{\gamma})= \boldsymbol{l_{n,0}^{\pm}}({\bf u},\boldsymbol{\gamma})+\epsilon^{\frac{1}{2}} \boldsymbol{l_{n,1}^{\pm}}({\bf u},\boldsymbol{\gamma})+\mathcal{O}(\epsilon),
\end{array}
\right.
\label{1eigvectoexpand22}
\end{equation}
\noindent
where
\begin{equation}
\left\{
\begin{array}{l}
\boldsymbol{r_{n,0}^{\pm}}({\bf u},\boldsymbol{\gamma}):=\sum_{k=1}^n \boldsymbol{ e_{n+k}}\pm \frac{h_k}{\sqrt{H}}\boldsymbol{ e_k},\\[6pt]
\boldsymbol{l_{n,0}^{\pm}}({\bf u},\boldsymbol{\gamma}):=\sum_{k=1}^n {}^{\top} \boldsymbol{ e_{k}}\pm \frac{h_k}{\sqrt{H}}{}^{\top}\boldsymbol{ e_{n+k}}.
\end{array}
\right.
\end{equation}

\noindent
Moreover, we have
\begin{equation}
\begin{array}{ll}
\mathsf{A}_x({\bf u},\boldsymbol{\gamma})=& \mathsf{A}_x({\bf u}_{\boldsymbol{\sigma}}^{\boldsymbol{m_{\sigma}^-}},\boldsymbol{\gamma^{m_{\sigma}^-}_{\sigma}})+\sum_{k=1 }^{n}(u_{k}-u_{m_{\sigma}^-}) \mathsf{A}_x^{k-1,1},\\[6pt]
&+ \sum_{k=1}^{n-1}(1-\gamma_k)\mathsf{A}_x^{k,3}(\boldsymbol{\gamma}),
\end{array}
\label{1equalityA1A224}
\end{equation}

\noindent
where the $3n \times 3n$ matrices, $\mathsf{A}_x^{k-1,1}$ and $\mathsf{A}_x^{k,2}(\boldsymbol{\gamma})$, are defined respectively in (\ref{1defA1}--\ref{1defA2}), $\mathsf{A}_x^{k,3}(\boldsymbol{\gamma}):=\left[ \mathsf{A}_{l,m}^{k,3} \right]_{(l,m) \in [\![1,n]\!]^2}$ is defined by
\begin{equation}
\mathsf{A}_{l,m}^{k,3}:=
\left\{
\begin{array}{lr}
0, & \mathrm{if}\ m \not=k,\\[6pt]
0,& \mathrm{if}\ l \le n+k\ \mathrm{or}\ l \ge 2n+1,\\[6pt]
-\alpha_{l-n-1,k},& \mathrm{otherwise},
\end{array}
\right.
\label{1defA3}
\end{equation}
 and for all $i \in [\![1;n-1]\!]$, $({\bf u}_{\boldsymbol{\sigma}}^{\boldsymbol{i}},\boldsymbol{\gamma^i_{\sigma}})$ are defined by
\begin{equation}
\left\{
\begin{array}{lr}
u_{j}=u_i,& \forall j \in [\![m_i^-,m_i^+]\!],\\[6pt]
\gamma_j=1,& \forall j \in [\![m_i^-,m_i^+ -1]\!].
\end{array}
\right.
\label{1usigmagammaj}
\end{equation}

\noindent
Consequently, according to the assumptions \eqref{1hypasympt}, \eqref{1hypasympt2} and  \eqref{1assumpteigenvectorlambdan},
\begin{equation}
\begin{array}{ll}
\mathsf{A}_x({\bf u},\boldsymbol{\gamma})&=\mathsf{A}_x({\bf u}_{\boldsymbol{\sigma}}^{\boldsymbol{m_{\sigma}^-}},\boldsymbol{\gamma^{m_{\sigma}^-}_{\sigma}})+ \sum_{k=m_{\sigma}^-+1}^n \pi_{m_{\sigma}^-} \epsilon^{\frac{1}{2}} \mathsf{A}_x^{k-1,1}+\epsilon\ \mathsf{A}_x^{{m_{\sigma}^-},3}(\boldsymbol{\gamma}) + \mathcal{O}(\epsilon^{\frac{3}{2}}),\\[6pt]
&=\mathsf{A}_x({\bf u}_{\boldsymbol{\sigma}}^{\boldsymbol{m_{\sigma}^-}},\boldsymbol{\gamma^{m_{\sigma}^-}_{\sigma}})+ \sum_{k=m_{\sigma}^-+1}^n \pi_{m_{\sigma}^-} \epsilon^{\frac{1}{2}} \mathsf{A}_x^{k-1,1}+ \mathcal{O}(\epsilon).
\end{array}
\label{1equalityA1A24}
\end{equation}

\noindent
Therefore, $\boldsymbol{r_x^{\lambda_n^{\pm}}}({\bf u},\boldsymbol{\gamma})$ is the approximation of the right eigenvector associated to $\lambda_n^{\pm}({\bf u},\boldsymbol{\gamma})$, with a precision about $\mathcal{O}(\epsilon)$, if and only if $\boldsymbol{r_{n,1}^{\pm}}({\bf u},\boldsymbol{\gamma})$ verifies:
\begin{equation}
\begin{array}{l}
\big(\sum_{k=m_{\sigma}^- +1}^n \pi_{m_{\sigma}^-} \mathsf{A}_x^{k-1,1}-\psi_{m_{\sigma}^-} \mathsf{I}_{3n}\big)\boldsymbol{r_{n,0}^{\pm}}({\bf u},\boldsymbol{\gamma})\\[6pt]
=-\big(\mathsf{A}_x({\bf u}_{\sigma}^{\boldsymbol{m_{\sigma}^-}},\boldsymbol{\gamma_{\sigma}^{m_{\sigma}^-}})-(u_{m_{\sigma}^-} \pm \sqrt{H}) \mathsf{I}_{3n}\big)\boldsymbol{r_{n,1}^{\pm}}({\bf u},\boldsymbol{\gamma}),
\end{array}
\label{1eigsystem4}
\end{equation}

\noindent
and $\boldsymbol{l_x^{\lambda_n^{\pm}}}({\bf u},\boldsymbol{\gamma})$ is the approximation of the left eigenvector associated to $\lambda_n^{\pm}({\bf u},\boldsymbol{\gamma})$, with a precision about $\mathcal{O}(\epsilon)$, if and only if $\boldsymbol{l_{n,1}^{\pm}}({\bf u},\boldsymbol{\gamma})$ verifies:
\begin{equation}
\begin{array}{l}
\boldsymbol{l_{n,0}^{\pm}}({\bf u},\boldsymbol{\gamma}) \big(\sum_{k=m_{\sigma}^- +1}^n \pi_{m_{\sigma}^-} \mathsf{A}_x^{k-1,1}-\psi_{m_{\sigma}^-} \mathsf{I}_{3n}\big)\\[6pt]
=-\boldsymbol{l_{n,1}^{\pm}}({\bf u},\boldsymbol{\gamma})\big(\mathsf{A}_x({\bf u}_{\sigma}^{\boldsymbol{m_{\sigma}^-}},\boldsymbol{\gamma_{\sigma}^{m_{\sigma}^-}})-(u_{m_{\sigma}^-}\pm \sqrt{H}) \mathsf{I}_{3n}\big).
\end{array}
\label{1eigsystem5}
\end{equation}

\noindent
Therefore, $\boldsymbol{r_{n,1}^{\pm}}({\bf u},\boldsymbol{\gamma}):= {}^{\top} (r_{n,1}^{\pm,k})_{k \in [\![1,3n]\!]}$ is solution of \eqref{1eigsystem4} if and only if
\begin{equation}
r_{n,1}^{\pm,k}=
\left\{
\begin{array}{lr}
\frac{h_k}{H}( S_r - 2 \psi_{m_{\sigma}^-}),& \mathrm{if}\ k \in [\![1,m_{\sigma}^-]\!],\\[6pt]
\frac{h_k}{H}(S_r+2 \frac{h_{\sigma,m_{\sigma}^-}^-}{h_{\sigma,m_{\sigma}^-}^+} \psi_{m_{\sigma}^-}),& \mathrm{if}\ k \in [\![m_{\sigma}^-+1,n]\!],\\[6pt]
\pm \frac{1}{\sqrt{H}} (S_r-\psi_{m_{\sigma}^-}),& \mathrm{if}\ k \in [\![n+1,n+m_{\sigma}^-]\!],\\[6pt]
\pm \frac{1}{\sqrt{H}}(S_r+\frac{h_{\sigma,m_{\sigma}^-}^-}{h_{\sigma,m_{\sigma}^-}^+} \psi_{m_{\sigma}^-}),& \mathrm{if}\ k \in [\![n+m_{\sigma}^-+1,2n]\!],\\[6pt]
0,& \mathrm{if}\ k \in [\![2n+1,3n]\!],
\end{array}
\right.
\end{equation}

\noindent
and $\boldsymbol{l_{n,1}^{\pm}}({\bf u},\boldsymbol{\gamma}):= (l_{n,1}^{\pm,k})_{k \in [\![1,3n]\!]}$ is solution of \eqref{1eigsystem5} if and only if
\begin{equation}
l_{n,1}^{\pm,k}=
\left\{
\begin{array}{lr}
\pm \frac{1}{\sqrt{H}} (S_l-\psi_{m_{\sigma}^-}),& \mathrm{if}\ k \in [\![1,m_{\sigma}^-]\!],\\[6pt]
\pm \frac{1}{\sqrt{H}}(S_l+\frac{h_{\sigma,m_{\sigma}^-}^-}{h_{\sigma,m_{\sigma}^-}^+} \psi_{m_{\sigma}^-}),& \mathrm{if}\ k \in [\![m_{\sigma}^-+1,n]\!],\\[6pt]
\frac{h_k}{H}( S_l - 2 \psi_{m_{\sigma}^-}),& \mathrm{if}\ k \in [\![n+1,n+m_{\sigma}^-]\!],\\[6pt]
\frac{h_k}{H}(S_l+2 \frac{h_{\sigma,m_{\sigma}^-}^-}{h_{\sigma,m_{\sigma}^-}^+} \psi_{m_{\sigma}^-}),& \mathrm{if}\ k \in [\![n+m_{\sigma}^-+1,2n]\!],\\[6pt]
0,& \mathrm{if}\ k \in [\![2n+1,3n]\!],
\end{array}
\right.
\end{equation}

\noindent
where $S_r:=\sum_{k=1}^n r_{n,1}^{\pm,k}$ and $S_l:= \sum_{k=1}^n l_{n,1}^{\pm,n+k}$. Finally, solutions of (\ref{1eigsystem4}--\ref{1eigsystem5}) are
\begin{equation}
\left\{
\begin{array}{ll}
\boldsymbol{r_{n,1}^{\pm}}({\bf u},\boldsymbol{\gamma})=& -\sum_{k=1}^{m_{\sigma}^-}  \frac{\psi_{m_{\sigma}^-}}{\sqrt{H}} \left( \frac{2 h_k}{\sqrt{H}} \boldsymbol{ e_{k}} \pm \boldsymbol{ e_{n+k}}\right)\\[4pt]
& + \sum_{k=m_{\sigma}^-+1}^{n}  \frac{\psi_{m_{\sigma}^-} h_{\sigma,m_{\sigma}^-}^-}{ \sqrt{H} h_{\sigma,m_{\sigma}^-}^+} \left( \frac{2 h_k}{\sqrt{H}} \boldsymbol{ e_{k}} \pm \boldsymbol{ e_{n+k}} \right),\\[8pt]
\boldsymbol{l_{n,1}^{\pm}}({\bf u},\boldsymbol{\gamma})=& -\sum_{k=1}^{m_{\sigma}^-}  \frac{\psi_{m_{\sigma}^-}}{\sqrt{H}} \left( \frac{2 h_k}{\sqrt{H}} {}^{\top}\boldsymbol{ e_{n+k}} \pm {}^{\top} \boldsymbol{ e_{k}}\right)\\[4pt]
& + \sum_{k=m_{\sigma}^-+1}^{n}  \frac{\psi_{m_{\sigma}^-} h_{\sigma,m_{\sigma}^-}^-}{ \sqrt{H} h_{\sigma,m_{\sigma}^-}^+} \left( \frac{2 h_k}{\sqrt{H}} {}^{\top} \boldsymbol{ e_{n+k}} \pm {}^{\top} \boldsymbol{ e_{k}} \right),
\end{array}
\right.
\label{1eigsystemsolution3}
\end{equation}

\noindent
and the approximations of the eigenvectors given in proposition \ref{1proplambdanexpandeigenvector} are verified.
\end{proof}

\noindent
{\em Remark:} The right eigenvectors of $\mathsf{A}({\bf u},\boldsymbol{\gamma},\theta)$: $\boldsymbol{r^{\lambda_n^{\pm}}}({\bf u},\boldsymbol{\gamma},\theta)$, associated to $\lambda_n^{\pm}({\bf u},\boldsymbol{\gamma},\theta)$, are defined by
\begin{equation}
\boldsymbol{r^{\lambda_n^{\pm}}}({\bf u},\boldsymbol{\gamma},\theta)=\mathsf{P}(\theta)^{-1} \boldsymbol{r_x^{\lambda_n^{\pm}}}(\mathsf{P}(\theta){\bf u},\boldsymbol{\gamma}),
\label{1eigvectleftbarotrop1}
\end{equation}
and the left ones: $\boldsymbol{l^{\lambda_n^{\pm}}}({\bf u},\boldsymbol{\gamma},\theta)$ are defined by
\begin{equation}
\boldsymbol{l^{\lambda_n^{\pm}}}({\bf u},\boldsymbol{\gamma},\theta)=\boldsymbol{l_x^{\lambda_n^{\pm}}}(\mathsf{P}(\theta){\bf u},\boldsymbol{\gamma})\mathsf{P}(\theta).
\label{1eigvectleftbarotrop}
\end{equation}

\noindent
To sum this subsection up, considering the asymptotic regime \eqref{1hypasympt}, \eqref{1hypasympt2} and assuming \eqref{1assumpteigenvectorlambdan}, we proved the expressions of the perturbations of the right and left eigenvectors associated to the barotropic eigenvalues, with a precision about $\mathcal{O}(\epsilon)$.


\noindent
In the next subsection, we give the asymptotic expansions of the right and left eigenvectors associated to the baroclinic eigenvalues.

\subsection{The baroclinic eigenvectors}

\noindent
With the asymptotic expansions of the baroclinic eigenvalues in \eqref{1lambdaipmasympt}, we can deduce asymptotic expansions of the baroclinic eigenvectors.

\begin{proposition}
Let $({\bf u},\boldsymbol{\gamma}) \in \mathbb{R}^{3n} \times ]0,1[^{n-1}$, $\epsilon >0$ and an injective function $\sigma \in \mathbb{R}_+^{*\ [\![1,n-1]\!]}$ such that $\boldsymbol{\gamma}$ verifies \eqref{1hypasympt}, ${\bf u}$ verifies \eqref{1hypasympt2} and
\begin{equation}
\epsilon \ll 1.
\end{equation}

\noindent
Then, for all $i \in [\![1,n-1]\!]$, the asymptotic expansions of the right eigenvectors associated to $\lambda_i^{\pm}({\bf u},\boldsymbol{\gamma})$, with precision about $\mathcal{O}(\epsilon^{\frac{\sigma(i)+1}{2}})$, is such that
\begin{equation}
\begin{array}{ll}
\boldsymbol{r_x^{\lambda_{i}^{\pm}}}({\bf u},\boldsymbol{\gamma})= & \sum_{j=m_i^-}^{i}\frac{\boldsymbol{ e_j}}{i-m_i^- +1} - \sum_{j=i+1}^{m_i^+}\frac{\boldsymbol{ e_{j}}}{m_i^+ -i}\\[4pt]
&+\frac{u_{i+1}-u_i}{h_{\sigma,i}^- +h_{\sigma,i}^+}\left(\sum_{j=m_i^-}^{i} \frac{h_{\sigma,i}^- \boldsymbol{e_{n+j}}}{(i-m_i^- +1)h_j}+\sum_{j=i+1}^{m_i^+}\frac{h_{\sigma,i}^+ \boldsymbol{e_{n+j}}}{(m_i^+ -i)h_{j}}\right)\\[4pt]
& \pm \left[ \frac{h_{\sigma,i}^- h_{\sigma,i}^+}{h_{\sigma,i}^- + h_{\sigma,i}^+} \left( 1-\gamma_i - \frac{(u_{i+1} - u_i)^2}{h_{\sigma,i}^- + h_{\sigma,i}^+} \right) \right]^{\frac{1}{2}}\sum_{j=m_i^-}^{i} \frac{\boldsymbol{e_{n+j}}}{(i-m_i^- +1)h_j}\\[4pt]
&\mp \left[ \frac{h_{\sigma,i}^- h_{\sigma,i}^+}{h_{\sigma,i}^- + h_{\sigma,i}^+} \left( 1-\gamma_i - \frac{(u_{i+1} - u_i)^2}{h_{\sigma,i}^- + h_{\sigma,i}^+} \right) \right]^{\frac{1}{2}}\sum_{j=i+1}^{m_i^+}\frac{\boldsymbol{e_{n+j}}}{(m_i^+ -i)h_{j}}\\[4pt]
& +\mathcal{O}(\epsilon^{\frac{\sigma(i)+1}{2}}),
\end{array}
\label{1eigvectlambdipm21}
\end{equation}
and the asymptotic expansions of the left eigenvectors associated to $\lambda_i^{\pm}({\bf u},\boldsymbol{\gamma})$, with precision about $\mathcal{O}(\epsilon^{\frac{\sigma(i)+1}{2}})$, is such that
\begin{equation}
\begin{array}{ll}
\boldsymbol{l_x^{\lambda_{i}^{\pm}}}({\bf u},\boldsymbol{\gamma})= & \sum_{j=m_i^-}^{i} \frac{{}^{\top}\boldsymbol{ e_{n+j}}}{i-m_i^- +1} - \sum_{j=i+1}^{m_i^+} \frac{{}^{\top} \boldsymbol{ e_{n+j}}}{m_i^+ -i}\\[4pt]
& +\frac{u_{i+1}-u_i}{h_{\sigma,i}^- +h_{\sigma,i}^+}\left(\sum_{j=m_i^-}^{i}\frac{h_{\sigma,i}^- {}^{\top}{\boldsymbol{e_{j}}}}{(i-m_i^- +1)h_j}+\sum_{j=i+1}^{m_i^+}\frac{h_{\sigma,i}^+ {}^{\top}{\boldsymbol{e_{j}}}}{(m_i^+ -i)h_{j}}\right)\\[4pt]
& \pm \left[ \frac{h_{\sigma,i}^- h_{\sigma,i}^+}{h_{\sigma,i}^- + h_{\sigma,i}^+} \left( 1-\gamma_i - \frac{(u_{i+1} - u_i)^2}{h_{\sigma,i}^- + h_{\sigma,i}^+} \right) \right]^{\frac{1}{2}}\sum_{j=m_i^-}^{i}\frac{{}^{\top}\boldsymbol{e_{j}}}{(i-m_i^- +1)h_j}\\[4pt]
& \mp \left[ \frac{h_{\sigma,i}^- h_{\sigma,i}^+}{h_{\sigma,i}^- + h_{\sigma,i}^+} \left( 1-\gamma_i - \frac{(u_{i+1} - u_i)^2}{h_{\sigma,i}^- + h_{\sigma,i}^+} \right) \right]^{\frac{1}{2}} \sum_{j=i+1}^{m_i^+}\frac{{}^{\top}\boldsymbol{e_{j}}}{(m_i^+ -i)h_{j}}\\[4pt]
& +\mathcal{O}(\epsilon^{\frac{\sigma(i)+1}{2}}).\end{array}\\
\label{1eigvectlambdipm22}
\end{equation}
\label{1thmeigvectlambdipm2}
\end{proposition}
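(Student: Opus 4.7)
The plan is to parallel the proof of Proposition~\ref{1thmeigvectlambdipm}, now in the general asymptotic regime \eqref{1hypasympt}--\eqref{1hypasympt2}. Fix $i \in [\![1,n-1]\!]$. Proposition~\ref{1proplambdaipmapprox} gives
\begin{equation*}
\lambda_i^{\pm}({\bf u},\boldsymbol{\gamma}) = u_i + \tilde\lambda_i^{\pm} \epsilon^{\sigma(i)/2} + \mathcal{O}(\epsilon^{(\sigma(i)+1)/2}),
\end{equation*}
with $\tilde\lambda_i^{\pm} = \frac{\pi_i h_{\sigma,i}^-}{h_{\sigma,i}^-+h_{\sigma,i}^+} \pm \bigl[\frac{h_{\sigma,i}^- h_{\sigma,i}^+}{(h_{\sigma,i}^-+h_{\sigma,i}^+)^2}(h_{\sigma,i}^-+h_{\sigma,i}^+-\pi_i^2)\bigr]^{1/2}$. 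I would expand
\begin{equation*}
\boldsymbol{r_x^{\lambda_i^{\pm}}}({\bf u},\boldsymbol{\gamma}) = \boldsymbol{r_{i,0}} + \epsilon^{\sigma(i)/2}\,\boldsymbol{r_{i,1}^{\pm}} + \mathcal{O}(\epsilon^{(\sigma(i)+1)/2}),
\end{equation*}
and analogously for $\boldsymbol{l_x^{\lambda_i^{\pm}}}$, and decompose the matrix (as in \eqref{1equalityA1A24})
\begin{equation*}
\mathsf{A}_x({\bf u},\boldsymbol{\gamma}) = \mathsf{A}_x({\bf u_{\boldsymbol{\sigma}}^{\boldsymbol{i}}},\boldsymbol{\gamma_{\sigma}^{i}}) + \epsilon^{\sigma(i)/2}\,\mathsf{B}_i^1 + \epsilon^{\sigma(i)}\,\mathsf{B}_i^2(\boldsymbol{\gamma}) + \mathcal{O}(\epsilon^{(\sigma(i)+1)/2}),
\end{equation*}
where by the defining property of $m_i^{\pm}$, every correction coming from indices outside $[\![m_i^-,m_i^+]\!]$ carries a factor $\epsilon^{\sigma(k)/2}$ or $\epsilon^{\sigma(k)}$ with $\sigma(k) > \sigma(i)$, and hence lands in the $\mathcal{O}(\epsilon^{(\sigma(i)+1)/2})$ remainder.

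At zeroth order $\boldsymbol{r_{i,0}}$ must lie in $\ker\bigl(\mathsf{A}_x({\bf u_{\boldsymbol{\sigma}}^{\boldsymbol{i}}},\boldsymbol{\gamma_{\sigma}^{i}}) - u_i \mathsf{I}_{3n}\bigr)$; because of the merging of the block $[\![m_i^-,m_i^+]\!]$, this kernel is multi-dimensional (it contains in particular every $\boldsymbol{e_j}-\boldsymbol{e_{j+1}}$ for $j \in [\![m_i^-,m_i^+-1]\!]$). The correct representative inside this kernel is selected by the Fredholm solvability condition for the first-order equation
\begin{equation*}
\bigl(\mathsf{A}_x({\bf u_{\boldsymbol{\sigma}}^{\boldsymbol{i}}},\boldsymbol{\gamma_{\sigma}^{i}}) - u_i \mathsf{I}_{3n}\bigr)\boldsymbol{r_{i,1}^{\pm}} = \bigl(\tilde\lambda_i^{\pm}\mathsf{I}_{3n} - \mathsf{B}_i^1\bigr)\boldsymbol{r_{i,0}},
\end{equation*}
which is exactly the reduced determinant equation $\det \mathsf{\Lambda}(\tilde\lambda_i^{\pm},\pi_i,\boldsymbol{\varpi},\sigma)=0$ of Proposition~\ref{1proplambdaipmapprox}. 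Solving it forces $\boldsymbol{r_{i,0}}$ to distribute uniformly, with weight $+1/(i-m_i^-+1)$ on each $\boldsymbol{e_j}$ with $j \in [\![m_i^-,i]\!]$ and weight $-1/(m_i^+-i)$ on each $\boldsymbol{e_j}$ with $j \in [\![i+1,m_i^+]\!]$---the internal wave between the two merged blocks of Proposition~\ref{1proplambdaipmapprox}, with no momentum component at leading order.

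With $\boldsymbol{r_{i,0}}$ fixed, the first-order problem becomes linear in $\boldsymbol{r_{i,1}^{\pm}}$ and is solved by inverting the diagonal momentum rows of $\mathsf{A}_x({\bf u_{\boldsymbol{\sigma}}^{\boldsymbol{i}}},\boldsymbol{\gamma_{\sigma}^{i}}) - u_i \mathsf{I}_{3n}$, whose action on $\boldsymbol{e_{n+j}}$ involves the layer height $h_j$. This yields the $h_j^{-1}$-weighted momentum coefficients displayed in \eqref{1eigvectlambdipm21}, with the $(u_{i+1}-u_i)$-part inherited from $\mathsf{B}_i^1$ and the square-root part inherited from the two roots $\tilde\lambda_i^{\pm}$. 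Transposing the same argument---or equivalently applying it to ${}^{\top}\mathsf{A}_x$---produces the left eigenvector expansion \eqref{1eigvectlambdipm22}.

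The main obstacle is precisely the high degeneracy of the base eigenvalue $u_i$: unlike in Proposition~\ref{1thmeigvectlambdipm}, where only two layers merge and $\boldsymbol{r_{i,0}} = \boldsymbol{e_i}-\boldsymbol{e_{i+1}}$ is essentially forced, here up to $m_i^+-m_i^-+1$ layers coalesce, so the correct zeroth-order vector can only be pinned down by the $\mathsf{\Lambda}$-block analysis of Proposition~\ref{1proplambdaipmapprox}. Turning that spectral analysis into an explicit expansion of the eigenvectors, and carefully tracking which outside-block corrections are genuinely negligible at order $\epsilon^{(\sigma(i)+1)/2}$, is where the bookkeeping concentrates.
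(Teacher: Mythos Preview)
Your approach is essentially the paper's: expand $\lambda_i^\pm$ via Proposition~\ref{1proplambdaipmapprox}, write $\boldsymbol{r_x^{\lambda_i^\pm}}=\boldsymbol{r_{i,0}}+\epsilon^{\sigma(i)/2}\boldsymbol{r_{i,1}^\pm}+\mathcal{O}(\epsilon^{(\sigma(i)+1)/2})$, decompose $\mathsf{A}_x({\bf u},\boldsymbol{\gamma})$ about the merged state $({\bf u}_{\boldsymbol{\sigma}}^{\boldsymbol{i}},\boldsymbol{\gamma}_{\sigma}^{i})$, and solve the resulting first-order linear system.

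The one noteworthy difference is how $\boldsymbol{r_{i,0}}$ is fixed. You argue that the high-dimensional kernel of $\mathsf{A}_x({\bf u}_{\boldsymbol{\sigma}}^{\boldsymbol{i}},\boldsymbol{\gamma}_{\sigma}^{i})-u_i\mathsf{I}_{3n}$ forces one to invoke a Fredholm solvability condition, tied back to the $\mathsf{\Lambda}$-block of Proposition~\ref{1proplambdaipmapprox}, to single out the ``uniform'' vector. The paper does not do this: it simply \emph{posits} $\boldsymbol{r_{i,0}}=\sum_{j=m_i^-}^{i}\frac{\boldsymbol{e_j}}{i-m_i^-+1}-\sum_{j=i+1}^{m_i^+}\frac{\boldsymbol{e_j}}{m_i^+-i}$ as an ansatz (noting it lies in the kernel), writes the first-order equation $\bigl(\sum_{k=i+1}^{m_i^+}\pi_i\mathsf{A}_x^{k-1,1}-\chi_{\sigma,i}^\pm\mathsf{I}_{3n}\bigr)\boldsymbol{r_{i,0}}=-\bigl(\mathsf{A}_x({\bf u}_{\boldsymbol{\sigma}}^{\boldsymbol{i}},\boldsymbol{\gamma}_{\sigma}^{i})-u_i\mathsf{I}_{3n}\bigr)\boldsymbol{r_{i,1}^\pm}$, and exhibits an explicit solution $\boldsymbol{r_{i,1}^\pm}=\sum_{j=m_i^-}^{i}\frac{\chi_{\sigma,i}^\pm}{(i-m_i^-+1)h_j}\boldsymbol{e_{n+j}}-\sum_{j=i+1}^{m_i^+}\frac{\chi_{\sigma,i}^\pm-\pi_i}{(m_i^+-i)h_j}\boldsymbol{e_{n+j}}$. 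Since an eigenvector is only determined up to normalization anyway, exhibiting one consistent pair $(\boldsymbol{r_{i,0}},\boldsymbol{r_{i,1}^\pm})$ suffices; no solvability analysis is needed. Your route is more systematic and explains \emph{why} this particular kernel element arises, but the paper's guess-and-verify is shorter and avoids the bookkeeping you flag as the main obstacle.
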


\begin{proof}
We consider $\lambda_i^{\pm}({\bf u},\boldsymbol{\gamma}) \in \mathbb{R}$, then, according to the asymptotic expansions of the proposition \ref{1proplambdaipmapprox},
\begin{equation}
\lambda_i^{\pm}({\bf u},\boldsymbol{\gamma})=u_i+\chi_{\sigma,i}^{\pm} \epsilon^{\frac{\sigma(i)}{2}} +\mathcal{O}(\epsilon^{\frac{\sigma(i)+1}{2}}),
\label{1reformullambdai2}
\end{equation}
where $\chi_{\sigma,i}^{\pm} := \pi_i \frac{h_{\sigma,i}^-}{h_{\sigma,i}^- +h_{\sigma,i}^+} \pm \left[ \frac{h_{\sigma,i}^- h_{\sigma,i}^+}{h_{\sigma,i}^- +h_{\sigma,i}^+}\left( 1-\frac{\pi_i^2}{h_{\sigma,i}^- +h_{\sigma,i}^+} \right)  \right]^{\frac{1}{2}}$. We expand the eigenvectors $\boldsymbol{r_x^{\lambda_i^{\pm}}}({\bf u},\boldsymbol{\gamma})$ and $\boldsymbol{l_x^{\lambda_i^{\pm}}}({\bf u},\boldsymbol{\gamma})$ such that
\begin{equation}
\forall  i \in [\![1,n-1]\!],\
\left\{
\begin{array}{l}
\boldsymbol{r_x^{\lambda_i^{\pm}}}({\bf u},\boldsymbol{\gamma})= \boldsymbol{r_{i,0}}({\bf u},\boldsymbol{\gamma})+\epsilon^{\frac{\sigma(i)}{2}} \boldsymbol{r_{i,1}^{\pm}}({\bf u},\boldsymbol{\gamma})+\mathcal{O}(\epsilon^{\frac{\sigma(i)+1}{2}}),\\[6pt]
\boldsymbol{l_x^{\lambda_i^{\pm}}}({\bf u},\boldsymbol{\gamma})= \boldsymbol{l_{i,0}}({\bf u},\boldsymbol{\gamma})+\epsilon^{\frac{\sigma(i)}{2}} \boldsymbol{l_{i,1}^{\pm}}({\bf u},\boldsymbol{\gamma})+\mathcal{O}(\epsilon^{\frac{\sigma(i)+1}{2}}),
\end{array}
\right.
\label{1eigvectoexpand2}
\end{equation}
\noindent
where $\boldsymbol{r_{i,0}}({\bf u},\boldsymbol{\gamma})$ and $\boldsymbol{l_{i,0}}({\bf u},\boldsymbol{\gamma})$ are right and left eigenvectors of the matrix $\mathsf{A}_x({\bf u}_{\boldsymbol{\sigma}}^{\boldsymbol{i}},\boldsymbol{\gamma^i_{\sigma}})$, associated to the eigenvalue $u_i$:
\begin{equation}
\forall  i \in [\![1,n-1]\!],\
\left\{
\begin{array}{l}
\boldsymbol{r_{i,0}}({\bf u},\boldsymbol{\gamma}):=\sum_{j=m_i^-}^{i}\frac{\boldsymbol{ e_j}}{i-m_i^- +1} - \sum_{j=i+1}^{m_i^+}\frac{\boldsymbol{ e_{j}}}{m_i^+ -i},\\
\boldsymbol{l_{i,0}}({\bf u},\boldsymbol{\gamma}):=\sum_{j=m_i^-}^{i} \frac{{}^{\top}\boldsymbol{ e_{n+j}}}{i-m_i^- +1} - \sum_{j=i+1}^{m_i^+} \frac{{}^{\top} \boldsymbol{ e_{n+j}}}{m_i^+ -i}.
\end{array}
\right.
\end{equation}

\noindent
Moreover, we have
\begin{equation}
\begin{array}{ll}
\mathsf{A}_x({\bf u},\boldsymbol{\gamma})=& \mathsf{A}_x({\bf u}_{\boldsymbol{\sigma}}^{\boldsymbol{i}},\boldsymbol{\gamma^i_{\sigma}})+\sum_{k=m_i^- }^{m_i^+}(u_{k}-u_i) \mathsf{A}_x^{k-1,1},\\[6pt]
&+(1-\gamma_{m_i^-})\mathsf{A}_x^{k,2}(\boldsymbol{\gamma}) + \sum_{k=m_i^- +1}^{m_i^+ -1}(1-\gamma_k)\mathsf{A}_x^{k,3}(\boldsymbol{\gamma}),
\end{array}
\label{1equalityA1A223}
\end{equation}
where the $3n \times 3n$ matrices, $\mathsf{A}_x^{k-1,1}$, $\mathsf{A}_x^{k,2}(\boldsymbol{\gamma})$ and $\mathsf{A}_x^{k,3}(\boldsymbol{\gamma})$ are defined respectively in (\ref{1defA1}--\ref{1defA2}) and \eqref{1defA3}, and $({\bf u}_{\boldsymbol{\sigma}}^{\boldsymbol{i}},\boldsymbol{\gamma^i_{\sigma}})$ is defined in \eqref{1usigmagammaj}. Consequently,
\begin{equation}
\mathsf{A}_x({\bf u},\boldsymbol{\gamma})= \mathsf{A}_x({\bf u}_{\boldsymbol{\sigma}}^{\boldsymbol{i}},\boldsymbol{\gamma^i_{\sigma}})+ \sum_{k=i+1}^{m_i^+} \pi_i \epsilon^{\frac{\sigma(i)}{2}} \mathsf{A}_x^{k-1,1}+\mathcal{O}(\epsilon^{\frac{\sigma(i)+1}{2}})
\label{1equalityA1A23}
\end{equation}

\noindent
Therefore, $\boldsymbol{r_x^{\lambda_i^{\pm}}}({\bf u},\boldsymbol{\gamma})$ and $\boldsymbol{l_x^{\lambda_i^{\pm}}}({\bf u},\boldsymbol{\gamma})$ are respectively the approximations of the right and left eigenvectors associated to $\lambda_i^{\pm}({\bf u},\boldsymbol{\gamma})$, with a precision about $\mathcal{O}(\epsilon^{\frac{\sigma(i)+1}{2}})$, if and only if $\boldsymbol{r_{i,1}^{\pm}}({\bf u},\boldsymbol{\gamma})$ and $\boldsymbol{l_{i,1}^{\pm}}({\bf u},\boldsymbol{\gamma})$ verify:

\begin{equation}
\left\{
\begin{array}{l}
\big(\sum_{k=i+1}^{m_i^+}\pi_i \mathsf{A}_x^{k-1,1}-\chi_{\sigma,i}^{\pm} \mathsf{I}_{3n}\big)\boldsymbol{r_{i,0}}({\bf u},\boldsymbol{\gamma})=-\big(\mathsf{A}_x({\bf u}_{\sigma}^{\boldsymbol{i}},\boldsymbol{\gamma_{\sigma}^i})-u_i \mathsf{I}_{3n}\big)\boldsymbol{r_{i,1}^{\pm}}({\bf u},\boldsymbol{\gamma}),\\[6pt]
\boldsymbol{l_{i,0}}({\bf u},\boldsymbol{\gamma}) \big(\sum_{k=i+1}^{m_i^+}\pi_i \mathsf{A}_x^{k-1,1}-\chi_{\sigma,i}^{\pm} \mathsf{I}_{3n}\big)=-\boldsymbol{l_{i,1}^{\pm}}({\bf u},\boldsymbol{\gamma})\big(\mathsf{A}_x({\bf u}_{\sigma}^{\boldsymbol{i}},\boldsymbol{\gamma_{\sigma}^i})-u_i \mathsf{I}_{3n}\big),
\end{array}
\right.
\label{1eigsystem2}
\end{equation}
\noindent
Finally, a solution of \eqref{1eigsystem2} is
\begin{equation}
\left\{
\begin{array}{l}
\boldsymbol{r_{i,1}^{\pm}}({\bf u},\boldsymbol{\gamma})=\sum_{j=m_i^-}^{i} \frac{\chi_{\sigma,i}^{\pm} }{(i-m_i^- +1)h_j}\boldsymbol{ e_{n+j}}-\sum_{j=i+1}^{m_i^+} \frac{\chi_{\sigma,i}^{\pm}-\pi_i}{(m_i^+ -i)h_{j}}\boldsymbol{ e_{n+j}} ,\\[6pt]
\boldsymbol{l_{i,1}^{\pm}}({\bf u},\boldsymbol{\gamma})=\sum_{j=m_i^-}^{i}\frac{\chi_{\sigma,i}^{\pm} }{(i-m_i^- +1)h_j}{}^{\top} \boldsymbol{ e_{j}}  - \sum_{j=i+1}^{m_i^+}\frac{\chi_{\sigma,i}^{\pm}-\pi_i}{(m_i^+ -i)h_{j}} {}^{\top} \boldsymbol{ e_{j}},
\end{array}
\right.
\label{1eigsystemsolution2}
\end{equation}
\noindent
and the approximations of the eigenvectors given in proposition \ref{1thmeigvectlambdipm2} are verified.
\end{proof}

\noindent
{\em Remark:} 1) For all $i \in [\![1,n-1]\!]$, the right eigenvectors of $\mathsf{A}({\bf u},\boldsymbol{\gamma},\theta)$, $\boldsymbol{r^{\lambda_i^{\pm}}}({\bf u},\boldsymbol{\gamma},\theta)$, associated to the baroclinic eigenvalues, are defined by
\begin{equation}
\boldsymbol{r^{\lambda_i^{\pm}}}({\bf u},\boldsymbol{\gamma},\theta)=\mathsf{P}^{-1}(\theta)\boldsymbol{r_x^{\lambda_i^{\pm}}}(\mathsf{P}(\theta){\bf u},\boldsymbol{\gamma}),
\label{1eigvectright}
\end{equation}
and the left eigenvectors of $\mathsf{A}({\bf u},\boldsymbol{\gamma},\theta)$, $\boldsymbol{l^{\lambda_i^{\pm}}}({\bf u},\boldsymbol{\gamma},\theta)$, associated to these eigenvalues, are defined by
\begin{equation}
\boldsymbol{l^{\lambda_i^{\pm}}}({\bf u},\boldsymbol{\gamma},\theta)=\boldsymbol{l_x^{\lambda_i^{\pm}}}(\mathsf{P}(\theta){\bf u},\boldsymbol{\gamma})\mathsf{P}(\theta).
\label{1eigvectleft}
\end{equation}

\noindent
2) Note that the asymptotic expansions \eqref{1expressionlefteigenvectn} for the barotropic eigenvectors and \eqref{1eigvectlambdipm22} for the baroclinic eigenvectors, are necessary to characterize the {\em Riemann} invariants, $r_{\lambda,x}({\bf u},\boldsymbol{\gamma})$, for all $\lambdaÊ\in \sigma(\mathsf{A}_x({\bf u},\boldsymbol{\gamma}))$, such that
\begin{equation}
\exists\ \alpha({\bf u},\boldsymbol{\gamma})>0,\ ({}^{\top}\boldsymbol{l^{\lambda}_x}({\bf u},\boldsymbol{\gamma})\cdot \frac{\partial {\bf u}}{\partial t})=\alpha({\bf u},\boldsymbol{\gamma}) \frac{\partial r_{\lambda,x}({\bf u},\boldsymbol{\gamma})}{\partial t}.
\label{1defriemanninv}
\end{equation}
However, it is possible that this last equation has no explicit solution, $r_{\lambda,x}({\bf u},\boldsymbol{\gamma})$, but the asymptotic expansion performed in this paper is still useful for a numerical resolution: we can approximately integrate the equation \eqref{1defriemanninv}.

\noindent
To conclude, we proved the expression of the asymptotic expansions of the baroclinic eigenvectors, considering the asymptotic regime \eqref{1hypasympt} and assuming the heights of each layer have all the same range and the difference of velocity between an interface has the same order as the square root of the relative difference of density at this interface. Moreover, the expansions of these eigenvectors, $\boldsymbol{r_x^{\lambda_i^{\pm}}}({\bf u},\boldsymbol{\gamma})$ and $\boldsymbol{l_x^{\lambda_i^{\pm}}}({\bf u},\boldsymbol{\gamma})$, for $i \in [\![1,n-1]\!]$, have been performed with a precision about $ \mathcal{O}(\epsilon^{\frac{\sigma(i)+1}{2}})$.

\noindent
In the next subsection, we deduce criterion of local well-posedness of the model \eqref{1systemmultilayer}, more general than \eqref{1condwellposed}.

\subsection{Local well-posedness of the model}
Using the previous asymptotic expansions, it is possible to prove the local well-posedness of the multi-layer shallow water model with free surface, in two space-dimensions.

\noindent
First, we can prove the next proposition
\begin{proposition}
Let $({\bf u},\boldsymbol{\gamma}) \in \mathbb{R}^{3n} \times ]0,1[^{n-1}$, $\theta \in [0,2\pi]$, $\epsilon >0$ and an injective function $\sigma \in \mathbb{R}_+^{*\ [\![1,n-1]\!]}$ such that $\boldsymbol{\gamma}$ verifies \eqref{1hypasympt}, ${\bf u}$ verifies \eqref{1hypasympt2}.

\noindent
Then, there exists $\delta >0$ such that if
\begin{equation}
\epsilon \le \delta,
\label{1hypepsilonpetit}
\end{equation}
the matrix $\mathsf{A}({\bf u},\boldsymbol{\gamma},\theta)$ is diagonalizable with real eigenvalues if
\begin{equation}
\left\{
\begin{array}{lr}
h_i > 0, & \forall i \in [\![1,n]\!],\\[6pt]
\phi_{\sigma,i}(\boldsymbol{h})-|u_{i+1}-u_i|^2-|v_{i+1}-v_i|^2 > 0,& \forall i \in [\![1,n-1]\!],
\end{array}\right.
\label{1condnechyp3}
\end{equation}
where $\phi_{\sigma,i}(\boldsymbol{h}):=(h_{\sigma,i}^- +h_{\sigma,i}^+)(1-\gamma_i)$.
\label{1diagA}
\end{proposition}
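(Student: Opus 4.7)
The plan is to establish diagonalizability of $\mathsf{A}(\mathbf{u},\boldsymbol{\gamma},\theta)$ by combining the reality of the spectrum (already available through Theorem \ref{1corolcaluderiv3}) with the existence of a basis of eigenvectors built from the asymptotic expansions of Propositions \ref{1proplambdanexpandeigenvector} and \ref{1thmeigvectlambdipm2}. Using the rotational invariance \eqref{1rotinv}, I would reduce the study of $\mathsf{A}(\mathbf{u},\boldsymbol{\gamma},\theta)$ to that of $\mathsf{A}_x(\mathsf{P}(\theta)\mathbf{u},\boldsymbol{\gamma})$. The conditions \eqref{1condnechyp3} combined with identity \eqref{maxPthetau} ensure that the reduced shears $\cos\theta(u_{i+1}-u_i)+\sin\theta(v_{i+1}-v_i)$ satisfy the hypotheses needed for every $\theta\in[0,2\pi]$, so the argument may be carried out on $\mathsf{A}_x$ uniformly in $\theta$.

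The spectrum of $\mathsf{A}_x(\mathbf{u},\boldsymbol{\gamma})$ splits as in \eqref{1sigmaAx} into the $n$ trivial eigenvalues $\lambda_{2n+i} = u_i$ with known eigenvectors \eqref{1vectpropVx}, the $2(n-1)$ baroclinic eigenvalues $\lambda_i^{\pm}$, and the $2$ barotropic eigenvalues $\lambda_n^{\pm}$. Under \eqref{1hypasympt}, \eqref{1hypasympt2} and $\epsilon\le\delta$, Theorem \ref{1corolcaluderiv3} together with condition \eqref{1condnechyp3} makes all these eigenvalues real, so what remains is to exhibit a full set of $3n$ linearly independent right eigenvectors. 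The $n$ trivial eigenvectors $\boldsymbol{e_{2n+i}}$ span the last block of coordinates and are disjoint from the baroclinic/barotropic right eigenvectors, which from \eqref{1eigvectlambdnpmr2} and \eqref{1eigvectlambdipm21} live entirely in $\mathrm{span}(\boldsymbol{e_1},\ldots,\boldsymbol{e_{2n}})$. Thus the problem reduces to showing the $2n$ baroclinic and barotropic right eigenvectors are linearly independent in that $2n$-dimensional subspace.

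The main obstacle is this linear independence, and the idea is to exploit the $\pm$ structure of the expansions. For each $i\in[\![1,n]\!]$, the sum $\boldsymbol{r_x^{\lambda_i^{+}}}+\boldsymbol{r_x^{\lambda_i^{-}}}$ isolates the $\boldsymbol{e_k}$-block (heights) and the difference $\boldsymbol{r_x^{\lambda_i^{+}}}-\boldsymbol{r_x^{\lambda_i^{-}}}$ isolates the $\boldsymbol{e_{n+k}}$-block (velocities), with amplitude proportional to the square-root factor $[h_{\sigma,i}^- h_{\sigma,i}^+ (h_{\sigma,i}^- + h_{\sigma,i}^+)^{-1}(1-\gamma_i-(u_{i+1}-u_i)^2/(h_{\sigma,i}^- + h_{\sigma,i}^+))]^{1/2}$ which, after the rotation argument, is strictly positive exactly under \eqref{1condnechyp3}. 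I would then show:
\begin{itemize}
\item the height-part sums $(\boldsymbol{r_x^{\lambda_i^{+}}}+\boldsymbol{r_x^{\lambda_i^{-}}})_{i\in[\![1,n]\!]}$ are independent, because the baroclinic sums are supported on the layer ranges $[\![m_i^-,m_i^+]\!]$ with alternating-sign blocks separated by interface $i$, while the barotropic sum has the positive-weight full profile $(h_k/\sqrt{H})_{k\in[\![1,n]\!]}$ which cannot be written as a combination of alternating-sign profiles;
\item the velocity-part differences $(\boldsymbol{r_x^{\lambda_i^{+}}}-\boldsymbol{r_x^{\lambda_i^{-}}})_{i\in[\![1,n]\!]}$ are independent for the same block structural reason, and carry non-zero amplitude thanks to \eqref{1condnechyp3}.
\end{itemize}
Equivalently, at leading order in $\epsilon$ the $2n\times 2n$ transition matrix decouples into two $n\times n$ block-triangular matrices (one for heights, one for velocities) whose determinants are products of the factors above, each of which is non-zero on the domain \eqref{1condnechyp3}. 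By continuity of eigenvectors with respect to $(\mathbf{u},\boldsymbol{\gamma})$ away from coalescence, there exists $\delta>0$, possibly smaller than the one of Theorem \ref{1corolcaluderiv3}, such that this non-degeneracy survives the $\mathcal{O}(\epsilon^{(\sigma(i)+1)/2})$ and $\mathcal{O}(1-\gamma_{m_\sigma^-})$ perturbations.

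Concatenating the $n$ trivial eigenvectors with these $2n$ independent vectors yields a full eigenbasis of $\mathbb{R}^{3n}$, so $\mathsf{A}_x(\mathsf{P}(\theta)\mathbf{u},\boldsymbol{\gamma})$ is diagonalizable with real spectrum. Via the conjugation \eqref{1rotinv}, the same holds for $\mathsf{A}(\mathbf{u},\boldsymbol{\gamma},\theta)$, which proves the proposition. The truly delicate point is controlling the perturbative step on the $2n\times 2n$ determinant uniformly in $\theta\in[0,2\pi]$; everything else is an organized bookkeeping of the asymptotic expansions already proved.
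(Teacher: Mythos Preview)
Your approach is essentially the same as the paper's: reduce via the rotational invariance \eqref{1rotinv} to $\mathsf{A}_x$, separate off the $n$ trivial eigenvectors $\boldsymbol{e_{2n+i}}$, and argue that the $2n$ barotropic and baroclinic right eigenvectors from Propositions \ref{1proplambdanexpandeigenvector} and \ref{1thmeigvectlambdipm2} are linearly independent under \eqref{1condnechyp3}. The paper's independence argument is terser (it simply observes that each vector's sign pattern in the height coordinates identifies the interface index $i$, and that the strict inequality in \eqref{1condnechyp3} separates the $+$ and $-$ vectors), whereas you give a more structured sum/difference block-decoupling argument.

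One minor correction: for the barotropic pair $i=n$, the roles of sum and difference are swapped relative to the baroclinic case. From \eqref{1eigvectlambdnpmr2} the leading order is $\sum_k \boldsymbol{e_{n+k}} \pm (h_k/\sqrt{H})\boldsymbol{e_k}$, so $\boldsymbol{r_x^{\lambda_n^{+}}}+\boldsymbol{r_x^{\lambda_n^{-}}}$ lives in the velocity block while the difference carries the height profile $(h_k/\sqrt{H})_k$. This does not affect the validity of your argument, only the bookkeeping of which combination goes with which block.
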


\begin{proof}
With the rotational invariance \eqref{1rotinv}, it is equivalent to prove the diagonalizability of $\mathsf{A}_x({\bf u},\boldsymbol{\gamma})$. Assuming \eqref{1hypasympt}, \eqref{1hypasympt2} and \eqref{1hypepsilonpetit}, according to \eqref{1vectpropVx} and the propositions \ref{1proplambdanexpandeigenvector} and \ref{1thmeigvectlambdipm2}, the right eigenvectors
\begin{equation}
\left(\boldsymbol{r_x^{\lambda_i^{\pm}}}({\bf u},\boldsymbol{\gamma})\right)_{i \in [\![1,n]\!]} \cup \left( \boldsymbol{r_x^{\lambda_i}}({\bf u},\boldsymbol{\gamma}) \right)_{i \in [\![2n+1,3n]\!]}
\end{equation}
constitute an eigenbasis of $\mathbb{R}^{3n}$ if \eqref{1condnechyp3} is verified. Indeed, the conditions \eqref{1condnechyp3} are necessary to insure the eigenvectors are in $\mathbb{R}^{3n}$ and it is a basis of this vector-space because: for $i \in [\![2n+1,3n]\!]$, giving a vector $\boldsymbol{r_x^{\lambda_i}}({\bf u},\boldsymbol{\gamma})$, it is obvious to find back $i$; for $i \in [\![1,n]\!]$, giving $\boldsymbol{r_x^{\lambda_i^{\pm}}}({\bf u},\boldsymbol{\gamma})$ it is also easy to detect $i$ --- where the sign of the $1^{\mathrm{st}}$ coordinates changes --- and, according to the strict inequalities \eqref{1condnechyp3},
\begin{equation}
\boldsymbol{r_x^{\lambda_i^{+}}}({\bf u},\boldsymbol{\gamma}) \not= \boldsymbol{r_x^{\lambda_i^{-}}}({\bf u},\boldsymbol{\gamma}).
\end{equation}

\noindent
Then, if the inequalities \eqref{1condnechyp3} are assumed, $\mathsf{A}_x({\bf u},\boldsymbol{\gamma})$ is diagonalizable with real eigenvalues and the proposition \ref{1diagA} is proved.
\end{proof}

\noindent
{\em Remark:} According to the propositions \ref{1proplambdanexpandeigenvector2} and \ref{1proplambdaipmapprox}, we would expect, in the asymptotic regime \eqref{1hypasympt}, \eqref{1hypasympt2} and $\epsilon \le \delta$, that
\begin{equation}
\lambda_n^- < \lambda_{m_{\sigma}^-}^- < \ldots < \lambda_{m_{\sigma}^+}^- < \lambda_{m_{\sigma}^+}^+ < \ldots < \lambda_{m_{\sigma}^-}^+ < \lambda_n^+.
\label{1ordreeigenvalues}
\end{equation}
In the particular case of two layers, these inequalities are true, in this asymptotic regime. Moreover, in the case of $n$ layers, with $n \ge 3$, if we assume for all $i \in [\![1,n-2]\!]$, $\pi_i \pi_{i+1} > 0$ and
\begin{equation}
\left[ \frac{h_{\sigma,i}^- h_{\sigma,i}^+}{h_{\sigma,i}^- + h_{\sigma,i}^+} \left( 1-\gamma_i - \frac{(u_{i+1} - u_i)^2}{h_{\sigma,i}^- + h_{\sigma,i}^+} \right) \right]^{\frac{1}{2}} \ll \frac{|\pi_i| h_{\sigma,i}^-}{h_{\sigma,i}^- + h_{\sigma,i}^+},
\end{equation}
then \eqref{1ordreeigenvalues} remains true, as we can deduce that
\begin{equation}
\forall i \in [\![1,n-1]\!],\ \min(u_i,u_{i+1}) < \lambda_i^{\pm} < \max(u_i,u_{i+1}),
\end{equation}
and the diagonalizability of the matrix $\mathsf{A}({\bf u},\boldsymbol{\gamma},\theta)$ is directly deduced. However, in the general case, \eqref{1ordreeigenvalues} is not verified: to prove the diagonalizability of $\mathsf{A}({\bf u},\boldsymbol{\gamma},\theta)$, we need the entirely eigenstructure of this matrix.

\noindent
Finally, as a consequence of the previous proposition, we deduce a criterion of local well-posedness in $\mathcal{H}^s(\mathbb{R}^2)^{3n}$, more general than criterion \eqref{1condwellposed}.

\begin{theorem}
Let $s >2$, $\boldsymbol{\gamma} \in ]0,1[^{n-1}$, $\epsilon >0$ and an injective function $\sigma: [\![1,n-1]\!] \rightarrow \mathbb{R}_+^*$ such that $\boldsymbol{\gamma}$ verifies \eqref{1hypasympt}.

\noindent
Then, there exists $\delta > 0$ such that if
\begin{equation}
\epsilon \le \delta,
\label{1assumpttheoremwellposed}
\end{equation}
the {\em Cauchy} problem, associated with {\em \eqref{1systemmultilayer}} and initial data ${\bf u^0} \in \mathcal{H}_{\sigma,\epsilon} \cap \mathcal{H}^s(\mathbb{R}^2)^{3n}$, is hyperbolic, locally well-posed in $\mathcal{H}^s(\mathbb{R}^2)^{3n}$ and the unique solution verifies conditions {\em \eqref{1strongsol}}.
\label{1wellprosedhyp2d}
\end{theorem}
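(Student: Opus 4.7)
The strategy is to string together the machinery built in the previous two sections: diagonalizability (Proposition \ref{1diagA}) implies symmetrizability (Proposition \ref{1diagimpliqsym}), which in turn yields local well-posedness in $\mathcal{H}^s(\mathbb{R}^2)^{3n}$ (Proposition \ref{1propsymm}). Concretely, I would set $\delta := \min(\delta_1, \delta_2)$, where $\delta_1$ is the threshold appearing in Theorem \ref{1corolcaluderiv3} and $\delta_2$ the one in Proposition \ref{1diagA}, and then argue pointwise in $X \in \mathbb{R}^2$ and $\theta \in [0,2\pi]$.

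Fix ${\bf u^0} \in \mathcal{H}_{\sigma,\epsilon} \cap \mathcal{H}^s(\mathbb{R}^2)^{3n}$. The membership in $\mathcal{H}_{\sigma,\epsilon}$ forces the pointwise conditions \eqref{1hypasympt2} and \eqref{1condnechyp2} with strict inequalities. The shear quantities $|u_{i+1}-u_i|^2 + |v_{i+1}-v_i|^2$ are invariant under the block rotation $\mathsf{P}(\theta)$ (this is the same observation used in \eqref{maxPthetau} and in the proof of Lemma \ref{1wellposedrotinv}), so the same strict inequalities hold for $\mathsf{P}(\theta){\bf u^0}(X)$ uniformly in $\theta$. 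Applying Proposition \ref{1diagA} (after the rotational invariance \eqref{1rotinv}) then gives that $\mathsf{A}({\bf u^0}(X),\boldsymbol{\gamma},\theta)$ is diagonalizable with real eigenvalues for every $(X,\theta)$, which in particular implies hyperbolicity via Proposition \ref{1propcrithypuseful}.

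At this stage Proposition \ref{1diagimpliqsym} constructs the symbolic symmetrizer
\begin{equation}
\mathsf{S}({\bf u},\boldsymbol{\gamma},\theta) := \sum_{\mu \in \sigma(\mathsf{A}({\bf u},\boldsymbol{\gamma},\theta))} {}^{\top}\mathsf{P}^{\mu}({\bf u},\boldsymbol{\gamma},\theta)\,\mathsf{P}^{\mu}({\bf u},\boldsymbol{\gamma},\theta),
\end{equation}
which is symmetric, positive-definite, and makes $\mathsf{S}\,\mathsf{A}$ symmetric. Invoking Proposition \ref{1propsymm} then delivers the unique solution ${\bf u} \in \mathcal{C}([0,T];\mathcal{H}^s)^{3n} \cap \mathcal{C}^1([0,T];\mathcal{H}^{s-1})^{3n}$ satisfying \eqref{1strongsol}.

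The main obstacle I expect is confirming that $\mathsf{S}$ is a genuine $\mathcal{C}^\infty$ mapping in the sense required by Definition \ref{1defsymm}, i.e.\ that the spectral projectors $\mathsf{P}^\mu$ depend smoothly on ${\bf u}$. This is exactly where the threshold $\epsilon \le \delta$ is used non-trivially: the asymptotic expansions of Propositions \ref{1proplambdanexpandeigenvector2} and \ref{1proplambdaipmapprox} guarantee that, under \eqref{1assumpttheoremwellposed}, the three families $(\lambda_i^\pm)_{i \in [\![1,n]\!]}$ and $(\lambda_{2n+i})_{i \in [\![1,n]\!]}$ are pairwise separated — the barotropic pair lies at distance $\mathcal{O}(\sqrt{H})$ from the rest, the baroclinic pairs are separated within each interface by the strict version of \eqref{1condnechyp2} and between interfaces by the injectivity of $\sigma$, and each is separated from the trivial eigenvalues $u_j$. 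Simplicity of the spectrum then makes each projector $\mathsf{P}^\mu$ an analytic function of the matrix entries (classical perturbation theory), and hence of ${\bf u}$, closing the argument.
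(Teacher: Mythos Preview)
Your proposal is correct and follows exactly the paper's route: invoke Proposition \ref{1diagA} to obtain pointwise diagonalizability of $\mathsf{A}({\bf u^0}(X),\boldsymbol{\gamma},\theta)$ with real spectrum, then feed this into Proposition \ref{1diagimpliqsym} (and thereby Proposition \ref{1propsymm}) to conclude hyperbolicity and local well-posedness with the regularity \eqref{1strongsol}. You actually supply more detail than the paper does---the rotational-invariance reduction and the discussion of smoothness of the spectral projectors are glossed over in the paper's three-line proof---so there is nothing to correct.
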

\begin{proof}
Let $({\bf u^0},\boldsymbol{\gamma}) \in \mathcal{H}^s(\mathbb{R}^2)^{3n} \times ]0,1[^{n-1}$ such that conditions \eqref{1hypasympt}, \eqref{1hypasympt2} and \eqref{1assumpttheoremwellposed} are verified. As it was proved in the proposition \ref{1diagA}, for all $(X,\theta) \in \mathbb{R}^2 \times [0,2\pi]$, $\mathsf{A}({\bf u^0}(X),\boldsymbol{\gamma},\theta)$ is diagonalizable, with real eigenvalues, if ${\bf u} \in \mathcal{H}_{\sigma,\epsilon}$. Then, the {\em Cauchy} problem is hyperbolic. Moreover, according to the proposition \ref{1diagimpliqsym}, it is locally well-posed in $\mathcal{H}^s(\mathbb{R}^2)^{3n}$ and the unique solution verifies conditions \eqref{1strongsol}.
\end{proof}

\noindent
{\em Remark:} This criterion is less restrictive than \eqref{1condwellposed}, because, as it was proved in proposition \ref{1prop1subsethypsym}, if $({\bf u},\boldsymbol{\gamma})$ verifies these conditions and $\epsilon$ is sufficiently small, $\mathcal{S}^s_{\gamma} \subset \mathcal{H}_{\sigma,\epsilon} \cap \mathcal{H}^s(\mathbb{R}^2)^{3n}$.

\noindent
In conclusion, we proved the expression of the asymptotic expansions of the baroclinic eigenvectors, considering the asymptotic regime \eqref{1hypasympt} and assuming the heights of each layer have all the same range and the difference of velocity between an interface has the same order as the square root of the relative difference of density at this interface. Moreover, the expansions of these eigenvectors, $\boldsymbol{r_x^{\lambda_i^{\pm}}}({\bf u},\boldsymbol{\gamma})$ and $\boldsymbol{l_x^{\lambda_i^{\pm}}}({\bf u},\boldsymbol{\gamma})$, for $i \in [\![1,n-1]\!]$, have been performed with a precision about $ \mathcal{O}(\epsilon^{\frac{\sigma(i)+1}{2}})$, permitting to give a condition of local well-posedness of the multi-layer shallow water model with free surface, in two dimensions.

\noindent
In the next subsection, we deduce from the asymptotic expansions of the eigenstructure of $\mathsf{A}_x({\bf u},\boldsymbol{\gamma})$, the nature of the waves associated to each eigenvalues.

\subsection{Nature of the waves}
In order to know the type of the wave associated to each eigenvalue -- shock, contact or rarefaction wave -- there is the next proposition
\begin{proposition}
Let $\boldsymbol{\gamma} \in ]0,1[^{n-1}$, $\epsilon >0$ and an injective function $\sigma \in \mathbb{R}_+^{*\ [\![1,n-1]\!]}$ such that $\boldsymbol{\gamma}$ verifies \eqref{1hypasympt}.

\noindent
Then, there exists $\delta > 0$ such that if
\begin{equation}
\epsilon \le \delta,
\label{1assumpteigenvectorlambdan4}
\end{equation}
and ${\bf u} \in \mathcal{H}_{\sigma,\epsilon}$, we have
\begin{equation}
\left\{
\begin{array}{lr}
\mathrm{the}\ \lambda_i^{\pm}\mathrm{-characteristic\ field\ is\ genuinely\ non-linear},&\mathrm{if}\ i \in [\![1,n]\!],\\
\mathrm{the}\ \lambda_i\mathrm{-characteristic\ field\ is\ linearly\ degenerate},&\mathrm{if}\ i \in [\![2n+1,3n]\!].
\end{array}
\right.
\label{1muwavenature}
\end{equation}
\label{1genuinelynonlin}
\end{proposition}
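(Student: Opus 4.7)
Recall that the $\lambda$-characteristic field associated to a right eigenvector $\boldsymbol{r^{\lambda}}({\bf u},\boldsymbol{\gamma})$ of $\mathsf{A}({\bf u},\boldsymbol{\gamma},\theta)$ is genuinely non-linear if ${}^{\top}\nabla_{\bf u}\lambda\cdot\boldsymbol{r^{\lambda}}\neq 0$ on the domain of hyperbolicity, and linearly degenerate if this quantity is identically zero. Thanks to the rotational invariance \eqref{1rotinv}, it suffices to verify the property for $\mathsf{A}_x({\bf u},\boldsymbol{\gamma})$, the other directions following via $\mathsf{P}(\theta)$. The analysis then splits naturally into the three families identified in \eqref{1sigmaAx}: the contact family $(\lambda_{2n+i})_{i\in[\![1,n]\!]}$, the baroclinic family $(\lambda_i^{\pm})_{i\in[\![1,n-1]\!]}$ and the barotropic family $\lambda_n^{\pm}$.

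\textbf{Contact fields.} For $i\in[\![1,n]\!]$, we have $\lambda_{2n+i}({\bf u},\boldsymbol{\gamma})=u_i$ and $\boldsymbol{r_x^{\lambda_{2n+i}}}({\bf u},\boldsymbol{\gamma})=\boldsymbol{e_{2n+i}}$ from \eqref{1vectpropVx}. Since $\lambda_{2n+i}$ depends only on the $(n+i)^{\mathrm{th}}$ component of ${\bf u}$ whereas the eigenvector sits at position $2n+i$, the dot product ${}^{\top}\nabla_{\bf u}\lambda_{2n+i}\cdot\boldsymbol{r_x^{\lambda_{2n+i}}}$ vanishes identically; these $n$ fields are linearly degenerate.

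\textbf{Barotropic fields.} Using the asymptotic expansion \eqref{1approxlambdanpm} of Proposition \ref{1proplambdanexpand} together with the eigenvector expansion \eqref{1eigvectlambdnpmr2}, I would compute ${}^{\top}\nabla_{\bf u}\lambda_n^{\pm}\cdot \boldsymbol{r_x^{\lambda_n^{\pm}}}$. The dominant contribution comes from the $\pm\sqrt{H}$ term, which yields $\partial_{h_k}\lambda_n^{\pm}=\pm\frac{1}{2\sqrt{H}}+\mathcal{O}(\epsilon)$ for every $k\in[\![1,n]\!]$, while the corresponding components of the eigenvector are $\pm\frac{h_k}{\sqrt{H}}+\mathcal{O}(\epsilon^{\frac12})$. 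Summing gives a leading term $\sum_{k=1}^n \frac{h_k}{2H}=\frac12\neq 0$, and the correction from $\bar u$ contributes only through the $\partial_{u_k}$ terms dotted with the $O(\epsilon^{\frac12})$ velocity-part of the eigenvector, which is subdominant. Choosing $\delta$ small enough ensures the leading $\frac12$ is not wiped out by remainders, so the two barotropic fields are genuinely non-linear.

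\textbf{Baroclinic fields and main obstacle.} For $i\in[\![1,n-1]\!]$, Proposition \ref{1proplambdaipmapprox} gives $\lambda_i^{\pm}=\frac{u_{i+1}h_{\sigma,i}^-+u_ih_{\sigma,i}^+}{h_{\sigma,i}^-+h_{\sigma,i}^+}\pm\bigl[\Phi_i({\bf h},\boldsymbol{\gamma},{\bf u})\bigr]^{1/2}+\mathcal{O}(\epsilon^{(\sigma(i)+1)/2})$, where $\Phi_i:=\frac{h_{\sigma,i}^-h_{\sigma,i}^+}{h_{\sigma,i}^-+h_{\sigma,i}^+}\bigl(1-\gamma_i-\frac{(u_{i+1}-u_i)^2}{h_{\sigma,i}^-+h_{\sigma,i}^+}\bigr)$. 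The leading contribution to the dot product with $\boldsymbol{r_x^{\lambda_i^{\pm}}}$ (cf. \eqref{1eigvectlambdipm21}) comes from $\partial_{h_j}(\pm\sqrt{\Phi_i})$ paired with the $\boldsymbol{e_j}$-components of the eigenvector for $j\in[\![m_i^-,m_i^+]\!]$. A direct differentiation shows this behaves like $\pm c_i\,\epsilon^{\sigma(i)/2}+\mathcal{O}(\epsilon^{(\sigma(i)+1)/2})$ with $c_i$ a strictly positive rational function of $(h_{m_i^-},\ldots,h_{m_i^+})$. The main obstacle is precisely the verification that $c_i\neq 0$: because the eigenvector's $h$-components carry opposite signs across the interface $i$, and $\partial_{h_j}\Phi_i$ also changes sign across this interface, one must track the bookkeeping carefully to confirm that the contributions add up rather than cancel. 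I would organise this by differentiating $\Phi_i$ explicitly in the form $\Phi_i=\frac{h_{\sigma,i}^-h_{\sigma,i}^+}{h_{\sigma,i}^-+h_{\sigma,i}^+}(1-\gamma_i)+\mathcal{O}(\epsilon^{\sigma(i)+1})$ and verifying that the leading bilinear form reduces to a multiple of $\frac{1-\gamma_i}{2\sqrt{\Phi_i}}\bigl(\frac{h_{\sigma,i}^+}{(i-m_i^-+1)(h_{\sigma,i}^-+h_{\sigma,i}^+)}+\frac{h_{\sigma,i}^-}{(m_i^+-i)(h_{\sigma,i}^-+h_{\sigma,i}^+)}\bigr)$, which is strictly positive. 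Taking $\delta$ small enough so that the $\mathcal{O}(\epsilon^{(\sigma(i)+1)/2})$ remainder cannot cancel this leading term yields \eqref{1muwavenature}.
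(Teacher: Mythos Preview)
Your overall strategy matches the paper's: reduce via rotational invariance to $\mathsf{A}_x$, then evaluate $\nabla_{\bf u}\lambda\cdot\boldsymbol{r_x^{\lambda}}$ using the asymptotic expansions of Propositions~\ref{1proplambdanexpandeigenvector2}, \ref{1proplambdaipmapprox}, \ref{1proplambdanexpandeigenvector} and \ref{1thmeigvectlambdipm2}. The contact and barotropic paragraphs are correct and essentially what the paper sketches. One point the paper adds that you omit: since $\lambda_i^{\pm}$ is an \emph{analytic} simple root of a polynomial with analytic coefficients, the remainder in \eqref{1lambdaipmasympt} can be differentiated term by term; you use this implicitly but should say it.

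There is, however, a genuine gap in your baroclinic step. A direct differentiation of the leading term $\Phi_i\approx\frac{h_{\sigma,i}^- h_{\sigma,i}^+}{h_{\sigma,i}^-+h_{\sigma,i}^+}(1-\gamma_i)$ gives $\partial_{h_j}\sqrt{\Phi_i}=\pm\frac{(1-\gamma_i)(h_{\sigma,i}^+)^2}{2\sqrt{\Phi_i}(h_{\sigma,i}^-+h_{\sigma,i}^+)^2}$ for $j\in\Sigma_i^-$ and the same with $(h_{\sigma,i}^-)^2$ for $j\in\Sigma_i^+$. Dotting with the $h$-components of \eqref{1eigvectlambdipm21} (the factors $\frac{1}{i-m_i^-+1}$ and $\frac{-1}{m_i^+-i}$ cancel against the number of summands), and adding the $u$-derivative contributions at $u_i=u_{i+1}$, yields a leading term proportional to $(h_{\sigma,i}^+)^2-(h_{\sigma,i}^-)^2$, \emph{not} the strictly positive sum you wrote. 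This vanishes on the codimension-one set $h_{\sigma,i}^-=h_{\sigma,i}^+$, so leading-order asymptotics alone do not establish genuine nonlinearity there. (In the two-layer test case $h_1=h_2=h$, $u_1=u_2=0$, the exact computation gives $\nabla\lambda_1^{+}\cdot\boldsymbol{r_x^{\lambda_1^+}}=\tfrac{3(1-\sqrt{\gamma_1})^2}{4\lambda_1^+}\neq 0$, of order $(1-\gamma_1)^{3/2}$: the nonvanishing comes from the $O(1-\gamma_i)$ correction to the $h$-components of the eigenvector, which your expansion truncates.) To close the argument on that set you must either push the eigenvector expansion one order further in $\epsilon^{\sigma(i)/2}$, or argue directly from the exact two-layer-type structure of the reduced block $\mathsf{\Lambda}$ in the proof of Proposition~\ref{1proplambdaipmapprox}.
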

\begin{proof}
If $({\bf u},\boldsymbol{\gamma})$ verify these assumptions, the asymptotic expansions \eqref{1approxlambdanpm3} and \eqref{1lambdaipmasympt} are valid. Moreover, we remark that for all $i \in [\![1,n]\!],\ \lambda_i^{\pm}$ depends analytically of the parameters of the problem and we deduce that the error of the asymptotic expansions still remains small after derivating. Then, with the right eigenvectors \eqref{1vectpropVx} and the asymptotic expansions of the right eigenvectors \eqref{1eigvectlambdnpmr2} and \eqref{1eigvectlambdipm21} of $\mathsf{A}_x({\bf u},\boldsymbol{\gamma})$, one can check that

\begin{equation}
\left\{
\begin{array}{lr}
\nabla \lambda_i^{\pm} \cdot \boldsymbol{r}_x^{\lambda_i^{\pm}}({\bf u}) \not=0,& \mathrm{if}\ i \in [\![1,n]\!],,\\
\nabla \lambda_i \cdot \boldsymbol{r}_x^{\lambda}({\bf u}) =0,& \mathrm{if}\ i \in [\![2n+1,3n]\!].
\label{1genuidef}
\end{array}
\right.
\end{equation}
Then, the proposition \ref{1genuinelynonlin} is proved.
\end{proof}

\noindent
{\em Remark:} When for all $i \in [\![1,n-1]\!]$,  $u_{i+1}-u_i$ and $1-\gamma_i$ are all equal to $0$, the $\lambda_n^{\pm}$-characteristic field remains genuinely non-linear but the $\lambda_i^{\pm}$-characteristic field becomes linearly degenerate.

\noindent
To conclude, under assumptions \eqref{1hypasympt}, \eqref{1hypasympt2} and \eqref{1assumpteigenvectorlambdan4}, for all $i \in [\![1,n]\!]$, the $\lambda_i^{\pm}$-wave is a shock wave or a rarefaction wave and for all $i \in [\![2n+1,3n]\!]$ the $\lambda_i$-wave is a contact wave.

\section{A {\em conservative} multi-layer shallow water model}
Even if the model (\ref{1massconservation}--\ref{1momentumconservation}) is conservative, in the one-dimensional case, with the unknowns $(h_i,u_i)$, $i \in [\![1,n]\!]$, it is not anymore true in the two-dimensional case. This section will treat this lack of conservativity by an augmented model, with a different approach from \cite{abgrall2009two}. We remind that no assumption has been made concerning the horizontal vorticity, in each layer
\begin{equation}
\forall i \in [\![1,n]\!],\ w_i:=curl(\boldsymbol{ u_i})=\frac{\partial v_i}{\partial x} - \frac{\partial u_i}{\partial y}.
\label{1wi}
\end{equation}

\subsection{Conservation laws}
Using a {\em Frobenius} problem, it was proved in \cite{barros2006conservation} that the one-dimensional two-layer shallow water model with free surface has a finite number of conservative quantities: the height and velocity in each layer, the total momentum and the total energy. However, in the two-dimensional case, it is still an open question.

\noindent
Concerning the multi-layer model, in one dimension, we can also reduce the study of conservative quantities to the study of a {\em Frobenius} problem. Indeed, defining the new unknowns
\begin{equation}
\forall i \in [\![1,n]\!],\ 
\left\{
\begin{array}{l}
\hat{h}_i:=\alpha_{n,i} h_i,\\[4pt]
\hat{u}_i:=u_i,
\end{array}
\right.
\end{equation}
and
\begin{equation}
{\bf \hat{u}}:= {}^{\top}(\hat{h}_1,\ldots,\hat{h}_n,\hat{u}_n,\ldots,\hat{u}_n),
\end{equation}
then, the model \eqref{1systemmultilayer} is equivalent to
\begin{equation}
\frac{\partial {\bf \hat{u}}}{\partial t} + \mathsf{\hat{A}}_x({\bf \hat{u}},\boldsymbol{\gamma}) \frac{\partial {\bf \hat{u}}}{\partial x} + {\bf \hat{b}}({\bf \hat{u}})=0,
\label{1systemmultilayerhat}
\end{equation}
with
\begin{equation}
\mathsf{\hat{A}}_x({\bf \hat{u}},\boldsymbol{\gamma}):= \left[\begin{array}{c|c}
 \begin{array}{c} \mathsf{\Delta}  \end{array}
 & \begin{array}{c} \mathsf{0} \end{array}\\
 \hline
  \begin{array}{c} \mathsf{0} \end{array}
 & \begin{array}{c} \mathsf{I_n} \end{array}\\
\end{array}\right] \left[\begin{array}{c|c}
 \begin{array}{c} \mathsf{V}_x  \end{array}
 & \begin{array}{c} \mathsf{H} \end{array}\\
 \hline
  \begin{array}{c} \mathsf{\Gamma} \end{array}
 & \begin{array}{c} \mathsf{V}_x \end{array}\\
\end{array}\right].
\end{equation}
Moreover, we have also
\begin{equation}
\mathsf{\hat{A}}_x({\bf \hat{u}},\boldsymbol{\gamma})= \mathsf{P} \nabla^2 \hat{e}_1({\bf \hat{u}},\boldsymbol{\gamma}),
\label{1formhatA}
\end{equation}
where $\hat{e}_1({\bf \hat{u}}):=\frac{1}{2}\sum_{i=1}^n \hat{h}_i \left(\hat{u}_i^2+\frac{\hat{h}_i}{\alpha_{n,i}}\right)+\sum_{i=1}^{n-1}\sum_{j=i+1}^n \hat{h}_i \frac{\hat{h}_j}{\alpha_{n,j}}$ and the $2n \times 2n$ block matrix $\mathsf{P}$ is defined by
\begin{equation}
\mathsf{P} :=  \left[\begin{array}{c|c}
 \begin{array}{c} \mathsf{0}  \end{array}
 & \begin{array}{c} \mathsf{I_n} \end{array}\\
 \hline
  \begin{array}{c} \mathsf{I_n} \end{array}
 & \begin{array}{c} \mathsf{0} \end{array}\\
\end{array}\right],
\end{equation}

\noindent
Therefore, $\eta({\bf \hat{u}})$ is a conservative quantity of the multi-layer model, in one dimension, if and only if the matrix $\nabla^2 \eta({\bf \hat{u}})\ \mathsf{\hat{A}}_x({\bf \hat{u}})$ is symmetric, which is equivalent to, according to \eqref{1formhatA},
\begin{equation}
(\mathsf{P} \nabla^2 \eta({\bf \hat{u}})) \mathsf{\hat{A}}_x({\bf \hat{u}})=\mathsf{\hat{A}}_x({\bf \hat{u}}) (\mathsf{P} \nabla^2 \eta({\bf \hat{u}})).
\end{equation}

\noindent
Consequently, if we denote by $\mathsf{X}:=\mathsf{P} \nabla^2 \eta({\bf \hat{u}})$, the conservative quantities of \eqref{1systemmultilayerhat} needs to verify the {\em Frobenius} problem:
\begin{equation}
\mathsf{X}\ \mathsf{\hat{A}}_x({\bf \hat{u}})=\mathsf{\hat{A}}_x({\bf \hat{u}})\ \mathsf{X}.
\label{1frobeniusproblem}
\end{equation}

\noindent
{\em Remark:} The condition \eqref{1frobeniusproblem} is just necessary: the solution $\mathsf{X}:=[\mathsf{X}_{i,j}]_{(i,j) \in [\![1,2n]\!]^2}$ needs to verify the compatibility conditions
\begin{equation}
\forall (i,j,k) \in [\![1,2n]\!]^3,\ \frac{\partial \mathsf{X}_{i,j}}{\partial \alpha_k}=\frac{\partial \mathsf{X}_{i,k}}{\partial \alpha_j},
\label{1conditioncompatibilityhessian}
\end{equation}
where for all $k \in [\![1,n]\!]$, $\alpha_k:=\hat{h}_k$ and $\alpha_{n+k}:=\hat{u}_k$, to insure that $\mathsf{X}$ is the hessian of a scalar field.

\noindent
We remind a useful property of the set of the solutions of \eqref{1frobeniusproblem}:

\begin{proposition}
Let $\boldsymbol{\gamma} \in ]0,1[^{n-1}$, $\epsilon >0$ and an injective function $\sigma \in \mathbb{R}_+^{*\ [\![1,n-1]\!]}$ such that $\boldsymbol{\gamma}$ verifies \eqref{1hypasympt}.

\noindent
Then, there exists $\delta > 0$ such that if
\begin{equation}
\epsilon \le \delta,
\label{1assumpteigenvectorlambdan6}
\end{equation}
and ${\bf u} \in \mathcal{H}_{\sigma,\epsilon}$, a matrix $\mathsf{X}$ is solution of the {\em Frobenius} problem \eqref{1frobeniusproblem} if and only if
\begin{equation}
\mathsf{X} \in \mathrm{Span}\big( \mathsf{\hat{A}}^k_x({\bf \hat{u}}),\ k \in [\![0,2n-1]\!] \big).
\end{equation}
\end{proposition}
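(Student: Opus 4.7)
The plan is to reduce the Frobenius problem \eqref{1frobeniusproblem} to a classical fact in linear algebra: the commutant of a cyclic (non-derogatory) endomorphism coincides with its polynomial algebra. I would proceed in three steps.

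First, I would establish that $\hat{\mathsf{A}}_x({\bf\hat u},\boldsymbol\gamma)$ admits $2n$ pairwise distinct real eigenvalues under the standing hypotheses. Up to the invertible diagonal conjugation by $\mathrm{diag}(\mathsf{\Delta},\mathsf{I_n})$, this matrix is similar to the $2n\times 2n$ block $\mathsf{A}_x^1({\bf u},\boldsymbol\gamma)$ introduced in Proposition \ref{1propcharactpoly}, so its spectrum is exactly $(\lambda_i^\pm({\bf u},\boldsymbol\gamma))_{i\in[\![1,n]\!]}$. The asymptotic expansions of Propositions \ref{1proplambdanexpandeigenvector2} and \ref{1proplambdaipmapprox} separate the barotropic pair $\lambda_n^\pm=\bar u\pm\sqrt H+O(\epsilon^{1/2})$ from the baroclinic ones $\lambda_i^\pm=u_i+O(\epsilon^{\sigma(i)/2})$, $i\in[\![1,n-1]\!]$, by an $O(1)$ gap. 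The splitting $\lambda_i^+\ne\lambda_i^-$ at each internal interface follows from the \emph{strict} inequality $\phi_{\sigma,i}({\bf h})>(u_{i+1}-u_i)^2$ built into the definition of $\mathcal H_{\sigma,\epsilon}$; the cross-interface distinctness $\lambda_i^\pm\ne\lambda_j^\pm$ for $i\ne j$ is a consequence of the injectivity of $\sigma$ together with the scaling \eqref{1hypasympt3}, which place the leading-order corrections to the base velocities at mutually distinct $\epsilon$-scales for $\epsilon$ small enough.

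Second, this simplicity has two consequences. On one hand, the minimal polynomial of $\hat{\mathsf{A}}_x$ coincides with its characteristic polynomial, which is of degree $2n$, so the family $\bigl(\hat{\mathsf{A}}_x^k\bigr)_{k\in[\![0,2n-1]\!]}$ is linearly independent in $\mathcal M_{2n}(\mathbb R)$. On the other hand, since $\hat{\mathsf{A}}_x$ is diagonalizable with simple spectrum, any matrix $\mathsf X$ commuting with $\hat{\mathsf{A}}_x$ stabilises each one-dimensional eigenspace and is therefore diagonal in the eigenbasis, so the commutant has dimension exactly $2n$. A dimension count then concludes: every power $\hat{\mathsf{A}}_x^k$ commutes trivially with $\hat{\mathsf{A}}_x$, so the $2n$-dimensional subspace $\mathrm{Span}\bigl(\hat{\mathsf{A}}_x^k,\,k\in[\![0,2n-1]\!]\bigr)$ is contained in the commutant; since both spaces have dimension $2n$, they must coincide.

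The main obstacle is the first step, and specifically the cross-interface distinctness of the baroclinic eigenvalues: while $\lambda_i^+\ne\lambda_i^-$ is directly encoded in the defining strict inequality of $\mathcal H_{\sigma,\epsilon}$, one must rule out the possibility that two base velocities $u_i,u_j$ and their $\epsilon^{\sigma(i)/2}$, $\epsilon^{\sigma(j)/2}$ corrections could conspire to produce an accidental coincidence $\lambda_i^\pm=\lambda_j^\pm$. Quantifying how small $\delta$ must be to preclude such resonances, using only the injectivity of $\sigma$ and the bounds \eqref{1hypasympt2}, is the technical core of the argument; once this is obtained, the linear-algebraic conclusion is essentially immediate.
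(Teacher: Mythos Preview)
Your proposal is correct and follows essentially the same route as the paper: establish that the $2n$ eigenvalues $(\lambda_i^{\pm})_{i\in[\![1,n]\!]}$ of $\hat{\mathsf A}_x$ are simple (via the asymptotic expansions of Propositions~\ref{1proplambdanexpandeigenvector2} and~\ref{1proplambdaipmapprox}), so that the minimal and characteristic polynomials coincide and the commutant reduces to $\mathbb R[\hat{\mathsf A}_x]$. The paper's proof is in fact much terser than yours---it simply asserts distinctness by citing those two propositions and invokes the classical fact about the commutant---whereas you spell out the dimension count and flag the cross-interface distinctness as the delicate point; this is a fair elaboration of the same argument.
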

\begin{proof}
Under these conditions, as it was proved in propositions \ref{1proplambdanexpandeigenvector2} and \ref{1proplambdaipmapprox}, the eigenvalues of $\mathsf{\hat{A}}_x({\bf \hat{u}})$ are all distinct. Then, the characteristic polynomial of $\mathsf{\hat{A}}_x({\bf \hat{u}})$ coincides with the minimal polynomial. Therefore, the set of solutions of \eqref{1frobeniusproblem} is equal to $\mathrm{Span}\big( \mathsf{\hat{A}}^k_x({\bf \hat{u}}),\ k \in [\![0,2n-1]\!] \big)$.
\end{proof}

\noindent
Then, according to the last proposition, there exists $\big( x_i \big)_{i \in [\![1,2n-1]\!]} \subset \mathbb{R}$ such that
\begin{equation}
\nabla^2 \eta({\bf \hat{u}})=\sum_{i=0}^{2n-1} x_i \mathsf{P} \mathsf{\hat{A}}^k_x({\bf \hat{u}}).
\end{equation}

\noindent
Using the compatibility conditions \eqref{1conditioncompatibilityhessian}, we should find conditions on $\big( x_i \big)_{i \in [\![1,2n-1]\!]}$, to insure $\eta({\bf \hat{u}})$ to be a conservative quantity. However, the question is still open as the complexity of \eqref{1conditioncompatibilityhessian} is very high. However we would expect to find
\begin{equation}
\left\{
\begin{array}{lr}
x_i\ \mathrm{is}\ \mathrm{a}\ \mathrm{constant},& \forall i \in [\![0,1]\!],\\
x_i=0,& \forall i \in [\![2,2n-1]\!],
\end{array}
\right.
\end{equation}
to deduce that there exist $(x_0,x_1) \in \mathbb{R}^2$ and $({\bf c},{\bf d}) \in \mathbb{R}^{2n}$ such that
\begin{equation}
\eta({\bf \hat{u}})= \frac{x_0}{2} {\bf \hat{u}} \cdot \mathsf{P}{\bf \hat{u}}+x_1 \hat{e}_1({\bf \hat{u}}) + {\bf c} \cdot {\bf \hat{u}} + {\bf d},
\end{equation}
as the only known conservative quantities, in one dimension, are the height, the velocity in each layer, the total momentum and the total energy of the system.

\noindent
Concerning the conservative quantities of the multi-layer model, in two dimensions, the question is quite more complex and is also still open. Moreover, the study performed below does not remain possible --- the structure \eqref{1formhatA} is not anymore verified.

\noindent
Nevertheless, introducing $w_i$, for $i \in [\![1,n]\!]$, in equations (\ref{1massconservation}--\ref{1momentumconservation}), the conservation of mass \eqref{1massconservation} is unchanged
\begin{equation}
\frac{\partial h_i}{\partial t} + {\bf \nabla} {\bf \cdot} (h_i \boldsymbol{u_i}) = 0,
\label{1massconservationrelax}
\end{equation}
but the equation of depth-averaged horizontal velocity \eqref{1momentumconservation} becomes conservative
\begin{equation}
\frac{\partial \boldsymbol{u_i}}{\partial t}+{\bf \nabla} \left(\frac{1}{2} (u_i^2+v_i^2) +P_i \right) - (f+w_i) \boldsymbol{u_i}^{\bot} = 0.
\label{1momentumconservationrelax}
\end{equation}
Moreover, the horizontal vorticity, in each layer, is also conservative:
\begin{equation}
\frac{\partial w_i}{\partial t} + \nabla \cdot \left((w_i+f)\boldsymbol{u_i} \right)=0.
\label{1rot}
\end{equation}

\noindent
Therefore, in the two-dimensional case, there are at least $3n+2$ conservative quantities: the height, the velocity and the horizontal vorticity in each layer, the total momentum and the energy $e_2$:
\begin{equation}
e_2({\bf v},\boldsymbol{\gamma}):=\frac{1}{2}\sum_{i=1}^n \alpha_{n,i} h_i \left(u_i^2+v_i^2+h_i\right)+\sum_{i=1}^{n-1}\sum_{j=i+1}^n \alpha_{n,i} h_i h_j.
\label{1energy2d}
\end{equation}

\subsection{A new augmented model}
From equations (\ref{1massconservation}--\ref{1momentumconservation}), it is possible to obtain a new model. We denote $({\bf u},{\bf v}) \in \mathcal{H}^s(\mathbb{R}^2)^{3n} \times \mathcal{H}^s(\mathbb{R}^2)^{4n}$, the vectors defined by
\begin{equation}
\left\{
\begin{array}{l}
{\bf u}:={}^{\top} (h_1,\ldots,h_n,u_1,\ldots,u_n,v_1,\ldots,v_n),\\
{\bf v}:={}^{\top} (h_1,\ldots,h_n,u_1,\ldots,u_n,v_1,\ldots,v_n,w_1,\ldots,w_n).
\end{array}
\right.
\label{1uvdefinition}
\end{equation}

\noindent
If ${\bf u}$ is a classical solution of \eqref{1systemmultilayer}, then ${\bf v}$ is solution of the augmented system
\begin{equation}
\frac{\partial {\bf v}}{\partial t} + \mathsf{A}^{\mathsf{a}}_x({\bf v},\boldsymbol{\gamma}) \frac{\partial {\bf v}}{\partial x} + \mathsf{A}^{\mathsf{a}}_y ({\bf v},\boldsymbol{\gamma}) \frac{\partial {\bf v}}{\partial y} + {\bf b^a}({\bf v})=0,
\label{1systemmultilayerrelax}
\end{equation}
where the $4n \times 4n$ block matrices $\mathsf{A}^{\mathsf{a}}_x({\bf v},\boldsymbol{\gamma})$ and $ \mathsf{A}^{\mathsf{a}}_y({\bf v},\boldsymbol{\gamma})$ are defined by
\begin{equation}
\mathsf{A}^{\mathsf{a}}_x({\bf v},\boldsymbol{\gamma}) :=  \left[\begin{array}{c|c|c|c}
 \begin{array}{c} \mathsf{V}_x  \end{array}
 & \begin{array}{c} \mathsf{H} \end{array}
 & \begin{array}{c} \mathsf{0} \end{array}
 & \begin{array}{c} \mathsf{0} \end{array}\\
 \hline
 \begin{array}{c} \mathsf{\Gamma} \end{array}
 & \begin{array}{c} \mathsf{V}_x \end{array}
 & \begin{array}{c} \mathsf{V}_y \end{array}
 & \begin{array}{c} \mathsf{0} \end{array}\\
 \hline
 \begin{array}{c} \mathsf{0} \end{array}
 & \begin{array}{c} \mathsf{0} \end{array}
 & \begin{array}{c} \mathsf{0} \end{array}
 & \begin{array}{c} \mathsf{0} \end{array}\\
 \hline
 \begin{array}{c} \mathsf{0} \end{array}
 & \begin{array}{c} \mathsf{W} \end{array}
 & \begin{array}{c} \mathsf{0} \end{array}
 & \begin{array}{c} \mathsf{V}_x \end{array}\\
\end{array}\right],
\label{1Arx}
\end{equation}

\begin{equation}
\mathsf{A}^{\mathsf{a}}_y({\bf v},\boldsymbol{\gamma}) :=  \left[\begin{array}{c|c|c|c}
 \begin{array}{c} \mathsf{V}_y  \end{array}
 & \begin{array}{c} \mathsf{0} \end{array}
 & \begin{array}{c} \mathsf{H} \end{array}
 & \begin{array}{c} \mathsf{0} \end{array}\\
 \hline
 \begin{array}{c} \mathsf{0} \end{array}
 & \begin{array}{c} \mathsf{0} \end{array}
 & \begin{array}{c} \mathsf{0} \end{array}
 & \begin{array}{c} \mathsf{0} \end{array}\\
 \hline
 \begin{array}{c} \mathsf{\Gamma} \end{array}
 & \begin{array}{c} \mathsf{V}_x \end{array}
 & \begin{array}{c} \mathsf{V}_y \end{array}
 & \begin{array}{c} \mathsf{0} \end{array}\\
\hline
 \begin{array}{c} \mathsf{0} \end{array}
 & \begin{array}{c} \mathsf{0} \end{array}
 & \begin{array}{c} \mathsf{W} \end{array}
 & \begin{array}{c} \mathsf{V}_y \end{array}\\\end{array}\right],
\label{1Ary}
\end{equation}
where $\mathsf{W}:=\mathrm{diag}[w_i+f]_{i \in [\![1,n]\!]}$ and ${\bf b^a}({\bf v})$ is defined by
\begin{equation}
{\bf b^{\mathsf{a}}}({\bf v}):=  \sum_{k=1}^{n} \big(-(w_k+f) v_k +  \frac{\partial b}{\partial x}\big)  \boldsymbol{e'_{n+k}}+ \big((w_k+f) u_k +  \frac{\partial b}{\partial y}\big) \boldsymbol{e'_{2n+k}},
\label{1br}
\end{equation}
where $(\boldsymbol{e'_i})_{i \in [\![1,4n]\!]}$ denotes the canonical basis of $\mathbb{R}^{4n}$.

\noindent
Even if the model (\ref{1massconservation}--\ref{1momentumconservation}) is not conservative, the model \eqref{1systemmultilayerrelax} is always so. Then, there is no need to chose a conservative path in the numerical resolution.

\noindent
{\em Remark:} 1) $e_2({\bf v},\boldsymbol{\gamma})$ is not the total energy of the augmented model \eqref{1systemmultilayerrelax}. Indeed, it is never a convex function with the variable ${\bf v}$ as it is independent of $(w_i)_{i \in [\![1,n]\!]}$. 2) Let ${\bf v} \in \mathbb{R}^{4n}$, the associated vector ${\bf u} \in \mathbb{R}^{3n}$ will be composed of the $3n$ first coordinates of the vector ${\bf v}$. All the quantities or functions with ${\bf u}$ as a variable will refer to the non-augmented model \eqref{1systemmultilayer} and all the ones with ${\bf v}$, as a variable, will refer to the new augmented model \eqref{1systemmultilayerrelax}.

\begin{proposition}
The augmented model {\em \eqref{1systemmultilayerrelax}} verifies the rotational invariance.
\label{1proprotinvrelax}
\end{proposition}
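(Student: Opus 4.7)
The plan is to mimic the construction of \eqref{1rotinv}--\eqref{1ptheta} in the augmented setting. Since $\mathbf{v}$ has $4n$ components, I would first introduce the $4n \times 4n$ block matrix
\begin{equation*}
\mathsf{P}^{\mathsf{a}}(\theta) := \left[\begin{array}{c|c|c|c}
 \mathsf{I_n} & \mathsf{0} & \mathsf{0} & \mathsf{0} \\
 \hline
 \mathsf{0} & \cos(\theta)\mathsf{I_n} & \sin(\theta)\mathsf{I_n} & \mathsf{0} \\
 \hline
 \mathsf{0} & -\sin(\theta)\mathsf{I_n} & \cos(\theta)\mathsf{I_n} & \mathsf{0} \\
 \hline
 \mathsf{0} & \mathsf{0} & \mathsf{0} & \mathsf{I_n}
\end{array}\right],
\end{equation*}
obtained by appending an $\mathsf{I_n}$ block to $\mathsf{P}(\theta)$ in the vorticity slot. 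This is natural because the height $h_i$ and the two-dimensional vorticity $w_i = \partial_x v_i - \partial_y u_i$ are rotational scalars, whereas $(u_i, v_i)$ transforms as a planar vector. One checks instantly that $\mathsf{P}^{\mathsf{a}}(\theta)^{-1} = {}^{\top}\mathsf{P}^{\mathsf{a}}(\theta)$ and that $\mathsf{P}^{\mathsf{a}}(\theta)$ leaves the diagonal matrices $\mathsf{H}$, $\mathsf{\Gamma}$ and $\mathsf{W}$ invariant, while it maps $(\mathsf{V}_x, \mathsf{V}_y)$ to $(\cos(\theta)\mathsf{V}_x + \sin(\theta)\mathsf{V}_y,\ -\sin(\theta)\mathsf{V}_x + \cos(\theta)\mathsf{V}_y)$ when evaluated at $\mathsf{P}^{\mathsf{a}}(\theta)\mathbf{v}$.

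The claim to establish is the identity
\begin{equation*}
\mathsf{A}^{\mathsf{a}}(\mathbf{v},\boldsymbol{\gamma},\theta) := \cos(\theta) \mathsf{A}^{\mathsf{a}}_x(\mathbf{v},\boldsymbol{\gamma}) + \sin(\theta)\mathsf{A}^{\mathsf{a}}_y(\mathbf{v},\boldsymbol{\gamma}) = \mathsf{P}^{\mathsf{a}}(\theta)^{-1} \mathsf{A}^{\mathsf{a}}_x\big(\mathsf{P}^{\mathsf{a}}(\theta)\mathbf{v},\boldsymbol{\gamma}\big)\, \mathsf{P}^{\mathsf{a}}(\theta),
\end{equation*}
which I would verify by a direct block-by-block computation. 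Right-multiplication by $\mathsf{P}^{\mathsf{a}}(\theta)$ mixes the velocity columns and reproduces $\mathsf{V}_x$ in the $u$-column and $\mathsf{V}_y$ in the $v$-column (by the elementary identities $\cos\theta \tilde{\mathsf{V}}_x - \sin\theta \tilde{\mathsf{V}}_y = \mathsf{V}_x$ and $\sin\theta \tilde{\mathsf{V}}_x + \cos\theta \tilde{\mathsf{V}}_y = \mathsf{V}_y$), while splitting the $\mathsf{H}$ block into a $\cos\theta \mathsf{H}$ and a $\sin\theta \mathsf{H}$ piece; subsequent left-multiplication by $\mathsf{P}^{\mathsf{a}}(\theta)^{-1}$ performs the analogous mixing of the second and third block rows. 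The crucial point is that the two ``momentum rows'' of $\mathsf{A}^{\mathsf{a}}_x$ and $\mathsf{A}^{\mathsf{a}}_y$ form a complementary pair: row $2$ of $\mathsf{A}^{\mathsf{a}}_x$ is $(\mathsf{\Gamma}, \mathsf{V}_x, \mathsf{V}_y, \mathsf{0})$ with row $2$ of $\mathsf{A}^{\mathsf{a}}_y$ vanishing, while row $3$ of $\mathsf{A}^{\mathsf{a}}_y$ is the same nonzero block with row $3$ of $\mathsf{A}^{\mathsf{a}}_x$ vanishing, so the rotation conjugation precisely reshuffles these into $\cos\theta$ and $\sin\theta$ combinations. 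The vorticity row (fourth block row) behaves likewise: its nontrivial entries $\mathsf{W}$ and $\mathsf{V}_x$ in $\mathsf{A}^{\mathsf{a}}_x$, and $\mathsf{W}$ and $\mathsf{V}_y$ in $\mathsf{A}^{\mathsf{a}}_y$, combine to give $(0, \cos\theta\,\mathsf{W}, \sin\theta\,\mathsf{W}, \cos\theta\,\mathsf{V}_x + \sin\theta\,\mathsf{V}_y)$, which matches the fourth row of $\mathsf{A}^{\mathsf{a}}(\mathbf{v},\boldsymbol{\gamma},\theta)$.

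There is no real obstacle here: everything is linear algebra once the correct extension $\mathsf{P}^{\mathsf{a}}(\theta)$ is chosen, and the verification is essentially a checklist across four block rows and four block columns. The only mild subtlety is the asymmetry introduced by the \emph{conservative} rewriting \eqref{1momentumconservationrelax} of the momentum equation, which places all $\partial_x$-fluxes in the $u$-row of $\mathsf{A}^{\mathsf{a}}_x$ and all $\partial_y$-fluxes in the $v$-row of $\mathsf{A}^{\mathsf{a}}_y$; one must check that this asymmetric placement is exactly the one produced by the similarity transform, which is precisely what the computation above confirms. Having established this identity, the analysis of $\mathsf{A}^{\mathsf{a}}(\mathbf{v},\boldsymbol{\gamma},\theta)$ reduces, exactly as in the non-augmented case, to the study of $\mathsf{A}^{\mathsf{a}}_x(\mathbf{v},\boldsymbol{\gamma})$ alone.
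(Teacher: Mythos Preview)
Your proposal is correct and follows exactly the paper's approach: the paper introduces the same $4n\times 4n$ matrix $\mathsf{P^a}(\theta)$ (identity on the height and vorticity blocks, planar rotation on the velocity blocks), states the identity $\mathsf{A}^{\mathsf{a}}({\bf v},\boldsymbol{\gamma},\theta)=\mathsf{P^{a}}(\theta)^{-1}\mathsf{A}^{\mathsf{a}}_x(\mathsf{P^a}(\theta){\bf v},\boldsymbol{\gamma})\mathsf{P^a}(\theta)$, and notes $\mathsf{P^a}(\theta)^{-1}={}^{\top}\mathsf{P^a}(\theta)$. Your block-by-block discussion is in fact more detailed than the paper's own ``one can check'' verification.
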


\begin{proof}
We denote by $\mathsf{A}^{\mathsf{a}}({\bf v},\boldsymbol{\gamma},\theta)$ the matrix defined by
\begin{equation}
\cos(\theta) \mathsf{A}^{\mathsf{a}}_x({\bf v},\boldsymbol{\gamma})+\sin(\theta) \mathsf{A}^{\mathsf{a}}_y({\bf v},\boldsymbol{\gamma}).
\end{equation}
One can check the next equality, for all $({\bf v},\boldsymbol{\gamma},\theta) \in \mathbb{R}^{4n} \times \mathbb{R}_+^{*\ n-1} \times [0,2\pi]$
\begin{equation}
\mathsf{A}^{\mathsf{a}}({\bf v},\boldsymbol{\gamma},\theta)=\mathsf{P^{a}}(\theta)^{-1} \mathsf{A}^{\mathsf{a}}_x(\mathsf{P^a}(\theta) {\bf v},\boldsymbol{\gamma}) \mathsf{P^a}(\theta),
\label{1rotinvrelax}
\end{equation}
where $\mathsf{P^a}(\theta)$ is the $4n \times 4n$ block matrix defined by
\begin{equation}
\mathsf{P^a}(\theta) :=  \left[\begin{array}{c|c|c|c}
 \begin{array}{c} \mathsf{I_n}  \end{array}
 & \begin{array}{c} \mathsf{0} \end{array}
 & \begin{array}{c} \mathsf{0} \end{array}
 & \begin{array}{c} \mathsf{0} \end{array}\\
 \hline
 \begin{array}{c} \mathsf{0} \end{array}
 & \begin{array}{c} \cos(\theta) \mathsf{I_n} \end{array}
 & \begin{array}{c} \sin(\theta)\mathsf{I_n} \end{array}
 & \begin{array}{c} \mathsf{0} \end{array}\\
 \hline
 \begin{array}{c} \mathsf{0} \end{array}
 & \begin{array}{c} -\sin(\theta)\mathsf{I_n} \end{array}
 & \begin{array}{c} \cos(\theta) \mathsf{I_n} \end{array}
 & \begin{array}{c} \mathsf{0} \end{array}\\
  \hline
 \begin{array}{c} \mathsf{0} \end{array}
 & \begin{array}{c} \mathsf{0} \end{array}
 & \begin{array}{c} \mathsf{0} \end{array}
 & \begin{array}{c} \mathsf{I_n} \end{array}\\
\end{array}\right],
\label{1Prtheta}
\end{equation}

\noindent
and, moreover, we notice $\mathsf{P^a}(\theta)^{-1}={}^{\top}\mathsf{P^a}(\theta)$.
\end{proof}

\subsection{A rough criterion of local well-posedness}
We give a $1^{\mathrm{st}}$ criterion of {\em Friedrichs}-symmetrizability to insure the local well-posedness in $\mathcal{H}^s(\mathbb{R}^2)^{4n}$ and $\mathcal{L}^2(\mathbb{R}^2)^{4n}$.

\begin{theorem}
Let $s>2$ and $({\bf v^0},\boldsymbol{\gamma}) \in  \mathcal{H}^s(\mathbb{R}^2)^{4n} \times ]0,1[^{n-1}$ and $\boldsymbol{u_0}:=(u_0,v_0) \in \mathbb{R}^2$ such that
\begin{equation}
\left\{
\begin{array}{lr}
\inf_{X \in \mathbb{R}^2} h_i^0(X) > 0,& \forall\ i \in [\![1,n]\!],\\[4pt]
\inf_{X \in \mathbb{R}^2} \delta^a({\bf v^0}(X),\boldsymbol{\gamma},\boldsymbol{u_0})>0,
\end{array}
\right.
\label{1condwellposeda}
\end{equation}
where for every ${\bf v} \in \mathbb{R}^{4n}$,
\begin{equation}
\begin{array}{ll}
\delta^{\mathsf{a}}({\bf v},\boldsymbol{\gamma},\boldsymbol{u_0}):=& \min \big(a(\mathbf{h},\boldsymbol{\gamma})^{-1},\min_{i \in [\![1,n]\!]}(h_i^2)\big)\\[4pt]
& -\max_{i \in [\![1,n]\!]} \alpha_{n,i}|u_i - u_0|- \max_{i \in [\![1,n]\!]} \alpha_{n,i}|v_i - v_0|\\[4pt]
 &+\min \bigg(0,\min_{i \in [\![1,n]\!]} \frac{w_i+f}{2}\big(w_i+f - \sqrt{(w_i+f)^2+4 h_i^2}  \big) \bigg).
\end{array}
\label{1defdeltaa}
\end{equation}
Then, the {\em Cauchy} problem, associated with the system {\em \eqref{1systemmultilayerrelax}} and the initial data ${\bf v^0}$, is hyperbolic, locally well-posed in $\mathcal{H}^s(\mathbb{R}^2)^{4n}$ and there exists $T>0$ such that ${\bf v}$, the unique solution of the {\em Cauchy} problem, verifies
\begin{equation}
\left\{
\begin{array}{l}
{\bf v} \in \mathcal{C}^1([0,T] \times \mathbb{R}^2)^{4n},\\
{\bf v} \in \mathcal{C}([0,T];\mathcal{H}^s(\mathbb{R}^2))^{4n} \cap \mathcal{C}^1([0,T];\mathcal{H}^{s-1}(\mathbb{R}^2))^{4n}.
\end{array}
\right.
\label{1strongsolrelax}
\end{equation}
\label{1condwellposedMLSWrelax}
\end{theorem}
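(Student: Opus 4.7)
The plan is to mirror the proof of Theorem~\ref{1condwellposedMLSW}, replacing Proposition~\ref{1rotinv} by the augmented rotational invariance~\eqref{1rotinvrelax} of Proposition~\ref{1proprotinvrelax}. By the same argument as in Lemma~\ref{1wellposedrotinv}, it suffices to exhibit a smooth map $\mathsf{S}^{\mathsf{a}}_x({\bf v},\boldsymbol{\gamma},\boldsymbol{u_0})$ which is symmetric, whose product with $\mathsf{A}^{\mathsf{a}}_x({\bf v},\boldsymbol{\gamma})$ is symmetric, and which is positive-definite under~\eqref{1condwellposeda}; the symbolic symmetrizer is then defined by
\[
\mathsf{S}^{\mathsf{a}}({\bf v},\boldsymbol{\gamma},\theta) := \mathsf{P}^{\mathsf{a}}(\theta)^{\mathsf{-1}} \mathsf{S}^{\mathsf{a}}_x\bigl(\mathsf{P}^{\mathsf{a}}(\theta){\bf v},\boldsymbol{\gamma},\mathsf{P}(\theta)\boldsymbol{u_0}\bigr) \mathsf{P}^{\mathsf{a}}(\theta),
\]
and the well-posedness conclusion, together with the regularity~\eqref{1strongsolrelax}, follows from the analogue of Proposition~\ref{1propsymm} (see \cite{benzoni2007multi}); hyperbolicity is then free by Proposition~\ref{1symimpliqhyp}.

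The candidate $\mathsf{S}^{\mathsf{a}}_x$ extends the symmetrizer $\mathsf{S}_x$ of~\eqref{1Sx} by a fourth block-row and block-column that encode the coupling between the new vorticity variables and the velocities. The block $(1,4)$ must vanish because the first row of $\mathsf{A}^{\mathsf{a}}_x$ is independent of $\mathbf{w}$, and the third row of $\mathsf{A}^{\mathsf{a}}_x$ vanishes, so no coupling with $\mathsf{V}_y$ is generated along $x$. Requiring symmetry of the $(2,4)$, $(4,2)$ and $(4,4)$ blocks of the product $\mathsf{S}^{\mathsf{a}}_x\mathsf{A}^{\mathsf{a}}_x$ then determines, up to a positive scaling, a coupling entry involving $\mathsf{H}$ and a diagonal $(4,4)$ block involving $\mathsf{W} = \mathrm{diag}(w_i+f)$. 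The symmetry of $\mathsf{S}^{\mathsf{a}}_x\mathsf{A}^{\mathsf{a}}_x$ is then verified by a direct block-by-block computation, exactly as~\eqref{1SxAx} was verified in the non-augmented setting.

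For the positive-definiteness, split $\mathsf{S}^{\mathsf{a}}_x = \mathsf{R}_0(\mathbf{h},\mathbf{w},\boldsymbol{\gamma}) + \mathsf{R}_1({\bf v},\boldsymbol{\gamma},\boldsymbol{u_0})$, where $\mathsf{R}_0$ is the reference block-diagonal part in which $u_i$ is replaced by $u_0$ and $v_i$ by $v_0$, and $\mathsf{R}_1$ collects the velocity deviations. Proposition~\ref{1proplambdamin} gives
\[
\lambda^{\min}(\mathsf{S}^{\mathsf{a}}_x) \ge \lambda^{\min}(\mathsf{R}_0) + \lambda^{\min}(\mathsf{R}_1).
\]
The matrix $\mathsf{R}_1$ has the same block-antidiagonal structure as in~\eqref{1spectra1}, hence $\lambda^{\min}(\mathsf{R}_1)=-\max_i \alpha_{n,i}|u_i-u_0|-\max_i \alpha_{n,i}|v_i-v_0|$. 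The matrix $\mathsf{R}_0$ splits into the $(h,u,v)$-block — bounded below via Lemma~\ref{1lemlambdamaxSx0m1} by $\min\bigl(a(\mathbf{h},\boldsymbol{\gamma})^{-1},\min_i h_i^2\bigr)$ — and, for each layer $i$, a $2\times 2$ velocity-vorticity coupling block whose smaller eigenvalue is exactly $\tfrac{w_i+f}{2}\bigl(w_i+f-\sqrt{(w_i+f)^2+4h_i^2}\bigr)$. Taking $\min(0,\cdot)$ over $i$ accounts for the fact that this $2\times 2$ block may already be positive-definite on its own, in which case it does not lower the bound. Summing the three contributions reproduces $\delta^{\mathsf{a}}$ in~\eqref{1defdeltaa}, so~\eqref{1condwellposeda} implies $\mathsf{S}^{\mathsf{a}}_x>0$.

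The main obstacle is the algebraic step: pinning down the exact $(4,4)$ block and the $(2,4)$ coupling entry of $\mathsf{S}^{\mathsf{a}}_x$, then verifying that $\mathsf{S}^{\mathsf{a}}_x \mathsf{A}^{\mathsf{a}}_x$ is symmetric. Because the vorticity enters the augmented flux matrix only through $\mathsf{W}$ multiplying $\partial_x u$ in the last row of~\eqref{1Arx}, any symmetrizer must carry a $\mathsf{W}$-dependence which in turn drives the characteristic eigenvalue expression $\tfrac{w_i+f}{2}\bigl(w_i+f-\sqrt{(w_i+f)^2+4h_i^2}\bigr)$ appearing in $\delta^{\mathsf{a}}$; reconciling the exact coefficients so that this quantity falls out of a clean $2\times 2$ eigenvalue computation is the delicate part of the argument. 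Once this is done, the rest of the proof is a routine assembly of the symmetrizer by rotational invariance and the application of Proposition~\ref{1propsymm}.
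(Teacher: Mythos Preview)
Your high-level strategy---construct an explicit symmetrizer, split it, and bound $\lambda^{\min}$ via Proposition~\ref{1proplambdamin}---matches the paper, but your structural ansatz for the symmetrizer is wrong and cannot be completed as stated.

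There are two concrete obstructions. First, the $3n\times 3n$ upper-left block of $\mathsf{A}^{\mathsf{a}}_x$ in~\eqref{1Arx} is \emph{not} $\mathsf{A}_x$: its $(2,3)$ block is $\mathsf{V}_y$ (not $0$) and its $(3,3)$ block is $0$ (not $\mathsf{V}_x$). Hence $\mathsf{S}_x$ from~\eqref{1Sx} does not symmetrize this part, and one cannot build $\mathsf{S}^{\mathsf{a}}_x$ by merely appending a fourth block-row and column to $\mathsf{S}_x$; an extra $(1,3)$ block $\mathsf{\Delta V}_y$ is needed. Second, your claim that the $(1,4)$ block must vanish is backward. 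If $(1,4)=0$ and the coupling sits in $(2,4)$, then comparing the $(1,4)$ and $(4,1)$ blocks of the product $\mathsf{S}^{\mathsf{a}}_x\mathsf{A}^{\mathsf{a}}_x$ forces the $(2,4)$ block of $\mathsf{S}^{\mathsf{a}}_x$ to vanish (since $\mathsf{\Gamma}$ is invertible), after which the $(2,4)/(4,2)$ symmetry forces the $(4,4)$ block to annihilate $\mathsf{W}$---incompatible with positive-definiteness. In the paper the coupling is \emph{height--vorticity}, not velocity--vorticity: the $(1,4)$ block is $-\mathsf{WH}$, the $(1,1)$ block acquires an extra $+\mathsf{W}^2$, and the $(4,4)$ block is $\mathsf{H}^2$ (see~\eqref{1Sarelax}). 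The expression $\tfrac{w_i+f}{2}\bigl(w_i+f-\sqrt{(w_i+f)^2+4h_i^2}\bigr)$ in $\delta^{\mathsf{a}}$ is precisely the smallest eigenvalue of the $2\times 2$ block $\bigl(\begin{smallmatrix}(w_i+f)^2 & -(w_i+f)h_i\\ -(w_i+f)h_i & 0\end{smallmatrix}\bigr)$ sitting in positions $1$ and $4$, which is consistent only with this height--vorticity placement.

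A further difference worth noting: the paper does not go through rotational invariance here at all. The matrix $\mathsf{S}^{\mathsf{a}}$ in~\eqref{1Sarelax} symmetrizes both $\mathsf{A}^{\mathsf{a}}_x$ and $\mathsf{A}^{\mathsf{a}}_y$ simultaneously (equations~\eqref{1SaAxarelax}--\eqref{1SaAyarelax}), so it is a genuine \emph{Friedrichs}-symmetrizer independent of $\theta$; the positivity estimate then uses the four-term split $\mathsf{S}^{\mathsf{a}}_0+\mathsf{S}^{\mathsf{a}}_1+\mathsf{S}^{\mathsf{a}}_2+\mathsf{S}^{\mathsf{a}}_3$ of~\eqref{1S0a}--\eqref{1S3a}, with $\mathsf{S}^{\mathsf{a}}_3$ carrying exactly the $\mathsf{W}$-dependent $2\times 2$ blocks above.
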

\begin{proof}
We define the next $4n \times 4n$ symmetric matrix:
\begin{equation}
\mathsf{S}^{\mathsf{a}}(\mathbf{v},\boldsymbol{\gamma}) =  \left[\begin{array}{c|c|c|c}
 \begin{array}{c} \mathsf{\Delta} \mathsf{\Gamma}+\mathsf{W}^2  \end{array}
 & \begin{array}{c} \mathsf{\Delta}  \mathsf{V}_x \end{array}
 & \begin{array}{c} \mathsf{\Delta}  \mathsf{V}_y \end{array}
 & \begin{array}{c} -\mathsf{W}  \mathsf{H}  \end{array}\\
 \hline
 \begin{array}{c} \mathsf{\Delta} \mathsf{V}_x \end{array}
 & \begin{array}{c} \mathsf{\Delta} \mathsf{H} \end{array}
 & \begin{array}{c} \mathsf{0} \end{array}
 & \begin{array}{c} \mathsf{0} \end{array}\\
 \hline
 \begin{array}{c}  \mathsf{\Delta}  \mathsf{V}_y \end{array}
 & \begin{array}{c} \mathsf{0} \end{array}
 & \begin{array}{c} \mathsf{\Delta} \mathsf{H} \end{array}
 & \begin{array}{c} \mathsf{0} \end{array}\\
  \hline
 \begin{array}{c}  -\mathsf{W}  \mathsf{H} \end{array}
 & \begin{array}{c} \mathsf{0} \end{array}
 & \begin{array}{c} \mathsf{0}\end{array}
 & \begin{array}{c} \mathsf{H}^2 \end{array}\\
\end{array}\right],
\label{1Sarelax}
\end{equation}

\noindent
One can check that $\mathsf{S}^a(\mathbf{v},\boldsymbol{\gamma})$, $\mathsf{S}^a(\mathbf{v},\boldsymbol{\gamma}) \mathsf{A}_x^a(\mathbf{v},\boldsymbol{\gamma})$ and $\mathsf{S}^a(\mathbf{v},\boldsymbol{\gamma}) \mathsf{A}_y^a(\mathbf{v},\boldsymbol{\gamma})$ are unconditionally symmetric:
\begin{equation}
\mathsf{S}^{\mathsf{a}}(\mathbf{v},\boldsymbol{\gamma}) \mathsf{A}_x^{\mathsf{a}} =  \left[\begin{array}{c|c|c|c}
 \begin{array}{c} 2\mathsf{\Delta} \mathsf{\Gamma} \mathsf{V}_x+\mathsf{W}^2 \mathsf{V}_x  \end{array}
 & \begin{array}{c} \mathsf{\Delta} \mathsf{\Gamma} \mathsf{H} + \mathsf{\Delta}  \mathsf{V}_x^2 \end{array}
 & \begin{array}{c} \mathsf{\Delta} \mathsf{V}_x \mathsf{V}_y \end{array}
 & \begin{array}{c} -\mathsf{W}  \mathsf{H} \mathsf{V}_x  \end{array}\\
 \hline
 \begin{array}{c} \mathsf{\Delta} \mathsf{\Gamma} \mathsf{H} + \mathsf{\Delta}  \mathsf{V}_x^2 \end{array}
 & \begin{array}{c} 2 \mathsf{\Delta} \mathsf{H} \mathsf{V}_x \end{array}
 & \begin{array}{c} \mathsf{\Delta} \mathsf{H} \mathsf{V}_y \end{array}
 & \begin{array}{c} \mathsf{0} \end{array}\\
 \hline
 \begin{array}{c}  \mathsf{\Delta} \mathsf{V}_x \mathsf{V}_y \end{array}
 & \begin{array}{c} \mathsf{\Delta} \mathsf{H} \mathsf{V}_y \end{array}
 & \begin{array}{c} \mathsf{0} \end{array}
 & \begin{array}{c} \mathsf{0} \end{array}\\
  \hline
 \begin{array}{c}  -\mathsf{W}  \mathsf{H} \mathsf{V}_x \end{array}
 & \begin{array}{c} \mathsf{0} \end{array}
 & \begin{array}{c} \mathsf{0}\end{array}
 & \begin{array}{c} \mathsf{H}^2 \mathsf{V}_x \end{array}\\
\end{array}\right],
\label{1SaAxarelax}
\end{equation}

\begin{equation}
\mathsf{S}^{\mathsf{a}}(\mathbf{v},\boldsymbol{\gamma}) \mathsf{A}_y^{\mathsf{a}} =  \left[\begin{array}{c|c|c|c}
 \begin{array}{c} 2\mathsf{\Delta} \mathsf{\Gamma} \mathsf{V}_y+\mathsf{W}^2 \mathsf{V}_y  \end{array}
 & \begin{array}{c} \mathsf{\Delta} \mathsf{V}_x \mathsf{V}_y \end{array}
 & \begin{array}{c} \mathsf{\Delta} \mathsf{\Gamma} \mathsf{H} + \mathsf{\Delta}  \mathsf{V}_y^2   \end{array}
 & \begin{array}{c} -\mathsf{W}  \mathsf{H} \mathsf{V}_y  \end{array}\\
 \hline
 \begin{array}{c} \mathsf{\Delta} \mathsf{V}_x \mathsf{V}_y \end{array}
 & \begin{array}{c} \mathsf{0} \end{array}
 & \begin{array}{c} \mathsf{\Delta} \mathsf{H} \mathsf{V}_x  \end{array}
 & \begin{array}{c} \mathsf{0} \end{array}\\
 \hline
 \begin{array}{c}  \mathsf{\Delta} \mathsf{\Gamma} \mathsf{H} + \mathsf{\Delta}  \mathsf{V}_y^2  \end{array}
 & \begin{array}{c} \mathsf{\Delta} \mathsf{H} \mathsf{V}_x  \end{array}
 & \begin{array}{c} 2 \mathsf{\Delta} \mathsf{H} \mathsf{V}_y \end{array}
 & \begin{array}{c} \mathsf{0} \end{array}\\
  \hline
 \begin{array}{c}  -\mathsf{W}  \mathsf{H} \mathsf{V}_y \end{array}
 & \begin{array}{c} \mathsf{0} \end{array}
 & \begin{array}{c} \mathsf{0}\end{array}
 & \begin{array}{c} \mathsf{H}^2 \mathsf{V}_y \end{array}\\
\end{array}\right],
\label{1SaAyarelax}
\end{equation}

\noindent
Then, we need to verify $\mathsf{S}^{\mathsf{a}}(\mathbf{v},\boldsymbol{\gamma})>0$ ({\em i.e} $\lambda^{\min}\big( \mathsf{S}^{\mathsf{a}}(\mathbf{v},\boldsymbol{\gamma}) \big) > 0$), to insure that it is a {\em Friedrichs}-symmetrizer. We introduce the following decomposition of $\mathsf{S}^{\mathsf{a}}(\mathbf{v},\boldsymbol{\gamma})$:
\begin{equation}
\mathsf{S}^{\mathsf{a}}(\mathbf{v},\boldsymbol{\gamma})=\mathsf{S}^{\mathsf{a}}_0(\mathbf{h},\boldsymbol{\gamma})+\mathsf{S}^{\mathsf{a}}_1(\mathbf{v},\boldsymbol{\gamma})+\mathsf{S}^{\mathsf{a}}_2(\mathbf{v},\boldsymbol{\gamma})+\mathsf{S}^{\mathsf{a}}_3(\mathbf{v},\boldsymbol{\gamma}),
\end{equation}
where the $4n \times 4n$ symmetric matrices are defined by
\begin{equation}
\mathsf{S}^{\mathsf{a}}_0(\mathbf{h},\boldsymbol{\gamma}) =  \left[\begin{array}{c|c|c|c}
 \begin{array}{c} \mathsf{\Delta} \mathsf{\Gamma}  \end{array}
 & \begin{array}{c} \mathsf{0} \end{array}
 & \begin{array}{c} \mathsf{0} \end{array}
 & \begin{array}{c} \mathsf{0}  \end{array}\\
 \hline
 \begin{array}{c} \mathsf{0} \end{array}
 & \begin{array}{c} \mathsf{\Delta} \mathsf{H} \end{array}
 & \begin{array}{c} \mathsf{0} \end{array}
 & \begin{array}{c} \mathsf{0} \end{array}\\
 \hline
 \begin{array}{c}  \mathsf{0} \end{array}
 & \begin{array}{c} \mathsf{0} \end{array}
 & \begin{array}{c} \mathsf{\Delta} \mathsf{H} \end{array}
 & \begin{array}{c} \mathsf{0} \end{array}\\
  \hline
 \begin{array}{c}  \mathsf{0}   \end{array}
 & \begin{array}{c} \mathsf{0} \end{array}
 & \begin{array}{c} \mathsf{0}\end{array}
 & \begin{array}{c} \mathsf{H}^2 \end{array}\\
\end{array}\right],\label{1S0a}
\end{equation}

\begin{equation}
\mathsf{S}^{\mathsf{a}}_1(\mathbf{v},\boldsymbol{\gamma}) =  \left[\begin{array}{c|c|c|c}
 \begin{array}{c} \mathsf{0}  \end{array}
 & \begin{array}{c} \mathsf{\Delta}  \mathsf{V}_x \end{array}
 & \begin{array}{c} \mathsf{0} \end{array}
 & \begin{array}{c} \mathsf{0}  \end{array}\\
 \hline
 \begin{array}{c} \mathsf{\Delta} \mathsf{V}_x \end{array}
 & \begin{array}{c} \mathsf{0} \end{array}
 & \begin{array}{c} \mathsf{0} \end{array}
 & \begin{array}{c} \mathsf{0} \end{array}\\
 \hline
 \begin{array}{c}  \mathsf{0} \end{array}
 & \begin{array}{c} \mathsf{0} \end{array}
 & \begin{array}{c} \mathsf{0} \end{array}
 & \begin{array}{c} \mathsf{0} \end{array}\\
  \hline
 \begin{array}{c}  \mathsf{0}  \end{array}
 & \begin{array}{c} \mathsf{0} \end{array}
 & \begin{array}{c} \mathsf{0}\end{array}
 & \begin{array}{c} \mathsf{0} \end{array}\\
\end{array}\right],\label{1S1a}
\end{equation}

\begin{equation}
\mathsf{S}^{\mathsf{a}}_2(\mathbf{v},\boldsymbol{\gamma}) =  \left[\begin{array}{c|c|c|c}
 \begin{array}{c} \mathsf{0}  \end{array}
 & \begin{array}{c} \mathsf{0} \end{array}
 & \begin{array}{c} \mathsf{\Delta}  \mathsf{V}_y \end{array}
 & \begin{array}{c} \mathsf{0}  \end{array}\\
 \hline
 \begin{array}{c} \mathsf{0} \end{array}
 & \begin{array}{c} \mathsf{0} \end{array}
 & \begin{array}{c} \mathsf{0} \end{array}
 & \begin{array}{c} \mathsf{0} \end{array}\\
 \hline
 \begin{array}{c}  \mathsf{\Delta}  \mathsf{V}_y \end{array}
 & \begin{array}{c} \mathsf{0} \end{array}
 & \begin{array}{c} \mathsf{0} \end{array}
 & \begin{array}{c} \mathsf{0} \end{array}\\
  \hline
 \begin{array}{c}  \mathsf{0} \end{array}
 & \begin{array}{c} \mathsf{0} \end{array}
 & \begin{array}{c} \mathsf{0}\end{array}
 & \begin{array}{c} \mathsf{0} \end{array}\\
\end{array}\right],\label{1S2a}
\end{equation}

\begin{equation}
\mathsf{S}^{\mathsf{a}}_3(\mathbf{v},\boldsymbol{\gamma}) =  \left[\begin{array}{c|c|c|c}
 \begin{array}{c} \mathsf{W}^2  \end{array}
 & \begin{array}{c} \mathsf{0} \end{array}
 & \begin{array}{c} \mathsf{0} \end{array}
 & \begin{array}{c} -\mathsf{W}  \mathsf{H}  \end{array}\\
 \hline
 \begin{array}{c} \mathsf{0} \end{array}
 & \begin{array}{c} \mathsf{0} \end{array}
 & \begin{array}{c} \mathsf{0} \end{array}
 & \begin{array}{c} \mathsf{0} \end{array}\\
 \hline
 \begin{array}{c}  \mathsf{0} \end{array}
 & \begin{array}{c} \mathsf{0} \end{array}
 & \begin{array}{c} \mathsf{0} \end{array}
 & \begin{array}{c} \mathsf{0} \end{array}\\
  \hline
 \begin{array}{c}  -\mathsf{W}  \mathsf{H} \end{array}
 & \begin{array}{c} \mathsf{0} \end{array}
 & \begin{array}{c} \mathsf{0}\end{array}
 & \begin{array}{c} \mathsf{0} \end{array}\\
\end{array}\right].
\label{1S3a}
\end{equation}

\noindent
According to the inequality of convexity \eqref{1ConcConv},
\begin{equation}
\lambda^{\min}\big(\mathsf{S}^{\mathsf{a}}(\mathbf{v},\boldsymbol{\gamma}) \big) \ge \lambda^{\min}\big(\mathsf{S}^{\mathsf{a}}_0(\mathbf{h},\boldsymbol{\gamma}) \big) + \lambda^{\min}\big(\mathsf{S}^{\mathsf{a}}_1(\mathbf{v},\boldsymbol{\gamma}) \big) + \lambda^{\min}\big(\mathsf{S}^{\mathsf{a}}_2(\mathbf{v},\boldsymbol{\gamma}) \big)+\lambda^{\min}\big(\mathsf{S}^{\mathsf{a}}_2(\mathbf{v},\boldsymbol{\gamma}) \big).
\end{equation}

\noindent
An analysis of each spectrum leads to
\begin{equation}
\left\{
\begin{array}{l}
\lambda^{\min}\big(\mathsf{S}^{\mathsf{a}}_0(\mathbf{h},\boldsymbol{\gamma}) \big) \ge \min \big(a(\mathbf{h},\boldsymbol{\gamma})^{-1},\min_{i \in [\![1,n]\!]}(h_i^2)\big),\\[4pt]
\lambda^{\min}\big(\mathsf{S}^{\mathsf{a}}_1(\mathbf{v},\boldsymbol{\gamma}) \big) = - \max_{i \in [\![1,n]\!]} \alpha_{n,i}|u_i|,\\[4pt]
\lambda^{\min}\big(\mathsf{S}^{\mathsf{a}}_1(\mathbf{v},\boldsymbol{\gamma}) \big) = - \max_{i \in [\![1,n]\!]} \alpha_{n,i}|v_i|,\\[4pt]
\lambda^{\min}\big(\mathsf{S}^{\mathsf{a}}_1(\mathbf{v},\boldsymbol{\gamma}) \big) = \min \bigg(0,\min_{i \in [\![1,n]\!]} \frac{w_i+f}{2}\big(w_i+f - \sqrt{(w_i+f)^2+4 h_i^2}  \big)\bigg).
\end{array}
\right.
\end{equation}

\noindent
Finally, with the rescaling
\begin{equation}
\forall i \in [\![1,n-1]\!],\ 
\left\{
\begin{array}{l}
h_i \leftarrow h_i,\\[4pt]
u_i -u_0 \leftarrow u_i,\\[4pt]
v_i-v_0 \leftarrow v_i,
\end{array}
\right.
\label{3rescal}
\end{equation}
and under conditions \eqref{1condwellposeda}, the mapping
\begin{equation}
\mathsf{S^a} : (\mathbf{v},\boldsymbol{\gamma}) \mapsto \mathsf{S^a}(\mathbf{v},\boldsymbol{\gamma})
\end{equation}
is a {\em Friedrichs}-symmetrizer. Using the propositions \ref{1propsymm} and \ref{1symimpliqhyp}, the {\em Cauchy} problem is hyperbolic, locally well-posed and the unique solution verifies \eqref{1strongsolrelax}, if the initial data verifies \eqref{1condwellposeda}.
\end{proof}

\noindent
{\em Remark:} The non-augmented model \eqref{1systemmultilayer} has a {\em symbolic}-symmetrizer if
\begin{equation}
a(\boldsymbol{h},\boldsymbol{\gamma})^{-1}- \alpha_{n,i}\big(|u_i-\bar{u}|+|v_i-\bar{v}|\big)>0,
\end{equation}
which is stronger than the condition of symmetrizability \eqref{1condwellposeda}, for the augmented model \eqref{1systemmultilayerrelax}, if  for all $i \in [\![1,n]\!]$,
\begin{equation}
h_i^2 \ge a(\boldsymbol{h},\boldsymbol{\gamma})^{-1},
\end{equation}
and
\begin{equation}
\min \bigg(0,\min_{i \in [\![1,n]\!]} \frac{w_i+f}{2}\big(w_i+f - \sqrt{(w_i+f)^2+4 h_i^2}  \big) \bigg) > \max_{i \in [\![1,n]\!]}\alpha_{n,i}(|u_0|-|\bar{u}|+|v_0|-|\bar{v}|) ,
\end{equation}
and weaker otherwise.

\subsection{A weaker criterion of local well-posedness}
As it was reminded before, the description of the eigenstructure of $\mathsf{A}^{\mathsf{a}}({\bf v},\boldsymbol{\gamma},\theta)$ is a decisive point, as it permits to characterize exactly its diagonalizability, the nature of the waves and also the {\em Riemann} invariants. According to the rotational invariance \eqref{1rotinvrelax}, we restrict the analysis to the eigenstructure of $\mathsf{A}^{\mathsf{a}}_x({\bf v},\boldsymbol{\gamma})$. First of all, as the characteristic polynomial of $\mathsf{A}^{\mathsf{a}}_x({\bf v},\boldsymbol{\gamma})$ is equal to
\begin{equation}
\det(\mathsf{A}^{\mathsf{a}}_x({\bf v},\boldsymbol{\gamma})-\lambda \mathsf{I_{4n}})=\lambda^n \det(\mathsf{A}_x({\bf u},\boldsymbol{\gamma})-\lambda \mathsf{I_{3n}}),
\label{1polcarAr}
\end{equation}
we remark that the spectrum of $\mathsf{A}^{\mathsf{a}}_x({\bf v},\boldsymbol{\gamma})$ is such that
\begin{equation}
\sigma(\mathsf{A}^{\mathsf{a}}_x({\bf v},\boldsymbol{\gamma})):=\left(\lambda_i^{\pm}({\bf v},\boldsymbol{\gamma})\right)_{i \in [\![1,n]\!]} \cup \left( \lambda_{2n+i}({\bf v},\boldsymbol{\gamma})\right)_{i \in [\![1,2n]\!]},
\label{1spectrumAr}
\end{equation}
where $\left(\lambda_i^{\pm}({\bf v},\boldsymbol{\gamma})\right)_{i \in [\![1,n]\!]} \cup \left( \lambda_{2n+i}({\bf v},\boldsymbol{\gamma})\right)_{i \in [\![1,n]\!]}=:\sigma(\mathsf{A}_x({\bf u},\boldsymbol{\gamma}))$ and 
\begin{equation}
\forall i \in [\![1,n]\!],\
\lambda_{3n+i}({\bf v},\boldsymbol{\gamma})=0.\\
\label{1lambda3n2n}
\end{equation}

\noindent
Let $\boldsymbol{\gamma} \in ]0,1[^{n-1}$, $\epsilon >0$ and an injective function $\sigma: [\![1,n-1]\!] \rightarrow \mathbb{R}_+^*$ such that $\boldsymbol{\gamma}$ verifies \eqref{1hypasympt}. We define the next subset of $\mathcal{L}^2(\mathbb{R}^2)^{4n}$:
\begin{equation}
\mathcal{H}^{a}_{\sigma,\epsilon}:= \left\{ {\bf v^0} \in \mathcal{L}^2(\mathbb{R}^2)^{4n} /\ {\bf u^0} \in \mathcal{H}_{\sigma,\epsilon} \right\}
\label{1defHgammasigmaepsilonrelax}
\end{equation}

\begin{proposition}
Let $({\bf v},\boldsymbol{\gamma}) \in \mathbb{R}^{4n} \times ]0,1[^{n-1}$, $\theta \in [0,2\pi]$, $\epsilon >0$ and an injective function $\sigma \in \mathbb{R}_+^{*\ [\![1,n-1]\!]}$ such that $\boldsymbol{\gamma}$ verifies \eqref{1hypasympt} and the associated vector ${\bf u}$ verifies \eqref{1hypasympt2}.

\noindent
There exists $\delta > 0$ such that if
\begin{equation}
\epsilon \le \delta,
\label{1defdelta}
\end{equation}
then the matrix $\mathsf{A}^{\mathsf{a}}({\bf v},\boldsymbol{\gamma},\theta)$ is diagonalizable with real eigenvalues if the associated vector, ${\bf u}$, verifies
\begin{equation}
\left\{
\begin{array}{lr}
h_i > 0, & \forall i \in [\![1,n]\!],\\[6pt]
\phi_{\sigma,i}(\boldsymbol{h})-|u_{i+1}-u_i|^2-|v_{i+1}-v_i|^2 > 0,& \forall i \in [\![1,n-1]\!].
\end{array}\right.
\label{1condnechyp5}
\end{equation}
\label{1diagArelax}
\end{proposition}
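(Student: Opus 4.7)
The plan is to use the rotational invariance \eqref{1rotinvrelax} to reduce to showing that $\mathsf{A}^{\mathsf{a}}_x({\bf v},\boldsymbol{\gamma})$ is diagonalizable with real eigenvalues, and then to exhibit $4n$ linearly independent real eigenvectors. The reality of $\sigma(\mathsf{A}^{\mathsf{a}}_x)$ is immediate from the factorization \eqref{1polcarAr}: $\sigma(\mathsf{A}^{\mathsf{a}}_x) = \sigma(\mathsf{A}_x({\bf u},\boldsymbol{\gamma})) \cup \{0\}$ with $\lambda = 0$ acquiring an additional algebraic multiplicity of $n$, and proposition \ref{1diagA} guarantees $\sigma(\mathsf{A}_x) \subset \mathbb{R}$ together with diagonalizability of $\mathsf{A}_x$ under the stated hypotheses.

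Writing ${\bf r} = (\boldsymbol{r}^1, \boldsymbol{r}^2, \boldsymbol{r}^3, \boldsymbol{r}^4)$ in block form, the eigenvalue equation $(\mathsf{A}^{\mathsf{a}}_x - \lambda\mathsf{I}_{4n}){\bf r} = 0$ decomposes as $(\mathsf{V}_x - \lambda\mathsf{I}_n)\boldsymbol{r}^1 + \mathsf{H}\boldsymbol{r}^2 = 0$, $\Gamma \boldsymbol{r}^1 + (\mathsf{V}_x - \lambda\mathsf{I}_n)\boldsymbol{r}^2 + \mathsf{V}_y\boldsymbol{r}^3 = 0$, $\lambda \boldsymbol{r}^3 = 0$, and $\mathsf{W}\boldsymbol{r}^2 + (\mathsf{V}_x - \lambda\mathsf{I}_n)\boldsymbol{r}^4 = 0$. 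For every non-zero $\lambda \in \sigma(\mathsf{A}_x)$, the third equation forces $\boldsymbol{r}^3 = 0$ and the first two reduce to the eigenvalue equation for $\mathsf{A}^1_x({\bf u},\boldsymbol{\gamma})$, already solved by the $\mathsf{A}_x$-eigenvectors provided by proposition \ref{1diagA}; the fourth equation then supplies $\boldsymbol{r}^4 = -(\mathsf{V}_x - \lambda\mathsf{I}_n)^{-1}\mathsf{W}\boldsymbol{r}^2$ whenever $\lambda \notin \sigma(\mathsf{V}_x)$, while for $\lambda = u_{i_0}$ one has $\boldsymbol{r}^2 = 0$ and any $\boldsymbol{r}^4 \in \ker(\mathsf{V}_x - u_{i_0}\mathsf{I}_n)$ completes the eigenvector. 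This construction yields $3n$ eigenvectors covering the non-zero part of the spectrum.

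The remaining $n$ eigenvectors arise from $\lambda = 0$: since the third equation no longer constrains $\boldsymbol{r}^3$, setting successively $\boldsymbol{r}^3 = \boldsymbol{e}_k$ for $k = 1, \ldots, n$ and using the invertibility of $\mathsf{A}^1_x({\bf u},\boldsymbol{\gamma})$ at $\lambda = 0$ (equivalent to $\det\mathsf{M}_x(0,{\bf u},\boldsymbol{\gamma}) \neq 0$, which holds generically under the asymptotic regime since none of the $\lambda_i^{\pm}$ vanish) uniquely determines $(\boldsymbol{r}^1, \boldsymbol{r}^2)$, after which $\boldsymbol{r}^4 = -\mathsf{V}_x^{-1}\mathsf{W}\boldsymbol{r}^2$. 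Linear independence of these $n$ vectors from the $3n$ previously constructed ones is immediate because the former have non-zero third block while the latter have $\boldsymbol{r}^3 = 0$. Counting gives $2n + n + n = 4n$ linearly independent real eigenvectors, hence diagonalizability.

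The main obstacle will be the degenerate configurations in which $\lambda = 0$ coincides with some $u_{i_0}$ or with some $\lambda_i^{\pm}({\bf u},\boldsymbol{\gamma})$: then the invertibility of $\mathsf{V}_x$ or of $\mathsf{A}^1_x$ used in the construction fails, the algebraic multiplicity of $0$ exceeds $n$, and the geometric multiplicity must be checked by a direct kernel computation that reconciles the $\mathsf{A}_x$-inherited eigenvectors with those arising from the $\lambda^n$ factor. Such coincidences are non-generic, and I expect to dispose of them either by a Galilean translation of ${\bf u}$ (which shifts $\sigma(\mathsf{A}_x)$ while leaving the $\lambda^n$ factor untouched, enabling an avoidance of the coincidence) or by a perturbation/continuity argument exploiting the analytic dependence of the eigenstructure on the parameters established in propositions \ref{1proplambdanexpandeigenvector} and \ref{1thmeigvectlambdipm2}.
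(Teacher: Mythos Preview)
Your approach is essentially the same as the paper's: both reduce via the rotational invariance \eqref{1rotinvrelax} to $\mathsf{A}^{\mathsf{a}}_x$, invoke the factorization \eqref{1polcarAr} together with proposition \ref{1diagA} for reality of the spectrum, and then construct $4n$ real eigenvectors by lifting those of $\mathsf{A}_x$ and supplying the extra block components. The paper writes down closed-form eigenvectors (formulas \eqref{1eigvectxrelax1}--\eqref{1eigvectxrelax2}) while you describe the solving procedure through the block system; you are also more explicit about the degenerate coincidences $0\in\{u_i\}\cup\{\lambda_i^{\pm}\}$, which the paper does not address beyond the $v_i=0$ branch.
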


\begin{proof}
With the rotational invariance \eqref{1rotinvrelax}, it is equivalent to prove the diagonalizability of $\mathsf{A}^{\mathsf{a}}_x({\bf v},\boldsymbol{\gamma})$. By denoting $(\boldsymbol{e'_i})_{i \in [\![1,4n]\!]}$ the canonical basis of $\mathbb{R}^{4n}$, one can prove the expressions of the right eigenvectors $\boldsymbol{r}_x^{\lambda}({\bf v},\boldsymbol{\gamma})$ of $\mathsf{A}^{\mathsf{a}}_x({\bf v},\boldsymbol{\gamma})$, associated to the eigenvalue $\lambda \in \sigma(\mathsf{A}^{\mathsf{a}}_x({\bf v},\boldsymbol{\gamma}))$, are, for all $i \in [\![1,n]\!]$, defined by
\begin{equation}
\boldsymbol{r}_x^{\lambda_i^{\pm}}({\bf v},\boldsymbol{\gamma})=\boldsymbol{r}_x^{\lambda_i^{\pm}}({\bf u},\boldsymbol{\gamma}) + \sum_{k=1}^n \frac{w_k +f}{\lambda_i^{\pm} - u_k}(\boldsymbol{r}_x^{\lambda_i^{\pm}}({\bf u},\boldsymbol{\gamma}) \cdot \boldsymbol{e_{n+k}}) \boldsymbol{e'_{3n+k}},
\label{1eigvectxrelax1}
\end{equation}

\begin{equation}
\boldsymbol{r}_x^{\lambda_{2n+i}}({\bf v},\boldsymbol{\gamma})=\boldsymbol{e'_{3n+i}},
\label{1eigvectxrelax3}
\end{equation}

\begin{equation}
\boldsymbol{r}_x^{\lambda_{3n+i}}({\bf v},\boldsymbol{\gamma})=
\left\{
\begin{array}{lr}
\begin{array}{l}
(\mathsf{\Gamma} \mathsf{V}_y \boldsymbol{e_i} \cdot \boldsymbol{e_i}) \boldsymbol{e'_i} - (\mathsf{V}_x \mathsf{H}^{-1} \mathsf{\Gamma}^{-1} \mathsf{V}_y \boldsymbol{e_i} \cdot \boldsymbol{e_i}) \boldsymbol{e'_{n+i}}\\
 -((\mathsf{I_n} -  \mathsf{V}_x^2 \mathsf{H}^{-1} \mathsf{\Gamma}^{-1}) \mathsf{V}_y \boldsymbol{e_i} \cdot \boldsymbol{e_i}) \boldsymbol{e'_{2n+i}}\\+ (\mathsf{W} \mathsf{H}^{-1} \mathsf{\Gamma}^{-1} \mathsf{V}_y \boldsymbol{e_i} \cdot \boldsymbol{e_i}) \boldsymbol{e'_{3n+i}}\end{array},& \mathrm{if}\ v_i \not=0,\\[20pt]
\boldsymbol{e'_{2n+i}},& \mathrm{if}\ v_i =0,
\end{array}
\right.
\label{1eigvectxrelax2}
\end{equation}


\noindent
where $\boldsymbol{r}_x^{\lambda}({\bf u},\boldsymbol{\gamma})$ are expressed in \eqref{1eigvectlambdnpmr2} and \eqref{1eigvectlambdipm21} and $( \cdot )$ is the inner product on $\mathbb{R}^{3n}$.

\noindent
Consequently, the right eigenvectors $\boldsymbol{r}^{\lambda}({\bf v},\boldsymbol{\gamma},\theta)$ of $\mathsf{A}^{\mathsf{a}}({\bf v},\boldsymbol{\gamma},\theta)$ are defined by
\begin{equation}
\forall \lambda \in \sigma\left(\mathsf{A}^{\mathsf{a}}({\bf v},\boldsymbol{\gamma},\theta) \right),\ \boldsymbol{r}^{\lambda}({\bf v},\boldsymbol{\gamma},\theta)=\mathsf{P^{\mathsf{a}}}(\theta)^{-1}\boldsymbol{r}_x^{\lambda}(\mathsf{P^{\mathsf{a}}}(\theta){\bf v},\boldsymbol{\gamma}).
\label{2eigvectrelax}
\end{equation}

\noindent
Moreover, as it was proved in the diagonalizability of $\mathsf{A}_x({\bf u},\boldsymbol{\gamma})$ in proposition \ref{1diagA}, if $\epsilon \le \delta$, the right eigenvectors induced an eigenbasis of $\mathbb{R}^{4n}$. A consequence is the right eigenvectors form an eigenbasis of $\mathbb{R}^{4n}$ and $\mathsf{A}^{\mathsf{a}}_x({\bf v},\boldsymbol{\gamma})$  is diagonalizable with real eigenvalues, if \eqref{1condnechyp5} is verified.
\end{proof}

\noindent
{\em Remark:} There is also the left eigenvectors $\boldsymbol{l}_x^{\lambda}({\bf v},\boldsymbol{\gamma})$ of $\mathsf{A}^{\mathsf{a}}_x({\bf v},\boldsymbol{\gamma})$, associated to the eigenvalue $\lambda \in \sigma(\mathsf{A}^{\mathsf{a}}_x({\bf v},\boldsymbol{\gamma}))$: for all $i \in [\![1,n]\!]$,
\begin{equation}
\boldsymbol{l}_x^{\lambda_i^{\pm}}({\bf v},\boldsymbol{\gamma})=\boldsymbol{l}_x^{\lambda_i^{\pm}}({\bf u},\boldsymbol{\gamma}) + \sum_{k=1}^n \frac{v_k}{\lambda_i^{\pm}}({}^{\top}\boldsymbol{l}_x^{\lambda_i^{\pm}}({\bf u},\boldsymbol{\gamma}) \cdot \boldsymbol{e_{n+k}}) {}^{\top} \boldsymbol{e'_{2n+k}},
\label{1leigvectxrelax1}
\end{equation}

\begin{equation}
\boldsymbol{l}_x^{\lambda_{2n+i}}({\bf v},\boldsymbol{\gamma})=-(w_i+f){}^{\top} \boldsymbol{e'_{i}} +h_i{}^{\top} \boldsymbol{e'_{3n+i}},
\label{1leigvectxrelax3}
\end{equation}

\begin{equation}
\boldsymbol{l}_x^{\lambda_{3n+i}}({\bf v},\boldsymbol{\gamma})={}^{\top} \boldsymbol{e'_{2n+i}},
\label{1leigvectxrelax2}
\end{equation}

\noindent
where $\boldsymbol{l}_x^{\lambda}({\bf u},\boldsymbol{\gamma})$ are expressed in \eqref{1expressionlefteigenvectn} and \eqref{1eigvectlambdipm22} and $( \cdot )$ is the inner product on $\mathbb{R}^{3n}$. Moreover, we made intentionally a mistake in \eqref{1eigvectxrelax1} and \eqref{1leigvectxrelax1}, as we did not provide the expression of $\boldsymbol{r}_x^{\lambda_i^{\pm}}({\bf u},\boldsymbol{\gamma})$ and $\boldsymbol{l}_x^{\lambda_i^{\pm}}({\bf u},\boldsymbol{\gamma})$, but it is the natural expression coming from \eqref{1eigvectlambdnpmr2}, \eqref{1expressionlefteigenvectn}, \eqref{1eigvectlambdipm21} and \eqref{1eigvectlambdipm22} and replacing $\boldsymbol{e_{i}}$ by $\boldsymbol{e'_{i}}$, for every $i \in [\![1,3n]\!]$.

\noindent
Then, the left eigenvectors $\boldsymbol{l}^{\lambda}({\bf v},\boldsymbol{\gamma},\theta)$ of $\mathsf{A}^{\mathsf{a}}({\bf v},\boldsymbol{\gamma},\theta)$ are also defined by
\begin{equation}
\forall \lambda \in \sigma\left(\mathsf{A}^{\mathsf{a}}({\bf v},\boldsymbol{\gamma},\theta) \right),\ \boldsymbol{l}^{\lambda}({\bf v},\boldsymbol{\gamma},\theta)=\boldsymbol{l}_x^{\lambda}(\mathsf{P^{\mathsf{a}}}(\theta){\bf v},\boldsymbol{\gamma})\mathsf{P^{\mathsf{a}}}(\theta).
\label{1eigvectleftrelax}
\end{equation}

\noindent
{\em Remark:} According to the asymptotic expansions \eqref{1expressionlefteigenvectn} and \eqref{1eigvectlambdipm22}, in the general case, $\lambda_i^{\pm} \not=0$ and $\lambda_i^{\pm}\not=u_i$; consequently, $\boldsymbol{r}_x^{\lambda_i^{\pm}}({\bf v},\boldsymbol{\gamma})$ in \eqref{1eigvectxrelax1} and $\boldsymbol{l}_x^{\lambda_i^{\pm}}({\bf v},\boldsymbol{\gamma})$ in \eqref{1leigvectxrelax1} are defined.

\noindent
Furthermore, the type of the wave associated to each eigenvalue is described in the next proposition.
\begin{proposition}
Let $\boldsymbol{\gamma} \in ]0,1[^{n-1}$, $\epsilon >0$ and an injective function $\sigma \in \mathbb{R}_+^{*\ [\![1,n-1]\!]}$ such that $\boldsymbol{\gamma}$ verifies \eqref{1hypasympt}.

\noindent
Then, there exists $\delta > 0$ such that if
\begin{equation}
\epsilon \le \delta,
\label{1assumpteigenvectorlambdan5}
\end{equation}
and ${\bf v} \in \mathcal{H}_{\sigma,\epsilon}^a$, we have
\begin{equation}
\left\{
\begin{array}{lr}
\mathrm{the}\ \lambda_i^{\pm}\mathrm{-characteristic\ field\ is\ genuinely\ non-linear},&\mathrm{if}\ i \in [\![1,n]\!],\\
\mathrm{the}\ \lambda_i\mathrm{-characteristic\ field\ is\ linearly\ degenerate},&\mathrm{if}\ i \in [\![2n+1,4n]\!].
\end{array}
\right.
\label{1muwavenaturerelax}
\end{equation}
\label{1genuinelynonlinrelax}
\end{proposition}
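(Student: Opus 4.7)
The plan is to mirror the proof of Proposition \ref{1genuinelynonlin} but working with the explicit eigenvectors of $\mathsf{A}^{\mathsf{a}}_x({\bf v},\boldsymbol{\gamma})$ given in \eqref{1eigvectxrelax1}--\eqref{1eigvectxrelax2}, and to exploit the fact that the eigenvalues of the augmented system are either those of $\mathsf{A}_x({\bf u},\boldsymbol{\gamma})$ or identically zero. The assumptions ${\bf v}\in\mathcal{H}^{a}_{\sigma,\epsilon}$ and $\epsilon\le\delta$ guarantee, via Propositions \ref{1proplambdanexpandeigenvector2} and \ref{1proplambdaipmapprox}, that all the $\lambda_i^{\pm}$ are distinct, real and depend analytically on ${\bf v}$, so their asymptotic expansions can be differentiated safely; this is the same analytic-dependence argument used in Proposition \ref{1genuinelynonlin}.

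First I would treat the two trivially linearly degenerate families. For $i\in[\![3n+1,4n]\!]$, $\lambda_i({\bf v},\boldsymbol{\gamma})=0$ by \eqref{1lambda3n2n}, so $\nabla\lambda_i\equiv 0$ and genuine non-linearity fails; hence these fields are linearly degenerate regardless of the eigenvector expression. For $i\in[\![2n+1,3n]\!]$, $\lambda_i({\bf v},\boldsymbol{\gamma})=u_{i-2n}$ depends only on the velocity $u_{i-2n}$ (not on $w_k$), while the right eigenvector \eqref{1eigvectxrelax3} is $\boldsymbol{e'_{3n+i-2n}}$, which has a nonzero component only in the vorticity block. Consequently $\nabla\lambda_{2n+i}\cdot\boldsymbol{r}_x^{\lambda_{2n+i}}=0$ and this family is linearly degenerate as well.

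For the genuinely nonlinear fields $i\in[\![1,n]\!]$, I would use the key observation that $\lambda_i^{\pm}({\bf v},\boldsymbol{\gamma})=\lambda_i^{\pm}({\bf u},\boldsymbol{\gamma})$ depends only on the first $3n$ coordinates of ${\bf v}$ (no vorticity dependence), so that $\nabla\lambda_i^{\pm}$ has zero entries on the last $n$ coordinates corresponding to $(w_k)_{k\in[\![1,n]\!]}$. Looking at formula \eqref{1eigvectxrelax1}, the extra term $\sum_{k=1}^{n}\frac{w_k+f}{\lambda_i^{\pm}-u_k}(\boldsymbol{r}_x^{\lambda_i^{\pm}}({\bf u},\boldsymbol{\gamma})\cdot\boldsymbol{e_{n+k}})\boldsymbol{e'_{3n+k}}$ lives entirely in the vorticity block and therefore contributes nothing to $\nabla\lambda_i^{\pm}\cdot\boldsymbol{r}_x^{\lambda_i^{\pm}}({\bf v},\boldsymbol{\gamma})$. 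One is thus reduced to verifying
\begin{equation}
\nabla\lambda_i^{\pm}\cdot\boldsymbol{r}_x^{\lambda_i^{\pm}}({\bf u},\boldsymbol{\gamma})\neq 0,
\nonumber
\end{equation}
which is precisely \eqref{1genuidef} from Proposition \ref{1genuinelynonlin}, already established in the non-augmented setting using the asymptotic expansions \eqref{1approxlambdanpm3}--\eqref{1lambdaipmasympt} and \eqref{1eigvectlambdnpmr2}--\eqref{1eigvectlambdipm21}. The argument is valid provided $\lambda_i^{\pm}\neq u_k$ for all $k$, which is assured under \eqref{1assumpteigenvectorlambdan5} (ensuring that the denominators in \eqref{1eigvectxrelax1} do not vanish) so that the formula for the eigenvector makes sense.

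The main obstacle, as in Proposition \ref{1genuinelynonlin}, is the verification that the asymptotic expansions remain controlled after differentiation; I would argue this by analytic dependence on the parameters $(\boldsymbol{h},{\bf u},\boldsymbol{\gamma})$ in the asymptotic regime \eqref{1hypasympt}--\eqref{1hypasympt2}, so that the derivatives of $\lambda_i^{\pm}$ inherit the same leading-order expression as the eigenvalues, and the inner product $\nabla\lambda_i^{\pm}\cdot\boldsymbol{r}_x^{\lambda_i^{\pm}}({\bf u},\boldsymbol{\gamma})$ is nonzero up to a correction of higher order in $\epsilon$, which can be absorbed by choosing $\delta>0$ small enough. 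With these ingredients the conclusion \eqref{1muwavenaturerelax} follows exactly as in the proof of Proposition \ref{1genuinelynonlin}.
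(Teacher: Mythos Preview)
Your proposal is correct and follows the same approach as the paper, which simply invokes Proposition~\ref{1genuinelynonlin} for the fields inherited from the non-augmented model and notes that the new eigenvalues $\lambda_{3n+i}=0$ are constant, hence trivially linearly degenerate. Your version is in fact more carefully written than the paper's: you make explicit why the extra vorticity-block component in \eqref{1eigvectxrelax1} contributes nothing to $\nabla\lambda_i^{\pm}\cdot\boldsymbol{r}_x^{\lambda_i^{\pm}}$, and you correctly identify the changed eigenvector $\boldsymbol{e'_{3n+i}}$ for $\lambda_{2n+i}=u_i$ in the augmented setting.
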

\begin{proof}
Using the same proof of proposition \ref{1genuinelynonlin}, and remarking that for all $i \in [\![2n+1,3n]\!],\ \lambda_i=0$, which implies
\begin{equation}
\nabla \lambda_i \cdot \boldsymbol{r}_x^{\lambda}({\bf v},\boldsymbol{\gamma}) =0,
\label{1genuidefrelax}
\end{equation}
and the proof of the proposition \ref{1genuinelynonlinrelax}.
\end{proof}

\noindent
To conclude, under conditions of the proposition \ref{1genuinelynonlinrelax}, for all $i \in [\![1,n]\!]$, the $\lambda_i^{\pm}$-wave is a shock wave or a rarefaction wave and the $\lambda_{2n+i}^{\pm}$-wave and $\lambda_{3n+i}^{\pm}$-wave are contact waves.

\noindent
Finally, the point is to know if this augmented system \eqref{1systemmultilayerrelax} is locally well-posed and if its solution provides the solution of the non-augmented system \eqref{1systemmultilayer}.
\begin{theorem}
Let $s >2$, $\boldsymbol{\gamma} \in ]0,1[^{n-1}$, $\epsilon >0$ and an injective function $\sigma: [\![1,n-1]\!] \rightarrow \mathbb{R}_+^*$ such that $\boldsymbol{\gamma}$ verifies \eqref{1hypasympt}.

\noindent
Then, there exists $\delta > 0$ such that if
\begin{equation}
\epsilon \le \delta,
\label{1assumpttheoremwellposedrelax}
\end{equation}
the {\em Cauchy} problem, associated with {\em \eqref{1systemmultilayerrelax}} and initial data ${\bf v^0} \in \mathcal{H}_{\sigma,\epsilon}^a \cap \mathcal{H}^s(\mathbb{R}^2)^{4n}$, is hyperbolic, locally well-posed in $\mathcal{H}^s(\mathbb{R}^2)^{4n}$ and the unique solution verifies conditions \eqref{1strongsolrelax}. Furthermore, ${\bf u}$, the associated vector field, verifies conditions {\em \eqref{1strongsol}} and is the unique classical solution of the {\em Cauchy} problem, associated with {\em \eqref{1systemmultilayer}} and initial data ${\bf u^0} \in \mathcal{H}_{\sigma,\epsilon} \cap \mathcal{H}^s(\mathbb{R}^2)^{3n}$, if and only if
\begin{equation}
\forall i \in [\![1,n]\!],\ w_i^0 = \frac{\partial v_i^0}{\partial x}-\frac{\partial u_i^0}{\partial y}.
\label{1initialcompatible}
\end{equation}
\label{1thmimplicsystrelax}
\end{theorem}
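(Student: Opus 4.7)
The plan splits naturally into two independent parts: first, establishing well-posedness of the augmented Cauchy problem; second, identifying the classical solution of \eqref{1systemmultilayer} inside \eqref{1systemmultilayerrelax} via the vorticity compatibility condition.

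For the first part, I will invoke Proposition \ref{1diagArelax}: under $\epsilon \le \delta$ and ${\bf v^0}(X) \in \mathcal{H}^a_{\sigma,\epsilon}$ for every $X \in \mathbb{R}^2$, the matrix $\mathsf{A}^{\mathsf{a}}({\bf v^0}(X),\boldsymbol{\gamma},\theta)$ is diagonalizable with real spectrum for every $(\theta,X)$. The projection-based symbolic symmetrizer of Proposition \ref{1diagimpliqsym} transports verbatim to the $4n\times 4n$ setting, yielding a smooth mapping that satisfies the three conditions of Definition \ref{1defsymm} adapted to $\mathbb{R}^{4n}$. The rotational invariance proved in Proposition \ref{1proprotinvrelax} reduces the positivity check to $\theta=0$, exactly as in the argument of Lemma \ref{1wellposedrotinv}, by conjugation with $\mathsf{P^a}(\theta)$. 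Applying the augmented analogue of Proposition \ref{1propsymm} then delivers the local well-posedness in $\mathcal{H}^s(\mathbb{R}^2)^{4n}$ together with the regularity \eqref{1strongsolrelax}.

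For the second part, the key observation is that the curl of the velocity equation \eqref{1momentumconservationrelax} produces exactly the same transport structure as the independent equation \eqref{1rot} imposed on $w_i$ in the augmented formulation. Concretely, differentiating the $v_i$ component in $x$ and subtracting the $y$-derivative of the $u_i$ component eliminates the gradient term $\nabla(\tfrac12|\boldsymbol{u_i}|^2 + P_i)$ (which is curl-free) and, after using the product rule on the rotational term $(f+w_i)\boldsymbol{u_i}^{\bot}$, yields
\[
\partial_t \tilde{w}_i + \nabla \cdot \bigl((w_i + f)\boldsymbol{u_i}\bigr) = 0, \qquad \tilde{w}_i := \partial_x v_i - \partial_y u_i.
\]
Hence the difference $d_i := \tilde{w}_i - w_i$ satisfies a linear scalar transport equation whose coefficients are determined by the $\mathcal{C}^1$ solution ${\bf v}$ already constructed in Part~1. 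The regularity \eqref{1strongsolrelax} suffices to apply a standard uniqueness result for such transport equations, so that $d_i(0) \equiv 0$ propagates to $d_i(t) \equiv 0$ on $[0,T]$. The identity $w_i = \partial_x v_i - \partial_y u_i$ therefore persists in time if and only if it holds initially, which is exactly \eqref{1initialcompatible}. Under this condition the extracted field ${\bf u}$ satisfies \eqref{1systemmultilayer} classically with the regularity \eqref{1strongsol}, and uniqueness for ${\bf u}$ follows from Theorem \ref{1wellprosedhyp2d}, whose assumptions are guaranteed by the definition of $\mathcal{H}^a_{\sigma,\epsilon}$ in \eqref{1defHgammasigmaepsilonrelax}. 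The converse direction is immediate: any classical solution of \eqref{1systemmultilayer} lifts to a solution of \eqref{1systemmultilayerrelax} by the derivation preceding \eqref{1systemmultilayerrelax}, and consistency at $t=0$ forces \eqref{1initialcompatible}.

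The main obstacle I expect is the algebraic cancellation underlying the curl identity above: one must verify that the nonlinear rotational term and the Bernoulli-type gradient in \eqref{1momentumconservationrelax} combine, after applying $\mathrm{curl}$, into precisely the divergence form $\nabla \cdot \bigl((w_i+f)\boldsymbol{u_i}\bigr)$, with no leftover contribution. A short direct computation using the commutation of partial derivatives and the product rule closes this step; once the identity is pinned down, the remainder of the argument is a routine transport-uniqueness bootstrap between the augmented and the original formulations.
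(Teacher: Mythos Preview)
Your proposal is correct and follows essentially the same two-step strategy as the paper: Proposition~\ref{1diagArelax} combined with Proposition~\ref{1diagimpliqsym} for well-posedness of the augmented system, then propagation of the vorticity constraint for the identification with \eqref{1systemmultilayer}. One small simplification worth noting: once you have established $\partial_t \tilde w_i + \nabla\cdot\bigl((w_i+f)\boldsymbol{u_i}\bigr)=0$ and compared it with \eqref{1rot}, the two equations share the \emph{same} divergence term (the coefficient is $w_i+f$, not $\tilde w_i+f$), so the difference $d_i=\tilde w_i - w_i$ satisfies the trivial equation $\partial_t d_i=0$ rather than a genuine transport equation with $\boldsymbol{u_i}$-dependent coefficients; no transport-uniqueness lemma is needed, and this is exactly the paper's observation that $\Phi_i(x,y):=w_i-\tilde w_i$ is independent of~$t$.
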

\begin{proof}
Using proposition \ref{1diagArelax}, $\sigma(\mathsf{A}^{\mathsf{a}}({\bf v},\boldsymbol{\gamma},\theta)) \subset \mathbb{R}$ and $\mathsf{A}^{\mathsf{a}}({\bf v}\boldsymbol{\gamma},\theta)$ is diagonalizable. Then, the proposition \ref{1diagimpliqsym} is verified: the hyperbolicity and the local well-posedness of the {\em Cauchy} problem, associated with system \eqref{1systemmultilayerrelax} and initial data ${\bf v^0}$, is insured and conditions \eqref{1strongsolrelax} are verified. Moreover, it is obvious to prove that, for all $i \in [\![1,n]\!]$, there exists $\Phi_i : \mathbb{R}^2 \rightarrow \mathbb{R}$ such that
\begin{equation}
\forall (t,x,y) \in \mathbb{R}_+ \times \mathbb{R}^2,\ w_i(t,x,y)=\frac{\partial v_i}{\partial x}(t,x,y)-\frac{\partial u_i}{\partial y}(t,x,y)+\Phi_i(x,y).
\label{2phi}
\end{equation}
As $\Phi_i$ does not depend on the time $t$, ${\bf u}$ -- the vector associated to ${\bf v}$ -- is solution of the non-augmented system \eqref{1systemmultilayer} if and only if $\Phi_i=0$, for all $i \in [\![1,n]\!]$, which is true if and only if it is verified at $t=0$.
\end{proof}

\noindent
We deduce directly the next corollary.
\begin{corollary}
Let $s >2$, $\boldsymbol{\gamma} \in ]0,1[^{n-1}$, $\epsilon >0$ and an injective function $\sigma: [\![1,n-1]\!] \rightarrow \mathbb{R}_+^*$ such that $\boldsymbol{\gamma}$ verifies \eqref{1hypasympt}.

\noindent
There exists $\delta > 0$ such that if we assume
\begin{equation}
\epsilon \le \delta,
\label{1assumpttheoremwellposedrelax2}
\end{equation}
and we consider ${\bf v}$, the unique solution of the {\em Cauchy} problem, associated with {\em \eqref{1systemmultilayerrelax}} and initial data ${\bf v^0}  \in \mathcal{H}^{a}_{\sigma,\epsilon} \cap \mathcal{H}^s(\mathbb{R}^2)^{4n}$, then the associated vector field, ${u} \in \mathcal{H}_{\sigma,\epsilon} \cap \mathcal{H}^s(\mathbb{R}^2)^{3n}$ is the unique solution of \eqref{1systemmultilayer} and verifies \eqref{1strongsol} if and only if ${\bf v^0}$ verifies
\begin{equation}
\forall i \in [\![1,n]\!],\ w_i^0 = \frac{\partial v_i^0}{\partial x}-\frac{\partial u_i^0}{\partial y}.
\label{1compatibilitycondwi}
\end{equation}
\label{1corollarywellpposedrelax}
\end{corollary}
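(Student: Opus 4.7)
The plan is to obtain this corollary as a direct packaging of Theorem \ref{1thmimplicsystrelax} combined with the local well-posedness result of Theorem \ref{1wellprosedhyp2d}. The key point is that Theorem \ref{1thmimplicsystrelax} already does most of the work: it supplies the unique solution $\mathbf{v}$ of the augmented Cauchy problem with the required regularity \eqref{1strongsolrelax}, and it establishes that the associated vector field $\mathbf{u}$ solves the non-augmented model \eqref{1systemmultilayer} if and only if the initial compatibility \eqref{1initialcompatible} holds. Since $\mathcal{H}^{a}_{\sigma,\epsilon}$ is defined precisely so that the associated $\mathbf{u}^0 \in \mathcal{H}_{\sigma,\epsilon}$, the value of $\delta$ from Theorem \ref{1thmimplicsystrelax} is the one to use here.

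The first step is therefore to apply Theorem \ref{1thmimplicsystrelax} to the initial datum $\mathbf{v}^0$, yielding $\mathbf{v}\in\mathcal{C}([0,T];\mathcal{H}^s(\mathbb{R}^2))^{4n}\cap\mathcal{C}^1([0,T];\mathcal{H}^{s-1}(\mathbb{R}^2))^{4n}$ solving \eqref{1systemmultilayerrelax} on some time interval $[0,T]$. The regularity of $\mathbf{u}$ stated in \eqref{1strongsol} follows from the fact that the first $3n$ components of $\mathbf{v}$ inherit the same regularity as $\mathbf{v}$ itself, and the evolution equations satisfied by these components (read off from the first three block rows of \eqref{1systemmultilayerrelax}) coincide exactly with \eqref{1systemmultilayer} provided that $w_i$ can be identified with the vorticity $\partial_x v_i - \partial_y u_i$ of the layer $i$.

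For the equivalence, I would then reproduce the short argument from the proof of Theorem \ref{1thmimplicsystrelax}: the difference $\Phi_i := w_i - (\partial_x v_i - \partial_y u_i)$ is time-independent by \eqref{2phi}, so $\mathbf{u}$ solves \eqref{1systemmultilayer} if and only if $\Phi_i\equiv 0$ for every $i\in[\![1,n]\!]$, which is equivalent to the compatibility at $t=0$, namely \eqref{1compatibilitycondwi}. Conversely, if $\mathbf{u}$ solves \eqref{1systemmultilayer} with $\mathbf{u}^0 \in \mathcal{H}_{\sigma,\epsilon}\cap\mathcal{H}^s(\mathbb{R}^2)^{3n}$, then Theorem \ref{1wellprosedhyp2d} guarantees uniqueness in that class, and testing the evolution of $\Phi_i$ at $t=0$ gives \eqref{1compatibilitycondwi}. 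The only delicate point — and the one I would flag as the main obstacle — is ensuring that the uniqueness statements for the two systems are compatible: uniqueness of $\mathbf{v}$ in $\mathcal{H}^s(\mathbb{R}^2)^{4n}$ together with uniqueness of $\mathbf{u}$ in $\mathcal{H}^s(\mathbb{R}^2)^{3n}$ from Theorem \ref{1wellprosedhyp2d} must match on the same lifespan $T$, which is handled by taking the minimum of the two existence times and using the continuous dependence built into \eqref{1strongsol}--\eqref{1strongsolrelax}.
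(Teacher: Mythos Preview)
Your proposal is correct and follows essentially the same route as the paper: the corollary is obtained as an immediate consequence of Theorem~\ref{1thmimplicsystrelax}, which already provides both the existence/uniqueness of $\mathbf{v}$ with regularity \eqref{1strongsolrelax} and the equivalence between $\mathbf{u}$ solving \eqref{1systemmultilayer} and the initial compatibility \eqref{1compatibilitycondwi}. The paper's proof is in fact even shorter than yours---it simply invokes Theorem~\ref{1thmimplicsystrelax} in one sentence---so your additional discussion of matching lifespans via Theorem~\ref{1wellprosedhyp2d} is not needed (the regularity \eqref{1strongsol} and uniqueness of $\mathbf{u}$ are already part of the conclusion of Theorem~\ref{1thmimplicsystrelax}), though it does no harm.
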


\begin{proof}
If $(\sigma,\epsilon)$ verify these assumptions, then, according to the theorem \ref{1thmimplicsystrelax}, the unique solution of the {\em Cauchy} problem, associated with \eqref{1systemmultilayerrelax} and initial data ${\bf v^0} \in \mathcal{H}_{\sigma,\epsilon}^a \cap \mathcal{H}^s(\mathbb{R}^2)^{4n}$ is such that the associated vector field, ${\bf u}$, verifies conditions \eqref{1strongsol} and is the unique classical solution of the Cauchy problem, associated with \eqref{1systemmultilayer} and initial data ${\bf u^0} \in \mathcal{H}_{\sigma,\epsilon} \cap \mathcal{H}^s(\mathbb{R}^2)^{3n}$, if and only if \eqref{1compatibilitycondwi} is verified.
\end{proof}

\noindent
To cut a long story short, we introduced a new conservative multi-layer model, in two-dimensions, proved the {\em Friedrichs}-symmetrizability under conditions \eqref{1condwellposeda}, proved its local well-posedness in $\mathcal{H}^s(\mathbb{R}^2)^{3n}$, with $s>2$, under the same conditions expressed in the previous section. Moreover, we explained the link between the solutions of the augmented and the non-augmented models: they are the same if they verify the compatibility conditions \eqref{1thmimplicsystrelax}, when $t=0$.

\section{Discussions and perspectives}
In this paper, we proved,  with various techniques, the hyperbolicity and the local well-posedness, in $\mathcal{H}^s(\mathbb{R}^2)^{3n}$, of the two-dimensional multi-layer shallow water model, with free surface. All of them use the rotational invariance property \eqref{1rotinv}, reducing the problem from two dimensions to one dimension. We gave, at first, a criterion of local well-posedness, in $\mathcal{H}^s(\mathbb{R}^2)^{3n}$, using the symmetrizability of the system \eqref{1systemmultilayer}. Afterwards, we studied the hyperbolicity of different particular cases: the single-layer model, the merger of two layers and the asymptotic expansion of this last case. Then, we proved the asymptotic expansion of all the eigenvalues, in a particular asymptotic regime, and a new criterion of hyperbolicity of this system was explicitly characterized and compared with the set of symmetrizability. This criterion is clearly similar with the criterion well-known in the two-layer case. Moreover, we provided the asymptotic expansion of all the eigenvectors, in this regime, we characterized the nature of waves associated to each element of the spectrum of $\mathsf{A}_x({\bf u},\boldsymbol{\gamma})$ -- shock, rarefaction of contact wave -- and we proved the local well-posedness, in $\mathcal{H}^s(\mathbb{R}^2)^{3n}$, of the system \eqref{1systemmultilayer}, under conditions of hyperbolicity and weak density-stratification. Finally, after discussing about the conservative quantities of the system, we introduced a new augmented model \eqref{1systemmultilayerrelax}, adding the horizontal vorticity, in each layer, as a new unknown. We also characterized the eigenstructure, the nature of the waves, proved the local well-posedness in $\mathcal{H}^s(\mathbb{R}^2)$ and explained the link of a solution of the non-augmented model \eqref{1systemmultilayer} and a solution of the new model \eqref{1systemmultilayerrelax}. The {\em conservativity} of the new augmented model avoid choosing a conservative path, introduced in \cite{dal1995definition}, to solve the numerical problem.

\noindent
However, the characterization of all the conservative quantities is still an open question, in the general case of $n$ layers and in one and two dimensions. Moreover, we addressed the question of the hyperbolicity and the local well-posedness in a particular asymptotic regime. There are a lot of other possibilities which are not taken into account in this regime. Indeed, even if the assumption on the density-stratification seems to embrace most of the useful cases of the oceanography, the assumptions on the heights of each layer does clearly not. Then, other asymptotic expansions are needed to be performed, in order to characterize the other possibilities.

\noindent
Finally, the characterization of the eigenstructure is a decisive point of the numerical treatment of the open boundary problem, in a limited domain $\Omega$. Indeed, there are a lot of techniques to treat these kind of boundary conditions: the radiation methods as the {\em Sommerfeld} conditions from \cite{sommerfeld1949partial} or as the {\em Orlanski-type} conditions, for more complex hyperbolic flows, proposed in \cite{orlanski1976simple}; the absorbing conditions, explained in \cite{engquist1977absorbing}; relaxation methods studied in \cite{roed1987study}; or the {\em Flather} conditions proposed in \cite{flather1976tidal}. As it was underlined in \cite{blayo2005revisiting}, the characteristic-based methods, such as {\em Flather} conditions --- which is often seen as radiation conditions ---, are natural and efficient open boundary conditions: the outgoing waves does not need any conditions, while conditions are imposed on the characteristic variables, for the incoming waves. However, the integration of the characteristics variables is not an issue when $n=1$, the single-layer problem: the characteristics variables are exactly known, because the exact eigenvectors $\boldsymbol{l}^{\lambda_1^{\pm}}:={}^{\top}(1,\pm\sqrt{\frac{h}{g}},0)$, associated with the exact eigenvalue $\lambda_1^{\pm}:=u \pm \sqrt{g h}$, is such that:

\begin{equation}
{}^{\top} \boldsymbol{l}^{\lambda_1^{\pm}} \frac{\partial {\bf u}}{\partial t}= \sqrt{\frac{h}{g}} \frac{\partial}{\partial t}\big( u \pm 2 \sqrt{gh} \big),
\end{equation}

\noindent
where we remind that ${\bf u}:={}^{\top}(h,u,v)$, when $n=1$. Then, the characteristic variables of the single-layer model, at the surface $\partial\Omega$, are: $(\boldsymbol{u} \cdot \boldsymbol{n})$ and $(\boldsymbol{u} \cdot \boldsymbol{n}) \pm 2 \sqrt{gh}$, where $\boldsymbol{n}$ is the outward pointing unit vector of $\partial\Omega$. In the case $n=2$, \cite{bousquet2013boundary} and \cite{petcu2013interface} gave the $4$ characteristics variables associated to the two-layer model and we will give it for an eastern surface: $\boldsymbol{n}={}^{\top}(1,0)$. Two of them are associated to the total height of water ({\em i.e.} the barotropic waves):
\begin{equation}
\frac{h_1 u_1 + h_2 u_2}{h_1+h_2} \pm 2\sqrt{g(h_1+h_2)},
\label{1charactvariabletwolayer1}
\end{equation}

\noindent
and two of them to the interface ({\em i.e.} the baroclinic waves):
\begin{equation}
\arcsin\left(\frac{h_1 - h_2}{h_1+h_2}\right) \mp \arcsin\left(\frac{u_2 - u_1}{\sqrt{g(1-\gamma_1)(h_1+h_2)}} \right).
\label{1charactvariabletwolayer2}
\end{equation}

\noindent
Nevertheless, these expressions are formal approximations of the regime $\gamma_1 \approx 1$ in \eqref{1charactvariabletwolayer1}, and $\frac{\partial (h_1 + h_2)}{\partial t} \approx 0$ in \eqref{1charactvariabletwolayer2}. As far as we know, the question is still open about the precision of these characteristic variables, compared with a linearized treatment of the open boundary conditions. Finally, as we have proved in this paper, the eigenstructure of the multi-layer model, with $n \ge 3$ and in the asymptotic regime considered, looks like different two-layer models, with different layers considered. Then, in the asymptotic regime \eqref{1hypasympt} and \eqref{1hypasympt2}, another open question is the efficiency of the open boundary conditions with the following formal characteristic variables:

\begin{equation}
\bar{u} \pm 2\sqrt{gH},
\label{1charactvariablenlayer1}
\end{equation}
and
\begin{equation}
\forall i \in [\![1,n-1]\!],\ \arcsin\left(\frac{h_{\sigma,i}^- - h_{\sigma,i}^+}{h_{\sigma,i}^- +h_{\sigma,i}^+}\right) \mp \arcsin\left(\frac{u_{i+1} - u_i}{\sqrt{g(1-\gamma_i)(h_{\sigma,i}^- +h_{\sigma,i}^+)}} \right).
\label{1charactvariablenlayer2}
\end{equation}
compared with a linearized treatment of the open boundary conditions, as {\em Flather} conditions. It would be interesting to compute these two kind of open boundary conditions in the two-layer case and a more general one, in a simple limited domain such as a rectangular, to address these open questions.

\section*{Acknowledgments}
The author warmly thanks P. Noble, J.P. Vila, V. Duch\^ene, R. Baraille and F. Chazel, for their noteworthy contribution to this research.

\bibliographystyle{plain}

\bibliography{bib}

\end{document}